
\documentclass{amsart}
\usepackage{amsfonts}
\usepackage{amsmath}
\usepackage{amssymb}
\usepackage{amsthm}
\usepackage{eucal}
\usepackage{graphicx}
\usepackage{graphicx,color}
\usepackage[usenames,dvipsnames,svgnames,table]{xcolor}

\usepackage{hyperref}
\hypersetup{colorlinks,
            filecolor=black,
            linkcolor=MidnightBlue,
            citecolor=NavyBlue,
            urlcolor=RoyalBlue,
            bookmarksopen=true}

\theoremstyle{plain}
\newtheorem{theorem}{Theorem}[section]

\newtheorem{lemma}[theorem]{Lemma}
\newtheorem{proposition}[theorem]{Proposition}
\newtheorem{corollary}[theorem]{Corollary}

\newtheorem*{main}{Main Theorem}

\newtheorem{remark}[theorem]{Remark}

\numberwithin{equation}{section}

\newcommand{\dt}{\partial_\tau}
\newcommand{\Dt}{\frac{d}{d\tau}}
\newcommand{\dy}{\partial_y}
\newcommand{\dz}{\partial_\theta}
\newcommand{\ls}{\lesssim}
\newcommand{\lan}{\langle}
\newcommand{\ran}{\rangle}
\newcommand{\ve}{\varepsilon}
\newcommand{\vp}{\varphi}

\newcommand{\Dy}{\mathrm{d}y}
\newcommand{\Dz}{\mathrm{d}\theta}
\newcommand{\es}{\simeq}
\newcommand{\lp}{\langle}
\newcommand{\rp}{\rangle_\sigma}

\newcommand{\V}[2]{
    \ifnum0=#1  
        \ifnum1=#2
            (v_{0,1,2})   
        \else
            (v_{0,#2,#2}) 
        \fi
    \else   
        \ifnum0=#2  
            \ifnum1=#1
                (v_{1,0,1})   
            \else
                (v_{#1,0,0})  
            \fi
        \else   
            (v_{#1,#2,#2})
        \fi
    \fi}

\begin{document}
\title[Neckpinch dynamics for asymmetric surfaces]
{Neckpinch dynamics for asymmetric surfaces\\ evolving by mean curvature flow}

\author{Zhou Gang}
\address[Zhou Gang]{California Institute of Technology}
\email{gzhou@caltech.edu}

\author{Dan Knopf} \address[Dan Knopf]{University of Texas at Austin}
\email{danknopf@math.utexas.edu}
\urladdr{http://www.ma.utexas.edu/users/danknopf/}

\author{Israel Michael Sigal}
\address[Israel Michael Sigal]{University of Toronto}
\email{im.sigal@utoronto.ca}
\urladdr{http://www.math.toronto.edu/sigal/}

\thanks{ZG thanks NSF for supports in DMS-1308985.
	DK thanks NSF for support in DMS-0545984.
	IMS thanks NSERC for support in NA7901.}

\begin{abstract}
We study noncompact surfaces evolving by mean curvature flow (\textsc{mcf}). For an open set of initial
data that are $C^3$-close to round, but without assuming rotational symmetry or positive mean curvature,
we show that \textsc{mcf} solutions become singular in finite time by forming neckpinches, and we
obtain detailed asymptotics of that singularity formation. Our results show in a precise way that
\textsc{mcf} solutions become asymptotically rotationally symmetric near a neckpinch singularity.

\end{abstract}
\maketitle

\setcounter{tocdepth}{2}
\tableofcontents

\section{Introduction}

In this paper, we study the motion of cylindrical surfaces under mean
curvature flow (\textsc{mcf}). Specifically, we investigate the asymptotics of
finite-time singularity formation for such surfaces, without assuming rotational
symmetry or positive mean curvature.

There is a rich literature on \textsc{mcf} singularity formation, one far too vast
to be fully acknowledged here. Neckpinch singularities for rotationally symmetric
$2$-dimensional surfaces were first observed by Huisken \cite{Huisken90}; this
was generalized to higher dimensions by Simon \cite{Simon90}. Asymptotic
properties of a solution approaching a singularity were studied by
Angenent--Vel\'{a}zquez \cite{AV97}, who obtained rigorous asymptotics for
nongeneric Type-II ``degenerate neckpinches,'' and derived formal matched
asymptotics for (conjecturally generic) Type-I singularities. Rigorous asymptotics
for the Type-I case were recently obtained by two of the authors \cite{GS09}.
The corresponding result for Type-I Ricci flow neckpinches was obtained by Angenent
and another of the authors \cite{AK07}. For the Type-II Ricci flow case, see
\cite{AIK11}.\footnote{Note that what we call the ``inner region'' in this
paper is called the ``intermediate region'' in \cite{AIK11} and \cite{AK07}.}
These asymptotic analyses all use the hypothesis of rotational symmetry in
essential ways.

A different approach is the surgery program
of Huisken and Sinestrari \cite{HS09}, which shows that singularities of $2$-convex
immersions $\mathcal{M}^n\subset\mathbb{R}^{n+1}$ with $n\geq3$ are either
close to round $\mathbb{S}^n$ components or close to ``necklike''
$\mathbb{S}^{n-1}\times I$ components; this program does not involve asymptotics
or require symmetry hypothesis. Yet another approach studies weak solutions that
extend past the singularity time. See existence results of Brakke \cite{Brakke78},
Evans--Spruck \cite{ES91}, and Chen--Giga--Goto \cite{CGG91}, as well as recent
classification results by Colding--Minicozzi \cite{CM09}.

In this paper, we consider the evolution of non-symmetric, noncompact surfaces embedded
in $\mathbb{R}^3$. To analyze their singularity formation, we study rescaled solutions
defined with respect to adaptive blowup variables.
We remove the hypothesis of rotational symmetry and instead consider initial data in a
Sobolev space $H^5 (\mathbb{S}^1\times\mathbb{R};\Dz\,\sigma\,\Dy)$
(where the weighted measure $\sigma\sim|y|^{-\frac{6}{5}}$ as $|y|\rightarrow\infty$)
which are $C^3$-close to round necks and possess weaker symmetries. (These symmetries
are retained to reduce technicalities but are not essential to our approach.) The
assumptions are made precise in Section~\ref{Basic}. Because we do not allow fully asymmetric
initial data, our results do not establish uniqueness of the limiting cylinder for general
\textsc{mcf} neckpinch singularities (a conjecture that we learned of from Klaus Ecker).
However, our results do provide rigorous evidence in favor of the heuristic expectation
that rotational symmetry is stable in a suitable (quasi-isometric) sense. In light of the
result \cite{CM09} of Colding--Minicozzi that shrinking spheres and cylinders are the only
generic \textsc{mcf} singularity models, our work suggests that the rotationally-symmetric
neckpinch is in fact a ``universal'' singularity profile.

Our analysis consists of two largely independent parts, both of which are organized
as bootstrap machines. The first machine takes as its input certain (weak) estimates
that follow from our assumptions on the initial data; by parabolic regularization
for quasilinear equations, we may assume that these hold for a sufficiently short time
interval. These estimates are detailed in Section~\ref{FirstBootstrap}. The output
of the machine consists of improved \emph{a priori} estimates for the same time interval,
which may then be propagated forward in time.
The machine is constructed in Sections~\ref{FirstBootstrap}--\ref{ProveLast}. Its
construction employs Lyapunov functionals and Sobolev embedding arguments near the
center of the neck, where the most critical analysis is needed, together with
maximum-principle arguments away from the developing singularity.

The second bootstrap machine takes for its input similar (weak) conditions detailed
in Section~~\ref{SecondBootstrap}, along with the improved \emph{a priori} estimates
output by the first machine. It further improves those estimates by showing that for
correct choices of adaptive scaling parameters, it is possible to decompose a solution
into an asymptotically dominant profile and a far smaller ``remainder'' term. The
methods employed here are close to those used in \cite{GS09}, and are collected in
Sections~\ref{SecondBootstrap}--\ref{FullStop}.

Connecting the two machines yields a (non-circular) sequence of arguments that establishes
finite-time extinction of the unrescaled solution, its singular collapse, the nature of the singular
set, and the solution's asymptotic behavior in a space-time neighborhood of the singularity. Thus when
combined, these two bootstrap arguments imply our main theorem. We prove this in Section~\ref{RepeatMainProof}.
\smallskip

As we recall in Section~\ref{Basic}, \textsc{mcf} of a normal graph over a
$2$-dimensional cylinder is determined up to tangential diffeomorphisms
by a radius function $u(x,\theta,t)>0$. 
The assumptions of the theorem are satisfied by any smooth reflection-symmetric
and $\pi$-periodic initial surface $u_0(x,\theta)=u(x,\theta,0)$ that is
a sufficiently small perturbation of the surfaces studied in \cite{GS09}.

\begin{main}
Let $u(x,\theta,t)$ be a solution of \textsc{mcf} whose initial surface satisfies the
Main Assumptions stated in Section~\ref{Basic} for sufficiently small $0<b_0,c_0\ll1$.
Then there exists a time $T<\infty$ such that $u(x,\theta,t)$ develops a
neckpinch singularity at time $T$, with $u(0,\cdot,T)=0$.
There exist functions $\lambda(t)$, $b(t)$, $c(t)$, and $\phi(x,\theta,t)$ such that
with $y(x,t):=\lambda(t)^{-1}x$, one has
\[\frac{u(x,\theta,t)}{\lambda(t)}
=\sqrt{\frac{2+b(t)y^2}{c(t)}}+\phi(y,\theta,t),\]
where as $t\nearrow T$,
\begin{align*}
\lambda(t)  &=[1+o(1)]\lambda_0\sqrt{T-t},\\
b(t)    &=\left[1+\mathcal{O}((-\log(T-t))^{-\frac{1}{2}})\right](-\log(T-t))^{-1},\\
c(t)    &=1+\left[1+\mathcal{O}((-\log(T-t))^{-1})\right](-\log(T-t))^{-1}.
\end{align*}
The solution is asymptotically rotationally symmetric near the neckpinch singularity
in the precise sense that, with $v(y,\theta,\tau):=\lambda(t)^{-1}u(x,\theta,t)$, the estimates
\[v^{-2}|\dz\phi|+v^{-1}|\dy\dz\phi|+v^{-2}|\dz^2\phi|+v^{-1}|\dy^2\dz\phi|
    +v^{-2}|\dy\dz^2\phi|+v^{-3}|\dz^3\phi|=\mathcal{O}(b(t)^{\frac{33}{20}})\]
and
\[\frac{|\phi(y,\theta,t)|}{(1+y^2)^{\frac{3}{2}}}=\mathcal{O}(b(t)^{\frac{8}{5}})
    \quad\text{and}\quad
\frac{|\phi(y,\theta,t)|}{(1+y^2)^{\frac{11}{20}}}=\mathcal{O}(b(t)^{\frac{13}{20}})\]
all hold uniformly as $t \nearrow T$.
\end{main}

\section{Basic evolution equations}\label{Basic}
In this paper, we study the evolution of graphs over a cylinder
$\mathbb{S}^1\times\mathbb{R}$ embedded in $\mathbb{R}^3$.
In coordinates $(x,y,z)$ for $\mathbb{R}^3$, we take as an initial datum a surface $M_0$
around the $x$-axis, given by a map $\sqrt{y^2+z^2}=u_0 (x,\theta)$, where $\theta$ denotes
the angle from the ray $y>0$ in the $(y,z)$-plane. Then for as long as the flow remains
a graph, all $M_{t}$ are given by $\sqrt{y^2+z^2}=u(x,\theta,t)$. It follows from
equations~\eqref{MCF-general} and \eqref{Q-general}, derived in Appendix~\ref{Graphs},
that $u$ evolves by
\begin{equation}\label{MCF-u}
\partial_t u =
\frac{[1+(\frac{\dz u}{u})^2]\partial_x^2 u
      +\frac{1+(\partial_x u)^2}{u^2}\dz^2 u
      -2\frac{(\partial_x u)(\dz u)^2}{u^3} \partial_x \dz u
      -\frac{(\dz u)^2}{u^3}}
    {1+(\partial_x u)^2 + (\frac{\dz u}{u})^2}
-\frac{1}{u}
 \end{equation}
with initial condition $u(x,\theta,0)=u_0(x,\theta)$.

Analysis of rotationally symmetric neckpinch formation \cite{GS09} leads one to
expect that solutions of \eqref{MCF-general} will become singular in finite
time, resembling spatially homogeneous \textsc{ode} solutions $\sqrt{2(T-t)}$
in an suitable space-time neighborhood of the developing singularity.

Accordingly, we apply adaptive rescaling, transforming the original space-time variables
$x$ and $t$ into rescaled blowup variables
\begin{equation} \label{Define-y}
y(x,t):=\lambda^{-1}(t) [x-x_{0}(t)]
\end{equation}
and
\begin{equation} \label{Define-tau}
\tau(t):=\int_0^t \lambda^{-2}(s)\,\mathrm{d}s,
\end{equation}
respectively, where $x_0(t)$ marks the center of the neck.
What distinguishes this approach from standard parabolic rescaling
(see, e.g., \cite{AV97} or \cite{AK07}) is that we do not fix $\lambda(t)$
but instead consider it as a free parameter to be determined from the evolution of
$u$ in equation~\eqref{MCF-u}.\footnote{For fully general neckpinches, one would
also regard $x_0(t)$ as a free parameter; below, we impose reflection symmetry to
fix $x_0(t)\equiv 0$.}
By \cite{GS09}, one expects that $\lambda\approx\sqrt{T-t}$
and $\tau\approx-\log(T-t)$, where $T>0$ is the singularity time.
(We confirm this in Section~\ref{RepeatMainProof} below.)

Consider a solution $u(x,\theta,t)$ of \eqref{MCF-u} with initial condition $u_{0}(x,\theta)$.
We define a rescaled radius $v(y,\theta,\tau)$ by
\begin{equation}\label{Define-v}
v(y(x,t),\theta,\tau(t)):=\lambda^{-1}(t)\,u(x,\theta,t).
\end{equation}
Then $v$ initially satisfies $v(y,\theta,0)=v_0(y,\theta)$,
where $v_0(y,\theta):=\lambda^{-1}_{0} u_{0}(\lambda_{0}y,\theta)$,
with $\lambda_0$ the initial value of the scaling parameter $\lambda$.

In commuting $(y,\theta,\tau)$ variables, the quantity $v$ evolves by
\begin{equation}\label{MCF-v}
\dt v = A_v v +av -v^{-1},
\end{equation}
where $A_v$ is the quasilinear elliptic operator
\begin{equation} \label{Define-A}
A_v :=
    F_1(p,q)\dy^2 + v^{-2}F_2(p,q)\dz^2 + v^{-1}F_3(p,q)\dy\dz
    +v^{-2}F_4(p,q)\dz -ay\dy
\end{equation}
with $v$-dependent coefficients defined with respect to
\begin{equation}\label{Define-apq}
a:=-\lambda\partial_t \lambda,\quad p:=\dy v,\quad\text{and}\quad q:=v^{-1}\dz v,
\end{equation}
by
\begin{equation}\label{Define-Fi}
\begin{array}{llcllllc}
F_1(p,q) &:= &\displaystyle\frac{1+q^2}{1+p^2+q^2}, &\quad &\quad
F_2(p,q) &:= &\displaystyle\frac{1+p^2}{1+p^2+q^2},\\
\\
F_3(p,q) &:= &\displaystyle-\frac{2pq}{1+p^2+q^2}, &\quad &\quad
F_4(p,q) &:= &\displaystyle\frac{q}{1+p^2+q^2}.
\end{array}
\end{equation}

\medskip
In order to state our assumptions precisely, we introduce some further notation.
We denote the formal solution of the adiabatic approximation to equation~\eqref{MCF-v} by
\begin{equation}\label{adiabatic}
V_{r,s}(y):=\sqrt{\frac{2+sy^2}{\frac 12+r}},
\end{equation}
where $r$ and $s$ are positive parameters. We introduce a step
function\,\footnote{One motivation for this choice of $g$ may be found in the results of \cite{GS09},
which prove that $v=\big(1+o(1)\big)\sqrt{2+b(\tau)y^2}$ as $\tau\rightarrow\infty$
for the rotationally symmetric \textsc{mcf} neckpinch. For another motivation,
see Remark~\ref{OneReasonFor-g} below.}
\begin{equation}\label{Define-g}
g(y,s):=\left\{    \begin{array}{ccc}
    \frac{9}{10}\sqrt{2} & \text{if} & s y^{2}< 20,\\
    \\
    4 & \text{if} & s y^{2} \geq 20.
    \end{array}
\right.
\end{equation}
We define a norm $\|\cdot\|_{m,n}$ by
\[\|\phi\|_{m,n} := \left\| (1+y^2)^{-\frac m2} \dy^n \phi\right\|_{L^\infty}.\]
We also introduce a Hilbert space
$L^2_{\sigma}\equiv L^2(\mathbb{S}^1\times\mathbb{R};\Dz\,\sigma\,\Dy)$
with norm $\|\cdot\|_{\sigma}$, whose weighted measure is defined with
respect to $\Sigma\gg 1$ (to be fixed below) by
\begin{equation}\label{Define-sigma}
\sigma(y):=(\Sigma+y^2)^{-\frac{3}{5}}.
\end{equation}

Here are our assumptions. We state them for $v_{0}(y,\theta)=v(y,\theta,0)$,
but they easily translate to $u_{0}(x,\theta)=u(x,\theta,0)$ using the
relation $u_0(x,\theta)=\lambda_0 v(\lambda_0^{-1}x,\theta)$.

\medskip
\textbf{Main Assumptions.} There exist small positive constants $b_0,c_0$ such that:
\begin{itemize}
\item[{[A1]}] The initial surface is a graph over $\mathbb{S}^1\times\mathbb{R}$
determined by a smooth function $v_0(y,\theta)>0$ with the symmetries
$v_0(y,\theta)=v_0(-y,\theta)=v_0(y,\theta+\pi)$.

\item[{[A2]}] The initial surface satisfies $v_0(y,\cdot) > g(y, b_0)$.

\item[{[A3]}] The initial surface is a small deformation of a formal solution $V_{a_0,b_0}$
in the sense that for $(m,n)\in\left\{(3,0),\ (11/10,0),\ (2,1), \ (1,1)\right\}$, one has
\[ \|v_0-V_{a_0,b_0}\|_{m,n}<b_0^{\frac{m+n}{2}+\frac{1}{10}}.\]

\item[{[A4]}] The parameter $a_0=a(0)$ obeys the bound $|a_0-1/2|<c_0$.

\item[{[A5]}] The initial surface obeys the further derivative bounds:
\begin{align*}
\textstyle\sum_{n\neq0,\, 2\leq m+n \leq3} v_0^{-n}|\dy^m\dz^n v_0|&<b_0^2,\\
b_0v_0^{-\frac12}|\dy v_0| + b_0^{\frac12}|\dy^2v_0|+|\dy^3 v_0|&<b_0^{\frac32},\\
|\dy\dz^2 v_0|+v_0^{-1}|\dz^3 v_0|&<c_0.
\end{align*}

\item[{[A6]}] $b_0^{\frac45}\|\dy^4 v_0\|_\sigma + \|\dy^5v_0\|_\sigma%
+ \sum_{n\neq0,\, 4\leq m+n \leq5} \|v_0^{-n}\dy^m\dz^n v_0\|_\sigma < b_0^4$.
\end{itemize}
\medskip

Our method works because \textsc{mcf} preserves the symmetries in Assumption~[A1].
\begin{lemma}\label{Symmetries}
If $v_0$ has the symmetries $v_0(y,\theta)=v_0(-y,\theta)=v_0(y,\theta+\pi)$,
then the solution $v$ of equation~\eqref{MCF-v} shares the same symmetries,
$v(y,\theta,\tau)=v(-y,\theta,\tau)=v(y,\theta+\pi,\tau)$, for as long as it
remains a graph.
\end{lemma}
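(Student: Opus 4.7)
The plan is to use uniqueness for equation~\eqref{MCF-v} together with the observation that each of the two symmetries in Assumption~[A1] is an \emph{equation symmetry}: if $\tilde v$ is obtained from $v$ by pre-composing with the symmetry, then $\tilde v$ is itself a solution of \eqref{MCF-v}. Since by hypothesis $\tilde v(\cdot,\cdot,0)=v_0$, uniqueness forces $\tilde v\equiv v$, which is exactly the assertion that $v$ inherits the symmetry. An essentially equivalent formulation is to work with the unrescaled flow, noting that the two symmetries correspond to the ambient Euclidean isometries $(x,y,z)\mapsto(-x,y,z)$ and $(x,y,z)\mapsto(x,-y,-z)$ (reflection across $\{x=0\}$ and rotation by $\pi$ about the $x$-axis), and to invoke the isometry-invariance of \textsc{mcf} together with uniqueness for smooth embedded graph solutions; one then descends to $v$ using \eqref{Define-y}--\eqref{Define-v} with $x_{0}(t)\equiv 0$.

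The $\theta\mapsto\theta+\pi$ case is immediate, because the coefficients in \eqref{Define-A}--\eqref{Define-Fi} depend on $v$ and its derivatives but not explicitly on $\theta$, so any $\theta$-translate of a solution is a solution; the initial datum is preserved by hypothesis, and uniqueness finishes the argument. For the reflection $y\mapsto -y$, I would set $\tilde v(y,\theta,\tau):=v(-y,\theta,\tau)$ and check directly that it solves \eqref{MCF-v}. Under this substitution the quantity $p=\partial_y v$ changes sign while $q=v^{-1}\partial_\theta v$ does not. Referring to \eqref{Define-Fi}, the coefficients $F_1,F_2$ are even in $p$, so the terms $F_1\partial_y^2v$ and $v^{-2}F_2\partial_\theta^2v$ are invariant; $F_3$ is odd in $p$, and this sign cancels the single sign produced when $\partial_y\partial_\theta$ acts on $v(-y,\theta,\tau)$; $F_4$ is even in $p$ and is paired with $\partial_\theta v$, which is unaffected by $y\mapsto -y$; and the drift $-ay\partial_y v$ is invariant because both $y$ and $\partial_yv$ change sign. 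The zeroth-order right-hand side $av-v^{-1}$ involves no derivatives, and $\partial_\tau$ commutes with the spatial reflection, so $\tilde v$ satisfies \eqref{MCF-v} with initial datum $\tilde v_0(y,\theta)=v_0(-y,\theta)=v_0(y,\theta)$, and uniqueness yields $\tilde v\equiv v$.

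The main obstacle --- really the only non-routine step --- is justifying uniqueness for the class of solutions in play. So long as $v$ remains a positive smooth graph over $\mathbb{S}^1\times\mathbb{R}$, which is guaranteed on some short time interval by Assumption~[A2] together with parabolic regularity for the quasilinear equation \eqref{MCF-v}, the equation is uniformly parabolic on compact regions, so standard quasilinear parabolic uniqueness applies. Equivalently, at the level of the embedded surface $M_{t}$, one may invoke uniqueness of smooth \textsc{mcf} from a smooth initial surface. Either way, the conclusion propagates for as long as $M_{t}$ is a graph, which is precisely the range of validity claimed in the lemma.
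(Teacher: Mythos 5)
Your proof is correct and follows the same approach as the paper: you observe that each symmetry maps solutions of \eqref{MCF-v} to solutions (for $y\mapsto-y$ by tracking the parity of $F_1,\dots,F_4$ in $p$; for $\theta\mapsto\theta+\pi$ because the coefficients have no explicit $\theta$-dependence), and then conclude by local well-posedness/uniqueness since the initial datum is invariant. The paper states this more tersely, but the argument is the same, and your verification of the sign bookkeeping under $y\mapsto-y$ is accurate.
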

\begin{proof}
If $v(y,\theta,\tau)$ solves equation~\eqref{MCF-v}, so do $v(-y,\theta,\tau)$ and $v(y,\theta+\pi,\tau)$.
Thus the result follows from local well-posedness of the equation.
\end{proof}

\section{Implied evolution equations}

Recall from equation~\eqref{MCF-v} in Section~\ref{Basic} that
$\dt v = A_v v +av -v^{-1}$, where $A_v$ is the quasilinear operator
defined in equation~\eqref{Define-A}, with coefficients $F_\ell$ introduced
in definitions~\eqref{Define-apq}--\eqref{Define-Fi}. Our goal
in this section is to derive evolution equations for quantities of the
form
\begin{equation} \label{Define-vmnk}
v_{m,n,k}:= v^{-k}(\dy^m \dz^n v),
\end{equation}
defined with respect to integers $m,n\geq 0$ and a real number $k\geq 0$.

\begin{lemma}\label{ComputeOnceAndForAll}
The quantity $v_{m,n,k}$ evolves by
\begin{equation}	\label{CorrectEvolutionEquation}
\dt v_{m,n,k} = \left\{A_v + (k+1)v^{-2} - (m+k-1)a \right\}v_{m,n,k} + E_{m,n,k},
\end{equation}
where the individual terms of the nonlinear quantity $E_{m,n,k}=\sum_{\ell=0}^5 E_{m,n,k,\ell}$
are defined by
\begin{equation} \label{Define-Xmnkj}
\begin{array} {lll}
E_{m,n,k,0} & := &-kv^{-1}v_{m,n,k}(A_v v+ay\dy v),\\ \\
E_{m,n,k,1} & := & v^{-k}\dy^m\dz^n(F_1\dy^2v)-F_1\dy^2v_{m,n,k},\\ \\
E_{m,n,k,2} & := & v^{-k}\dy^m\dz^n(v^{-2}F_2\dz^2v)-v^{-2}F_2\dz^2v_{m,n,k},\\ \\
E_{m,n,k,3} & := & v^{-k}\dy^m\dz^n(v^{-1}F_3\dy\dz v)-v^{-1}F_3\dy\dz v_{m,n,k},\\ \\
E_{m,n,k,4} & := & v^{-k}\dy^m\dz^n(v^{-2}F_4\dz v)-v^{-2}F_4\dz v_{m,n,k},\\ \\
E_{m,n,k,5} & := & -v^{-k}\dy^m\dz^n(v^{-1})-v^{-2}v_{m,n,k}.
\end{array}
\end{equation}
\end{lemma}

\begin{proof}
We begin by using equations~\eqref{MCF-v} and \eqref{Define-A} to compute that
\begin{align*}
\dt\big(v^{-k}\dy^m\dz^n v\big)&=-kv_{m,n,k}v^{-1}\dt v + v^{-k}\dy^m\dz^n(\dt v)\\
&=-kv^{-1}v_{m,n,k}(A_v v)-kav_{m,n,k}+kv^{-2}v_{m,n,k}\\
&\quad+v^{-k}\dy^m\dz^n
\big\{F_1\dy^2v+v^{-2}F_2\dz^2v+v^{-1}F_3\dy\dz v+v^{-2}F_4\dz v\big\}\\
&\quad-av^{-k}\dy^m\dz^n(y\dy v)+av_{m,n,k}-v^{-k}\dy^m\dz^n(v^{-1}).
\end{align*}
We then rewrite this in the form
\begin{align*}
\dt v_{m,n,k}&=A_v v_{m,n,k}+\big\{(k+1)v^{-2}+a(1-k)\big\}v_{m,n,k}\\
&\quad-kv^{-1}v_{m,n,k}(A_vv)\\
&\quad+v^{-k}\dy^m\dz^n(F_1\dy^2v)-F_1\dy^2v_{m,n,k}\\
&\quad+v^{-k}\dy^m\dz^n(v^{-2}F_2\dz^2v)-v^{-2}F_2\dz^2v_{m,n,k}\\
&\quad+v^{-k}\dy^m\dz^n(v^{-1}F_3\dy\dz v)-v^{-1}F_3\dy\dz v_{m,n,k}\\
&\quad+v^{-k}\dy^m\dz^n(v^{-2}F_4\dz v)-v^{-2}F_4\dz v_{m,n,k}\\
&\quad-v^{-k}\dy^m\dz^n(v^{-1})-v^{-2}v_{m,n,k}\\
&\quad+a\big\{y\dy v_{m,n,k}-v^{-k}\dy^m\dz^n(y\dy v)\big\}.
\end{align*}
Comparing this to equations~\eqref{CorrectEvolutionEquation} and \eqref{Define-Xmnkj}
shows that to complete the proof, it suffices to observe that the final line above simplifies
as follows:
\[
a\big\{y\dy v_{m,n,k}-v^{-k}\dy^m\dz^n(y\dy v)\big\}
=-mav_{m,n,k}-kv^{-1}v_{m,n,k}(ay\dy v).
\]
\end{proof}

\begin{remark}\label{OneReasonFor-g}
An important feature of the calculation above is its linear reaction term,
which reveals how the evolution equation for $v_{m,n,k}$ ``improves''
as $m$ and $k$ increase, provided that $v$ is suitably bounded from below.
This partly explains our choice of step function $g$ in equation~\eqref{Define-g}.
\end{remark}

\begin{corollary}\label{GeneralEquation}
The quantity $v_{m,n,k}^2$ evolves by
\begin{equation} \label{Evolve-vmnk2}
\begin{array}{lrl}
\dt v_{m,n,k}^2 &= &A_v (v_{m,n,k}^2) + 2\left[(k+1)v^{-2}-(m+k-1)a\right]v_{m,n,k}^2\\ \\
                &  &- B_{m,n,k} + 2E_{m,n,k}v_{m,n,k},
\end{array}
\end{equation}
where $B_{m,n,k}:= A_v(v_{m,n,k}^2)-2v_{m,n,k} A_v v_{m,n,k}$ satisfies $B\geq0$
along with
\begin{equation}\label{BisGood}
B_{m,n,k} \geq \frac{v_{m+1,n,k}^2 + v_{m,n+1,k+1}^2}{1+p^2+q^2}
	-k^2 (v_{1,0,1}^2+v_{0,1,1}^2) v_{m,n,k}^2.
\end{equation}
\end{corollary}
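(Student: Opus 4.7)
The plan is to deduce both assertions by combining Lemma~\ref{ComputeOnceAndForAll} with a pointwise computation of the principal symbol of $A_v$.

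For the evolution equation \eqref{Evolve-vmnk2}, the idea is purely mechanical: multiply the identity
\[\dt v_{m,n,k} = \{A_v + (k+1)v^{-2} - (m+k-1)a\}v_{m,n,k} + E_{m,n,k}\]
from Lemma~\ref{ComputeOnceAndForAll} through by $2v_{m,n,k}$ and then substitute $2v_{m,n,k}A_v v_{m,n,k} = A_v(v_{m,n,k}^2) - B_{m,n,k}$, which is merely the definition of $B_{m,n,k}$ rearranged. This immediately produces \eqref{Evolve-vmnk2}.

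For the lower bound \eqref{BisGood}, I would first invoke the standard identity $L(\phi^2) - 2\phi L\phi = 2\sum_{i,j}a_{ij}(\partial_i\phi)(\partial_j\phi)$, valid for any linear second-order operator $L=\sum a_{ij}\partial_i\partial_j+\sum b_i\partial_i$ with scalar coefficients, noting that the first-order pieces contribute nothing since $\partial_i(\phi^2)=2\phi\partial_i\phi$. Applied to $A_v$ with $\phi:=v_{m,n,k}$, this causes the terms $v^{-2}F_4\dz$ and $-ay\dy$ to drop out, leaving
\[B_{m,n,k} = 2F_1(\dy\phi)^2 + 2v^{-2}F_2(\dz\phi)^2 + 2v^{-1}F_3(\dy\phi)(\dz\phi).\]
Substituting the explicit formulas \eqref{Define-Fi} and writing $\alpha:=\dy\phi$, $\beta:=v^{-1}\dz\phi$, this becomes $\frac{2}{1+p^2+q^2}\bigl[(1+q^2)\alpha^2 -2pq\,\alpha\beta + (1+p^2)\beta^2\bigr]$.

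The decisive algebraic step is to recognize the bracket as a sum of squares: by direct expansion,
\[(1+q^2)\alpha^2 - 2pq\,\alpha\beta + (1+p^2)\beta^2 = \alpha^2 + \beta^2 + (q\alpha - p\beta)^2.\]
Discarding the nonnegative term $(q\alpha - p\beta)^2$ and identifying $\alpha^2$ with $v_{m+1,n,k}^2$ and $\beta^2$ with $v_{m,n+1,k+1}^2$ (i.e., $(\dy v_{m,n,k})^2$ and $v^{-2}(\dz v_{m,n,k})^2$, as obtained by Leibniz on the $v^{-k}$ prefactor) yields \eqref{BisGood}; the terminal nonnegativity is then immediate. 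The whole argument is pointwise algebra, so there is no genuine obstacle: the only two items requiring care are the bookkeeping of the $F_i$ substitutions and verification of the sum-of-squares factorization by expansion.
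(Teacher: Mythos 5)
Your proof follows the paper's route exactly: multiply the identity from Lemma~\ref{ComputeOnceAndForAll} by $2v_{m,n,k}$, rearrange using the definition of $B_{m,n,k}$, compute $B_{m,n,k}$ from the second-order-operator-on-squares identity (the first-order pieces of $A_v$ drop out, just as you say), and lower-bound the resulting quadratic form in $(\dy\phi,\ v^{-1}\dz\phi)$. Your explicit sum-of-squares decomposition $(1+q^2)\alpha^2-2pq\,\alpha\beta+(1+p^2)\beta^2=\alpha^2+\beta^2+(q\alpha-p\beta)^2$ is precisely the content of what the paper abbreviates as ``the result follows by Cauchy--Schwarz,'' so the argument is the same.

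One point needs repair. Your parenthetical ``as obtained by Leibniz on the $v^{-k}$ prefactor'' is backwards: Leibniz is exactly what shows the identification is \emph{not} exact when $k\neq0$. One has $\dy v_{m,n,k}=v_{m+1,n,k}-kp\,v^{-1}v_{m,n,k}$ and $v^{-1}\dz v_{m,n,k}=v_{m,n+1,k+1}-kq\,v^{-1}v_{m,n,k}$, so the sum-of-squares computation literally yields $B_{m,n,k}\geq\frac{2}{1+p^2+q^2}\bigl[(\dy v_{m,n,k})^2+v^{-2}(\dz v_{m,n,k})^2\bigr]$, which is the form the paper's own proof records. The passage to $v_{m+1,n,k}^2$ and $v_{m,n+1,k+1}^2$ in the stated inequality~\eqref{BisGood} is literal only for $k=0$, and otherwise carries lower-order corrections that the paper absorbs where they matter (e.g., the estimate $(\dy w)^2\geq v_{2,1,1}^2-C\beta\, w^2$ in the proof of Lemma~\ref{Omega11}). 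So your computation is sound, but you should drop the Leibniz justification for the final substitution --- it is the source of the extra term, not a reason it vanishes.
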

\begin{proof}
Recall the useful fact that $v\dy^2 v =\frac12 \dy^2(v^2)-(\dy v)^2$.
Using this idea, the result follows by applying Cauchy--Schwarz to obtain first
\begin{align*}
B_{m,n,k}&= 2[ F_1(\dy v_{m,n,k})^2+v^{-2}F_2 (\dz v_{m,n,k})^2
   +v^{-1}F_3 (\dy v_{m,n,k})(\dz v_{m,n,k})]\\
   &\geq2\frac{(\dy v_{m,n,k})^2+(\dz v_{m,n,k})^2}{1+p^2+q^2}\geq0,
\end{align*}
and then applying it in case $k>0$ to the identities
\begin{align*}
(\dy v_{m,n,k})^2&=v_{m+1,n,k}^2 - 2v_{m+1,n,k} kv_{1,0,1}v_{m,n,k}+k^2v_{1,0,1}^2 v_{m,n,k}^2,\\
(\dz v_{m,n,k})^2&=v_{m,n+1,k}^2 - 2v_{m,n+1,k} kv_{0,1,1}v_{m,n,k}+k^2v_{0,1,1}^2 v_{m,n,k}^2.
\end{align*}
\end{proof}

\begin{remark}\label{WhyBisGood}
Once one has suitable first-order estimates for $v$, one can bound the quantity
$1+p^2+q^2$ from above, whereupon $B_{m,n,k}$ contributes valuable
higher-order terms of the form $-\ve(v_{m+1,n,k}^2 + v_{m,n+1,k+1}^2)$
to the evolution equation satisfied by $v_{m,n,k}$. The same first-order
estimates let us easily control the potentially bad terms in \eqref{BisGood} by
$k^2 (v_{1,0,1}^2+v_{0,1,1}^2) v_{m,n,k}^2\leq\ve v_{m,n,k}^2$.
\end{remark}

In estimating the ``commutators'' $E_{m,n,k,\ell}$ defined in \eqref{Define-Xmnkj}, the following
simple observation will be used many times, often implicitly. We omit its easy proof.

\begin{lemma}\label{UniformBounds}
For any $i,j\geq 0$ and $\ell=1,\dots,4$,
there exist constants $C_{i,j,\ell}$ such that
\[ |\partial_p^i\partial_q^j F_{\ell}(p,q)|\leq C_{i,j,\ell} \]
for all $p,q\in\mathbb{R}$.
\end{lemma}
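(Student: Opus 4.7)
The plan is to observe that each $F_\ell$ is of the form $P_\ell(p,q)/w(p,q)$, where $P_\ell$ is a polynomial of degree at most $2$ and $w := 1 + p^2 + q^2 \geq 1$. The proof then rests on two elementary facts: a structural statement about derivatives of such rational functions, and a trivial polynomial growth bound.

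The first step is to show by induction on $i+j$ that for any polynomial $P$ of degree at most $d$ and any integer $n \geq 1$, one has
\[
\partial_p^i \partial_q^j \frac{P(p,q)}{w(p,q)^n} = \frac{Q(p,q)}{w(p,q)^{n+i+j}},
\]
where $Q$ is a polynomial of degree at most $d + i + j$. The base case $i+j=0$ is trivial. For the inductive step, use
\[
\partial_p \frac{P}{w^n} = \frac{w\,\partial_p P - 2np\,P}{w^{n+1}},
\]
whose numerator has degree at most $d + 1$; the analogous computation works for $\partial_q$. Applied to $F_\ell$ with $n=1$ and $d \leq 2$, this yields $\partial_p^i \partial_q^j F_\ell = Q_{i,j,\ell}/w^{1+i+j}$ with $\deg Q_{i,j,\ell} \leq 2 + i + j$.

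The second step is the crude estimate $|R(p,q)| \leq C_R\,w^{(\deg R)/2}$, valid for any polynomial $R$, which follows because each monomial $p^a q^b$ with $a+b \leq \deg R$ satisfies $|p^a q^b| \leq (p^2+q^2)^{(a+b)/2} \leq w^{(\deg R)/2}$. Combining with the first step,
\[
\bigl|\partial_p^i \partial_q^j F_\ell(p,q)\bigr| \;\leq\; \frac{C\,w^{(2+i+j)/2}}{w^{1+i+j}} \;=\; \frac{C}{w^{(i+j)/2}} \;\leq\; C,
\]
since $w \geq 1$. Taking $C_{i,j,\ell}$ to be this constant completes the proof.

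There is no real obstacle here: the argument is pure bookkeeping, and the only step warranting care is the degree count in the inductive claim. A sharper statement (exploiting, for instance, the fact that $F_\ell$ extends continuously to the one-point compactification of $\mathbb{R}^2$) would yield the same conclusion more conceptually, but the inductive argument above is essentially optimal for the applications in which this lemma is invoked.
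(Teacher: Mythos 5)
Your proof is correct. The paper explicitly omits this proof, calling it ``easy,'' so there is nothing to compare against; your induction on $i+j$ to establish $\partial_p^i\partial_q^j F_\ell = Q_{i,j,\ell}/w^{1+i+j}$ with $\deg Q_{i,j,\ell}\leq 2+i+j$, followed by the crude bound $|R|\leq C_R\,w^{(\deg R)/2}$, is a clean and complete verification of the claim.
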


We also freely use the following facts, usually without comment.
\begin{remark}
The quantity $E_{m,n,k,0}$ vanishes if $k=0$.
\end{remark}

\begin{remark}
The quantity $E_{m,n,k,5}$ vanishes if $m+n=1$.
\end{remark}

\section{The first bootstrap machine}\label{FirstBootstrap}

\subsection{Input}\label{FBMI}
Here are the conditions that constitute the input to our first bootstrap machine, whose
structure we describe below. By standard regularity theory for quasilinear parabolic
equations, if the initial data satisfy the Main Assumptions in Section~\ref{Basic} for
$b_0$ and $c_0$ sufficiently small, then the solution will satisfy the properties below
up to a short time $\tau_1>0$. (Note that to obtain some of the derivative bounds below,
one uses the general interpolation result, Lemma~\ref{InterpEst}.)

Some of these properties are global, while others are local in nature. In these conditions,
and in many of the arguments that follow, we separately treat the \emph{inner region}
$\{\beta y^2\leq20\}$ and the \emph{outer region} $\{\beta y^2\geq20\}$, both defined with
respect to
\begin{equation}
\beta(\tau):=(\kappa+\tau)^{-1},
\end{equation}
where $\kappa=\kappa(b_0,c_0)\gg1$. Note that our Main Assumptions imply that slightly
stronger conditions hold for the inner region. This is unsurprising: it is natural to
expect that the solution of equation~\eqref{MCF-v} is sufficiently close in that region
to the solution $V_{\frac12,\beta}$ of the equation $\frac{1}{2}y\dy V-\frac{1}{2}V+V^{-1}=0$
that is an adiabatic approximation of \eqref{MCF-v} there. (Compare~\cite{AK07} and \cite{GS09}.)
\medskip

Here are the global conditions:\footnote{\textsc{Notation}: In the remainder
of this paper, we write $\vp\ls\psi$ if there exists a uniform constant $C>0$
such that $\vp\leq C\psi$, and we define $\langle x \rangle := \sqrt{1+|x|^2}$.}
\begin{itemize}
\item[{[C0]}] For $\tau\in[0,\tau_1]$, the solution has
the uniform lower bound $v(\cdot,\cdot,\tau)\geq\kappa^{-1}$.

\item[{[C1]}] For $\tau\in[0,\tau_1]$, the solution satisfies
the first-order estimates
\[  |\dy v| \ls \beta^{\frac{2}{5}}v^{\frac{1}{2}},\quad
    |\dz v| \ls \beta^{\frac{3}{2}}v^2,\quad\text{and}\quad
    |\dz v| \ls v. \]

\item[{[C2]}] For $\tau\in[0,\tau_1]$, the solution satisfies
the second-order estimates
\[  |\dy^2 v|  \ls \beta^{\frac{3}{5}},\quad
    |\dy\dz v| \ls \beta^{\frac{3}{2}}v,\quad
    |\dy\dz v| \ls 1,\quad\text{and}\quad
    |\dz^2 v|  \ls \beta^{\frac{3}{2}}v^2. \]

\item[{[C3]}] For $\tau\in[0,\tau_1]$ the solution satisfies the
third-order decay estimates
\[|\dy^3 v|\ls\beta\quad\text{and}\quad
    v^{-n}|\dy^m\dz^n v|\ls\beta^{\frac{3}{2}}\]
for $m+n=3$ with $n\geq 1$, as well as the ``smallness estimate'' that
\[\beta^{-\frac{11}{20}}\left(|\dy^3 v|+|\dy^2\dz v|\right)
    +|\dy\dz^2 v|+v^{-1}|\dz^3 v|\ls(\beta_0+\ve_0)^{\frac{1}{40}}\]
for some $\ve_0=\ve_0(b_0,c_0)\ll1$.\footnote{The smallness estimate will only be
used in the proof of Theorem~\ref{SmallnessEstimates}.}

\item[{[Ca]}] For $\tau\in[0,\tau_1]$, the parameter $a$
satisfies\,\footnote{Proposition~\ref{IFT} will establish that $a$ is a $C^1$ function of time.}
\[\frac12-\kappa^{-1}\leq a \leq \frac12+\kappa^{-1}.\]
\end{itemize}

To state the remaining global conditions, we decompose the solution into
$\theta$-independent and $\theta$-dependent parts $v_1,v_2$, respectively,
defined by
\begin{equation}\label{Decomposition}
    v_1(y,\tau):=\frac{1}{2\pi}\int_0^{2\pi}v(y,\theta,\tau)\,\mathrm{d}\theta
    \quad\text{and}\quad
    v_2(y,\theta,\tau):=v(y,\theta,\tau)-v_1(y,\tau).
\end{equation}
We denote the norm in the Hilbert space $L^2_{\sigma}$ introduced in definition~\eqref{Define-sigma}
by $\|\cdot\|_{\sigma}$, and the inner product by
\[\lp\vp,\psi\rp :=
    \int_{\mathbb{R}}\int_{\mathbb{S}^1} \vp \psi\,\Dz\,\sigma\,\Dy.\]

Our remaining inputs are:
\begin{itemize}
\item[{[Cs]}] For $\tau\in[0,\tau_1]$, one has Sobolev bounds
$\|v^{-n}\dy^m\dz^n v\|_{\sigma}<\infty$ whenever
$4\leq m+n\leq 7$.

\item[{[Cr]}] There exists $0<\delta\ll 1$ such that the scale-invariant
bound $|v_2|\leq\delta v_1$ holds everywhere for
$\tau\in[0,\tau_1]$.

\item[{[Cg]}] For $\tau\in[0,\tau_1]$, one has
$\lan y \ran^{-1}|\dy v|\leq\Sigma^{\frac{1}{4}}\beta$.
\end{itemize}

\begin{remark}
Note that our global gradient Condition~[Cg] posits the $\beta$ decay rate
of the formal solution $V(y,\tau)=\sqrt{2+\beta y^2}$ but with a large constant
$\Sigma^{\frac{1}{4}}\gg1$. The effect of our bootstrap argument will be to
sharpen that constant.
\end{remark}

Remark~\ref{NoCircleHere} (below) shows that Condition~[Cb] in Section~\ref{SecondBootstrap},
which is directly implied by our Main Assumptions, in turn implies that
[Cg] holds, along with extra properties that are local to the inner region:
\begin{itemize}
\item[{[C0i]}] For $\tau\in[0,\tau_1]$ and $\beta y^2\leq20$, the quantity $v$
is uniformly bounded from above and below, so that for $\beta y^2\leq 20$,
one has bounds
\[\frac{9}{10}\sqrt 2\equiv g(y,\beta)\leq v(y,\cdot,\cdot)\leq C_0.\]

\item[{[C1i]}] For $\tau\in[0,\tau_1]$ and $\beta y^2\leq20$, the solution satisfies
the stronger first-order estimates
\[ |\dy v| \ls \beta^{\frac{1}{2}}v^{\frac{1}{2}}\quad\text{and}\quad
    |\dz v|\ls\kappa^{-\frac{1}{2}}v.\]
\end{itemize}

\subsection{Output}\label{FirstOutput}
The output of this machine consists of the following estimates, which
collectively improve Conditions~[C0]--[C3], [Cs], [Cr], [Cg], and [C0i]--[C1i]:
\[\begin{array}{c}
    v(y,\theta,\tau)\geq g(y,\beta),\quad
    v=\mathcal{O}(\lan y \ran)\,\text{ as }\, |y|\rightarrow\infty;\\ \\
    v^{-\frac{1}{2}}|\dy v|\ls \beta^{\frac{1}{2}},\quad
    |\dy v|\ls 1,\quad
    v^{-2}|\dz v|\ls\beta^{\frac{33}{20}},\quad
    v^{-1}|\dz v|\ls\kappa^{-\frac{1}{2}};\\ \\
    \beta|\dy^2 v|+v^{-1}|\dy\dz v|+v^{-2}|\dz^2 v|\ls\beta^{\frac{33}{20}};\\ \\
    |\dy\dz v|+v^{-1}|\dz^2 v|\ls(\beta_0+\ve_0)^{\frac{1}{20}};\\ \\
    \beta^{\frac{1}{2}}|\dy^3 v|+v^{-1}|\dy^2\dz v|+v^{-2}|\dy\dz^2 v|+v^{-3}|\dz^3v|
    \ls\beta^{\frac{33}{20}};\\ \\
    \beta^{-\frac{11}{20}}\left(|\dy^3 v|+|\dy^2\dz v|\right)
    +|\dy\dz^2 v|+v^{-1}|\dz^3 v|\ls(\beta_0+\ve_0)^{\frac{1}{20}}.
\end{array}\]
Here, $\beta_0\equiv\beta(0)$ and $\ve_0$ are independent of $\tau_1$. So these
improvements will allow us to propagate the assumptions above forward in time.
(See Section~\ref{RepeatMainProof} below.)

\subsection{Structure}

Here is how we establish these stronger estimates.

We first derive improved estimates for $v$ and its first derivatives in the outer region,
using our Main Assumptions and their implications [Ca] and [C0]--[C2] for $v$ and its
first and second derivatives, and using the stronger estimates [C0i]--[C1i] that hold on the
boundary circles $\beta y^2=20$. The reason why second-order conditions are needed to prove improved
first-order estimates is that the nonlinear terms in the evolution equation for $D^N v$ (a spatial
derivative of total order $N$) will in general contain derivatives of order $N+1$. This occurs
because first derivatives of $v$ appear in the quasilinear operator $A_v$ that controls the
evolution equations studied here.

\begin{theorem}\label{FirstOrderEstimates}
Suppose a solution $v=v(y,\theta,\tau)$ of equation~\eqref{MCF-v}
satisfies Assumption~[A1] at $\tau=0$, and Conditions~[Ca] and [C0]--[C2]
for $\beta y^2\geq20$ and $\tau\in [0,\tau_1]$. If Conditions~[C0i]--[C1i]
hold on the boundary of this region for $\tau\in [0,\tau_1]$, then for the
same time interval, the solution satisfies the following estimates throughout
the outer region $\{\beta y^2\geq20\}$:
\begin{equation}\label{v-below}
    v(y,\cdot,\cdot)\geq 4,
\end{equation}
\begin{equation}\label{v_y-est}
    |\dy v|\ls \beta^{\frac{1}{2}}v^{\frac{1}{2}},
\end{equation}
\begin{equation}\label{v_y-bound}
    |\dy v|\ls 1,
\end{equation}
\begin{equation}\label{v_z-est}
    |\dz v|\leq C_0\kappa^{-\frac{1}{2}} v,
\end{equation}
where $C_0$ depends only on the initial data and not on $\tau_1$.
\end{theorem}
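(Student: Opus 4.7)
The plan is to apply the parabolic maximum principle on the moving outer region $\Omega(\tau):=\{y:\beta(\tau)y^2\geq 20\}$, both to $v$ itself (for \eqref{v-below}) and to quantities of the form $v_{m,n,k}^2$ whose evolutions are provided by Corollary~\ref{GeneralEquation} (for the derivative bounds). A key geometric observation is that since $\beta(\tau)$ is decreasing, $\Omega(\tau)$ is a \emph{shrinking} domain: any $y\in\Omega(\tau)$ was already in $\Omega(\tau')$ for every $\tau'<\tau$, and so satisfied $v_0(y,\cdot)>4$ by Assumption~[A2]. In particular the lateral boundary $|y|=\sqrt{20/\beta(\tau)}$ is outflowing in time, and the parabolic-boundary data for the MP is effectively the initial slice, augmented only by the one-sided gradient information at moving-boundary contact points.

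For \eqref{v-below}, I argue by contradiction. Let $\tau_*\in(0,\tau_1]$ be the first time at which $v$ attains the value $4$ in $\bar\Omega(\tau_*)\times\mathbb{S}^1$, at some $(y_*,\theta_*)$. If $|y_*|>\sqrt{20/\beta(\tau_*)}$ (interior contact), the standard argument applies: the identity $F_1F_2-F_3^2/4=(1+p^2+q^2)^{-1}>0$ shows the principal symbol of $A_v+ay\dy$ is positive definite, so the second-order part of $A_v v$ is nonnegative at the critical point, giving $\dt v\geq av-v^{-1}=4a-\tfrac14\geq\tfrac74-\mathcal{O}(\kappa^{-1})>0$ by [Ca], contradicting $\dt v|_{\tau_*^-}\leq 0$. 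If $|y_*|=\sqrt{20/\beta(\tau_*)}$ (boundary contact), the shrinking of $\Omega$ again forces $\dt v(y_*,\theta_*,\tau_*^-)\leq 0$, and at the contact $\dz v=0$, $\dz^2 v\geq 0$, and $\dy v\cdot\mathrm{sign}(y_*)\geq 0$; the unsigned terms $F_1\dy^2 v$ and $v^{-1}F_3\dy\dz v$ are $\mathcal{O}(\beta^{3/5})$ by [C2], while the drift is controlled via [C1i] as $|ay_*\dy v|\ls a\sqrt{20/\beta}\cdot\beta^{1/2}v^{1/2}=\mathcal{O}(1)$, with implicit constant made small by choosing $b_0,c_0\ll 1$ in the Main Assumptions. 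The strict positivity of $4a-\tfrac14$ then dominates, again giving $\dt v>0$ and a contradiction.

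For \eqref{v_y-est} and \eqref{v_z-est}, I apply Corollary~\ref{GeneralEquation} with $(m,n,k)=(1,0,\tfrac12)$ and $(0,1,1)$, respectively. By Remark~\ref{OneReasonFor-g}, on $\{v\geq 4\}$ the reaction coefficient $\tfrac32 v^{-2}-\tfrac12 a$ is bounded above by $-\tfrac{5}{32}+\mathcal{O}(\kappa^{-1})$, so the linear reaction is strictly dissipative in the first case; in the second, the reaction $2v^{-2}\leq\tfrac18$ is small and absorbed by the gradient dissipation $B_{0,1,1}\geq 0$ from \eqref{BisGood}. I use the decomposition \eqref{DecomposeCommutators} to control the error terms $E_{m,n,k,\ell}$ by the input conditions [C0]--[C2], and then compare to barriers of the form $K\beta$ and $C_0^2\kappa^{-1}$; the initial and boundary data come from [A5] and [C1i] respectively, so the same outflow-boundary MP argument propagates the estimates. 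For \eqref{v_y-bound}, I combine \eqref{v_y-est} with an upper bound $v\ls\langle y\rangle$ (inherited by comparison to the adiabatic profile $V_{a,\beta}$ from above), or, alternatively, apply the MP directly to $v_{1,0,1}^2=v^{-2}(\dy v)^2$, whose reaction coefficient $2v^{-2}-a$ is strictly negative on $\{v\geq 4\}$ by Remark~\ref{OneReasonFor-g}.

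The main obstacle is the boundary-contact case of \eqref{v-below}: since [C0i] only gives $v\geq\tfrac{9\sqrt 2}{10}<4$ on the moving boundary, no standard fixed-domain MP can yield $v\geq 4$ in the outer region. The resolution rests on (i) the outflowing nature of the moving boundary, which reroutes the parabolic-boundary data back to the initial slice where [A2] gives $v_0>4$, and (ii) the smallness of the implicit constants in [C1i] (and [Cg]) at the boundary, which prevents the drift term $-ay\dy v$ from overpowering the strictly positive forcing $av-v^{-1}$ at a putative contact point.
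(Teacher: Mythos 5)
Your overall framework — maximum principle on the moving outer region, applied to $v$ and to quantities $v_{m,n,k}^2$ via Corollary~\ref{GeneralEquation}, with Remark~\ref{OneReasonFor-g} providing the sign of the reaction — is the paper's framework. But the critical step for \eqref{v-below}, which you explicitly flag as "the main obstacle," contains a genuine gap, and your proposed resolution does not close it.

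At a boundary-contact point $(y_*,\theta_*,\tau_*)$ with $|y_*|=\sqrt{20/\beta(\tau_*)}$ and $v=4$, you bound the drift by $|a\,y_*\,\dy v|\ls a\sqrt{20/\beta}\cdot\beta^{1/2}v^{1/2}=\mathcal{O}(1)$ and claim that the implicit constant "can be made small by choosing $b_0,c_0\ll 1$." This is false. The implicit constant in [C1i] encodes the slope of the adiabatic profile: at $\beta y^2=20$ with $b\approx\beta$, $a\approx\tfrac12$, one has $|\dy V_{a,b}|/(\beta^{1/2}V^{1/2})\to\sqrt{20}/(\sqrt{22}\cdot 22^{1/4})$, a fixed nonzero number independent of $b_0,c_0$. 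Numerically, $|a\,y_*\,\dy v|$ is of order $\sqrt{20}\approx 4.5$, which exceeds $av-v^{-1}=4a-\tfrac14\approx\tfrac74$. Worse, the sign is against you: at a boundary minimum in the outer region (with $y_*>0$) the Hopf lemma gives $\dy v>0$, so $-a\,y_*\,\dy v<0$, reducing $\dt v$. So $\dt v$ can be strictly negative at the contact and the contradiction you seek does not follow. This is precisely why the paper does not attempt a boundary-contact (Hopf) argument but instead supplies lateral boundary data and applies Proposition~\ref{PMP} via Lemma~\ref{NotTooSmall}: the step function $g(y,\beta)$ takes the value $4$ on the closed set $\beta y^2\geq 20$ — including the boundary circle — and the bootstrap input (phrased via [C0i], but ultimately coming from the decomposition [Cb], where $v\approx V_{a,b}\approx\sqrt{22}>4$ there) gives the required $v\geq 4$ on $\{\beta y^2=20\}$. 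Your reading that [C0i] only supplies $\tfrac{9}{10}\sqrt{2}$ at the boundary picks up a real imprecision in the paper's phrasing, but the correct reaction to it is to invoke the stronger boundary data that the bootstrap actually provides, not to try to dispense with lateral boundary conditions.

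Two smaller points. For \eqref{v_z-est}, the small positive reaction is not "absorbed by the gradient dissipation $B_{0,1,1}$": that term involves $(\dy w)^2$ and $(\dz w)^2$ and cannot cancel $v^{-2}w^2$; what actually works is that $v^{-2}w^2\ls\beta^3$ is time-integrable, so one compares against an ODE barrier (the paper's $\vp'=C\beta^2$). For \eqref{v_y-bound}, your first option — combine \eqref{v_y-est} with $v\ls\langle y\rangle$ — is circular, because $v=\mathcal{O}(\langle y\rangle)$ (estimate~\eqref{v-above}) is itself deduced from \eqref{v_y-bound}. Your alternative route via $v_{1,0,1}^2$, whose reaction coefficient $4v^{-2}-2a$ is strictly negative on $\{v\geq 4\}$, is a sensible variant but differs from the paper, which works directly with $(\dy v)^4+1$ and a Gr\"onwall comparison; either should close, but the nonlinear error estimates would need to be checked separately.
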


Note that estimate~\eqref{v_y-bound} provides $C$ such that
$|\dy v|\leq C$ in the outer region for $\tau\in[0,\tau_1]$. By
Condition~[C0i], one has $v(\pm\sqrt{20}\beta^{-\frac{1}{2}})\leq c^2$
for some $c>0$. Hence by quadrature, one obtains the immediate corollary that
\begin{equation}\label{v-above}
    v=\mathcal{O}(\lan y \ran)\quad\text{as}\quad |y|\rightarrow\infty.
\end{equation}
Theorem~\ref{FirstOrderEstimates} is proved in Section~\ref{ProveFirst}.
\medskip

In the second step, we derive estimates for second and third derivatives of $v$. It is
reasonable to expect that one could bound second derivatives  without assumptions on
third derivatives, thus allowing us to ``close the loop'' at two derivatives in our
bootstrap arguments. A reason for this expectation is that once one has suitable
estimates on first derivatives, one can show that higher-order derivatives occur in
combinations $\phi\cdot(D^{N+1}v)-\psi\cdot(D^{N+1}v)^2$, where $|\phi|\ll\psi$. Indeed,
Theorem~\ref{FirstOrderEstimates} suffices to bound the quantity $\psi>0$ from below.
This expectation is correct in the outer region, where one can obtain the needed
second-derivative bounds using only maximum-principle arguments. But in the inner
region, more complicated machinery is needed. To construct improved second-order
estimates there, we introduce and bound suitable Lyapunov functionals and then apply
Sobolev embedding theorems. This method requires us to assume (and subsequently improve)
pointwise bounds on third-order derivatives and $L^2_\sigma$ bounds on derivatives of
orders four and five. What makes this method more effective than the maximum principle
in the inner region is the fact that the Lyapunov functionals allow integration by parts.

\begin{remark}\label{FixAxis}
The main reason that integration improves our estimates is
the fact that for any smooth function $f$ which is orthogonal to
constants in $L_2(\{y\}\times\mathbb{S}^1)$, our assumption
of $\pi$-periodicity implies that
\[\int_{\{y\}\times\mathbb{S}^1}(\dz f)^2 \,\Dz
    \geq 4\int_{\{y\}\times\mathbb{S}^1} f^2 \,\Dz.\]
\end{remark}

\begin{remark}
Here is one reason why we need the pointwise estimates on third-order derivatives in [C3].
(Another reason will become clear in Section~\ref{SecondBootstrap}.)  In estimating nonlinear terms
in the evolution of the Lyapunov functionals introduced below, one encounters quantities of the sort
$\lp D^N(F_\ell),(D^2v)(D^Nv)\rp$, where the coefficients $F_\ell$ are given in definition~\eqref{Define-Fi}.
After integration by parts, one gets, schematically,
$\lp D^{N-1}(F_\ell),(D^3v)(D^Nv)+(D^2v)(D^{N+1}v)\rp$.
Such terms are comparable to $\lp D^{N}(v),(D^3v)(D^Nv)+(D^2v)(D^{N+1}v)\rp$.
So we need $L^\infty$ control on third derivatives in order to impose only
$L^2_\sigma$ assumptions on higher derivatives.
\end{remark}

In Section~\ref{InnerProof}, we prove the following result.

\begin{theorem}\label{InnerEstimates}
Suppose that a solution $v=v(y,\theta,\tau)$ of equation~\eqref{MCF-v}
satisfies Assumption~[A1] at $\tau=0$, and Conditions~[Ca], [C0]--[C3],
[Cs], [Cr], [Cg], and [C0i]--[C1i] for $\tau\in [0,\tau_1]$. Then for
the same time interval, the solution satisfies the following pointwise
bounds throughout the inner region $\{\beta y^2\leq20\}$:
\begin{equation}\label{InnerSecondOrder}
\beta|\dy^2 v|+v^{-1}|\dy\dz v|+v^{-2}|\dz^2 v|\ls\beta^{\frac{33}{20}}
\end{equation}
and
\begin{equation}\label{InnerThirdOrder}
\beta^{\frac{1}{2}}|\dy^3 v|+v^{-1}|\dy^2\dz v|+v^{-2}|\dy\dz^2 v|
    +v^{-3}|\dz^3 v|\ls\beta^{\frac{33}{20}},
\end{equation}
\end{theorem}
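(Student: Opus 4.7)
The plan is to apply Corollary~\ref{GeneralEquation} to derive an evolution equation for $v_{m,n,k}^2$ at each multi-index appearing in the conclusion---$(2,0,0)$, $(1,1,1)$, $(0,2,2)$ for second order and $(3,0,0)$, $(2,1,1)$, $(1,2,2)$, $(0,3,3)$ for third order---and to carry the same argument to fourth- and fifth-order quantities so as to feed a Sobolev embedding at the end. The principal structural input is Remark~\ref{OneReasonFor-g}: since Condition~[C0i] gives $v\geq\tfrac{9}{10}\sqrt{2}$ in the inner region and Condition~[Ca] gives $a\geq\tfrac{1}{2}-\kappa^{-1}$, the linear reaction coefficient $2[(k+1)v^{-2}-(m+k-1)a]$ appearing in \eqref{Evolve-vmnk2} is negative (or very nearly so) for $m+k\geq 2$.

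For each such multi-index I would form a weighted Lyaponuv functional $\mathcal{L}_{m,n,k}(\tau):=\lp v_{m,n,k}^2,1\rp$ and compute $\dt \mathcal{L}_{m,n,k}$ from \eqref{Evolve-vmnk2}. Integration against the weight $\sigma$ converts $\lp A_v v_{m,n,k}^2,1\rp$ into a negative semi-definite quadratic form in $\dy v_{m,n,k}$ and $\dz v_{m,n,k}$ plus controllable drift and boundary contributions (driven by $\sigma'/\sigma$ and by the boundary of the inner region), while the intrinsic dissipation $B_{m,n,k}\geq 2(1+p^2+q^2)^{-1}(v_{m+1,n,k}^2+v_{m,n+1,k+1}^2)$ from \eqref{BisGood} furnishes pointwise lower bounds for the good higher-order terms. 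For indices with $n\geq 1$, the Poincaré-type inequality of Remark~\ref{FixAxis}, applied via the decomposition~\eqref{Decomposition} to the $\theta$-dependent component of $v_{m,n,k}$, provides additional dissipation of size $\geq 4\,\mathcal{L}_{m,n,k}$, which is crucial for closing the estimate.

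The commutator $E_{m,n,k}$ is expanded by \eqref{DecomposeCommutators}--\eqref{Define-Xmnkj}, and each summand is bounded using the Leibniz rule, the uniform bounds on $F_\ell$ from Lemma~\ref{UniformBounds}, the two-sided bounds on $v$ from~[C0i], the first-order bounds from~[C1i], and the pointwise third-order bounds from~[C3]; inner products $\lp E_{m,n,k}v_{m,n,k},1\rp$ are then absorbed into $B_{m,n,k}$ by Cauchy--Schwarz. The outcome is a coupled system of differential inequalities for the $\mathcal{L}_{m,n,k}$ that, combined with the initial decay rates built into~[A3]--[A6], yields $L^2_\sigma$ bounds of the required order. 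Because~[C0i] bounds $v$ above and below in the inner region, $\sigma$ is comparable to a constant there, so a standard Sobolev embedding $H^2(\mathbb{S}^1\times\{\beta y^2\leq 20\})\hookrightarrow L^\infty$ applied to derivatives of orders $2$ and $3$, using the $L^2_\sigma$ bounds at orders $4$ and $5$, delivers the pointwise estimates~\eqref{InnerSecondOrder}--\eqref{InnerThirdOrder}.

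The main obstacle is the pair of pure-$y$ multi-indices $(2,0,0)$ and $(3,0,0)$. For these the reaction coefficient in~\eqref{Evolve-vmnk2} is only marginally favorable under the available lower bound $v\geq\tfrac{9}{10}\sqrt{2}$---Remark~\ref{OneReasonFor-g} forces $\varepsilon\gtrsim 19/81$, leaving a genuinely positive remainder---and the $\theta$-Poincaré tool is unavailable because $n=0$. Closing the estimates at the sharp exponent $\beta^{33/20}$ therefore rests on delicate bookkeeping: the second-order $y$-dissipation packaged inside $B_{m,n,k}$ (which controls $v_{m+1,n,k}$ in $L^2_\sigma$ from below) and the favorable part of $\lp A_v v_{m,n,k}^2,1\rp$ after integration by parts against $\sigma$ must jointly dominate both the residual reaction term and the worst nonlinear contributions from $E_{m,n,k}$.
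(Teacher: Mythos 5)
Your overall plan is the paper's: integral Lyapunov functionals $\Omega_{m,n}$ at orders $2$ through $5$, dissipation from $B_{m,n,k}$, the $\theta$-Poincar\'e inequality of Remark~\ref{FixAxis} when $n\geq1$, Cauchy--Schwarz absorption of the commutators, and then a weighted Sobolev embedding. You also correctly identify the genuinely delicate pure-$y$ indices $(2,0,0)$ and $(3,0,0)$.

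However, there is a concrete error in the final Sobolev-embedding step. You assert that ``because~[C0i] bounds $v$ above and below in the inner region, $\sigma$ is comparable to a constant there.'' This is false, and the reasoning is a non sequitur: $\sigma(y)=(\Sigma+y^2)^{-3/5}$ is a fixed weight in $y$ and has nothing to do with $v$. In the inner region $\{\beta y^2\leq 20\}$ one has $|y|$ up to $\sim\beta^{-1/2}$, so $\sigma$ varies from $\Sigma^{-3/5}$ at $y=0$ down to $\sim\beta^{3/5}$ at the edge of the region; the ratio diverges as $\tau\to\infty$. If your claim were true, a standard embedding would give the naive $\beta^{2}$ pointwise decay rather than $\beta^{33/20}$, directly contradicting the paper's explicit remark following Theorem~\ref{InnerEstimates} that ``the fact that we do not achieve the expected $\beta^{2}$ decay \ldots\ is due to our use of Sobolev embedding with respect to the weighted measure $\sigma$.'' The correct step converts the $L^2_\sigma$ bound to a pointwise one at a cost of $\sigma(y)^{-1}\lesssim\beta^{-3/5}$ in the inner region (plus a further $\beta^{-1/20}$ coming from the $\beta$-weights on $\Omega_{2,0}$ and $\Omega_{4,0}$), which is exactly how the exponent $33/20$ arises.

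A secondary gap: for the pure-$y$ indices you observe only that ``delicate bookkeeping'' is needed. The resolution is not generic: the paper attaches explicit $\beta$-weights ($\beta^{2}\Omega_{2,0}$, $\beta\Omega_{3,0}$, $\beta^{4/5}\Omega_{4,0}$) to the combined functionals, uses the global gradient bound of Condition~[Cg] together with \eqref{gamma-decay} when integrating $\int v^{-2}v_{2,0,0}^{2}\sigma$ by parts, and handles the cross term $\beta^{1/2}\Omega_{3,0}^{1/2}\Omega_{2,0}^{1/2}$ via weighted Cauchy--Schwarz inside Proposition~\ref{SharpLyaponuv}. Without naming this mechanism (particularly the $\beta$-weighting and the role of~[Cg]), the argument at $(2,0,0)$ does not close, since the linear reaction coefficient there is strictly positive under $v\geq\tfrac{9}{10}\sqrt{2}$ and $n=0$ removes the $\theta$-Poincar\'e gain.
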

The fact that we do not achieve the expected $\beta^2$ decay on the
\textsc{rhs} of estimates~\eqref{InnerSecondOrder}--\eqref{InnerThirdOrder}
is due to our use of Sobolev embedding with respect to the weighed measure
introduced in equation~\eqref{Define-sigma}.
\medskip

Then using Theorem~\ref{InnerEstimates} to ensure that they hold on
the boundary of the inner region, we extend its estimates to the outer region.

\begin{theorem}\label{OuterEstimates}
Suppose that a solution $v=v(y,\theta,\tau)$ of equation~\eqref{MCF-v}
satisfies Assumption~[A1] at $\tau=0$, and Conditions~[Ca] and [C0]--[C3]
for $\beta y^2\geq20$ and $\tau\in [0,\tau_1]$. Then the estimates
\begin{equation}\label{OuterSecondOrder}
    \beta|\dy^2 v|+v^{-1}|\dy\dz v|+v^{-2}|\dz^2 v|\ls\beta^{\frac{33}{20}}
\end{equation}
and
\begin{equation}\label{OuterThirdOrder}
\beta^{\frac{1}{2}}|\dy^3 v|+v^{-1}|\dy^2\dz v|+v^{-2}|\dy\dz^2 v|+v^{-3}|\dz^3v|
    \ls\beta^{\frac{33}{20}}
\end{equation}
hold throughout the entire outer region $\{\beta y^2\geq20\}$ during the
same time interval, provided that they hold on the boundary $\beta y^2=20$.
\end{theorem}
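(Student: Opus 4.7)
The approach is a parabolic maximum-principle argument applied separately to each quantity $v_{m,n,k}:=v^{-k}\dy^m\dz^n v$ whose norm appears on the left of \eqref{OuterSecondOrder}--\eqref{OuterThirdOrder}, namely for $(m,n,k)\in\{(2,0,0),(1,1,1),(0,2,2),(3,0,0),(2,1,1),(1,2,2),(0,3,3)\}$. For each such triple, Corollary~\ref{GeneralEquation} supplies the evolution equation for $v_{m,n,k}^2$. In the outer region, Theorem~\ref{FirstOrderEstimates} gives $v\geq 4$ and Condition~[Ca] gives $a\geq\frac12-\kappa^{-1}$; since $m+k>1$ for every triple above, Remark~\ref{OneReasonFor-g} yields a uniformly negative linear reaction coefficient $-c_0<0$ in each evolution equation. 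This is the source of the dissipation I will exploit.

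The plan is to compare each $v_{m,n,k}^2$ against the time-dependent barrier $M_{m,n,k}(\tau):=C_\ast\beta(\tau)^{2\alpha_{m,n,k}}$, where the exponent $\alpha_{m,n,k}$ is read off from \eqref{OuterSecondOrder}--\eqref{OuterThirdOrder}: $\alpha_{2,0,0}=\frac{13}{20}$ and $\alpha_{3,0,0}=\frac{23}{20}$, with $\alpha_{m,n,k}=\frac{33}{20}$ for the remaining five triples. Since $\dt\beta=-\beta^2$, $|\dt M_{m,n,k}|$ is of order $\beta^{2\alpha_{m,n,k}+1}$, negligible compared to $c_0 M_{m,n,k}$. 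I argue by contradiction: suppose $v_{m,n,k}^2-M_{m,n,k}$ first becomes positive at some space-time point $(y_\ast,\theta_\ast,\tau_\ast)$ with $\beta(\tau_\ast)y_\ast^2>20$. Boundary values on $\beta y^2=20$ are controlled by Theorem~\ref{InnerEstimates}, initial values by Main Assumptions~[A3] and [A5], and spatial infinity either by the global pointwise bounds in [C2]--[C3] (truncating to $|y|\leq R$ and sending $R\to\infty$) or, where needed, by weighted Sobolev embedding from [Cs]. Thus $(y_\ast,\theta_\ast)$ is an interior spatial maximum of $v_{m,n,k}^2$ at time $\tau_\ast$, where $\dy v_{m,n,k}=\dz v_{m,n,k}=0$ and $A_v(v_{m,n,k}^2)\leq 0$.

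What remains is to control the nonlinear error $2E_{m,n,k}v_{m,n,k}$ at that point via the decomposition \eqref{DecomposeCommutators}--\eqref{Define-Xmnkj}. Each $E_{m,n,k,\ell}$ contains derivatives of $v$ of total order at most $m+n+1$, since the would-be order-$(m+n+2)$ term cancels with $F_\ell\dy^2 v_{m,n,k}$ (and the analogues). The surviving one-higher-order derivatives split into two types. Those of the form $\dy v_{m,n,k}$ or $\dz v_{m,n,k}$ vanish at the maximum, and through the identity $v_{m+1,n,k}=\dy v_{m,n,k}+kv^{-1}(\dy v)v_{m,n,k}$ and its $\dz$ analogue they produce the pointwise bounds $|v_{m+1,n,k}|\ls|v_{m,n,k}|$ and $|v_{m,n+1,k+1}|\ls|v_{m,n,k}|$, with constants controlled by $|\dy v|\ls 1$ and $|\dz v|\ls v$ from Theorem~\ref{FirstOrderEstimates}. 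The other type---for instance $\dy^4 v$ appearing in $E_{2,1,1,1}$ through $\dz F_1\cdot\dy^4 v$, which is not a derivative of $v_{2,1,1}$---is handled by combining a pointwise bound obtained from [Cs] via weighted Sobolev embedding (using that [Cs] provides $L^2_\sigma$ control through order $7$) with the $\beta$-smallness of $\dy F_\ell$ and $\dz F_\ell$ in the outer region, which follows from Lemma~\ref{UniformBounds} and the second-order bounds in [C2]. All remaining lower-order pieces are controlled directly by [C0]--[C3]. Together these give $|E_{m,n,k}|\ls\beta^{\alpha_{m,n,k}+\eta}$ at $(y_\ast,\theta_\ast,\tau_\ast)$ for some $\eta>0$.

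Substituting into the evolution equation yields
\[0\;\leq\;\dt\bigl(v_{m,n,k}^2-M_{m,n,k}\bigr)\;\leq\;-c_0 M_{m,n,k}+C\beta^{2\alpha_{m,n,k}+\eta}+\mathcal{O}\!\left(\beta^{2\alpha_{m,n,k}+1}\right),\]
which for $C_\ast$ sufficiently large contradicts $v_{m,n,k}^2=M_{m,n,k}=C_\ast\beta^{2\alpha_{m,n,k}}$. The main obstacle is the bookkeeping for the commutators: for each triple $(m,n,k)$ and each $\ell\in\{0,\ldots,5\}$ I must identify which higher-order derivatives are absorbed by the vanishing-gradient identities at the maximum, which by the $\beta$-smallness of derivatives of $F_\ell$ in the outer region, and which by a Sobolev embedding bound coming from [Cs]. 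Because $v\geq 4$ everywhere in the outer region, no Lyapunov functional or $L^2_\sigma$ energy method is required; the maximum principle alone suffices, which is precisely why the outer-region argument is structurally simpler than the inner-region Theorem~\ref{InnerEstimates}.
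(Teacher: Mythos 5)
Your second-order argument could in principle be made to work, because the one-higher-order derivatives appearing in $E_{m,n,k}$ for $m+n=2$ are all of order three, and [C3] supplies pointwise $\beta$-decay for those; bounding them directly and completing the square against the negative reaction coefficient closes that loop without any coupling. But your third-order argument has a genuine gap. When $m+n=3$ (e.g., for $v_{3,0,0}^2$), the commutators $E_{m,n,n,\ell}$ produce terms that are linear in fourth-order derivatives such as $v_{2,2,2}$, $v_{4,0,0}$, $v_{0,4,4}$ --- derivatives that are \emph{not} of the form $\dy v_{m,n,n}$ or $\dz v_{m,n,n}$, so they do not vanish at the spatial maximum and are not captured by the identity $|v_{m+1,n,n}|\ls|v_{m,n,n}|$. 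There is also no pointwise bound on them among the hypotheses of the theorem: the statement assumes only [Ca] and [C0]--[C3], and [C3] stops at order three. Invoking [Cs] is not permitted by the theorem's hypotheses, and even if it were, [Cs] asserts only finiteness of the weighted $L^2_\sigma$ norms, with no quantitative decay rate; moreover, because the weight $\sigma(y)\sim|y|^{-6/5}$ degenerates as $|y|\to\infty$, weighted Sobolev embedding cannot produce pointwise control that is uniform over the outer region $\{\beta y^2\geq 20\}$, where $|y|$ is unbounded. So the terms you label ``the other type'' are left genuinely uncontrolled.

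The paper's resolution is precisely the ingredient missing from your approach: one does not apply the maximum principle to each $v_{m,n,n}^2$ separately, but to the $\beta$-weighted sum $\Upsilon=\beta(v_{3,0,0})^2+(v_{2,1,1})^2+(v_{1,2,2})^2+(v_{0,3,3})^2$. In the evolution inequality for $\Upsilon$, the linear-in-fourth-order bad term coming from one component can be completed against the favorable quadratic $-\ve\bigl[(v_{m+1,n,n})^2+(v_{m,n+1,n+1})^2\bigr]$ contributed by the $B_{m,n,n}$ of a \emph{different} component (with the $\beta$-weight on the $(v_{3,0,0})^2$ term arranged so that even $v_{4,0,0}$ is absorbed). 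This cross-coupling is what eliminates the need for any independent pointwise control on fourth-order derivatives in the outer region, and it cannot be replicated by running the maximum principle one quantity at a time.
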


As an easy corollary, we apply the interpolation result Lemma~\ref{InterpEst} to get
our final first-order estimate claimed in subsection~\ref{FirstOutput}, namely
\begin{align*}
\frac{|\dz v|}{v^2}
    \leq \frac{1}{(1-\delta)^2}\frac{|\dz v_2|}{v_1^2}
    &\leq \frac{C_1}{(1-\delta)^2}\max_{\theta\in[0,2\pi]}\frac{|\dz^2 v_2|}{v_1^2}\\
    &\leq C_2\frac{(1+\delta)^2}{(1-\delta)^2}\max_{\theta\in[0,2\pi]}\frac{|\dz^2 v|}{v^2}
    \leq C_3 \beta^{\frac{33}{20}}.
\end{align*}
\smallskip

Theorem~\ref{OuterEstimates} is proved in Section~\ref{ProveLast}.

\medskip
Finally, we improve the ``smallness estimates'' in Condition~[C3],
producing improved bounds for $|\dy\dz^2 v|$ and $v^{-1}|\dz^3 v|$;
these serve as inputs to the second bootstrap machine constructed in
Section~\ref{SecondBootstrap} below.

\begin{theorem}\label{SmallnessEstimates}
Suppose that a solution $v=v(y,\theta,\tau)$ of equation~\eqref{MCF-v}
satisfies Assumption~[A1] at $\tau=0$, and Conditions [Ca], [C0]--[C3],
[Cs], [Cr], [Cg], and [C0i]--[C1i] for $\tau\in [0,\tau_1]$. Then for
the same time interval, the solution satisfies
\[\beta^{-\frac{11}{10}}\left[(\dy^3 v)^2+(\dy^2\dz v)^2\right]
    +(\dy\dz^2 v)^2+v^{-2}(\dz^3 v)^2
    \ls(\beta_0+\ve_0)^{\frac{1}{10}}.\]
\end{theorem}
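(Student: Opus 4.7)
The goal is to sharpen the squared version of the smallness part of Condition~[C3] by a factor of $(\beta_0+\ve_0)^{1/20}$. The natural Lyapunov quantity is
\[
\mathcal{Q}:=\beta^{-\tfrac{11}{10}}\bigl(v_{3,0,0}^2+v_{2,1,0}^2\bigr)+v_{1,2,0}^2+v_{0,3,1}^2,
\]
so that the claim is exactly $\mathcal{Q}\ls(\beta_0+\ve_0)^{1/10}$. Applying Corollary~\ref{GeneralEquation} to each of the four choices of $(m,n,k)$, using $\dt\beta=-\beta^2$ to differentiate the weight, and collecting, one obtains
\[
\dt\mathcal{Q}=A_v\mathcal{Q}+R-B_{\rm tot}+2\textstyle\sum E_{m,n,k}v_{m,n,k},
\]
where $B_{\rm tot}\geq 0$ bundles the nonnegative $B_{m,n,k}$ terms and $R$ collects the (weight-adjusted) linear reactions.

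The central step is controlling $R$. The coefficient of $v_{3,0,0}^2$ inside $R$ is $\beta^{-11/10}\{\tfrac{11}{10}\beta+2v^{-2}-4a\}$, which is strictly negative both in the inner region where $v\geq\tfrac{9}{10}\sqrt 2$ by [C0i] and in the outer region where $v\geq 4$ by Theorem~\ref{FirstOrderEstimates}, once [Ca] and $\kappa,\beta^{-1}\gg 1$ are used. The remaining coefficients $\beta^{-11/10}\{\tfrac{11}{10}\beta+2v^{-2}-2a\}$, $2v^{-2}$, $4v^{-2}$ can, however, be strictly positive in the inner region. The plan for neutralizing them is Remark~\ref{FixAxis}: since each of $v_{2,1,0}, v_{1,2,0}, v_{0,3,1}$ carries at least one $\dz$, they have vanishing $\theta$-average, so
\[
\int_0^{2\pi} v_{m,n,k}^2\,\Dz\leq\tfrac14\int_0^{2\pi}(\dz v_{m,n,k})^2\,\Dz.
\]
Each $B_{m,n,k}$ contains a contribution comparable to $v^{-2}(\dz v_{m,n,k})^2$, so this Poincar\'e step absorbs the positive reaction into $-B_{\rm tot}$ after integrating against the weighted measure of Condition~[Cs]. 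Hence the argument naturally runs in $L^2_\sigma$, with the pointwise conclusion recovered at the end via Sobolev embedding using the order-four and -five $L^2_\sigma$ bounds supplied by [Cs].

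The error terms $E_{m,n,k}$ decompose via \eqref{DecomposeCommutators}--\eqref{Define-Xmnkj}. By Lemma~\ref{UniformBounds} the coefficients $F_\ell$ and their derivatives are uniformly bounded, and each resulting product of derivatives of $v$ is handled by [C0]--[C3], crucially including the smallness part of [C3]; this yields $|\textstyle\sum E_{m,n,k}v_{m,n,k}|\ls(\beta_0+\ve_0)^{1/40}\mathcal{Q}+C(\beta_0+\ve_0)^{1/10}\beta$, the first term being absorbed into the reaction margin produced by the Poincar\'e step. The combined inequality then has the schematic form $\dt\mathcal{Q}\leq A_v\mathcal{Q}-\eta\mathcal{Q}+C(\beta_0+\ve_0)^{1/10}$ (after integration against $\sigma$), for some $\eta>0$; a standard Gronwall/maximum-principle argument then propagates the initial bound $\mathcal{Q}(0)\ls(\beta_0+\ve_0)^{1/10}$, which is itself secured by Assumption~[A5] together with the calibration $\ve_0=\ve_0(b_0,c_0)$. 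I expect the main obstacle to be the fact that the positive-reaction terms for $(1,2,0)$ and $(0,3,1)$ are \emph{pointwise} positive but only \emph{integrally} dominated by $B$; this forces the analysis through $L^2_\sigma$ rather than permitting a direct pointwise maximum-principle argument, which is also why the improvement is the modest $(\beta_0+\ve_0)^{1/20}\to(\beta_0+\ve_0)^{1/10}$ rather than the $\beta^2$-type decay one might initially hope for.
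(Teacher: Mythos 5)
Your Lyapunov quantity $\mathcal{Q}$ matches the paper's $\Upsilon$, and you correctly identify that the weight $\beta^{-11/10}$ on the two $n\leq 1$ terms plays a special role. But your route --- running the argument in $L^2_\sigma$ with Remark~\ref{FixAxis} to neutralize the positive reaction in the inner region, and recovering the pointwise statement via Sobolev embedding --- has a gap that I do not see how to close. The weight $\sigma\sim|y|^{-6/5}$ vanishes as $|y|\to\infty$, so an $L^2_\sigma$ bound on $\mathcal{Q}$ gives no pointwise control at all in the far part of the outer region $\{\beta y^2\geq 20\}$, which is exactly where the theorem's conclusion must hold. The Sobolev-embedding step used in Theorem~\ref{InnerEstimates} converts $L^2_\sigma$ bounds into pointwise bounds only on the inner region $\{|y|\ls\beta^{-1/2}\}$, where $\sigma$ is bounded below; that mechanism simply does not reach the outer region. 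Separately, a Poincar\'e/integral argument is incompatible with a pointwise maximum-principle argument: you cannot run both in the same inequality, and once you are committed to integration you have given up the pointwise conclusion.

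The paper does something different and, I think, structurally necessary. It proves Proposition~\ref{prop:small}, a \emph{pointwise} differential inequality $\dt\Upsilon\leq A_v\Upsilon+C\beta^{11/10}$ valid only in the \emph{outer} region, where $v\geq 4$ makes every reaction coefficient in Corollary~\ref{GeneralEquation} strictly negative (so no Poincar\'e trick is needed); the $\beta^{-11/10}$ weights are then used not to fix the reaction, but to make the cross-coupling of good terms work: the $B_{m,n,k}$ from one component of $\Upsilon$ absorbs a dangerous fourth-derivative error term from another (this is the content of the estimates \eqref{eq:D1}, \eqref{eq:D2}, \eqref{eq:w1}, \eqref{eq:w2}). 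The inner region is then not argued at all; Theorem~\ref{InnerEstimates} already gives $\Upsilon|_{\beta y^2\leq 20}\ls\beta^{1/2}$, which supplies the boundary data for the parabolic maximum principle on the outer region. You would do well to notice this division of labour: the heavy $L^2_\sigma$/Poincar\'e machinery is confined to the inner region (Sections~\ref{InnerBootstrap}--\ref{InnerProof}), and the outer region is always handled by pointwise maximum-principle arguments, precisely because $L^2_\sigma$ loses all information at spatial infinity.

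One smaller issue: your stated error bound $|\sum E_{m,n,k}v_{m,n,k}|\ls(\beta_0+\ve_0)^{1/40}\mathcal{Q}+C(\beta_0+\ve_0)^{1/10}\beta$ understates the difficulty. The critical error contributions contain fourth-order derivatives such as $\dy^3\dz v$, $v^{-2}\dz^4 v$, $\dy^4 v$, $v^{-1}\dy\dz^3 v$, and these cannot be absorbed into $\mathcal{Q}$ itself; they must be absorbed into the $-B_{m,n,k}$ terms of the \emph{other} components of $\Upsilon$, using the smallness in Condition~[C3] to pay for the cross-coupling and the $\beta^{-11/10}$ weight to match orders. Your schematic bound skips the step where this bookkeeping would actually have to be done.
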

As an easy corollary, we apply Lemma~\ref{InterpEst} to get our final second-order
estimates claimed in subsection~\ref{FirstOutput}, namely
\[|\dy\dz v|+v^{-1}|\dz^2 v|\ls(\beta_0+\ve_0)^{\frac{1}{20}}.\]
\smallskip

Theorem~\ref{SmallnessEstimates} is proved in Section~\ref{Smallness}.
Its proof completes our construction of the first bootstrap machine.

\section{Estimates of first-order derivatives}\label{ProveFirst}

In this section, we prove the estimates that constitute Theorem~\ref{FirstOrderEstimates}.
Our arguments use a version of the parabolic maximum principle adapted to noncompact
domains, which we state as Proposition~\ref{PMP} and prove in Appendix~\ref{ProvePMP}.
\smallskip

We start with a simple observation illustrating how one applies Proposition~\ref{PMP}
to control $\inf v$ for large $|y|$.

\begin{lemma}\label{NotTooSmall}
If there exist constants $\ve>0$ and
$c\geq\sup_{0\leq\tau\leq\tau_1}a^{-1/2}$,
and a continuous function $b(\tau)\geq 0$ such that
(\textsf{a}) $v_0(y,\cdot)\geq c$ for $|y|\geq b(0)$,
(\textsf{b}) $v(\pm b(\tau),\cdot,\tau)\geq c$ for $0\leq\tau\leq\tau_1$, and
(\textsf{c}) $v(\cdot,\cdot,\tau)\geq\ve$ for $|y|\geq b(\tau)$ and $0\leq\tau\leq\tau_1$,
then \[v(y,\theta,\tau)\geq c\] for $|y|\geq b(\tau)$,
$0\leq\theta\leq2\pi$, and $0\leq\tau\leq\tau_1$.
\end{lemma}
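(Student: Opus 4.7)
The plan is to apply the parabolic maximum principle of Proposition~\ref{PMP} to the difference $w := v - c$ on the noncompact parabolic domain
\[
\Omega := \{(y,\theta,\tau) : |y| \geq b(\tau),\ \theta \in \mathbb{S}^1,\ 0 \leq \tau \leq \tau_1\}.
\]
First I would linearize the reaction term $av - v^{-1}$ around $v = c$ via the exact identity
\[
av - v^{-1} = (ac - c^{-1}) + (v - c)\left(a + \frac{1}{cv}\right),
\]
which is well defined on $\Omega$ because condition (\textsf{c}) provides $v \geq \ve > 0$ there. Since $A_v$ annihilates constants, substituting into equation~\eqref{MCF-v} gives
\[
\dt w = A_v w + (ac - c^{-1}) + \left(a + \tfrac{1}{cv}\right) w.
\]
The standing hypothesis $c \geq \sup_{0\leq \tau\leq\tau_1} a^{-1/2}$ is chosen precisely so that $ac - c^{-1} \geq 0$ for every $\tau\in[0,\tau_1]$, hence $w$ is a supersolution of a linear parabolic equation on $\Omega$ whose zeroth-order coefficient $a + (cv)^{-1}$ is bounded above using [Ca] and (\textsf{c}).

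Next I would check the parabolic boundary data and behavior at spatial infinity. Condition (\textsf{a}) gives $w \geq 0$ on the initial slice $\{\tau=0,\ |y|\geq b(0)\}$, and condition (\textsf{b}) gives $w\geq 0$ on the lateral boundary $\{|y|=b(\tau)\}$. Because $\Omega$ is unbounded in $y$, I would invoke (\textsf{c}) once more to obtain the uniform a priori lower bound $w \geq \ve - c$. This boundedness-from-below is precisely the integrability/growth hypothesis needed to rule out loss of mass at $|y|=\infty$ in a parabolic comparison principle on a noncompact domain such as Proposition~\ref{PMP}. Applying that principle to $w$ then produces $w \geq 0$ throughout $\Omega$, which is the desired bound $v \geq c$.

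The main (and only) obstacle is technical: one must verify that, viewed as a linear operator with coefficients frozen along the given solution $v$, the quasilinear operator $A_v$ truly meets the hypotheses of Proposition~\ref{PMP}. By Lemma~\ref{UniformBounds} the coefficients $F_1,\dots,F_4$ are uniformly bounded in $(p,q)$; combined with (\textsf{c}) this bounds the principal coefficients $F_1$, $v^{-2}F_2$, and $v^{-1}F_3$ pointwise on $\Omega$. Ellipticity is built into the formulas \eqref{Define-Fi}, as the discriminant of the second-order part works out to $v^{-2}/(1+p^2+q^2) > 0$; and the drift $-ay\dy$ has only linear growth in $y$, which is admissible for parabolic comparison on the line. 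Once these ingredients are matched to the precise statement proved in Appendix~\ref{ProvePMP}, the min-principle conclusion $w\geq 0$ is immediate.
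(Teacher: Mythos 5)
Your proof is correct and takes essentially the same route as the paper's: the identity $av - v^{-1} = (ac - c^{-1}) + (v-c)\big(a + (cv)^{-1}\big)$ is exactly the decomposition underlying the paper's computation $D(c-v) = c^{-1}(1-ac^2) \leq 0$, and your verifications of (i), (ii), and the coefficient hypotheses in (o) mirror the paper's. The one place you diverge is property (iii): as stated, Proposition~\ref{PMP} requires the two-sided growth bound $|c-v| \ls e^{\alpha y^2}$, which needs an upper bound on $v$ in addition to the lower bound you extract from (\textsf{c}); the paper supplies the missing half by quadrature from Conditions~[C0i] and [C1], which bound $|\dy v|$ and thus keep $v$ from blowing up at spatial infinity. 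Your observation that only the lower bound on $w = v-c$ is actually used by the barrier construction in Appendix~\ref{ProvePMP} is accurate, but to invoke the proposition as literally stated you should either make that remark explicit or supply the upper bound on $v$ from the ambient conditions.
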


\begin{proof}
Apply the maximum principle in the form of Proposition~\ref{PMP} to $c-v$
in the region $\Omega:=\{|y|\geq b(\tau)\}$, with $D:=\dt-A_v-B$ and $B:=a+(cv)^{-1}$.
By (\textsf{c}), $B$ is bounded from above.
Computing $D(c-v)=c^{-1}(1-ac^2)\leq 0$ establishes property (\textsc{i}).
Hypotheses (\textsf{a}) and (\textsf{b}) ensure that property (\textsc{ii}) is satisfied.
Property (\textsc{iii}) follows from Conditions~[C0i] and [C1] by quadrature.
Hence Proposition~\ref{PMP} implies that $v\geq c$.
\end{proof}
\smallskip

We now establish the estimates that constitute Theorem~\ref{FirstOrderEstimates}.

\begin{proof}[Proof of estimate~\eqref{v-below}]
We apply Lemma~\ref{NotTooSmall} in the outer region $\beta y^2\geq 20$.
Let $\ve=\kappa^{-1}$, $c=4$ (large enough by Condition~[Ca]),
and $b(\tau)=\sqrt{20\beta(\tau)^{-1}}$. Then
Assumption~[A2] implies property (\textsf{a}); Condition~[C0i] gives (\textsf{b});
and [C0] yields (\textsf{c}). Thus Lemma~\ref{NotTooSmall} implies the estimate.
\end{proof}

\begin{proof}[Proof of estimate~\eqref{v_y-est}]
Set $w:=v_{1,0,\frac{1}{2}}\equiv v^{-\frac{1}{2}}\dy v$, using definition~\eqref{Define-vmnk}.
Then by equation~\eqref{Evolve-vmnk2} in Lemma~\ref{ComputeOnceAndForAll}, one has
\[
\dt w^2 = A_v(w^2)+(3v^{-2}-a)w^2-B_{1,0,\frac{1}{2}}+2w\sum_{\ell=0}^4 E_{1,0,\frac{1}{2},\ell},
\]
where the nonlinear commutator terms $E_{1,0,\frac{1}{2},\ell}$ are defined in equation~\eqref{DecomposeCommutators}
and display~\eqref{Define-Xmnkj}.

We now calculate and estimate the nonlinear terms above. Observe, for example, that
\begin{align*}
E_{1,0,\frac{1}{2},1}&:=v^{-\frac{1}{2}}\dy(F_1 \dy^2 v)-F_1 \dy^2(v^{-\frac{1}{2}}\dy v)\\
    &=(\dy F_1)\left[v^{-\frac{1}{2}}(\dy^2 v)\right]
    +F_1\left[\frac{3}{2}v^{-\frac{3}{2}}(\dy^2 v)(\dy v)-\frac{3}{4}v^{-\frac{5}{2}}(\dy v)^3\right],
\end{align*}
where\,\footnote{Recall from definition~\eqref{Define-apq} that $p=\dy v$ and $q=v^{-1}\dz v$.}
\[\dy F_\ell
    = (\partial_p F_\ell) (\dy^2 v)
    +(\partial_q F_\ell)[v^{-1}\dy\dz v - v^{-2}(\dy v)(\dz v)].\]
By Lemma~\ref{UniformBounds}, there are uniform bounds
$|\partial_p^i\partial_q^j F_{\ell}(p,q)|\leq C_{i,j,\ell}$.
Applying this fact (which we freely use below without comment)
one establishes by similar direct computations that
\[\begin{array}{ccl}
|E_{1,0,\frac{1}{2},0}|&\ls
    &v^{-\frac{1}{2}}|\dy v|\big[|\dy^2 v|+v^{-2}|\dz^2 v|+v^{-1}|\dy\dz v|+v^{-2}|\dz v|\big] \\ \\
|E_{1,0,\frac{1}{2},1}|&\ls
    &\big[v^{-\frac{1}{2}}|\dy^2 v|+v^{-\frac{3}{2}}|\dy^2 v||\dy v|+v^{-\frac{5}{2}}|\dy v|^3\big] \\ \\
|E_{1,0,\frac{1}{2},2}|&\ls
    &\big[v^{-\frac{5}{2}}|\dz^2 v|+v^{-\frac{7}{2}}(|\dy v||\dz^2 v|+|\dy\dz v||\dz v|)
    +v^{-\frac{9}{2}}|\dy v||\dz v|^2\big] \\ \\
|E_{1,0,\frac{1}{2},3}|&\ls
    & \big[v^{-\frac{3}{2}}|\dy^2 v||\dy\dz v|
    +v^{-\frac{5}{2}}(|\dy\dz v|^2+|\dy\dz v||\dy v|+|\dy^2 v|^|\dz v|)\\
    &&\, +v^{-\frac{7}{2}}(|\dy\dz v||\dy v|+|\dy v|^2|\dz v|)\big]\\ \\
|E_{1,0,\frac{1}{2},4}|&\ls
        & \big[v^{-\frac{5}{2}}|\dy^2 v||\dz v|+v^{-\frac{7}{2}}(|\dy v||\dz v|+|\dy\dz v||\dz v|)
        +v^{-\frac{9}{2}}|\dy v||\dz v|^2\big].
\end{array}\]
By Conditions~[C1]--[C2], one has
    $|\dy v|\ls v^{\frac{1}{2}}\beta^{\frac{2}{5}}$,
    $|\dz v|\ls v^2\beta^{\frac{3}{2}}$,
    $|\dy^2 v|\ls\beta^{\frac{3}{5}}$,
    $|\dy\dz v|\ls v \beta^{\frac{3}{2}}$,
    $|\dy\dz v|\ls 1$, and
    $|\dz^2 v|\ls v^2 \beta^{\frac{3}{2}}$.
Combining these inequalities and using estimate~\eqref{v-below}, one readily obtains
\[2\left|w\sum_{\ell=0}^4 E_{1,0,\frac{1}{2},\ell}\right|\leq C\beta^{\frac{3}{5}}|w|.\]
By Condition~[Ca] and estimate~\eqref{v-below},
one has $(3v^{-2}-a)w^2\leq -\frac{1}{4}w^2$ in the outer region,
for $\kappa$ large enough, whereupon completing the square shows that
\[C\beta^{\frac{3}{5}}|w|-\frac{1}{8}w^2\leq 2C^2 \beta^{\frac{6}{5}}.\]

Now let $\Gamma>0$ be a large constant to be chosen below. Using
Corollary~\ref{GeneralEquation} to see that $B_{1,0,\frac{1}{2}}\geq0$,
and applying the estimates above, one obtains
\[
\dt(w^2-\Gamma\beta)
    \leq A_v(w^2-\Gamma\beta)-\frac{1}{8}(w^2-\Gamma\beta)
    +\left(\beta+\frac{2C^2}{\Gamma}\beta^{\frac{1}{5}}-\frac{1}{8}\right)
    \Gamma\beta.
\]
We shall apply Proposition~\ref{PMP}. By taking $\kappa$ and $\Gamma$
large enough, we can ensure that
\[
\dt(w^2-\Gamma\beta)
    \leq A_v(w^2-\Gamma\beta)-\frac{1}{8}(w^2-\Gamma\beta),
\]
which is property~(\textsc{i}). Making $\Gamma$ larger if necessary,
we can by our Main Assumptions ensure that $w^2\leq\Gamma\beta$ at $\tau=0$,
whereupon property~(\textsc{ii}) follows from Condition~[C1i] on the boundary.
Property~(\textsc{iii}) is a consequence of Condition~[C0i] on the boundary
and Condition~[C1] in the outer regon. Hence we have $w^2\leq\Gamma\beta$ in the
outer region for $0\leq\tau\leq\tau_1$ by the maximum principle.
\end{proof}

\begin{proof}[Proof of estimate~\eqref{v_y-bound}]
Arguing as in the proof of Corollary~\ref{GeneralEquation},
one computes from equation~\eqref{Evolve-vmnk2}, equation~\eqref{DecomposeCommutators},
and definition~\eqref{Define-Xmnkj}, that
\[\dt(\dy v)^4
    \leq A_v(\dy v)^4 + 4v^{-2}(\dy v)^4
    +4(\dy v)^3\sum_{\ell=1}^4 E_{1,0,0,\ell}.\]
By estimate~\eqref{v_y-est} and the implications of
Conditions~[C1]--[C2] that
    $|\dz v|\ls v^2\beta^{\frac{3}{2}}$,
    $|\dz v|\ls v$,
    $|\dy^2 v|\ls\beta^{\frac{3}{5}}$,
    $|\dy\dz v|\ls v \beta^{\frac{3}{2}}$,
    $|\dy\dz v|\ls 1$, and
    $|\dz^2 v|\ls v^2 \beta^{\frac{3}{2}}$,
one can estimate that
$|E_{1,0,0,1}|\ls\beta^{\frac{6}{5}}$, with
$|E_{1,0,0,\ell}|\ls\beta^2$ for
$\ell=2,3,4$.

Define $w:=(\dy v)^4+1$. Then by estimate~\eqref{v_y-est}
and Young's inequality, one has
\begin{align*}
\dt w
    &\leq A_v(w)+C_1\beta^2
        +C_2\beta^{\frac{6}{5}}\left[(\dy v)^4+1\right]\\
    &\leq A_v(w)+C\beta^{\frac{6}{5}}w.
\end{align*}
Let $\vp(\tau)$ solve the \textsc{ode} $\vp'=C\beta^{\frac{6}{5}}\vp$
with initial condition $\vp(0)=C_0$, where $C_0$ may be
chosen by our Main Assumptions so that $w\leq C_0$ at $\tau=0$. Then
\[\dt(w-\vp)\leq A_v(w-\vp)+C\beta^{\frac{6}{5}}(w-\vp).\]
Properties~(\textsc{i})--(\textsc{ii}) of Proposition~\ref{PMP}
are clearly satisfied for this equation, while property~(\textsc{iii})
follows from estimate~\eqref{v_y-est}. Hence we have $w\leq\vp$ by the
maximum principle. Because $\vp$ is uniformly bounded in time, the
lemma follows.
\end{proof}

\begin{proof}[Proof of estimate~\eqref{v_z-est}]
Let $w:=v_{0,1,1}\equiv v^{-1}\dz v$.
By Corollary~\ref{GeneralEquation}, $w^2$ satisfies the differential inequality
\[ \dt w^2 \leq A_v(w^2)+4v^{-2}w^2+2\left|w\sum_{\ell=0}^4 E_{0,1,1,\ell}\right|.\]
We proceed to bound the reaction terms.
By Condition~[C1], one has \[v^{-2}w^2\ls\beta^3.\]
By Conditions~[C1]--[C2], one has $|E_{0,1,1,0}|\ls\beta^{\frac{21}{10}}$.
By direct computation (compare estimate~\eqref{dzF_ell-estimate} below),
one has $v^{-1}|\dz F_\ell|\ls\beta^{\frac{3}{2}}$. Thus
one gets $|E_{0,1,1,1}|\ls\beta^2$ and $|E_{0,1,1,\ell}|\ls\beta^3$
for $\ell=2,3,4$. By Condition~[C1], one has $|w|\ls 1$, and thus
$\dt w^2 \leq A_v(w^2)+C\beta^2$ for some $C<\infty$.

Now let $\vp(\tau)$ solve $\vp'=C\beta^2$ with $\vp(0)=C_*\kappa^{-1}$.
By our Main Assumptions, $w\leq\vp$ at $\tau=0$ for $C_*>0$ sufficiently large
and depending only on the initial data. Because
\[\dt (w^2-\vp) \leq A_v(w^2-\vp),\]
the maximum principle implies that $w^2\leq\vp$ for as long as the solution exists.
\end{proof}

The proof of Theorem~\ref{FirstOrderEstimates} is now complete.

\section{Decay estimates in the inner region}\label{InnerBootstrap}

In this section, we construct the machinery that will prove Theorem~\ref{InnerEstimates}.

Given integers $m,n\geq 0$, we define functionals
$\Omega_{m,n}=\Omega_{m,n}(\tau)$ by
\begin{equation}\label{Define-Omega-mn}
\Omega_{m,n}
    := \int_{-\infty}^{\infty}\int_{0}^{2\pi}
        v_{m,n,n}^2\,\Dz\,\sigma\,\Dy
    \equiv \int_{-\infty}^{\infty}\int_{0}^{2\pi}
        v^{-2n}|\dy^m\dz^n v|^2\,\Dz\,\sigma\,\Dy,
\end{equation}
where the notation $v_{m,n,k}$ appears in equation~\eqref{Define-vmnk}, and
$\sigma=\sigma(y)$ is defined in equation~\eqref{Define-sigma}.
By Conditions~[C2], [C3], and [Cs], these functionals and their
$\tau$-derivatives are well defined if $m+n\leq5$.

Our strategy to prove Theorem~\ref{InnerEstimates} consists of three steps.
\textsc{(i)} We bound weighted sums of $\Omega_{m,n}$ with $2\leq m+n\leq3$.
\textsc{(ii)} We bound weighted sums of $\Omega_{m,n}$ with $4\leq m+n\leq5$.
\textsc{(iii)} We apply Sobolev embedding, using the facts that
$|y|\ls\beta^{-\frac{1}{2}}$ in the inner region, and that
$\sigma\sim |y|^{-\frac{6}{5}}$ as $|y|\rightarrow\infty$.

A consequence of this strategy is that we only need strong estimates for
second-order derivatives. It suffices to show that higher derivatives decay
at the same rates as those of second order, rather than at the faster rates
one would expect from parabolic smoothing. This somewhat reduces the work
necessary to bound derivatives of orders three through five.

\subsection{Differential inequalities}\label{PointwiseSecond}

We first derive differential inequalities satisfied by the squares of the
second-order quantities appearing in Theorem~\ref{InnerEstimates}. To avoid
later redundancy, these estimates are designed to be useful in both the inner
and outer regions. They could be obtained using the more general techniques
developed in Appendix~\ref{GetHigh} and employed below to bound higher
derivatives. We chose to derive them explicitly here, both to obtain
the sharpest results possible and to help the reader by introducing our
methods in as transparent a manner as we can.

Observe that Conditions~[C0i] and [C1i] imply that inequalities~\eqref{v_y-est}--\eqref{v_z-est} of
Theorem~\ref{FirstOrderEstimates} hold in the inner region as well as the outer region. Thus we
assume in this subsection that those inequalities hold globally.

\begin{lemma}\label{v200-estimate}
There exist $0<\ve<C<\infty$ such that the quantity $|\dy^2 v|^2$ satisfies
\begin{align*}
\dt|\dy^2 v|^2
    &\leq A_v(|\dy^2 v|^2)+X|\dy^2 v|^2
    -Y\left(|\dy^3 v|^2+v^{-2}|\dy^2\dz v|^2\right)\\
    &+C\left(\beta^{\frac{11}{5}}
        +\beta|\dy^2 v|
        +\beta^{\frac{11}{10}}v^{-2}|\dy\dz^2 v|\right),
\end{align*}
where in the inner region $\{\beta y^2\leq 20\}$,
\[X=2(v^{-2}-a)\quad\text{and}\quad Y\geq\frac{2}{1+\ve},\]
while in the outer region,
\[X\leq\frac{1}{8}-2a\leq-\frac{3}{4}\quad\text{and}\quad Y\geq\ve.\]
\end{lemma}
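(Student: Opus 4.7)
\smallskip
\noindent\textbf{Proof proposal.} The plan is to apply Corollary~\ref{GeneralEquation} with $(m,n,k)=(2,0,0)$, so that $v_{2,0,0}=\dy^2 v$, and then separately analyze the three contributions to the resulting evolution equation for $(\dy^2 v)^2$: the linear reaction coefficient $2(v^{-2}-a)$, the quadratic ``good'' term $-B_{2,0,0}$, and the nonlinear commutator $2E_{2,0,0}\,\dy^2 v$. For the linear coefficient, the inner region requires no manipulation and yields $X=2(v^{-2}-a)$; in the outer region we invoke the pointwise lower bound $v\geq 4$ from estimate~\eqref{v-below} together with Condition~[Ca], which together give $2v^{-2}\leq \frac18$ and hence $X\leq\frac18-2a\leq-\frac34$ once $\kappa$ is chosen large.

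For $B_{2,0,0}$, Corollary~\ref{GeneralEquation} furnishes $B_{2,0,0}\geq\frac{2}{1+p^2+q^2}\bigl[(\dy^3 v)^2+v^{-2}(\dy^2\dz v)^2\bigr]$, with $p=\dy v$ and $q=v^{-1}\dz v$. In the inner region, Conditions~[C0i]--[C1i] force $p^2+q^2\leq\varepsilon$ for $\varepsilon$ small (from $|\dy v|\ls\beta^{1/2}v^{1/2}$ together with the uniform upper bound on $v$, and from $|\dz v|/v\ls\kappa^{-1/2}$), so $Y\geq\frac{2}{1+\varepsilon}$ as claimed. In the outer region, Condition~[C1] and the bound $|\dy v|\ls 1$ from Theorem~\ref{FirstOrderEstimates} merely give $p^2+q^2$ uniformly bounded, yielding only $Y\geq\varepsilon$ for some fixed $\varepsilon>0$.

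The main work is to show that $2E_{2,0,0}\,\dy^2 v$ can be bounded by $C\bigl(\beta^{11/5}+\beta|\dy^2 v|+\beta^{11/10}v^{-2}|\dy\dz^2 v|\bigr)$ after absorbing a controlled amount into $-Y(|\dy^3 v|^2+v^{-2}|\dy^2\dz v|^2)$. Since $k=0$ we have $E_{2,0,0,0}=0$, and the product rule gives explicit formulas for the remaining pieces in the decomposition \eqref{DecomposeCommutators}, e.g.\ $E_{2,0,0,1}=(\dy^2 F_1)(\dy^2 v)+2(\dy F_1)(\dy^3 v)$, $E_{2,0,0,2}=\dy^2(v^{-2}F_2)(\dz^2 v)+2\dy(v^{-2}F_2)(\dy\dz^2 v)$, with analogous expressions for $\ell=3,4$, and $E_{2,0,0,5}=-2v^{-3}(\dy v)^2$. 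Using Lemma~\ref{UniformBounds} and the chain rule, the factors $\dy F_\ell$, $\dy^2 F_\ell$, $\dy(v^{-j}F_\ell)$, $\dy^2(v^{-j}F_\ell)$ are estimated pointwise by combinations of $|\dy v|$, $|\dz v|$, $|\dy^2 v|$, $|\dy\dz v|$, $|\dz^2 v|$, $|\dy^3 v|$, and $|\dy^2\dz v|$ divided by appropriate powers of $v$; Conditions~[C1]--[C3] then bound each in powers of $\beta$. After multiplying by $2\dy^2 v$, the terms quadratic in $\dy^3 v$ or $v^{-1}\dy^2\dz v$ are dispatched by Young's inequality with constants tuned to be absorbed into $-Y(|\dy^3 v|^2+v^{-2}|\dy^2\dz v|^2)$. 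The leading residues from $E_{2,0,0,1}$ contribute $\ls\beta^{11/5}$; the surviving part of the $\dz^2 v$ term from $E_{2,0,0,2}$ and the $\dy\dz v$ term from $E_{2,0,0,3}$ and $E_{2,0,0,4}$ contribute $\ls\beta|\dy^2 v|$; and the $v^{-2}|\dy\dz^2 v|$ factor appearing only in the second piece of $E_{2,0,0,2}$, when paired with $|\dy^2 v|$ using Young with a carefully chosen split, leaves $\ls\beta^{11/10}v^{-2}|\dy\dz^2 v|$ behind.

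The main obstacle is the bookkeeping: one must verify that every cross term either closes into one of the three tolerated expressions on the right-hand side or is absorbable into $-Y(|\dy^3 v|^2+v^{-2}|\dy^2\dz v|^2)$, and that the same Young splittings work in both regions. The delicate cases are the mixed terms $(\dy^3 v)(\dy^2 v)$ and $(\dy^2\dz v)(\dy^2 v)v^{-1}$ appearing in $E_{2,0,0,1}$ and $E_{2,0,0,3}$, where the threshold between ``absorb'' and ``leave on the RHS'' must be chosen so that the residual is not larger than $\beta|\dy^2 v|$; and the cubic term $\dy(v^{-2}F_2)(\dy\dz^2 v)(\dy^2 v)$ from $E_{2,0,0,2}$, which is the only place the factor $v^{-2}|\dy\dz^2 v|$ naturally appears and therefore must not be absorbed but kept intact on the RHS. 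The power $\beta^{11/10}$ then arises by bounding $|\dy(v^{-2}F_2)|\cdot v^2$ by $\beta^{1/2}$ (using [C1i] in the inner region, or the improved bounds of Theorem~\ref{FirstOrderEstimates} in the outer region) and combining it with $|\dy^2 v|\ls \beta^{3/5}$ from [C2].
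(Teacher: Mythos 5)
Your proposal follows the paper's proof essentially step for step: the same application of Corollary~\ref{GeneralEquation} with $(m,n,k)=(2,0,0)$, the same treatment of $X$ via $v\geq4$ and $[Ca]$ in the outer region, the same bounds on $Y$ from $[C0i]$--$[C1i]$ versus $|\dy v|\ls1$, and the same strategy of estimating the $E_{2,0,0,\ell}$ pointwise via $|\dy F_\ell|\ls\beta^{3/5}$ and $|\dy^2 F_\ell|\ls\beta^{6/5}+|\dy^3 v|+v^{-1}|\dy^2\dz v|$ and then completing squares to absorb the $\dy^3 v$ and $\dy^2\dz v$ terms into $-Y(\cdot)$ while keeping $\beta^{11/10}v^{-2}|\dy\dz^2 v|$ intact. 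The bookkeeping in your last paragraph slightly misattributes the source of the $\beta|\dy^2 v|$ term (in the paper it comes principally from $E_{2,0,0,5}=-2v^{-3}(\dy v)^2$, while the $\ell=2,3,4$ contributions feed the $\beta^{11/10}(\cdot)$ group), but this is a cosmetic discrepancy that does not affect the validity of the argument.
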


\begin{proof}
Define $w:=v_{2,0,0}\equiv \dy^2 v$.
Then by equation~\eqref{Evolve-vmnk2} and Corollary~\ref{GeneralEquation}, one has
\[
\dt w^2
    \leq A_v(w^2)+X w^2-Y\left(|\dy^3 v|^2+v^{-2}|\dy^2\dz v|^2\right)
    +2\left|w\sum_{\ell=1}^5 E_{2,0,0,\ell}\right|,
\]
where $X:=2(v^{-2}-a)$ and $Y:=\frac{2}{1+p^2+q^2}$.

The estimates for $X$ in the outer region follow from estimate~\eqref{v-below}
for $v$ there, and Condition~[Ca].

Although we discarded analogous quantities $B_{m,n,k}$ in the proof
of Theorem~\ref{FirstOrderEstimates}, we retain them above to help us control
third-order derivatives of $v$.
By Corollary~\ref{GeneralEquation}, estimate~\eqref{v_y-bound} for $p=\dy v$, and
estimate~\eqref{v_z-est} for $q=v^{-1}\dz v$, there exists $\ve>0$ such that
$B_{2,0,0}\geq 2 \ve\left(|\dy^3 v|^2+v^{-2}|\dy^2\dz v|^2\right)$ in the outer
region. (Compare Remark~\ref{WhyBisGood}.)

In the inner region, one can do better. Condition~[C0i]
gives uniform bounds for $v$ there, whence Condition~[C1i] implies
that $|p|=|\dy v|\ls\beta^{\frac{1}{2}}$ and similarly that
$|q|=|v^{-1}\dz v|\ls\beta^{\frac{3}{2}}$ in the inner region. Hence
\[B_{2,0,0}\geq\left(\frac{2}{1+\ve}+\ve\right)\left(|\dy^3 v|^2+v^{-2}|\dy^2\dz v|^2\right)\]
in the inner region.

We next estimate the $E_{2,0,0,\ell}$ terms above.
By Conditions~[C1]--[C2] and estimate~\eqref{v_y-bound}, one has
\begin{equation}\label{dyF_ell-estimate}
|\dy F_\ell|
    = \big|(\partial_{p}F_\ell) (\dy^2 v)
        +(\partial_q F_\ell)(v^{-1}\dy\dz v - v^{-2}\dy v \dz v)\big|
    \ls\beta^{\frac{3}{5}}.
\end{equation}
Next one calculates that
\begin{align*}
\dy^2 F_\ell
    =&\,(\partial_p^2 F_\ell)(\dy^2 v)^2
    +2(\partial_p \partial_q F_\ell)(\dy^2 v)
            (v^{-1}\dy\dz v - v^{-2}\dy v \dz v)\\
    &+(\partial_q^2 F_\ell)
            (v^{-1}\dy\dz v - v^{-2}\dy v \dz v)^2
    +(\partial_p F_\ell)(\dy^3 v)\\
    &+(\partial_q F_\ell)[v^{-1}\dy^2 \dz v-2v^{-2}\dy\dz v\dy v-v^{-2}\dy^2 v\dz v+2v^{-3}(\dy v)^2\dz v]
\end{align*}
and uses Conditions~[C1]--[C2] with inequalities~\eqref{v_y-est}--\eqref{v_y-bound}
from Theorem~\ref{FirstOrderEstimates} to estimate
\begin{equation}\label{dy2F_ell-estimate}
|\dy^2 F_\ell|\ls \beta^{\frac{6}{5}}+|\dy^3 v|+v^{-1}|\dy^2 \dz v|.
\end{equation}
Collecting the estimates above and using Condition~[C2] again, one obtains
\[|E_{2,0,0,1}|
    =|(\dy^2 F_1)(\dy^2 v)+2(\dy F_1)(\dy^3 v)|
    \ls \beta^{\frac{9}{5}}
    +\beta^{\frac{3}{5}}(|\\dy^3 v|+v^{-1}|\dy^2 \dz v|).\]
In similar fashion, one derives
\[|E_{2,0,0,2}|\ls
    \beta^2
    +\beta^{\frac{3}{2}}(|\dy^3 v|+v^{-1}|\dy^2\dz v|)
    +\beta^{\frac{1}{2}}v^{-2}|\dy\dz^2 v|\]
and
\[|E_{2,0,0,3}|\ls
    \beta^{\frac{21}{10}}
    +\beta^{\frac{3}{2}}|\dy^3 v|
    +\beta^{\frac{1}{2}}v^{-1}|\dy^2\dz v|.\]
Noting that
\begin{align*}
E_{2,0,0,4}
    =&\,(\dy^2 F_4)(v^{-2}\dz v)
        +(\dy F_4)(2v^{-2}\dy \dz v-4v^{-3}\dy v\dz v)\\
    &-F_4
        [4v^{-3}\dy\dz v\dy v+2v^{-3}\dy^2 v\dz v-6v^{-4}(\dy v)^2\dz v],
\end{align*}
one applies Condition~[C2] and estimate~\eqref{v_y-est} to get
\[|E_{2,0,0,4}|\ls \beta^2
    +\beta^{\frac{3}{2}}(|\dy^3 v|+v^{-1}|\dy^2 \dz v|).\]
By estimate~\eqref{v_y-est} again, one has
\[|E_{2,0,0,5}|=2v^{-3}(\dy v)^2\ls\beta.\]

Collecting the estimates above and using Condition~[C2], we
conclude that
\[\left|w\sum_{\ell=1}^5 E_{2,0,0,\ell}\right|
    \ls\beta|w|
    +\beta^{\frac{11}{10}}
           \left(|\dy^3 v|+v^{-1}|\dy^2 \dz v|+v^{-2}|\dy\dz^2 v|\right).\]
Using Remark~\ref{WhyBisGood} and completing squares to see that
$\beta^{\frac{11}{10}}|\dy^3 v|-\ve|\dy^3 v|^2\leq\frac{1}{4\ve}\beta^{\frac{11}{5}}$
and
$\beta^{\frac{11}{10}}v^{-1}|\dy^2 \dz v|-\ve v^{-2} |\dy^2 \dz v|^2\leq\frac{1}{4\ve}\beta^{\frac{11}{5}}$,
we combine the estimates above to obtain the lemma.
\end{proof}

\begin{lemma}\label{v111-estimate}
There exist $0<\ve<C<\infty$ such that the
quantity $v^{-2}|\dy\dz v|^2$ satisfies
\begin{align*}
\dt(v^{-2}|\dy\dz v|^2)
    &\leq A_v(v^{-2}|\dy\dz v|^2)+X v^{-2}|\dy\dz v|^2
    -Y\left(v^{-2}|\dy^2\dz v|^2+v^{-4}|\dy\dz^2 v|^2\right)\\
    &+C\left(\beta^4+\beta^2v^{-1}|\dy\dz v|
        +\beta^3|\dy^3 v|+\beta^2v^{-3}|\dz^3 v|\right),
\end{align*}
where in the inner region $\{\beta y^2\leq 20\}$,
\[X=2(2v^{-2}-a)+\ve\quad\text{and}\quad Y\geq\frac{1}{1+\ve},\]
while in the outer region,
\[X\leq\frac{1}{4}-2a+\ve\leq-\frac{5}{8}\quad\text{and}\quad Y\geq\ve.\]
\end{lemma}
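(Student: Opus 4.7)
The calculation follows exactly the template of Lemma~\ref{v200-estimate}, now applied to $w := v_{1,1,1} = v^{-1}\dy\dz v$, i.e., to the index choice $(m,n,k) = (1,1,1)$. Applying Corollary~\ref{GeneralEquation} with these indices gives
\[
\dt w^2 \leq A_v(w^2) + 2(2v^{-2}-a)\,w^2 - B_{1,1,1} + 2\left|w\sum_{\ell=0}^{5} E_{1,1,1,\ell}\right|,
\]
so the prefactor $X = 2(2v^{-2}-a)$ matches the inner-region claim verbatim. In the outer region, estimate~\eqref{v-below} yields $2v^{-2}\leq 1/8$, and Condition~[Ca] then gives $X\leq 1/4-2a\leq -5/8$ once $\kappa$ is large enough. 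By \eqref{BisGood}, the dissipation satisfies
\[
B_{1,1,1}\geq \frac{2}{1+p^2+q^2}\bigl[v^{-2}(\dy^2\dz v)^2 + v^{-4}(\dy\dz^2 v)^2\bigr],
\]
so $Y = 2/(1+p^2+q^2)$, and the inner-region bound $Y\geq 2/(1+\ve)$ follows from [C0i]--[C1i] together with estimate~\eqref{v_z-est} (which forces $|q|\ls\kappa^{-1/2}$), while the outer-region bound $Y\geq\ve$ follows from \eqref{v_y-bound}--\eqref{v_z-est}.

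It remains to control the six commutator terms $E_{1,1,1,\ell}$ from \eqref{Define-Xmnkj} --- essentially pure bookkeeping, parallel to the corresponding step in Lemma~\ref{v200-estimate}. First I would produce $\dz F_\ell$ and $\dy\dz F_\ell$ estimates by chain-rule expansion in $p,q$ plus Lemma~\ref{UniformBounds}; combined with \eqref{dyF_ell-estimate}--\eqref{dy2F_ell-estimate} these feed directly into $E_{1,1,1,1}$--$E_{1,1,1,4}$. The zeroth-order term $E_{1,1,1,0} = -v^{-1}w\,\tilde A_v v$ is estimated via $|\tilde A_v v|\ls\beta^{3/5}$ (using Conditions~[C1]--[C2] on the explicit form of $\tilde A_v$), and $E_{1,1,1,5}$ is handled by direct expansion of $\dy\dz(v^{-1})$. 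Using Condition~[C2] to control the prefactor $|w|\ls\beta^{3/2}$ and Conditions~[C1]--[C3] to bound the lower-order derivative factors, one then applies Young's inequality to absorb the $|\dy^2\dz v|$ and $|\dy\dz^2 v|$ contributions into the $-Y(\cdot)$ dissipation, while preserving $|\dy^3 v|$ and $|\dz^3 v|$ contributions on the right-hand side at the prescribed $\beta$-weights. The residual absolute-order terms collapse to $\beta^4$ and $\beta^2 v^{-1}|\dy\dz v|$, producing the claimed right-hand side.

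\textbf{Main obstacle.} The delicate accounting step is the Young-inequality phase: one must absorb precisely the two ``compatible'' third-derivative contributions $|\dy^2\dz v|$ and $|\dy\dz^2 v|$ --- those matching the structure of $B_{1,1,1}$ --- while leaving $|\dy^3 v|$ and $|\dz^3 v|$ on the right-hand side at exactly the exponents $\beta^3$ and $\beta^2$ prescribed in the lemma. These are the weights needed to close the subsequent $\Omega_{m,n}$ bounds in Section~\ref{InnerBootstrap}, so one cannot afford any loss: each $E_{1,1,1,\ell}$ must be decomposed into exactly the ``absorbable'' and ``non-absorbable'' pieces, and Young's inequality applied only at the right junctures.
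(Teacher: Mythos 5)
Your proposal is correct and matches the paper's proof essentially verbatim: same reduction to Corollary~\ref{GeneralEquation} at $(m,n,k)=(1,1,1)$, same identification of $X=2(2v^{-2}-a)$ and $Y=2/(1+p^2+q^2)$, same derivation of new $\dz F_\ell$ and $\dy\dz F_\ell$ bounds to feed the $E_{1,1,1,\ell}$ terms, and the same final completing-the-square step to absorb $v^{-1}|\dy^2\dz v|$ and $v^{-2}|\dy\dz^2 v|$ into the $B_{1,1,1}$ dissipation while leaving the $|\dy^3 v|$ and $v^{-3}|\dz^3 v|$ contributions at the stated $\beta$-weights. The only cosmetic difference is that you invoke $|q|\ls\kappa^{-1/2}$ via estimate~\eqref{v_z-est}, whereas the paper cites the equivalent consequence $|q|\ls\beta^{3/2}$ of Conditions~[C0i]--[C1i]; both serve the same purpose in bounding $Y$ from below in the inner region.
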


\begin{proof}
Define $w:=v_{1,1,1}\equiv v^{-1}\dy\dz v$.
Then by equation~\eqref{Evolve-vmnk2} and estimate~\eqref{BisGood}, one has
\[\dt w^2\leq A_v(w^2)+X w^2-Y\left(v^{-2}|\dy^2\dz v|^2+v^{-4}|\dy\dz^2 v|^2\right)
    +2\left|w\sum_{\ell=0}^5 E_{1,1,1,\ell}\right|,\]
where $X:=2(2v^{-2}-a)+\ve$ and $Y:=\frac{1}{1+p^2+q^2}$.
As explained in Remark~\ref{WhyBisGood}, existence of $0<\ve\ll1$ follows easily
from \eqref{BisGood} and the estimates for $|v_{1,0,1}|$ and $|v_{0,1,1}|$ implied by
Theorem~\ref{FirstOrderEstimates}. The estimates for $X$ in the outer region follow
from estimate~\eqref{v-below} for $v$ there, and Condition~[Ca].
The estimates for $Y$ are the same as those in Lemma~\ref{v200-estimate} above.

Using Condition~[C2], it is easy to see that
$|E_{1,1,1,0}|\ls\beta^{\frac{21}{10}}$.
We compute and use Conditions~[C1]--[C2] to estimate
\begin{equation}\label{dzF_ell-estimate}
v^{-1}|\dz F_\ell|=v^{-1}\big|(\partial_p F_\ell)(\dy\dz v)
    +(\partial_q F_\ell)[v^{-1}\dz^2 v-v^{-2}(\dz v)^2]\big|
    \ls\beta^{\frac{3}{2}}.
\end{equation}
As we did in the proof of Lemma~\ref{v200-estimate}, we next
compute and estimate
\begin{equation}\label{dydzF_ell-estimate}
v^{-1}|\dy\dz F_\ell|\ls\beta^2
    +v^{-1}|\dy^2\dz v|+v^{-2}|\dy\dz^2 v|.
\end{equation}
Here we used inequality~\eqref{v_y-est} as well.

Using the inequalities above and estimate~\eqref{dyF_ell-estimate},
one sees, for example, that
\begin{align*}
E_{1,1,1,3}
    =&\,(v^{-1}\dy\dz F_3)(v^{-1}\dy\dz v)\\
    &+(v^{-1}\dz F_3)(v^{-1}\dy^2\dz v-v^{-2}\dy\dz v \dy v)\\
    &+(\dy F_3)(v^{-2}\dy\dz^2 v-v^{-3}\dy\dz v \dz v)
\end{align*}
may be estimated by
\[ |E_{1,1,1,3}|\ls
    \beta^{\frac{7}{2}}
    +\beta^{\frac{3}{2}}v^{-1}|\dy^2\dz v|
    +\beta^{\frac{3}{5}}v^{-2}|\dy\dz^2 v|. \]
In like fashion, we derive the estimates
\begin{align*}
|E_{1,1,1,1}|&\ls
    \beta^{\frac{21}{10}}
    +\beta^{\frac{3}{2}}|\dy^3 v|
    +\beta^{\frac{1}{2}}(v^{-1}|\dy^2\dz v|+v^{-2}|\dy\dz^2 v|),\\ \\
|E_{1,1,1,2}|&\ls
    \beta^3
    +\beta^{\frac{3}{2}}
        (v^{-1}|\dy^2\dz v|+v^{-2}|\dy\dz^2 v|)
    +\beta^{\frac{1}{2}}v^{-3}|\dz^3 v|,\\ \\
|E_{1,1,1,4}|&\ls
    \beta^2
    +\beta^{\frac{3}{2}}(v^{-1}|\dy^2\dz v| + v^{-2}|\dy\dz^2 v|).
\end{align*}
We omit some details which are entirely analogous to those shown
above. Finally, Condition~[C1] and estimate~\eqref{v_y-est} give
\[ |E_{1,1,1,5}|=2v^{-3}|\dy v||\dz v|\ls\beta^2. \]

Collecting the estimates above and using Condition~[C2], we
conclude that
\[\left|w\sum_{\ell=0}^5 E_{1,1,1,\ell}\right|
    \ls\beta^2|w|
    +\beta^3|\dy^3 v|
    +\beta^2 (v^{-1}|\dy^2\dz v|+v^{-2}|\dy\dz^2v|+v^{-3}|\dz^3 v|).\]
Using Remark~\ref{WhyBisGood} and completing squares to get
$\beta^2v^{-1}|\dy^2\dz v| -\ve v^{-2}|\dy^2\dz v|^2\leq\frac{1}{4\ve}\beta^4$
and
$\beta^2v^{-2}|\dy \dz^2 v|-\ve v^{-4} |\dy \dz^2 v|^2\leq\frac{1}{4\ve}\beta^4$,
we combine the estimates above to obtain the lemma.
\end{proof}

\begin{lemma}\label{v022-estimate}
There exist $0<\ve<C<\infty$ such that the
quantity $v^{-4}|\dz^2 v|^2$ satisfies
\begin{align*}
\dt(v^{-4}|\dz^2 v|^2)
    &\leq A_v(v^{-4}|\dz^2 v|^2)+Xv^{-4}|\dz^2 v|^2
    -Y\left(v^{-4}|\dy\dz^2 v|^2+v^{-6}|\dz^3 v|^2\right)\\
    &+C\left(\beta^4
        +\beta^{\frac{21}{10}}v^{-2}|\dz^2 v|
        +\beta^3 v^{-1}|\dy^2\dz v|\right),
\end{align*}
where in the inner region $\{\beta y^2\leq 20\}$,
\[X=2(3v^{-2}-a)+\ve\quad\text{and}\quad Y\geq\frac{1}{1+\ve},\]
while in the outer region,
\[X\leq\frac{3}{8}-2a+\ve\leq-\frac{1}{2}\quad\text{and}\quad Y\geq\ve.\]
\end{lemma}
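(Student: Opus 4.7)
The plan is to follow the template set by Lemmas~\ref{v200-estimate} and~\ref{v111-estimate}, now applied to $w:=v_{0,2,2}\equiv v^{-2}\dz^2 v$, so that $w^2 = v^{-4}|\dz^2 v|^2$. Applying Corollary~\ref{GeneralEquation} together with \eqref{BisGood} at $(m,n,k)=(0,2,2)$ produces
\[\dt w^2 \leq A_v(w^2) + X w^2 - Y\bigl(v^{-4}|\dy\dz^2 v|^2 + v^{-6}|\dz^3 v|^2\bigr) + 2\Bigl|w\sum_{\ell=0}^5 E_{0,2,2,\ell}\Bigr|,\]
with $X = 2(3v^{-2}-a)$ and $Y = 2/(1+p^2+q^2)$. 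In the outer region, $v\geq 4$ by \eqref{v-below} and $a\geq 1/2-\kappa^{-1}$ by [Ca] give $X \leq 3/8-2a \leq -1/2$; the bounds on $Y$ in both regions follow verbatim from the argument in the previous two lemmas, using [C1i] in the inner region and \eqref{v_y-bound}--\eqref{v_z-est} in the outer one.

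The substantive work is estimating the six commutators $E_{0,2,2,\ell}$ defined by \eqref{Define-Xmnkj}. For $\ell=0$, I would bound $|\tilde A_v v|\ls\beta^{3/5}$ using [C1]--[C2] and the uniform lower bound on $v$. Since $w E_{0,2,2,0} = -2v^{-1}w^2\tilde A_v v$, this yields $|w E_{0,2,2,0}|\ls\beta^{3/5}w^2$, which becomes $\ls\beta^{21/10}|w|$ after using the [C2] bound $|w|\ls\beta^{3/2}$ on one factor of $w$. For $\ell=1,\dots,4$, I would expand each commutator, observe that the top-order fourth-derivative terms cancel by equality of mixed partials, and bound the residuals using \eqref{dyF_ell-estimate}--\eqref{dy2F_ell-estimate}, \eqref{dzF_ell-estimate}--\eqref{dydzF_ell-estimate}, together with a companion estimate of the form $v^{-2}|\dz^2 F_\ell|\ls\beta^3 + v^{-1}|\dy\dz^2 v|+v^{-2}|\dz^3 v|$ obtained by direct differentiation from Lemma~\ref{UniformBounds} and Conditions~[C1]--[C3]. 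For $\ell=5$, a short calculation gives $E_{0,2,2,5} = -2v^{-5}(\dz v)^2$, controlled by \eqref{v_z-est} as $\ls\beta^3$.

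Collecting all six estimates, I expect
\[\Bigl|w\sum_{\ell=0}^5 E_{0,2,2,\ell}\Bigr| \ls \beta^{21/10}|w| + \beta^3 v^{-1}|\dy^2\dz v|\,|w| + \beta^2\bigl(v^{-2}|\dy\dz^2 v| + v^{-3}|\dz^3 v|\bigr)|w| + \beta^4,\]
after which Young's inequality in the form $\beta^2 v^{-2}|\dy\dz^2 v|\leq \beta^4/(4\ve)+\ve v^{-4}|\dy\dz^2 v|^2$, and its $\dz^3 v$ analogue, lets me absorb the two cross terms involving third-order $\theta$-derivatives into the gradient piece $-Y(v^{-4}|\dy\dz^2 v|^2 + v^{-6}|\dz^3 v|^2)$. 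Rewriting $|w|$ as $v^{-2}|\dz^2 v|$ in the remaining terms produces exactly the three residuals displayed in the lemma.

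The main obstacle I anticipate is the bookkeeping for $E_{0,2,2,2}$, where $\dz^2$ hits $v^{-2}F_2\dz^2 v$: the expansion generates a long list of terms involving $\dy\dz^2 v$ and $\dz^3 v$, and one must track powers of $v$ and $\beta$ carefully to confirm that every dangerous-looking contribution either fits inside the residual $C\beta^4$ after Young or is absorbable into the gradient piece. Once the cancellation of the fourth-order $\dz^4 v$ term and the correct weighting of each surviving commutator piece are verified, the remainder is a routine application of the same identities used for Lemmas~\ref{v200-estimate} and~\ref{v111-estimate}.
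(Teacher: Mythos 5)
Your plan is essentially the paper's proof: set $w := v_{0,2,2}$, apply Corollary~\ref{GeneralEquation} with \eqref{BisGood}, bound $X$ and $Y$ exactly as in Lemmas~\ref{v200-estimate}--\ref{v111-estimate}, estimate the six commutators $E_{0,2,2,\ell}$ using the derivative bounds on the $F_\ell$, and complete squares against the gradient piece. One small but substantive slip: your companion estimate should read $v^{-2}|\dz^2 F_\ell| \ls \beta^3 + v^{-2}|\dy\dz^2 v| + v^{-3}|\dz^3 v|$, not $v^{-1}|\dy\dz^2 v| + v^{-2}|\dz^3 v|$; the weights you wrote are each short by one power of $v^{-1}$, and in the outer region (where $v \sim \lan y\ran$ is unbounded) those missing powers are exactly what makes the subsequent Young step close up to $C\beta^4$, so this must be fixed rather than carried along. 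Also, the cancellation of the top-order $\dy^2\dz^2 v$ term is not really an ``equality of mixed partials'' phenomenon; it is built into the commutator definition \eqref{Define-Xmnkj}, whose whole point is to subtract off that leading term.
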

\begin{proof}
Define $w:=v_{0,2,2}\equiv v^{-2}\dz^2 v$.
Then by equation~\eqref{Evolve-vmnk2} and estimate~\eqref{BisGood}, one has
\[\dt w^2\leq A_v(w^2)+Xw^2-Y\left(v^{-4}|\dy\dz^2 v|^2+v^{-6}|\dz^3 v|^2\right)
    +2\left|w\sum_{\ell=0}^5 E_{0,2,2,\ell}\right|,\]
where $X:=2(3v^{-2}-a)+\ve$ and $Y:=\frac{1}{1+p^2+q^2}$.
The estimates for $X$ in the outer region follow from estimate~\eqref{v-below}
for $v$ there, and Condition~[Ca].
The estimates for $Y$ are the same as those in Lemma~\ref{v200-estimate}.

To proceed, we compute
\begin{align*}
\dz^2 F_\ell
    =&\,(\partial_p^2 F_\ell)(\dy\dz v)^2\\
    &+2(\partial_p\partial_q F_\ell)(\dy\dz v)
        [v^{-1}\dz^2 v-v^{-2}(\dz v)^2]\\
    &+(\partial_q^2 F_\ell)[v^{-1}\dz^2 v-v^{-2}(\dz v)^2]^2\\
    &+(\partial_p F_\ell)(\dy\dz^2 v)
        +(\partial_q F_\ell)
        \dz [v^{-1}\dz^2 v-v^{-2}(\dz v)^2],
\end{align*}
which we estimate in the form
\begin{equation}\label{dz2F_ell-estimate}
v^{-2}|\dz^2F_\ell|\ls
    \beta^3+v^{-2}|\dy\dz^2 v|+v^{-3}|\dz^3 v|.
\end{equation}

By Conditions~[C1]--[C2], it is easy to see that
$|E_{0,2,2,0}|\ls\beta^{\frac{21}{10}}$.
Using [C1]--[C2], estimate~\eqref{dzF_ell-estimate}
for $v^{-1} |\dz F_1|$, and estimate~\eqref{dz2F_ell-estimate} for
$v^{-2}|\dz^2 F_1|$, one can estimate
\begin{align*}
E_{0,2,2,1}=&\,(v^{-2}\dz^2 F_1)(\dy^2 v)
        +2(v^{-1}\dz F_1)(v^{-1}\dy^2\dz v)\\
    &-F_1[2\dy(v^{-2})(\dy\dz^2 v)
        +\dy^2(v^{-2})\dz^2 v]
\end{align*}
by
\[|E_{0,2,2,1}|\ls   \beta^{\frac{21}{10}}
    +\beta^{\frac{3}{2}}v^{-1}|\dy^2\dz v|
    +\beta^{\frac{1}{2}}v^{-2}|\dy\dz^2 v|
    +\beta^{\frac{3}{5}}v^{-3}|\dz^3 v|.\]
Continuing in this fashion, one proceeds to estimate
the remaining terms,
\begin{align*}
|E_{0,2,2,2}|&\ls   \beta^{\frac{9}{2}}
    +\beta^{\frac{3}{2}}
    \left(v^{-2}|\dy\dz^2 v|+v^{-3}|\dz^3 v|\right),\\ \\
|E_{0,2,2,3}|&\ls   \beta^3
    +\beta^{\frac{3}{2}}v^{-2}|\dy\dz^2 v|
    +\beta^{\frac{1}{2}}v^{-3}|\dz^3 v|,\\ \\
|E_{0,2,2,4}|&\ls   \beta^3
    +\beta^{\frac{3}{2}}
    \left(v^{-2}|\dy\dz^2 v|+v^{-3}|\dz^3 v|\right),\\ \\
|E_{0,2,2,5}|&\ls   \beta^3.
\end{align*}

Collecting the estimates above and using Condition~[C2], we
conclude that
\[\left|w\sum_{\ell=0}^5 E_{0,2,2,\ell}\right|
    \ls\beta^{\frac{21}{10}}|w|
    +\beta^3v^{-1}|\dy^2\dz v|
    +\beta^2
        \left(v^{-2}|\dy\dz^2 v|+v^{-3}|\dz^3 v|\right).\]
The lemma again follows by recalling Remark~\ref{WhyBisGood}
and completing the squares.
\end{proof}

\subsection{Lyapunov functionals of second and third order}\label{Lyapunov23}

In this subsection, we derive differential inequalities satisfied by the
$\Omega_{m,n}$ with $2\leq m+n\leq 3$. In the next subsection, we treat the cases
$4\leq m+n\leq 5$. Our arguments are slightly different for  $m+n=2$ than they are
for $3\leq m+n\leq5$. But in all cases, we shall exploit the fact that as
$\Sigma\rightarrow\infty$, one has
\begin{equation}\label{gamma-decay}
\|\sigma^{-1}\dy^2\sigma\|_\infty=\mathcal{O}(\Sigma^{-1})
\quad\text{and}\quad
\|\sigma^{-1}\dy\sigma\|_\infty=\mathcal{O}(\Sigma^{-\frac{1}{2}}),
\end{equation}
where $\sigma(y):=(\Sigma+y^2)^{-\frac{3}{5}}$ is defined in
equation~\eqref{Define-sigma}.

\begin{remark}
To derive good differential inequalities for the $\Omega_{m,n}$, we shall
need to use \eqref{gamma-decay} by taking $\Sigma$ sufficiently large. See
in particular the estimates derived in display~\eqref{Commutators} below.
On the other hand, the fact that one can make $\rho$ arbitrarily small in
Lemmata~\ref{Omega11}--\ref{Omega30} and \ref{OmegaLevel4}--\ref{OmegaLevel5}
by making $\Sigma$ large is an interesting feature of the proof but is not
needed for the arguments elsewhere in this paper.
\end{remark}

To estimate the evolution of the second-order functionals
$\Omega_{m,n}$ with $m+n=2$, we can recycle the estimates
derived in Lemmata~\ref{v200-estimate}--\ref{v022-estimate}.

\begin{lemma}\label{Omega11}
There exist constants $\ve>0$ and $\rho=\rho(\Sigma)>0$,
with $\rho(\Sigma)\searrow0$ as $\Sigma\rightarrow\infty$, such that
\[\Dt\Omega_{1,1}\leq\
    -\ve\left(\Omega_{1,1}+\Omega_{2,1}+\Omega_{1,2}\right)
    +\rho\beta^2\left(\Omega_{1,1}^{\frac{1}{2}}
        +\beta\Omega_{3,0}^{\frac{1}{2}}+\Omega_{2,1}^{\frac{1}{2}}
        +\Omega_{1,2}^{\frac{1}{2}}
        +\Omega_{0,3}^{\frac{1}{2}}\right).\]
\end{lemma}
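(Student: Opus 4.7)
The plan is to integrate the pointwise differential inequality of Lemma~\ref{v111-estimate} against the weighted measure $\sigma\,\Dy\,\Dz$. Writing $w:=v^{-1}\dy\dz v$ so that $\Omega_{1,1}=\lp w,w\rp$, the resulting bound on $\Dt\Omega_{1,1}$ splits into four pieces: (i)~$\int A_v(w^2)\sigma$, to be integrated by parts; (ii)~the linear reaction $\int X w^2\sigma$; (iii)~the dissipation $-\int Y\bigl(v^{-2}|\dy^2\dz v|^2+v^{-4}|\dy\dz^2 v|^2\bigr)\sigma$; and (iv)~the explicit nonlinear error of Lemma~\ref{v111-estimate}. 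For piece~(i), I would perform two integrations by parts, shifting all derivatives off $w^2$ and onto $\sigma$ and the $F_\ell$; the weight decay~\eqref{gamma-decay} keeps the resulting contributions to $\Omega_{1,1}$ small (they come with factors $\rho(\Sigma)\to 0$), while the bounds~\eqref{dyF_ell-estimate}--\eqref{dydzF_ell-estimate} on $\dy F_\ell$ and $\dy\dz F_\ell$, together with Conditions~[C1]--[C3], generate residual terms involving $|\dy^3 v|$, $v^{-1}|\dy^2\dz v|$, and $v^{-2}|\dy\dz^2 v|$ that will feed into the $\Omega^{1/2}$ tail.

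The crux is producing the coercive $-\ve\Omega_{1,1}$ from pieces~(ii) and~(iii). In the outer region $\{\beta y^2\geq 20\}$, Lemma~\ref{v111-estimate} already supplies the pointwise reaction $X\leq-5/8$, which contributes a direct negative multiple of the outer-region portion of $\Omega_{1,1}$. In the inner region $\{\beta y^2\leq 20\}$, where $X=2(2v^{-2}-a)$ may be positive, I would invoke Remark~\ref{FixAxis}: the $\pi$-periodicity of Assumption~[A1], preserved by Lemma~\ref{Symmetries}, forces any $\theta$-derivative of $v$ to have lowest nonzero Fourier mode $|k|\geq 2$, yielding the Poincar\'e bound $\int(\dy\dz v)^2\,\Dz\leq\frac{1}{4}\int(\dy\dz^2 v)^2\,\Dz$. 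Combining this with Condition~[C0i] (the two-sided bound $g(y,\beta)\leq v\leq C_0$ on the inner region) and Condition~[Cr] (closeness of $v$ to its $\theta$-average $v_1$, allowing the interchange of $v^{-2}$ and $v_1^{-2}$ up to factors $1+O(\delta)$), a suitable fraction of $-Y\Omega_{1,2}$ in the inner region dominates $X\int_{\text{inner}}w^2\sigma$. A final rearrangement, using also $\Omega_{1,1}\leq K(\Omega_{1,2}+\Omega_{2,1})$, yields the claimed $-\ve(\Omega_{1,1}+\Omega_{2,1}+\Omega_{1,2})$.

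The remaining terms are absorbed via Cauchy--Schwarz against $\sigma$. The explicit nonlinear pieces of Lemma~\ref{v111-estimate} give $\int\beta^2 v^{-1}|\dy\dz v|\sigma\leq\beta^2\|1\|_\sigma\,\Omega_{1,1}^{1/2}$, $\int\beta^3|\dy^3 v|\sigma\leq\beta^3\|1\|_\sigma\,\Omega_{3,0}^{1/2}$, and $\int\beta^2 v^{-3}|\dz^3 v|\sigma\leq\beta^2\|1\|_\sigma\,\Omega_{0,3}^{1/2}$; the IBP residues from piece~(i), paired against $w$ and $\|w\|_\infty\ls\beta^{3/2}$ from~[C2], produce the matching $\Omega_{2,1}^{1/2}$ and $\Omega_{1,2}^{1/2}$ contributions; and $\rho$ gathers the small factors $\|\sigma^{-1}\dy\sigma\|_\infty=O(\Sigma^{-1/2})$ and $\|\sigma^{-1}\dy^2\sigma\|_\infty=O(\Sigma^{-1})$ from~\eqref{gamma-decay}. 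The main obstacle is the coercivity step in the inner region: the pointwise $X$ there is bounded above by only roughly $1.47$ (using $v\geq\tfrac{9}{10}\sqrt{2}$), while the bare Poincar\'e gain via $-Y\Omega_{1,2}$ yields a factor of order $4/C_0^2$; making the trade-off close quantitatively is the step most sensitive to constants, and it is here that one must carefully exploit~[C0i], the decomposition~\eqref{Decomposition}, and~[Cr] in combination.
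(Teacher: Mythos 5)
Your overall plan — integrate the pointwise inequality against $\sigma\,\Dz\,\Dy$, integrate by parts, and use the $\pi$-periodicity Poincar\'e bound of Remark~\ref{FixAxis} together with Conditions~[C0i] and [Cr] to close the inner region — is essentially the paper's route, and the paper indeed organizes the computation as $\tfrac12\tfrac{d}{d\tau}\Omega_{1,1}=I_1+I_2+I_3$ with the good term produced from the $\theta$-dissipation in $I_1$ and the error terms controlled exactly as you describe (commutator estimates such as~\eqref{Commutators} supply the $\rho(\Sigma)$ factors, and the nonlinear errors are bounded pointwise and then fed through Cauchy--Schwarz into the $\Omega^{1/2}$ tail).

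However, your closing concern about the constants nearly failing is based on a misreading of the dissipation structure, and it obscures the fact that the inner-region bookkeeping closes with room to spare. The $\theta$-part of the quadratic form coming from integrating $\int wA_vw\,\sigma$ by parts is, schematically, $-\int\frac{v^{-2}(\dz w)^2}{1+p^2+q^2}\,\sigma$, which already carries a factor $v^{-2}$ \emph{inside} the integrand — this is the same $v^{-2}$ that appears in the bad reaction term $I_2=\int(2v^{-2}-a)w^2\,\sigma$. After the Poincar\'e step (applied to $\dy\dz v$, mediated by~[Cr] to swap $v^{-1}$ and $v_1^{-1}$ at the cost of $(\tfrac{1-\delta}{1+\delta})^2$), the dissipation produces $-4(1-2\ve)(\tfrac{1-\delta}{1+\delta})^2\int_{\mathrm{inner}}v^{-2}w^2\,\sigma$; the positive contribution from $I_2$ over the inner region is $2\int_{\mathrm{inner}}v^{-2}w^2\,\sigma-a\int_{\mathrm{inner}}w^2\,\sigma$. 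The comparison is thus $2$ against $4(1-2\ve)(\tfrac{1-\delta}{1+\delta})^2\approx 4$, with no $C_0$ entering at all; what remains is $-a\int_{\mathrm{inner}}w^2\,\sigma$, which combines with the outer-region bound $(\tfrac18-a)\int_{\mathrm{outer}}w^2\,\sigma$ to give the full $-\ve\Omega_{1,1}$ directly. In particular, your proposed step "$\Omega_{1,1}\le K(\Omega_{1,2}+\Omega_{2,1})$" is neither needed nor valid globally (it would require an upper bound on $v$, which fails in the outer region); the outer region is handled instead by the sign of the reaction once $v\ge4$ there.
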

\begin{proof}
Let $w:=v_{1,1,1}\equiv v^{-1}\dy\dz v$.
Condition~[Cs] lets us apply dominated convergence to compute
$\frac{d}{d\tau}\Omega_{1,1}$, whence Lemma~\ref{ComputeOnceAndForAll} gives
\begin{align*}
\frac{1}{2}\frac{d}{d\tau}\Omega_{1,1}
    =&\,\int_{\mathbb{R}}\int_{\mathbb{S}^1} wA_v(w)\,\Dz\,\sigma\,\Dy\\
    &+\int_{\mathbb{R}}\int_{\mathbb{S}^1} (2v^{-2}-a)w^2\,\Dz\,\sigma\,\Dy\\
    &+\int_{\mathbb{R}}\int_{\mathbb{S}^1}
        \left(\sum_{\ell=0}^5 E_{1,1,1,\ell}\right)w\,\Dz\,\sigma\,\Dy\\
    =&: I_1+I_2+I_3.
\end{align*}
By Lemma~\ref{v111-estimate} and its proof, one has
\[I_2 \leq
    \int_{\beta y^2\leq 20}\int_{\mathbb{S}^1}
        (2v^{-2}-a)w^2\,\Dz\,\sigma\,\Dy
    +\left(\frac{1}{8}-a\right)\int_{\beta y^2\geq20}\int_{\mathbb{S}^1}
        w^2\,\Dz\,\sigma\,\Dy\]
and
\[|I_3| \leq C\beta^2 \int_{\mathbb{R}}\int_{\mathbb{S}^1}\left(
    |w|+\beta|\dy^3 v|+v^{-1}|\dy^2\dz v|
    +v^{-2}|\dy\dz^2v|+v^{-3}|\dz^3 v|\right)\Dz\,\sigma\,\Dy.\]
Using Cauchy--Schwarz, the latter estimate becomes
\[|I_3|\leq\rho\beta^2\left(\Omega_{1,1}^{\frac{1}{2}}
        +\beta\Omega_{3,0}^{\frac{1}{2}}+\Omega_{2,1}^{\frac{1}{2}}
        +\Omega_{1,2}^{\frac{1}{2}}
        +\Omega_{0,3}^{\frac{1}{2}}\right).\]
To complete the proof, we will estimate $I_1:=I_{1,1}+I_{1,2}$, where
\begin{align*}
I_{1,1}&:=\int_{\mathbb{R}}\int_{\mathbb{S}^1}
    w\left(F_1 \dy^2 w +v^{-2}F_2\dz^2 w +v^{-1}F_3\dy\dz w\right)\Dz\,\sigma\,\Dy,\\
I_{1,2}&:=\int_{\mathbb{R}}\int_{\mathbb{S}^1}
    w\left(v^{-2}F_4\dz w - ay\dy w\right)\Dz\,\sigma\,\Dy.
\end{align*}
Because $w$ and $\sigma$ are even functions of $y$, one may integrate by parts
in $y$ as well as $\theta$, whereupon applying Cauchy--Schwarz pointwise (as in
Corollary~\ref{GeneralEquation}) shows that $I_{1,1}\leq I_{1,3}+I_{1,4}$, where
\begin{align*}
    I_{1,3}&:=-\int_{\mathbb{R}}\int_{\mathbb{S}^1}
        \frac{(\dy w)^2+v^{-2}(\dz w)^2}{1+p^2+q^2}\,\Dz\,\sigma\,\Dy,\\
    I_{1,4}&:=\frac{1}{2}\int_{\mathbb{R}}\int_{\mathbb{S}^1}w^2
        \left[\sigma^{-1}\dy^2(\sigma F_1)+\dz^2(v^{-2}F_2)
        +\sigma^{-1}\dy\dz(\sigma v^{-1}F_3)\right]\Dz\,\sigma\,\Dy.
\end{align*}
One estimates
$(\dy w)^2\geq v_{2,1,1}^2-C\beta w^2$
and
$v^{-2}(\dz w)^2\geq v_{1,2,2}^2-C\beta^3 w^2$
using Theorem~\ref{FirstOrderEstimates} and Condition~[C1i].
Using those facts and Condition~[C0i], one has $p^2+q^2\leq C$ everywhere
and $p^2+q^2\leq\ve$ in the inner region. Hence
\begin{align*}
I_{1,3}
    \leq&-\ve\left(\Omega_{2,1}+\Omega_{1,2}\right)
        +C\beta\Omega_{1,1}\\
    &-4(1-2\ve)\left(\frac{1-\delta}{1+\delta}\right)^2
        \int_{\beta y^2\leq20}\int_{\mathbb{S}^1} v^{-2} w^2\,\Dz\,\sigma\,\Dy.
\end{align*}
To get the final term, we applied Condition~[Cr] to the decomposition
$v=v_1+v_2$ introduced in definition~\eqref{Decomposition} and then exploited
the key idea behind Remark~\ref{FixAxis}. Then combining Theorem~\ref{FirstOrderEstimates}
and Conditions~[C1i] and [C2] with
estimates~\eqref{dyF_ell-estimate}--\eqref{dydzF_ell-estimate}
and \eqref{gamma-decay} lets us control the terms in $I_{1,4}$ using
\begin{subequations}\label{Commutators}
\begin{align}
\sigma^{-1}|\dy^2(\sigma F_1)|&\leq C
    \left(|\dy^3 v|+v^{-1}|\dy^2\dz v|+\beta^{\frac{6}{5}}
        +\beta^{\frac{3}{5}}\Sigma^{-\frac{1}{2}}+\Sigma^{-1}\right),\\
|\dz^2(v^{-2}F_2)|&\leq C
    \left(v^{-2}|\dy\dz^2 v|+v^{-3}|\dz^3 v|+\beta^{\frac{3}{2}}\right),\\
\sigma^{-1}|\dy\dz(\sigma v^{-1}F_3)|&\leq C
    \left(v^{-1}|\dy^2\dz v|+v^{-2}|\dy\dz^2 v|
    +\beta^{\frac{3}{2}}+\beta^{\frac{3}{2}}\Sigma^{-\frac{1}{2}}\right).
\end{align}
\end{subequations}
Notice that here is where one needs $\Sigma$ to be sufficiently large.
To control $I_{1,2}$, one integrates by parts in $\theta$, combining
Conditions~[C1] and [C1i] with estimate~\eqref{dzF_ell-estimate} to obtain
\[\left|\int_{\mathbb{R}}\int_{\mathbb{S}^1} v^{-2}F_4 (w\dz w)\,\Dz\,\sigma\,\Dy\right|
    \leq C\beta^{\frac{3}{2}}\Omega_{1,1}.\]
Finally, one uses the fact that $y\dy\sigma\leq 0$ to get
\[-a\int_{\mathbb{R}}\int_{\mathbb{S}^1} yw\dy w\,\Dz\,\sigma\,\Dy
    \leq\frac{a}{2}\Omega_{1,1}.\]
The conclusion of the lemma follows by collecting the estimates above, using
the consequence of Condition~[C2] that $|w|\ls\beta^{\frac{3}{2}}$
pointwise. By Condition~[Ca], the coefficient multiplying
$\Omega_{1,1}$ can be chosen to be less than
$\frac{1}{8}-\frac{a}{2}\leq-\frac{1}{8}+(2\kappa)^{-1}$.
\end{proof}

\begin{lemma}\label{Omega02}
There exist constants $\ve>0$ and $\rho=\rho(\Sigma)>0$,
with $\rho(\Sigma)\searrow0$ as $\Sigma\rightarrow\infty$, such that
\[\Dt\Omega_{0,2}\leq\
    -\ve\left(\Omega_{0,2}+\Omega_{1,2}+\Omega_{0,3}\right)
    +\rho\beta^2\left(\beta^{\frac{1}{10}}\Omega_{0,2}^{\frac{1}{2}}
        +\beta\Omega_{2,1}^{\frac{1}{2}}+\Omega_{1,2}^{\frac{1}{2}}
        +\Omega_{0,3}^{\frac{1}{2}}\right).\]
\end{lemma}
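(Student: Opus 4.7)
The plan is to mirror the structure of the proof of Lemma~\ref{Omega11} with $w$ now taken to be $w := v_{0,2,2} \equiv v^{-2}\dz^2 v$. Condition~[Cs] legitimizes differentiating under the integral, and Lemma~\ref{ComputeOnceAndForAll} produces the decomposition
\[\tfrac{1}{2}\Dt\Omega_{0,2} = \lp A_v w,\,w\rp + \lp(3v^{-2}-a)w,\,w\rp + \lp\textstyle\sum_{\ell=0}^5 E_{0,2,2,\ell},\,w\rp =: J_1+J_2+J_3,\]
where the reaction coefficient $(k{+}1)v^{-2} - (m{+}k{-}1)a = 3v^{-2}-a$ comes from applying Lemma~\ref{ComputeOnceAndForAll} at $(m,n,k)=(0,2,2)$.

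I expect the commutator integral $J_3$ to be the most mechanical step. The pointwise bounds on each $E_{0,2,2,\ell}$ obtained inside the proof of Lemma~\ref{v022-estimate}, together with Cauchy--Schwarz against the weighted measure and the a priori bound $\|w\|_\sigma = \Omega_{0,2}^{1/2}\ls\beta^{3/2}$ from Condition~[C2], will yield exactly the advertised bound $\rho\beta^2\bigl(\beta^{1/10}\Omega_{0,2}^{1/2}+\beta\Omega_{2,1}^{1/2}+\Omega_{1,2}^{1/2}+\Omega_{0,3}^{1/2}\bigr)$. Notice that $\Omega_{3,0}^{1/2}$ is absent from the conclusion because no $\dy^3 v$ term appears in any $E_{0,2,2,\ell}$.

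For $J_1$, I will integrate by parts in $y$ (legitimate since both $w$ and $\sigma$ are even in $y$ by Assumption~[A1] and Lemma~\ref{Symmetries}) and in $\theta$. The leading quadratic form $-\int\bigl(F_1(\dy w)^2+v^{-2}F_2(\dz w)^2+v^{-1}F_3(\dy w)(\dz w)\bigr)\sigma$ is nonpositive by pointwise Cauchy--Schwarz; using Conditions~[C0i]--[C1i] to force $p^2+q^2\leq\ve$ in the inner region, it contributes the dissipative term $-\ve(\Omega_{1,2}+\Omega_{0,3})+O(\beta)\Omega_{0,2}$. The residual commutators of the form $\sigma^{-1}\dy^2(\sigma F_1)$, $\dz^2(v^{-2}F_2)$, $\sigma^{-1}\dy\dz(\sigma v^{-1}F_3)$, together with the $\theta$-integration of $v^{-2}F_4\,w\dz w$ and the drift term $-ayw\dy w$, will be controlled exactly as in display~\eqref{Commutators} using \eqref{gamma-decay}, \eqref{dyF_ell-estimate}--\eqref{dz2F_ell-estimate}, and $y\dy\sigma\leq 0$; taking $\Sigma$ large makes their net contribution $\rho$-small.

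The main obstacle is $J_2$. In the outer region, Lemma~\ref{v022-estimate} gives $3v^{-2}-a\leq-\tfrac{1}{4}$ via estimate~\eqref{v-below} and Condition~[Ca], which is already dissipative. In the inner region, however, $v\geq\tfrac{9}{10}\sqrt{2}$ forces only $3v^{-2}-a\leq\tfrac{50}{27}-\tfrac{1}{2}+\kappa^{-1}>0$, so $J_2$ is positive and must be absorbed by the hidden gain $-\int v^{-2}F_2(\dz w)^2\sigma$ extracted from $J_1$. The crucial input is Remark~\ref{FixAxis}: the $\pi$-periodicity of Assumption~[A1] forces $w$ to have only even Fourier modes in $\theta$, and expanding $v^{-2}=v_1^{-2}+O(\delta v_1^{-2})$ via Condition~[Cr] together with $\int_{\mathbb{S}^1}\dz^2 v\,\Dz=0$ shows the $\theta$-average $\bar w(y,\tau)$ satisfies $|\bar w|\ls\delta\beta^{3/2}$. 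Applying Remark~\ref{FixAxis} to $w-\bar w$ and using Condition~[Cr] to trade $v^{-2}$ for $v_1^{-2}$ yields
\[-\int_{\beta y^2\leq 20}\!\int_{\mathbb{S}^1} v^{-2}F_2(\dz w)^2\,\Dz\,\sigma\,\Dy \leq -4(1-2\ve)\Bigl(\tfrac{1-\delta}{1+\delta}\Bigr)^2 \int_{\beta y^2\leq 20}\!\int_{\mathbb{S}^1} v^{-2}w^2\,\Dz\,\sigma\,\Dy + O(\delta^2\beta^3).\]
Adding this gain to the positive part of $J_2$ produces an effective inner-region coefficient at most $3v^{-2}-a-4v^{-2}+O(\delta)=-v^{-2}-a+O(\delta)<-\ve$ by Conditions~[C0i] and [Ca]. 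Combining with the outer-region dissipation and absorbing the residual $O(\beta)\Omega_{0,2}$ via Condition~[C2] delivers the claimed $-\ve\Omega_{0,2}$ term and completes the proof.
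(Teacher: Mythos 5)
Your approach mirrors the paper's (which simply declares the proof ``virtually identical, \emph{mutatis mutandis}'' to Lemma~\ref{Omega11}, feeding in the pointwise bounds from Lemma~\ref{v022-estimate} and invoking Remark~\ref{FixAxis} for $v_{0,2,2}$), and the overall structure of your argument---the decomposition into $J_1,J_2,J_3$, the commutator estimate for $J_3$ matching the stated $\rho\beta^2(\beta^{1/10}\Omega_{0,2}^{1/2}+\beta\Omega_{2,1}^{1/2}+\Omega_{1,2}^{1/2}+\Omega_{0,3}^{1/2})$, the outer-region sign of $3v^{-2}-a$, and the inner-region gain from the Poincar\'e-type inequality---is correct.

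There is, however, one genuine imprecision in your treatment of the $\bar w$ correction. You estimate $|\bar w|\ls\delta\beta^{3/2}$ pointwise and therefore record the error as $O(\delta^2\beta^3)$. Integrated against $\sigma$ this is $\approx\delta^2\beta^3\Sigma^{-1/10}$, which does \emph{not} fit under the allowed term $\rho\beta^2\cdot\beta^{1/10}\Omega_{0,2}^{1/2}\ls\rho\beta^{36/10}\Sigma^{-1/20}$ (since $\beta^3\gg\beta^{36/10}$ as $\tau\to\infty$), nor can it be absorbed by $-\ve\Omega_{0,2}$, because $\Omega_{0,2}$ may in principle be much smaller than its a priori ceiling $\beta^3\Sigma^{-1/10}$. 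The fix is to avoid converting to a pointwise bound: since $\bar w=\frac{1}{2\pi}\int_{\mathbb{S}^1}(v^{-2}-v_1^{-2})\,\dz^2 v\,\Dz$ (the $v_1^{-2}$ piece integrates to zero), Condition~[Cr] gives $|v^{-2}-v_1^{-2}|\ls\delta v^{-2}$ and Cauchy--Schwarz in $\theta$ yields $\bar w^2\leq\delta^2\cdot\frac{1}{2\pi}\int_{\mathbb{S}^1}w^2\,\Dz$, hence $\int\int\bar w^2\,\Dz\,\sigma\,\Dy\leq\delta^2\Omega_{0,2}$; this \emph{is} absorbed by $-\ve\Omega_{0,2}$ for $\delta$ small. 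Even more directly---and this is what the paper's reference to Condition~[Cr] in the Lemma~\ref{Omega11} proof appears to intend---first trade $v^{-2}$ for $v_1^{-2}$ in the weight, after which $v_1^{-2}\dz^2 v$ has exactly zero $\theta$-average and Remark~\ref{FixAxis} applies with no $\bar w$ correction at all. Finally, your reconstruction of the inner coefficient ``$3v^{-2}-a-4v^{-2}$'' omits the $+\frac{a}{2}$ drift contribution from the $-ayw\dy w$ term, but since $-v^{-2}-\frac{a}{2}$ remains strictly negative under [C0i] and [Ca], the conclusion is unaffected.
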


\begin{proof}
Because Remark~\ref{FixAxis} applies to $v_{0,2,2}\equiv v^{-2}\dz^2 v$,
the proof is virtually identical, \emph{mutatis mutandis} to that of
Lemma~\ref{Omega11}, when one takes as input the estimates in
Lemma~\ref{v022-estimate}. The final observation is that the
coefficient multiplying $\Omega_{0,2}$ can be chosen less than
$\frac{3}{16}-\frac{a}{2}\leq-\frac{1}{16}+(2\kappa)^{-1}$.
We omit further details.
\end{proof}

\begin{lemma}\label{Omega20}
There exist constants $\ve>0$ and $\rho=\rho(\Sigma)>0$,
with $\rho(\Sigma)\searrow0$ as $\Sigma\rightarrow\infty$, such that
\[\Dt\Omega_{2,0}\leq\
    -\ve\left(\Omega_{2,0}+\Omega_{3,0}+\Omega_{2,1}\right)
    +\rho\left[\beta\Omega_{2,0}^{\frac{1}{2}}
    +\beta^{\frac{1}{2}}\Omega_{3,0}^{\frac{1}{2}}
    +\beta^{\frac{11}{10}}
        \left(\Omega_{2,1}^{\frac{1}{2}}+\Omega_{1,2}^{\frac{1}{2}}\right)\right].\]
\end{lemma}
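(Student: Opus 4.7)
The plan is to follow the template of Lemmas~\ref{Omega11} and \ref{Omega02}, taking $w:=v_{2,0,0}\equiv\dy^2 v$. First I would differentiate $\Omega_{2,0}$ under the integral (justified by Condition~[Cs]) and use Lemma~\ref{v200-estimate} to write
\[
\tfrac12\Dt\Omega_{2,0} = I_1 + I_2 + I_3,
\]
where $I_1:=\int wA_v w\,\Dz\,\sigma\,\Dy$, $I_2:=\int(v^{-2}-a)w^2\,\Dz\,\sigma\,\Dy$, and $I_3:=\int w\sum_{\ell=1}^5 E_{2,0,0,\ell}\,\Dz\,\sigma\,\Dy$. Integration by parts in $I_1$, using evenness of $w$ and $\sigma$ in $y$ and $\pi$-periodicity in $\theta$, produces the coercive dissipation $-\int (1+p^2+q^2)^{-1}[(\dy w)^2+v^{-2}(\dz w)^2]\,\Dz\,\sigma\,\Dy$, bounded above by $-\tfrac{1-\ve}{1+\ve}(\Omega_{3,0}+\Omega_{2,1})$ via Theorem~\ref{FirstOrderEstimates} and Condition~[C1i], together with commutator terms of the form \eqref{Commutators} controlled by \eqref{gamma-decay} and \eqref{dyF_ell-estimate}--\eqref{dy2F_ell-estimate}. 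Integration by parts on the drift $-ay\dy w\cdot w$ uses $y\dy\sigma=-\tfrac{6y^2}{5(\Sigma+y^2)}\sigma$ and adds an effective reaction coefficient $\tfrac{a}{2}(1+\sigma^{-1}y\dy\sigma)$ to $I_2$. Cauchy--Schwarz applied to $I_3$ and to the commutator integrals, with the pointwise estimates used in the proof of Lemma~\ref{v200-estimate}, yields exactly the forcings $\rho\bigl[\beta\Omega_{2,0}^{1/2}+\beta^{1/2}\Omega_{3,0}^{1/2}+\beta^{11/10}(\Omega_{2,1}^{1/2}+\Omega_{1,2}^{1/2})\bigr]$, with $\rho(\Sigma)\searrow 0$ inherited from \eqref{gamma-decay}.

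The main obstacle lies in the reaction term $I_2$. In the outer region $\{\beta y^2 \geq 20\}$, estimate~\eqref{v-below} gives $v\geq 4$, whence the effective coefficient $(v^{-2}-a)+\tfrac{a}{2}(1+\sigma^{-1}y\dy\sigma)$ is at most $\tfrac{1}{16}-\tfrac{a}{2}\leq-\tfrac{1}{16}+(2\kappa)^{-1}$, delivering $-\ve\Omega_{2,0}^{\text{outer}}$ directly. But in the inner region Condition~[C0i] gives only $v\geq\tfrac{9}{10}\sqrt 2$, so $v^{-2}-a$ need not be negative, and, unlike $v_{1,1,1}$ and $v_{0,2,2}$, the quantity $w=\dy^2 v$ is not orthogonal to constants in $\theta$---one has $\int_{\mathbb{S}^1}w\,\Dz = 2\pi\dy^2 v_1 \neq 0$---so the Poincar\'e trick of Remark~\ref{FixAxis} that furnished the crucial extra $-4v^{-2}w^2$ contribution in the proofs of Lemmas~\ref{Omega11} and \ref{Omega02} is not available for $w$ itself.

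To resolve this I would split $w = \dy^2 v_1 + \dy^2 v_2$ via \eqref{Decomposition}; since $\int_{\mathbb{S}^1}\dy^2 v_2\,\Dz = 0$, the two pieces decouple as $\Omega_{2,0} = \Omega_{2,0}^{(1)}+\Omega_{2,0}^{(2)}$ with no cross term. On the $\theta$-oscillating piece $\Omega_{2,0}^{(2)}$, Remark~\ref{FixAxis} applies to $\dy^2 v_2$; combined with Condition~[Cr], which lets us replace the $v^{-2}$ weight by $v_1^{-2}$ up to an $O(\delta)$ error, this reproduces the negative coefficient established in the proof of Lemma~\ref{Omega11} and yields $-\ve\Omega_{2,0}^{(2)}$ in the inner region. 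For the rotationally symmetric piece $\Omega_{2,0}^{(1)}$, Condition~[C2] gives the pointwise bound $|\dy^2 v|\ls\beta^{3/5}$, hence $\Omega_{2,0}^{(1),\text{inner}} \ls \beta^{6/5}$; this lets us re-express the bounded positive inner contribution as $-\ve\Omega_{2,0}^{(1)} + O(\beta^{6/5})$, and the $O(\beta^{6/5})$ term is absorbed into $\rho\beta\Omega_{2,0}^{1/2}$ via Young's inequality after choosing $\Sigma$ sufficiently large that $\rho(\Sigma)$ exceeds the (bounded) inner-region reaction coefficient. The hardest step, and the only genuinely new ingredient relative to Lemmas~\ref{Omega11} and \ref{Omega02}, is this absorption of the $\theta$-independent inner-region contribution into the stated forcing terms; all other pieces are direct analogues of the calculations carried out in those lemmas.
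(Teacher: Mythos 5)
Your diagnosis of why Lemma~\ref{Omega20} is harder than Lemmas~\ref{Omega11} and \ref{Omega02} is exactly right: the quantity $w=\dy^2 v$ is not orthogonal to constants in $\theta$, so the Poincar\'e gain of Remark~\ref{FixAxis} that neutralizes the inner-region reaction coefficient is unavailable. But your proposed fix fails at the step where you handle the $\theta$-independent piece, and the paper takes a genuinely different and simpler route.

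The problem is quantitative. You bound the bad inner-region contribution from $\dy^2 v_1$ by $O(\beta^{6/5})$ via the pointwise estimate $|\dy^2 v|\ls\beta^{3/5}$ from Condition~[C2] and then claim it can be absorbed into $\rho\beta\Omega_{2,0}^{1/2}$. This cannot work. The forcing term $\rho\beta\Omega_{2,0}^{1/2}$ is used in the proof of Proposition~\ref{SharpLyaponuv} (after multiplying the differential inequality by $\beta^2$ and using Young) to establish $\Omega_{2,0}\ls\beta^2$; equivalently, it acts like an additive source of size $\rho^2\beta^2$. An additive $C\beta^{6/5}$ source is far larger than $\rho^2\beta^2$, so it would only yield $\Omega_{2,0}\ls\beta^{6/5}$, which is too weak to feed the Sobolev embedding in Step~(iii) of Theorem~\ref{InnerEstimates}. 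Moreover, to bound $C\beta^{6/5}$ by $\rho\beta\Omega_{2,0}^{1/2}$ one would need a \emph{lower} bound $\Omega_{2,0}\gtrsim\rho^{-2}\beta^{2/5}$, which is the wrong direction and eventually false. Your remark about ``choosing $\Sigma$ sufficiently large that $\rho(\Sigma)$ exceeds the inner-region reaction coefficient'' is also inverted: $\rho(\Sigma)\searrow 0$ as $\Sigma\rightarrow\infty$, so large $\Sigma$ makes $\rho$ small, not large. Finally, splitting $\Omega_{2,0}$ into $\Omega_{2,0}^{(1)}+\Omega_{2,0}^{(2)}$ at the level of the time derivative would also require deriving separate evolution equations for $\dy^2 v_1$ and $\dy^2 v_2$, which is delicate since the quasilinear coefficients $F_\ell$ mix Fourier modes; this step is not elaborated.

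The paper avoids the $v_1/v_2$ split entirely. It writes $I_2$ so as to isolate $I_{2,2}:=\int v^{-2}(\dy^2 v)^2\,\Dz\,\sigma\,\Dy$ and then integrates this once by parts in $y$, using $\dy^2 v=\dy(\dy v)$ and evenness of $\sigma$, to obtain
\[
I_{2,2}=\int_{\mathbb{R}}\int_{\mathbb{S}^1}(\dy v)\bigl[2v^{-3}(\dy^2 v)(\dy v)-v^{-2}(\dy^2 v)\sigma^{-1}\dy\sigma-v^{-2}\dy^3 v\bigr]\,\Dz\,\sigma\,\Dy.
\]
Every resulting term carries at least one factor of $\dy v$, which decays like $\beta^{1/2}$ by Theorem~\ref{FirstOrderEstimates}, and the $\sigma^{-1}\dy\sigma$ factor brings in the $\Sigma$ smallness from estimate~\eqref{gamma-decay} and Condition~[Cg]; Cauchy--Schwarz then yields exactly $\rho(\beta\Omega_{2,0}^{1/2}+\beta^{1/2}\Omega_{3,0}^{1/2})$. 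No inner/outer or Fourier-mode split of $w$ is needed, and the dangerous $O(\beta^{6/5})$ never appears.
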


\begin{proof}
The argument here contains two key differences from that used above.
The estimate for $I_2$ in Lemma~\ref{Omega11} is replaced by
\[I_2\leq\left(\frac{1}{16}-a\right)\Omega_{2,0}+2I_{2,2},\]
where
\begin{align*}
I_{2,2}&:=\int_{\mathbb{R}}\int_{\mathbb{S}^1} v^{-2}w^2\,\Dz\,\sigma\,\Dy\\
    &=\int_{\mathbb{R}}\int_{\mathbb{S}^1}
    (\dy v)\left[
    2v^{-3}(\dy^2 v)(\dy v)-v^{-2}(\dy^2v)\sigma^{-1}\dy\sigma-v^{-2}\dy^3 v
    \right]\,\Dz\,\sigma\,\Dy.
\end{align*}
Note that $\sigma^{-1}|\dy\sigma|=\mathcal{O}(\lan y\ran^{-1})$ as
$|y|\rightarrow\infty$. Hence one may apply Condition~[Cg] and
estimate~\eqref{gamma-decay} to see that
\[I_{2,2}\leq \rho\left(\beta\Omega_{2,0}^{\frac{1}{2}}
    +\beta^{\frac{1}{2}}\Omega_{3,0}^{\frac{1}{2}}\right).\]

Remark~\ref{FixAxis} does not apply to the function $\dy^2 v$, so
one gets only a weaker estimate for $I_{1,3}$, namely
\[I_{1,3}\leq-\ve\left(\Omega_{3,0}+\Omega_{2,1}\right)
        +\rho\left(\beta^2\Omega_{3,0}^{\frac{1}{2}}
        +\beta^3\Omega_{2,1}^{\frac{1}{2}}\right).\]

The remainder of the proof goes through as in Lemma~\ref{Omega11},
using the pointwise estimates derived in Lemma~\ref{v200-estimate}.
Here the coefficient multiplying $\Omega_{2,0}$ can be chosen less than
$\frac{1}{16}-\frac{a}{2}\leq-\frac{3}{16}+(2\kappa)^{-1}$.
\end{proof}
\medskip

We use a modified technique to derive differential inequalities
for $\Omega_{m,n}$ with $3\leq m+n\leq 5$. This technique applies
integration by parts to the nonlinear commutators, using
important but tedious estimates derived in Appendix~\ref{GetHigh}.
In the remainder of this subsection, we bound the functionals of order $m+n=3$.
We treat the cases $4\leq m+n\leq 5$ in Section~\ref{Lyapunov45} below.

\begin{lemma}\label{OmegaLevel3}
There exist constants $0<\ve<C<\infty$ and $\rho=\rho(\Sigma)>0$,
with $\rho(\Sigma)\searrow0$ as $\Sigma\rightarrow\infty$, such that
for all $m+n=3$ with $n\geq1$, one has
\begin{align*}
\Dt\Omega_{m,n}\leq&\,
    -\ve\left(\Omega_{m,n}+\Omega_{m+1,n}+\Omega_{m,n+1}\right)
    +C\beta^{\frac{3}{5}}\left(\Omega_{2,1}+\Omega_{1,2}+\Omega_{0,3}\right)\\
    &+\rho\beta^2\left[\Omega_{m,n}^{\frac{1}{2}}
    +\beta\Omega_{4,0}^{\frac{1}{2}}+\Omega_{3,1}^{\frac{1}{2}}
    +\Omega_{2,2}^{\frac{1}{2}}+\Omega_{1,3}^{\frac{1}{2}}
    +\Omega_{0,4}^{\frac{1}{2}}\right].
\end{align*}
\end{lemma}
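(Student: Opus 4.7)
The plan is to follow the framework used in Lemmata~\ref{Omega11}--\ref{Omega20}, exploiting that $n\geq1$ for every admissible pair $(m,n)$, so that Remark~\ref{FixAxis} is available throughout. I set $w:=v_{m,n,n}$ and, using Lemma~\ref{ComputeOnceAndForAll} together with the dominated-convergence justification afforded by Condition~[Cs], write
\[
\tfrac{1}{2}\Dt\Omega_{m,n}
    =\lp A_v w,\,w\rp
    +\lp\big((n+1)v^{-2}-2a\big)w,\,w\rp
    +\lp E_{m,n,n},\,w\rp
    =:I_1+I_2+I_3.
\]
For $I_2$, in the outer region Condition~[Ca] and the lower bound $v\geq4$ from Theorem~\ref{FirstOrderEstimates} already give a strictly negative integrand. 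In the inner region the reaction coefficient is bounded above only by a constant, but since $n\geq1$ the function $w$ is orthogonal to constants in $\theta$, so I invoke Remark~\ref{FixAxis} to offset the $(n+1)v^{-2}w^{2}$ contribution against part of the quadratic form in $v^{-1}\dz w$ generated by $I_1$; together with Condition~[Ca] this delivers a net $-\ve\,\Omega_{m,n}$ contribution.

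The estimate of $I_1$ parallels the treatment in Lemma~\ref{Omega11}. Integrating by parts in $y$ (legal because $w$ and $\sigma$ have compatible parity by Assumption~[A1]) and in $\theta$, and applying the pointwise estimate of Corollary~\ref{GeneralEquation}, the principal part produces a negative-definite quadratic form in $\dy w$ and $v^{-1}\dz w$. Then the bounds on $p$ and $q$ from Theorem~\ref{FirstOrderEstimates} together with Conditions~[C0i]--[C1i] yield the good higher-order terms $-\ve(\Omega_{m+1,n}+\Omega_{m,n+1})$, after setting aside a fraction of the $v^{-2}(\dz w)^{2}$ for the $I_2$-absorption above. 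The residual commutator part of $I_1$, coming from derivatives of $\sigma F_1$, $v^{-2}F_2$, $\sigma v^{-1}F_3$, and $v^{-2}F_4$ that fall on $w$ rather than on $\dy w$ or $\dz w$, is handled exactly as in~\eqref{Commutators} but with one extra derivative; together with the decay~\eqref{gamma-decay} of $\sigma$, this contributes to the $\rho\beta^{2}[\cdots]$ remainder with the asserted coefficient $\rho=\rho(\Sigma)\searrow 0$.

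The genuinely new work, and the main obstacle, is the nonlinear commutator integral $I_3=\lp E_{m,n,n},w\rp$. I decompose $E_{m,n,n}=\sum_{\ell=0}^{5}E_{m,n,n,\ell}$ and expand each piece by Leibniz applied to $\dy^m\dz^n$. Branches in which every differentiation lands on $v$ produce fourth-order quantities, which after Cauchy--Schwarz and Young's inequality are absorbed into the good $-\ve(\Omega_{m+1,n}+\Omega_{m,n+1})$, leaving only remainders of the form $\rho\beta^{2}\Omega_{m',n'}^{1/2}$ at indices $(m',n')\in\{(4,0),(3,1),(2,2),(1,3),(0,4)\}$; the solitary factor of $\beta$ in front of $\Omega_{4,0}^{1/2}$ reflects the fact that $\Omega_{4,0}$ is the only fourth-order functional without a $\theta$-derivative, so Remark~\ref{FixAxis} is unavailable and one must instead rely on the pointwise bound~\eqref{v_y-est}. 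Branches in which at least one derivative falls on a factor of $F_\ell$ or on a power of $v$ are estimated pointwise using $|\dy F_\ell|\ls\beta^{3/5}$ from~\eqref{dyF_ell-estimate} together with~\eqref{v_y-est}; a single such factor of $\beta^{3/5}$ leaves behind a product of two third-order derivatives of $v$, and Cauchy--Schwarz then converts it into $C\beta^{3/5}(\Omega_{2,1}+\Omega_{1,2}+\Omega_{0,3})$. The main technical task is to verify, for each $E_{m,n,n,\ell}$ and each Leibniz configuration, that only these two destinations (fourth-order absorption, or third-order remainder with the $\beta^{3/5}$ coefficient) can actually occur; this bookkeeping is carried out uniformly using the general commutator estimates of Appendix~\ref{GetHigh}. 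Combining the bounds on $I_1$, $I_2$, and $I_3$ then yields the asserted differential inequality.
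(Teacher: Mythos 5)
Your proposal follows the paper's proof of Lemma~\ref{OmegaLevel3} essentially exactly: you use the same decomposition $\frac{1}{2}\Dt\Omega_{m,n}=I_1+I_2+I_3$ via Lemma~\ref{ComputeOnceAndForAll}, the same mechanism of offsetting the inner-region reaction term in $I_2$ against the $v^{-2}(\dz w)^2$ contribution from $I_1$ via Remark~\ref{FixAxis}, and the same deferral of the commutator integral $I_3$ to Lemma~\ref{E21estimate} of Appendix~\ref{GetHigh}. The one inaccuracy is your heuristic for the extra factor of $\beta$ on $\Omega_{4,0}^{1/2}$: in the paper this arises because the commutator terms containing $v_{4,0,0}$ already carry small coefficients (e.g.\ $v^{-1}|\dz F_1|\ls\beta^{3/2}$ paired with $|v_{2,1,1}|\ls\beta^{3/2}$ yields $\beta^3$), not from any unavailability of Remark~\ref{FixAxis} in $I_3$ --- that remark is never invoked in the commutator estimates, and the fourth-order terms are not ``absorbed'' via Young's inequality but simply left as $\rho\beta^2\Omega_{m',n'}^{1/2}$ remainders after Cauchy--Schwarz.
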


\begin{proof}
Let $w:=v_{m,n,n}\equiv v^{-n}\dy^m\dz^n v$. Then using dominated convergence
and Lemma~\ref{ComputeOnceAndForAll}, one obtains
$\frac{1}{2}\,\frac{d}{d\tau}\Omega_{m,n}=I_1+I_2+I_3$, with terms $I_j$
defined below. We closely follow Lemma~\ref{Omega11} in estimating the
first two terms, indicating why the method used there applies here. We
use a somewhat different method to estimate the final term $I_3$.
For the first term, integration by parts yields
\begin{align*}
I_1&:=\int_{\mathbb{R}}\int_{\mathbb{S}^1}
    w\left(F_1 \dy^2 w +v^{-2}F_2\dz^2 w +v^{-1}F_3\dy\dz w
    +v^{-2}F_4\dz w - ay\dy w\right)\Dz\,\sigma\,\Dy\\
   &\leq-\ve\left(\Omega_{m+1,n}+\Omega_{m,n+1}\right)
   +\left(\frac{a}{2}+\ve\right)\Omega_{m,n}
   -(4-\ve)\int_{\beta y^2\leq 20}\int_{\mathbb{S}^1} v^{-2} w^2\,\Dz\,\sigma\,\Dy.
\end{align*}
Here we followed Lemma~\ref{Omega11}, estimating
\[(\dy w)^2+(\dz w)^2\geq v_{m+1,n,n}^2+v_{m,n+1,n+1}^2-C\beta w^2,\] using
Condition~[C3] to bound the third-order derivatives in display~\eqref{Commutators}
by $C\beta$, and again exploiting the idea behind Remark~\ref{FixAxis} to get
the useful integral over the inner region. By Lemma~\ref{ComputeOnceAndForAll} and
estimate~\eqref{v-below}, one has
\begin{align*}
I_2 &:=\int_{\mathbb{R}}\int_{\mathbb{S}^1} [(n+1)v^{-2}-2a]w^2\,\Dz\,\sigma\,\Dy\\
    &\leq\left(\frac{1}{4}-\frac{3a}{2}\right)\Omega_{m,n}+
    \left(4-\frac{a}{2}\right)\int_{\beta y^2\leq 20}\int_{\mathbb{S}^1}
        v^{-2} w^2\,\Dz\,\sigma\,\Dy.
\end{align*}
In Appendix~\ref{GetHigh}, we estimate
\[I_3 :=\int_{\mathbb{R}}\int_{\mathbb{S}^1}
        \left(\sum_{\ell=0}^5 E_{m,n,n,\ell}\right)w\,\Dz\,\sigma\,\Dy\]
using integration by parts. By Lemma~\ref{E21estimate} there,
one has
\[|I_3|\leq
\rho\beta^2\left[\Omega_{m,n}^{\frac{1}{2}}
    +\beta\Omega_{4,0}^{\frac{1}{2}}
    +\Omega_{3,1}^{\frac{1}{2}}
    +\Omega_{2,2}^{\frac{1}{2}}
    +\Omega_{1,3}^{\frac{1}{2}}
    +\Omega_{0,4}^{\frac{1}{2}}\right]
    +C\beta^{\frac{3}{5}}\left(\Omega_{2,1}+\Omega_{1,2}+\Omega_{0,3}\right).\]
Collecting these estimates while recalling Condition~[Ca] completes the proof.
\end{proof}

\begin{lemma}\label{Omega30}
There exist constants $0<\ve<C<\infty$ and $\rho=\rho(\Sigma)>0$,
with $\rho(\Sigma)\searrow0$ as $\Sigma\rightarrow\infty$, such that
\begin{align*}
\Dt\Omega_{3,0}\leq&\,
    -\ve\left(\Omega_{3,0}+\Omega_{4,0}+\Omega_{3,1}\right)
    +\rho\beta^{\frac{3}{2}}\left[
    \Omega_{3,0}^{\frac{1}{2}}
    +\beta^{\frac{1}{10}}\Omega_{4,0}^{\frac{1}{2}}
    +\Omega_{3,1}^{\frac{1}{2}}
    +\Omega_{2,2}^{\frac{1}{2}}\right]\\
    &+C\beta\left(\sum_{i+j=3}\Omega_{i,j}\right)
    +C\beta^{\frac{1}{2}}\Omega_{3,0}^{\frac{1}{2}}\Omega_{2,0}^{\frac{1}{2}}.
\end{align*}
\end{lemma}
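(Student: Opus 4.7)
The plan is to follow the three-term decomposition used in Lemma~\ref{OmegaLevel3}, but adapted to the two structural differences that arise when $n=0$. First, Remark~\ref{FixAxis} is unavailable because $\dy^3 v$ need not be $\theta$-mean-free, so the $\theta$-integration by parts in the dissipation term yields no Poincar\'e bonus of the form $-\int_{\beta y^2\leq 20} v^{-2}w^2\sigma$. Second, the reaction coefficient from Lemma~\ref{ComputeOnceAndForAll} is $(k+1)v^{-2}-(m+k-1)a=v^{-2}-2a$ at $k=n=0$; its positive mass in the inner region must be absorbed by hand, generating the explicit cross-term $C\beta^{\frac{1}{2}}\Omega_{3,0}^{\frac{1}{2}}\Omega_{2,0}^{\frac{1}{2}}$ in the conclusion.

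Setting $w:=v_{3,0,0}\equiv\dy^3 v$, Condition~[Cs] and dominated convergence let me write $\tfrac{1}{2}\Dt\Omega_{3,0}=I_1+I_2+I_3$, with $I_1:=\lp w, A_v w\rp$, $I_2:=\lp (v^{-2}-2a)w, w\rp$, and $I_3:=\lp w, \sum_{\ell=0}^5 E_{3,0,0,\ell}\rp$. For $I_1$ I integrate by parts in $y$ (using evenness of $\sigma$) and in $\theta$ (using $2\pi$-periodicity), obtaining the dissipation $-\int F_1(\dy w)^2\sigma-\int v^{-2}F_2(\dz w)^2\sigma-\int v^{-1}F_3(\dy w)(\dz w)\sigma$ plus commutator terms controlled exactly in the style of~\eqref{Commutators}. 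Using $p^2+q^2\leq\ve$ in the inner region from [C0i]--[C1i] and the uniform bounds on $p,q$ in the outer region from Theorem~\ref{FirstOrderEstimates}, I extract the dissipation $-\ve'(\Omega_{4,0}+\Omega_{3,1})$; the commutator factors $\sigma^{-1}\dy^2(\sigma F_1)$, $\dz^2(v^{-2}F_2)$, $\sigma^{-1}\dy\dz(\sigma v^{-1}F_3)$ are controlled by Condition~[C3] and~\eqref{gamma-decay} after taking $\Sigma$ large. The drift term contributes $\tfrac{a}{2}\Omega_{3,0}$ after integration by parts, since $y\dy\sigma\leq 0$.

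For $I_2$ I follow Lemma~\ref{Omega20}. Splitting off the outer region via~\eqref{v-below} gives $I_2\leq(\tfrac{1}{16}-2a)\Omega_{3,0}+2I_{2,2}$ with $I_{2,2}:=\lp v^{-1}w, v^{-1}w\rp$; by Condition~[Ca] the coefficient $\tfrac{1}{16}-2a$ is $\leq-\ve$. To estimate $I_{2,2}$ I write $w=\dy(\dy^2 v)$ in one factor and integrate by parts in $y$:
\[I_{2,2}=\lp \dy^2 v,\,2v^{-3}(\dy v)(\dy^3 v)-v^{-2}(\dy^4 v)-v^{-2}(\dy^3 v)\sigma^{-1}\dy\sigma\rp.\]
Condition~[C1i] gives $v^{-3}|\dy v|\ls\beta^{\frac{1}{2}}$ in $\beta y^2\leq 20$, while Condition~[Cg] combined with $\sigma^{-1}|\dy\sigma|=\mathcal{O}(\Sigma^{-\frac{1}{2}}\lan y\ran^{-1})$ controls the weighted integrand in the outer region. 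Cauchy--Schwarz then produces $C\beta^{\frac{1}{2}}\Omega_{3,0}^{\frac{1}{2}}\Omega_{2,0}^{\frac{1}{2}}$ from the first summand, a piece of the form $\rho\,\Omega_{4,0}^{\frac{1}{2}}\Omega_{2,0}^{\frac{1}{2}}$ absorbable by Young's inequality into the $-\ve\Omega_{4,0}$ from $I_1$, and a $\Sigma^{-\frac{1}{2}}$-small remainder from the third.

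For $I_3$ I appeal to the integration-by-parts machinery of Appendix~\ref{GetHigh}, applied now with $(m,n,k)=(3,0,0)$; the bookkeeping that proves Lemma~\ref{E21estimate} gives the bound $\rho\beta^{\frac{3}{2}}\bigl[\Omega_{3,0}^{\frac{1}{2}}+\beta^{\frac{1}{10}}\Omega_{4,0}^{\frac{1}{2}}+\Omega_{3,1}^{\frac{1}{2}}+\Omega_{2,2}^{\frac{1}{2}}\bigr]+C\beta\sum_{i+j=3}\Omega_{i,j}$, where the $\beta^{\frac{3}{2}}$ rate (rather than the $\beta^2$ of Lemma~\ref{OmegaLevel3}) reflects Condition~[C3]'s weaker pointwise bound $|\dy^3 v|\ls\beta$ together with the reduced decay of $\dy^j F_\ell$ for small $j$. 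Collecting $I_1+I_2+I_3$ and using Young's inequality to absorb the residual $\Omega_{4,0}^{\frac{1}{2}}$ factors into the dissipation yields the stated inequality. The main obstacle is the handling of $I_{2,2}$: absent a Poincar\'e bonus in $\theta$, the inner-region mass $\int v^{-2}w^2\sigma$ cannot be eaten by $I_1$, and the only available route is integration by parts into $\dy^2 v$, which is precisely what forces the nonstandard cross-term $C\beta^{\frac{1}{2}}\Omega_{3,0}^{\frac{1}{2}}\Omega_{2,0}^{\frac{1}{2}}$ to appear in the conclusion and prevents Lemma~\ref{Omega30} from being subsumed by Lemma~\ref{OmegaLevel3}.
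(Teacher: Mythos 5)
Your decomposition $\tfrac12\Dt\Omega_{3,0}=I_1+I_2+I_3$ follows the paper, and your handling of $I_1$ is fine, but your treatment of $I_2$ is both unnecessary and broken, and as a consequence you have misattributed the source of the cross-term $C\beta^{\frac12}\Omega_{3,0}^{\frac12}\Omega_{2,0}^{\frac12}$.

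For $I_2$, there is no need to adapt the integration-by-parts device of Lemma~\ref{Omega20}. The paper's observation is that, for $(m,n)=(3,0)$ and $k=0$, the reaction coefficient $v^{-2}-2a$ is \emph{already globally negative}: by [C0i] and the outer-region lower bound $v\geq 4$, one has $v^{-2}\leq\frac{50}{81}$ everywhere, while [Ca] gives $a\approx\frac12$, so $v^{-2}-\tfrac{4a}{3}\leq\frac{50}{81}-\frac{2}{3}+\ve<0$ and hence $v^{-2}-2a\leq-\frac{2a}{3}$ pointwise. Thus $I_2\leq-\frac{2a}{3}\Omega_{3,0}$ outright, and no inner-region Poincar\'e bonus is needed. (This is precisely the point of Remark~\ref{OneReasonFor-g}: the linear part improves as $m$ increases, and $m=3$ is the threshold at which it turns globally negative.) Moreover, the analog of the Lemma~\ref{Omega20} argument genuinely fails here: after integrating $I_{2,2}=\int v^{-2}(\dy^3 v)^2\sigma$ by parts, one of the resulting terms is $\int v^{-2}(\dy^4 v)(\dy^2 v)\sigma$, and unlike in the $(2,0)$ case, where the extracted factor $\dy v$ is tamed by Condition~[Cg], here the factor $\dy^2 v$ has no comparable weighted $L^\infty$ control. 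You assert this piece appears with a small factor $\rho$, but $\|v^{-2}\|_\infty$ is merely bounded, not small; Young's inequality then leaves an $\mathcal{O}(\Omega_{2,0})$ remainder that is not in the conclusion and cannot be absorbed.

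The cross-term $C\beta^{\frac12}\Omega_{3,0}^{\frac12}\Omega_{2,0}^{\frac12}$ in fact comes from $I_3$, specifically from the commutator $E_{3,0,0,5}\es\V 20\V 10+(\V 10)^3$, whose pairing with $\V 30$ gives $|\lp E_{3,0,0,5},\V 30\rp|\ls\beta^{\frac12}\Omega_{3,0}^{\frac12}\Omega_{2,0}^{\frac12}+\rho\beta^{\frac32}\Omega_{3,0}^{\frac12}$. This is exactly the content of Lemma~\ref{E30estimate} (not Lemma~\ref{E21estimate}, which covers $m+n=3$ with $n\geq1$ and does not exhibit this term). Your claimed $I_3$ bound omits this contribution; including it, and replacing your $I_2$ argument with the simple pointwise one, recovers the paper's proof.
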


\begin{proof}
We again write $\frac{1}{2}\,\frac{d}{d\tau}\Omega_{3,0}=I_1+I_2+I_3$
as in the proof of Lemma~\ref{OmegaLevel3}. The only differences
from the proof there are \textsc{(i)} we use Lemma~\ref{E30estimate} to
estimate $I_3\equiv E_{3,0}$, and \textsc{(ii)} we cannot apply
Remark~\ref{FixAxis} to extract a useful integral over the inner region
from $I_1$. However, that integral is not needed. Indeed, by Conditions~[C0i]
and [Ca], one has $v^{-2}-\frac{4a}{3}\leq\frac{50}{81}-\frac{2}{3}+\ve<0$.
(Compare Remark~\ref{OneReasonFor-g}.) Therefore,
\[I_2 :=\int_{\mathbb{R}}\int_{\mathbb{S}^1} (v^{-2}-2a){\V 30}^2\,\Dz\,\sigma\,\Dy
    \leq-\frac{2a}{3}\Omega_{3,0}.\]
The remainder of the proof exactly follows that of Lemma~\ref{OmegaLevel3}.
(Note that we deal with the term
$C\beta^{\frac{1}{2}}\Omega_{3,0}^{\frac{1}{2}}\Omega_{2,0}^{\frac{1}{2}}$
in the proof of Proposition~\ref{SharpLyapunov} below).\end{proof}

We are now ready to bound the Lyapunov functionals $\Omega_{m,n}$ with $2\leq m+n\leq3$.

\begin{proposition}\label{SharpLyapunov}
Suppose that a solution $v=v(y,\theta,\tau)$ of equation~\eqref{MCF-v}
satisfies Assumption~[A1] at $\tau=0$, as well as [Ca], [C0]--[C3],
[C0i]--[C1i], [Cg], [Cr], and [Cs] for $\tau\in [0,\tau_1]$. Then for
the same time interval, one has
\begin{equation}\label{SharpLyapunovEstimates}
\beta^2\Omega_{2,0}+\Omega_{1,1}+\Omega_{0,2}+\beta\Omega_{3,0}
    +\Omega_{2,1}+\Omega_{1,2}+\Omega_{0,3}\ls\beta^4.
\end{equation}
\end{proposition}

\begin{proof}
For $R>0$ to be chosen, define
\[\Upsilon:=R\left(\beta^2\Omega_{2,0}+\Omega_{1,1}+\Omega_{0,2}\right)
+\beta\Omega_{3,0}+\Omega_{2,1}+\Omega_{1,2}+\Omega_{0,3},\]
noting the $\beta$-weights on the first and fourth terms.
Observe that $\frac{d}{d\tau}\beta<0$. Thus by Lemmas~\ref{Omega20}
and \ref{Omega30}, respectively, there exist
$\ve_2,\ve_3>0$ such that
\[\frac{d}{d\tau}(\beta^2\Omega_{2,0})\leq
    -\ve_2(\beta^2\Omega_{2,0})
    +\rho\beta^2\left[(\beta^2\Omega_{2,0})^{\frac{1}{2}}
        +(\beta\Omega_{3,0})^{\frac{1}{2}}
        +\Omega_{2,1}^{\frac{1}{2}}
        +\Omega_{1,2}^{\frac{1}{2}}\right]\]
and
\begin{align*}
\frac{d}{d\tau}(\beta\Omega_{3,0})\leq&\,
    -\ve_3(\beta\Omega_{3,0})
    +\rho\beta^2\left[(\beta\Omega_{3,0})^{\frac{1}{2}}
        +\beta^{\frac{3}{5}}\Omega_{4,0}^{\frac{1}{2}}
        +\Omega_{3,1}^{\frac{1}{2}}
        +\Omega_{2,2}^{\frac{1}{2}}\right]\\
    &+C\beta\left[(\beta\Omega_{3,0})
        +\beta(\Omega_{2,1}+\Omega_{1,2}+\Omega_{0,3})\right]\\
    &+\left\{\frac{\ve_3}{2}(\beta\Omega_{3,0})
    +\frac{C^2}{2\ve_3}(\beta^2\Omega_{2,0})\right\},
\end{align*}
where one applies weighted Cauchy--Schwarz to
$C(\beta^2\Omega_{2,0})^{\frac{1}{2}}(\beta\Omega_{3,0})^{\frac{1}{2}}$
to obtain the terms in braces.

Set $R=\frac{C^2}{\ve_2\ve_3}$. Then collecting the remaining estimates in
Lemmata~\ref{Omega11}--\ref{OmegaLevel3} and applying Cauchy--Schwarz yet again
proves that there exist $\ve_0>0$ and $C_0<\infty$, both independent of $R$, such that
\[\frac{d}{d\tau}\Upsilon\leq
    -\ve_0\Upsilon+C_0\left(R\rho\beta^2 \Upsilon^{\frac{1}{2}}+\rho^2\beta^4\right),\]
whence it follows easily that
$\Upsilon\ls\beta^4$.
\end{proof}

\subsection{Lyapunov functionals of fourth and fifth order}\label{Lyapunov45}
In this subsection, we derive differential inequalities satisfied by the
$\Omega_{m,n}$ with $4\leq m+n\leq5$. As noted above, our  estimates for them
do not need to be sharp, which somewhat reduces the technical work needed to bound the relevant
nonlinear terms. This work, which is done in Lemmata~\ref{pq}--\ref{E5estimate} of
Appendix~\ref{GetHigh}, is also substantially reduced by the availability of
Proposition~\ref{SharpLyapunov}.

\begin{lemma}\label{OmegaLevel4}
There exist $0<\ve<C<\infty$ such that whenever $m+n=4$,
\begin{align*}
\Dt\Omega_{m,n}\leq&\,
    -\ve\left(\Omega_{m,n}+\Omega_{m+1,n}+\Omega_{m,n+1}\right)
    +C\beta^{\frac{1}{2}}\left(\sum_{4\leq i+j\leq 5}\Omega_{i,j}\right)\\
    &+C\beta^r\left(\sum_{4\leq i+j\leq 5}\Omega_{i,j}^{\frac{1}{2}}\right),
\end{align*}
where $r=\frac{8}{5}$ if $n=0$ and $r=2$ otherwise.
\end{lemma}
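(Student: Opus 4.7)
The plan is to follow closely the structure of Lemmata~\ref{OmegaLevel3} and~\ref{Omega30}, delegating the most laborious bookkeeping to the integration-by-parts estimates in Appendix~\ref{GetHigh} (Lemmata~\ref{pq}--\ref{E5estimate}). Fix $(m,n)$ with $m+n=4$, set $w := v_{m,n,n}$, and use Condition~[Cs] together with Corollary~\ref{GeneralEquation} to write $\tfrac12\Dt\Omega_{m,n} = I_1 + I_2 + I_3$, where $I_1 := \lp A_v w,\, w\rp$, $I_2 := \lp [(n+1)v^{-2}-(m+n-1)a]\,w,\,w\rp$, and $I_3 := \lp \sum_{\ell=0}^5 E_{m,n,n,\ell},\,w\rp$.

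For $I_1$ I would integrate by parts in $y$ and $\theta$ (using the parity of $w$ and $\sigma$ in $y$) and apply Cauchy--Schwarz pointwise. Combining Theorem~\ref{FirstOrderEstimates} with $(\dy w)^2 \geq v_{m+1,n,n}^2 - C\beta w^2$ and $v^{-2}(\dz w)^2 \geq v_{m,n+1,n+1}^2 - C\beta^3 w^2$ produces the principal good term $-\ve(\Omega_{m+1,n}+\Omega_{m,n+1})$ plus lower-order remainders. When $n\geq 1$, decomposing $v=v_1+v_2$ as in~\eqref{Decomposition} and invoking [Cr] together with Remark~\ref{FixAxis} also harvests a useful negative inner-region integral proportional to $\int_{\beta y^2\leq 20} v^{-2}w^2$. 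The remaining integration-by-parts commutators from $\sigma^{-1}\dy^j\sigma$ and derivatives of $F_\ell$ are controlled via~\eqref{dyF_ell-estimate}--\eqref{dz2F_ell-estimate}, \eqref{gamma-decay}, and the pointwise third-derivative bounds in~[C3]; these contribute at most $C\beta^{1/2}\Omega_{i,j}$ for $4\leq i+j\leq 5$, exactly matching the borderline term in the conclusion. For $I_2$ I would apply Remark~\ref{OneReasonFor-g} with $k=n$ and $m+k=4$: when $m\geq 3$ the inner-region bound $v\geq g(y,\beta)$ from [C0i] already gives a negative coefficient; in the remaining cases $(m,n)\in\{(2,2),(1,3),(0,4)\}$ the outer-region bound~\eqref{v-below} ($v\geq 4$) handles $\{\beta y^2\geq 20\}$, while the leftover positive reaction in the inner region is absorbed by the extra negative integral harvested from $I_1$. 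This yields $I_2 \leq -\ve\Omega_{m,n}$ in every case.

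The commutator integral $I_3$ is the source of the $n$-dependent exponent $r$ and is the main obstacle. Using the integration-by-parts machinery of Appendix~\ref{GetHigh}, each $\lp E_{m,n,n,\ell},\,w\rp$ is rewritten as a sum of integrals whose integrands are products of a pointwise-bounded factor and at most one $L^2_\sigma$-factor of order four or five; Cauchy--Schwarz then yields the two types of terms in the conclusion. The dichotomy $r=8/5$ versus $r=2$ is dictated by the sharpest pointwise third-order bound available from~[C3]: for $n=0$, the unavoidable third-order factor produced by $\dy F_\ell$ is $\dy^3 v$ itself, for which~[C3] gives only $|\dy^3 v|\ls\beta$, yielding the weaker weight $\beta^{8/5}$; for $n\geq 1$, the analogous factor is a mixed derivative ($\dy^2\dz v$, $\dy\dz^2 v$, or $v^{-1}\dz^3 v$) with decay $\beta^{3/2}$, giving the sharper weight $\beta^2$. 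The hard part is verifying that, after integration by parts, no factor of order greater than five ever appears on the right-hand side, so that only $\Omega_{i,j}$ with $i+j\leq 5$ occur; this is precisely what the appendix lemmata achieve.
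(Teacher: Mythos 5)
Your proposal is correct and follows the same route as the paper's (very terse) proof: decompose $\tfrac12\Dt\Omega_{m,n}$ as $I_1+I_2+I_3$ as in Lemma~\ref{OmegaLevel3}, obtain the negative contribution from $I_1$ and the inner/outer coefficient bound on $I_2$, and delegate the commutator bound $I_3=E_{m,n}$ to Lemma~\ref{E4estimate} in Appendix~\ref{GetHigh}. One small imprecision worth flagging: the phrase ``at most one $L^2_\sigma$-factor'' understates what happens, since after integration by parts the inner product already pairs $v_{m,n,n}$ with other order-4 or order-5 $L^2_\sigma$ factors, producing the $\beta^{1/2}\Omega_{i,j}$ terms; and the source of $r=\tfrac{8}{5}$ is not $|\dy F_\ell|\cdot|\dy^3 v|$ but rather products such as $(\dy^2 v)^2$ in $E_{m,0,0,5}$, where $\|\dy^2 v\|_\infty\|\dy^2 v\|_\sigma\ls\beta^{3/5}\cdot\beta=\beta^{8/5}$ is the weakest available weight (because estimate~\eqref{SharpLyaponuvEstimates} gives only $\Omega_{2,0}\ls\beta^2$ rather than $\beta^4$); the spirit of your explanation --- pure-$y$ derivatives carry weaker decay than mixed ones --- is nevertheless right.
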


\begin{proof}
Following the proof of Lemma~\ref{OmegaLevel3}, the result follows easily when one
observes that
\begin{multline*}
\int_{\mathbb{R}}\int_{\mathbb{S}^1} [(n+1)v^{-2}-(m+n-1)a]v_{m,n,n}^2\,\Dz\,\sigma\,\Dy\\
    \leq\left(\frac{3}{8}-\frac{5a}{2}\right)\Omega_{m,n}+
    \left(4-\frac{a}{2}\right)\int_{\beta y^2\leq 20}\int_{\mathbb{S}^1}
        v^{-2} w^2\,\Dz\,\sigma\,\Dy
\end{multline*}
and applies the conclusion of Lemma~\ref{E4estimate} from Appendix~\ref{GetHigh}.
\end{proof}

\begin{lemma}\label{OmegaLevel5}
There exist $0<\ve<C<\infty$ such that whenever $m+n=5$,
\begin{align*}
\Dt\Omega_{m,n}\leq&\,
    -\ve\left(\Omega_{m,n}+\Omega_{m+1,n}+\Omega_{m,n+1}\right)
    +C\beta^{\frac{1}{2}}\left(\sum_{4\leq i+j\leq 6}\Omega_{i,j}\right)\\
    &+C\beta^{\frac{21}{10}}\left(\sum_{5\leq i+j\leq 6}\Omega_{i,j}^{\frac{1}{2}}\right).
\end{align*}
\end{lemma}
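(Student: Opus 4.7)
The plan is to follow the template established by Lemmas~\ref{OmegaLevel3} and \ref{OmegaLevel4}, adjusting the coefficient bookkeeping for the higher order $m+n=5$ and invoking the appropriate commutator estimate from Appendix~\ref{GetHigh}. Setting $w:=v_{m,n,n}$, one applies dominated convergence (justified by Condition~[Cs] since $m+n\leq 5$) and Corollary~\ref{GeneralEquation} to write
\[
\frac{1}{2}\Dt\Omega_{m,n}=I_1+I_2+I_3,
\]
where $I_1$ collects the $A_v w$ contributions, $I_2$ is the linear reaction $\int_{\mathbb{R}}\int_{\mathbb{S}^1}[(n+1)v^{-2}-4a]\,w^2\,\Dz\,\sigma\,\Dy$, and $I_3=\int_{\mathbb{R}}\int_{\mathbb{S}^1}\bigl(\sum_{\ell}E_{m,n,n,\ell}\bigr)w\,\Dz\,\sigma\,\Dy$.

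For $I_1$ I would integrate by parts in $y$ and $\theta$ exactly as in the proof of Lemma~\ref{OmegaLevel3}: this produces the negative principal terms $-\ve(\Omega_{m+1,n}+\Omega_{m,n+1})$, a drift contribution $(\frac{a}{2}+\ve)\Omega_{m,n}$ from the $-ay\dy w$ piece (using $y\dy\sigma\leq 0$), and, when $n\geq 1$, an inner-region sink proportional to $-\int_{\beta y^2\leq 20}\int_{\mathbb{S}^1}v^{-2}w^2\,\Dz\,\sigma\,\Dy$ extracted from the $v^{-2}F_2(\dz w)^2$ term via Remark~\ref{FixAxis}. The commutator corrections of the form $\sigma^{-1}\dy^i\dz^j(\sigma F_\ell)$ are controlled exactly as in display~\eqref{Commutators}, using Conditions~[C1i], [C2] and the smallness of $\Sigma^{-1}$. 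For $I_2$ with $n\geq 1$, splitting into inner and outer regions and combining with the sink from $I_1$ shows the net coefficient on $\Omega_{m,n}$ is strictly negative once $\kappa$ is sufficiently large, by Condition~[Ca]. For the exceptional case $(m,n)=(5,0)$, where Remark~\ref{FixAxis} is unavailable, Conditions~[C0i] and [Ca] give $v^{-2}-4a\leq \frac{50}{81}-2+O(\kappa^{-1})<0$ pointwise, so $I_2\leq -\ve\Omega_{5,0}$ directly, paralleling the argument in Lemma~\ref{Omega30}.

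The bulk of the work lies in the commutator term $I_3$, which I would cite from the level-$5$ analogue of Lemma~\ref{E5estimate} in Appendix~\ref{GetHigh}. After integration by parts one transfers a derivative off the highest-order factor in each $E_{m,n,n,\ell}$, converting derivatives of $v$ of order six into derivatives of $w$; these then pair via Cauchy--Schwarz with $\Omega_{i,j}^{1/2}$ for $5\leq i+j\leq 6$, producing the $C\beta^{21/10}$ tail. The exponent $\frac{21}{10}$ arises by playing off three sources of smallness: pointwise $\beta^{3/2}$-decay of low-order derivatives from Theorem~\ref{InnerEstimates} and Condition~[C3], the $L^2_\sigma$-smallness $\Omega_{i,j}\ls\beta^4$ for $2\leq i+j\leq 3$ supplied by Proposition~\ref{SharpLyaponuv}, and the $\beta^{1/2}$ slack absorbed into the separate $C\beta^{1/2}\sum_{4\leq i+j\leq 6}\Omega_{i,j}$ term on the right-hand side.

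The main obstacle is the combinatorial bookkeeping for $I_3$. The quasilinear coefficients $F_\ell(p,q)$, with $p=\dy v$ and $q=v^{-1}\dz v$, when differentiated a total of five times in $y$ and $\theta$, generate a proliferating family of monomials in derivatives of $v$ of all orders up to six. Each such monomial must be arranged so that the single highest-order factor is absorbed by the sink $-\ve(\Omega_{m+1,n}+\Omega_{m,n+1})$, while every other factor is controlled either pointwise via Theorem~\ref{InnerEstimates} or in $L^2_\sigma$ via Proposition~\ref{SharpLyaponuv}. This systematic accounting is precisely what Lemmas~\ref{pq}--\ref{E5estimate} are designed to carry out; once those auxiliary estimates are in hand, assembling the final differential inequality is a routine combination of the bounds for $I_1$, $I_2$, and $I_3$.
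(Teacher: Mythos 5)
Your proposal is correct and follows the same route as the paper: split $\tfrac{1}{2}\Dt\Omega_{m,n}$ into $I_1+I_2+I_3$, treat $I_1$ by integration by parts as in Lemma~\ref{OmegaLevel3}, handle $I_2$ separately for $n\geq1$ (inner-region sink via Remark~\ref{FixAxis}) and $n=0$ (pointwise negativity of $v^{-2}-4a$ from Conditions~[C0i] and [Ca]), and invoke Lemma~\ref{E5estimate} for $I_3$. One tiny wording slip: you refer to ``the level-$5$ analogue of Lemma~\ref{E5estimate},'' but Lemma~\ref{E5estimate} is itself the $m+n=5$ commutator estimate, and it is exactly what the paper cites.
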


\begin{proof}
The proof exactly parallels that of Lemma~\ref{OmegaLevel3} when one observes that
\[\int_{\mathbb{R}}\int_{\mathbb{S}^1} [v^{-2}-(m-1)a]v_{m,0,0}^2\,\Dz\,\sigma\,\Dy
    \leq-\frac{5a}{3}\Omega_{m,0}\]
and applies Lemma~\ref{E5estimate} from Appendix~\ref{GetHigh}.
\end{proof}

\subsection{Estimates of second- and third-order derivatives}\label{InnerProof}

\begin{proof}[Proof of Theorem~\ref{InnerEstimates}]
As indicated above, the proof is in three steps.
\smallskip

\textsc{(Step i)} This was accomplished in Proposition~\ref{SharpLyapunov},
where we proved that
\[\beta^2\Omega_{2,0}+\Omega_{1,1}+\Omega_{0,2}+\beta\Omega_{3,0}
    +\Omega_{2,1}+\Omega_{1,2}+\Omega_{0,3}\ls\beta^4.\]

\textsc{(Step ii)} Define
\[\Upsilon:=\beta^{\frac{4}{5}}\Omega_{4,0}+\Omega_{3,1}+\Omega_{2,2}+\Omega_{1,3}+\Omega_{0,4}
    +\sum_{m+n=5}\Omega_{m,n},\]
noting the $\beta$-weight imposed on the first term.
By Lemmas~\ref{OmegaLevel4}--\ref{OmegaLevel5} and Cauchy--Schwarz, there exist
$0<\ve<C<\infty$ such that
\[\frac{d}{d\tau}\Upsilon\leq-\ve\Upsilon+C\left(\beta^2\Upsilon^{\frac{1}{2}}+\beta^4\right).\]
Assumption~[A6] bounds $\Upsilon$ at $\tau=0$. It follows that $\Upsilon\ls\beta^4$, namely that
\[\beta^{\frac{4}{5}}\Omega_{4,0}+\Omega_{3,1}+\Omega_{2,2}+\Omega_{1,3}+\Omega_{0,4}
    +\sum_{m+n=5}\Omega_{m,n}\ls\beta^4.\]

\textsc{(Step iii)}
Now that we have $L^2_\sigma$ bounds on derivatives of orders two through five, Sobolev embedding
gives pointwise bounds on derivatives of orders two through three. Because of the weighted norm
$\|\cdot\|_\sigma$ these pointwise bounds are not uniform in $y$. Using the facts
that $\sigma\sim\lan y \ran^{-\frac{6}{5}}$ as $|y|\rightarrow\infty$ and that
$|y|\ls\beta^{-\frac{1}{2}}$ in the inner region, one obtains
\[\beta^2(\dy^2 v)^2+v^{-2}(\dy\dz v)^2+v^{-4}(\dz^2 v)^2\ls
    \beta^{4-\frac{1}{20}-\frac{3}{5}}\]
and
\[\beta(\dy^3 v)^2+v^{-2}(\dy^2\dz v)^2+v^{-4}(\dy\dz^2 v)^2+v^{-6}(\dz^3 v)^2
    \ls\beta^{4-\frac{1}{20}-\frac{3}{5}}\]
in the inner region. These inequalities are equivalent to
estimates~\eqref{InnerSecondOrder}--\eqref{InnerThirdOrder}.
\end{proof}

\section{Estimates in the outer region}\label{ProveLast}

\subsection{Second-order decay estimates}

As noted above, it is easy in the outer region to ``close the loop'' by estimating
second-order derivatives without needing assumptions on higher derivatives.

\begin{proof}[Proof of estimate~\eqref{OuterSecondOrder}]
By Lemmata~\ref{v200-estimate}--\ref{v022-estimate} and Cauchy--Schwarz,
there exist constants $C_1,C_2,C_3$ such that in the outer region, one has
\begin{align*}
\dt|\dy^2 v|^2
    &\leq A_v(|\dy^2 v|^2)-\frac{5}{8}|\dy^2 v|^2
    -\ve\left(|\dy^3 v|^2+v^{-2}|\dy^2\dz v|^2\right)\\
    &+C_1\left(\beta^2
                +\beta^{\frac{11}{10}}v^{-2}|\dy\dz^2 v|\right),
\end{align*}
and
\begin{align*}
\dt(v^{-2}|\dy\dz v|^2)
    &\leq A_v(v^{-2}|\dy\dz v|^2)-\frac{1}{2}v^{-2}|\dy\dz v|^2
    -\ve\left(v^{-2}|\dy^2\dz v|^2+v^{-4}|\dy\dz^2 v|^2\right)\\
    &+C_2\left(\beta^4
        +\beta^3|\dy^3 v|+\beta^2 v^{-3}|\dz^3 v|\right),
\end{align*}
and
\begin{align*}
\dt(v^{-4}|\dz^2 v|^2)
    &\leq A_v(v^{-4}|\dz^2 v|^2)-\frac{3}{8}v^{-4}|\dz^2 v|^2
    -\ve\left(v^{-4}|\dy\dz^2 v|^2+v^{-6}|\dz^3 v|^2\right)\\
    &+C_3\left(\beta^4+\beta^3 v^{-1}|\dy^2\dz v|\right).
\end{align*}

Define
\[ \Upsilon :=\beta^2|\dy^2 v|^2+v^{-2}|\dy\dz v|^2+v^{-4}|\dz^2 v|^2,\]
noting the $\beta$-weight imposed on the first term.
Then because $\frac{d}{d\tau}\beta<0$, there exists $C_0<\infty$ such that
\[\dt\Upsilon\leq A_v(\Upsilon)-\frac{3}{8}\Upsilon
    +C_0\beta^4+E,\]
where
\begin{align*}
E   &:= \left(C_2\beta^3|\dy^3 v|
        -\ve \beta|\dy^3 v|^2\right)
    +\left(C_3\beta^3 v^{-1}|\dy^2\dz v|
        -\ve v^{-2}|\dy^2\dz v|^2\right)\\
    &+\left(C_1\beta^{\frac{31}{10}} v^{-2}|\dy\dz^2 v|
        -\ve v^{-4}|\dy\dz^2 v|^2\right)
    +\left(C_2\beta^2 v^{-3}|\dz^3 v|
        -\ve v^{-6}|\dz^3 v|^2\right).
\end{align*}
Cauchy--Schwarz proves that $E\ls\beta^4$.
Hence there exists $C<\infty$ such that
\[\dt(\Upsilon-\Gamma\beta^{\frac{33}{10}})
    \leq A_v(\Upsilon-\Gamma\beta^{\frac{33}{10}})
    -\frac{3}{8}(\Upsilon-\Gamma\beta^{\frac{33}{10}})
    +\left[\frac{33}{10}\beta+\frac{C}{\Gamma}\beta^{\frac{7}{10}}-\frac{3}{8}\right]
    \Gamma\beta^{\frac{33}{10}}.\]
Choosing $\kappa$ and $\Gamma$ sufficiently large ensures
both that the quantity in brackets above is negative, and
that $\Upsilon\leq\Gamma\beta^{\frac{33}{10}}$ at $\tau=0$.
By Theorem~\ref{InnerEstimates}, one has $\Upsilon\leq\Gamma\beta^{\frac{33}{10}}$
on the rest of the parabolic boundary. Hence the result follows from
the maximum principle in the form of Proposition~\ref{PMP}.
\end{proof}

\subsection{Third-order decay estimates}\label{ProveThird}

In this subsection, we complete the proof of Theorem~\ref{OuterEstimates}
by establishing estimate~\eqref{OuterThirdOrder} in the outer region.

As we did in the previous section, we begin by deriving differential
inequalities satisfied by the squares of the quantities that appear
in \eqref{OuterThirdOrder}. These estimates do not have to be sharp,
because we only need relatively weak third-order estimates for our
application in Section~\ref{SecondBootstrap}. To keep the notation from becoming
too cumbersome, we consistently write $(v_{m,n,k})$ for
$(v^{-k}\dy^m \dz^n v)$ in the remainder of this section.

\begin{lemma}\label{Outer-v30}
There exist $0<\ve<C<\infty$ such that in the outer region,
\begin{align*}
\dt{\V 30}^2\leq&\, A_v({\V 30}^2)-\frac{7}{4}{\V 30}^2
    -\ve\left[{\V 40}^2+{\V 31}^2\right]\\
    &+C\left(\beta^3+\beta^{\frac{6}{5}}|\V 30|+\beta^{\frac{3}{2}}|\V 22|
        +\beta\sum_{m+n=3}v_{m,n,n}^2\right).
\end{align*}
\end{lemma}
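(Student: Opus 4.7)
The proof follows the template established in Lemmata~\ref{v200-estimate}--\ref{v022-estimate}, applied with $(m,n,k)=(3,0,0)$, but in a ``loss-tolerant'' form since the right-hand side need not be sharp. Setting $w:=v_{3,0,0}\equiv\dy^3 v$, equation~\eqref{Evolve-vmnk2} together with estimate~\eqref{BisGood} yields
\[
\dt w^2 \leq A_v(w^2) + 2(v^{-2}-2a)w^2 - B_{3,0,0} + 2w\sum_{\ell=1}^5 E_{3,0,0,\ell},
\]
where the $\ell=0$ commutator vanishes since $k=0$. In the outer region, estimate~\eqref{v-below} gives $v^{-2}\leq\tfrac{1}{16}$ and Condition~[Ca] gives $a\geq\tfrac{1}{2}-\kappa^{-1}$, so for $\kappa$ large the linear coefficient satisfies $2(v^{-2}-2a)\leq-\tfrac{7}{4}$. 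The first-order bounds of Theorem~\ref{FirstOrderEstimates} force $|p|,|q|\ls 1$, whence $B_{3,0,0}\geq 2\ve({\V 40}^2+{\V 31}^2)$ for some $\ve>0$.

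It remains to estimate the five nonlinear commutators. The plan is to expand each $E_{3,0,0,\ell}$ by the Leibniz rule, controlling the factors $\dy^j F_\ell$ for $j=1,2,3$ by extending \eqref{dyF_ell-estimate}--\eqref{dy2F_ell-estimate} in the obvious way, and controlling the remaining derivatives of $v$ by Conditions~[C1]--[C3] and Theorem~\ref{FirstOrderEstimates}. Each summand of $2|w\,E_{3,0,0,\ell}|$ then falls into one of four categories: \emph{(a)} lower-order pieces bounded by $C\beta^{6/5}|w|=C\beta^{6/5}|\V 30|$ after using [C3] to trade powers of $|w|$ for powers of $\beta$; \emph{(b)} cross terms involving another third-order derivative, dispatched by Cauchy--Schwarz into $C\beta\sum_{m+n=3}v_{m,n,n}^2$; \emph{(c)} terms linear in $|\V 40|$ or $|\V 31|$, which Young's inequality converts into $\tfrac{\ve}{2}({\V 40}^2+{\V 31}^2)+O(\beta^3)$, the quadratic part being absorbed by $B_{3,0,0}$; and \emph{(d)} terms linear in the cross fourth derivative $v^{-2}|\dy^2\dz^2 v|=|\V 22|$, which are not absorbable by $B_{3,0,0}$ and must be retained as a $C\beta^{3/2}|\V 22|$ remainder.

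Summing these contributions produces the claimed inequality. The main obstacle is category \emph{(d)}: one must verify that the $|\V 22|$ contributions --- arising for instance from $E_{3,0,0,2}$ when two of the three $\dy$'s in $\dy^3$ fall on $\dz^2 v$ inside $v^{-2}F_2\,\dz^2 v$, and analogously from $E_{3,0,0,3}$ --- come with coefficient no larger than $\beta^{3/2}$. Tracing through, this bound emerges because the surviving $\dy$-derivative on the $v^{-2}F_\ell$ factor is controlled by the sharp first-order estimate~\eqref{v_y-est} $|\dy v|\ls\beta^{1/2}v^{1/2}$ in the outer region, and because Condition~[C3] supplies the additional factor $|w|\ls\beta$ when multiplying by $w$. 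A secondary nuisance is bookkeeping the $\dy^3 F_\ell$ expansion, whose leading fourth-order pieces must all be matched either against $B_{3,0,0}$ or against $|\V 22|$ without generating any unmatched $\V 40$-- or $\V 31$--linear residues of size larger than $\beta^{3/2}$.
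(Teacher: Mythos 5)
Your proposal follows essentially the same route as the paper: both avoid the integration by parts of Lemma~\ref{E30estimate} and instead bound $|\V 30\sum_\ell E_{3,0,0,\ell}|$ pointwise, combining the $\dy^j F_\ell$ bounds of Lemma~\ref{Fell3} (which is the ``obvious extension'' you allude to) with the already-established second-order outer estimate~\eqref{OuterSecondOrder} and the Condition~[C3] bound $|\V 30|\ls\beta$, then closing with Cauchy--Schwarz and absorption of $\V 40^2$, $\V 31^2$ by $B_{3,0,0}$. One small bookkeeping slip: $\V 22$ arises only from $E_{3,0,0,2}$ via the term $(\dy(v^{-2}F_2))(\dy^2\dz^2 v)$; the commutator $E_{3,0,0,3}$ contains $\dz$ only to first order (through $\dy\dz v$ and $q=v^{-1}\dz v$), so it produces $\V 31$ and $\V 21$ --- which are dispatched by $B_{3,0,0}$ and the $\beta\sum_{m+n=3}v_{m,n,n}^2$ bucket --- but not $\V 22$.
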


\begin{proof}
Following the proof of Lemma~\ref{v200-estimate}, one applies Condition~[Ca]
and estimates~\eqref{v-below}, \eqref{v_y-est}, and \eqref{v_z-est} to
equations~\eqref{Evolve-vmnk2}--\eqref{BisGood} to bound the linear terms above.
So it suffices to derive a pointwise bound for $|\V 30\sum_{\ell=1}^5 E_{3,0,0,\ell}|$.

For this, we can use some but not all of the estimates derived in
Lemma~\ref{E30estimate}. Four terms there were estimated using
integration by parts. Here, we instead combine the pointwise bounds
derived in Lemma~\ref{Fell3} with
estimates~\eqref{v200-estimate}--\eqref{v022-estimate}
to obtain the bounds
\[|\V30(\dy^3 F_1)\V 20|\ls\beta^{\frac{7}{10}}|\V 30|
    \left(\beta^{\frac{8}{5}}+|\V 40|+|\V 31|\right)\]
and
\[|\V30(\dy^3 F_2)\V 02|\ls\beta^{\frac{3}{2}}|\V 30|
    \left(\beta^{\frac{8}{5}}+|\V 40|+|\V 31|\right),\]
with similar estimates holding for the contributions
from $F_3$ and $F_4$.
Again using estimates~\eqref{v200-estimate}--\eqref{v022-estimate}
instead of the method used in Lemma~\ref{E30estimate},
we obtain the critical bound
\[|\V30 E_{3,0,0,5}|\ls|\V 30|\,|\V 20 \V 10+{\V 10}^3|\ls
    \beta^{\frac{6}{5}}|\V 30|.\]
The result follows using Cauchy--Schwarz and our assumption
$|\V 30|\ls\beta$.
\end{proof}

\begin{lemma}\label{Outer-vmn}
There exist $0<\ve<C<\infty$ such that for $m+n=3$ with $n\geq1$,
one estimates in the outer region that
\begin{align*}
\dt(v_{m,n,n})^2\leq&\, A_v((v_{m,n,n})^2)-\frac{5}{4}(v_{m,n,n})^2
    -\ve\left[(v_{m+1,n,n})^2+(v_{m,n+1,n+1})^2\right]\\
    &+C\beta^2\left(|v_{m,n,n}|+\beta|v_{4,0,0}|+|v_{3,1,1}|+|v_{2,2,2}|
        +|v_{1,3,3}|+|v_{0,4,4}|\right)\\
    &+C\beta^{\frac{3}{5}}[(v_{2,1,1})^2+(v_{1,2,2})^2+(v_{0,3,3})^2].
\end{align*}
\end{lemma}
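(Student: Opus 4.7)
The plan is to follow Lemma~\ref{Outer-v30} closely, adapting the argument to the case $n\geq 1$ while exploiting the fact that the linear reaction coefficient becomes even more favorable as $n$ increases. Setting $w := v_{m,n,n}$ and invoking equation~\eqref{Evolve-vmnk2} together with estimate~\eqref{BisGood}, I would first derive
\[
\dt w^2 \leq A_v(w^2) + 2\bigl[(n+1)v^{-2} - 2a\bigr]w^2 - Y\bigl[(v_{m+1,n,n})^2 + (v_{m,n+1,n+1})^2\bigr] + 2\Bigl|w\sum_{\ell=0}^5 E_{m,n,n,\ell}\Bigr|,
\]
where $Y := 2/(1+p^2+q^2)$. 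For the linear coefficient, estimate~\eqref{v-below} gives $v^{-2}\leq 1/16$ in the outer region, and Condition~[Ca] gives $2a\geq 1-2\kappa^{-1}$, so $2[(n+1)v^{-2}-2a]\leq 2[(n+1)/16 - 1 + 2\kappa^{-1}]$; for each $n\in\{1,2,3\}$, this is at most $-5/4$ once $\kappa$ is sufficiently large. The $Y$-term produces the negative quadratic $-\ve[(v_{m+1,n,n})^2 + (v_{m,n+1,n+1})^2]$ by estimates~\eqref{v_y-bound} and \eqref{v_z-est}, which give a uniform upper bound on $1+p^2+q^2$ in the outer region (cf.\ Remark~\ref{WhyBisGood}).

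The bulk of the work is pointwise control of the six commutators $E_{m,n,n,\ell}$. The strategy of Lemma~\ref{Outer-v30} carries over. I would expand each $E_{m,n,n,\ell}$ via the Leibniz rule using definitions~\eqref{Define-Xmnkj}, bound derivatives of $F_\ell$ by combining Lemma~\ref{UniformBounds} with the formulas~\eqref{dyF_ell-estimate}, \eqref{dy2F_ell-estimate}, \eqref{dzF_ell-estimate}, \eqref{dydzF_ell-estimate}, and \eqref{dz2F_ell-estimate}, and then feed in the first-order estimates of Theorem~\ref{FirstOrderEstimates} together with the pointwise second-order decay~\eqref{OuterSecondOrder}, which has just been established in the preceding subsection. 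Each $E_{m,n,n,\ell}$ then breaks into two types of contributions: (i) terms containing exactly one fourth-order derivative multiplied by a decaying coefficient, and (ii) products of two lower-order derivatives.

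The final step is absorption and collection. Type-(i) contributions whose fourth-order factor is $v_{m+1,n,n}$ or $v_{m,n+1,n+1}$ are absorbed into the negative $Y$-quadratic by Young's inequality, each absorption producing a harmless $\beta^4$ residual dominated by the retained $\beta^2|w|$ term. The remaining type-(i) contributions --- of mixed form $v_{4,0,0}, v_{3,1,1}, v_{2,2,2}, v_{1,3,3}, v_{0,4,4}$ --- are kept linearly with $\beta^2$ weights as in the statement, the extra $\beta$-factor in front of $v_{4,0,0}$ compensating for the additional pure $y$-derivative (mirroring the role of the $\beta$-weight on $\Omega_{4,0}$ in Step~(ii) of the proof of Theorem~\ref{InnerEstimates}, and the corresponding weight in Lemma~\ref{Outer-v30}). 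Type-(ii) contributions yield quadratics in the second-order derivatives; after Cauchy--Schwarz applied to $|w|\cdot|E_{m,n,n,\ell}|$, these supply precisely the $\beta^{3/5}[(v_{2,1,1})^2 + (v_{1,2,2})^2 + (v_{0,3,3})^2]$ remainder, where the $\beta^{3/5}$ arises from the $|\dy F_\ell|\ls\beta^{3/5}$ estimate in~\eqref{dyF_ell-estimate}.

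The main obstacle is bookkeeping rather than analysis: for each $\ell\in\{0,\dots,5\}$ and each $(m,n)\in\{(2,1),(1,2),(0,3)\}$, one must expand the commutator, classify each summand as (i) or (ii), and verify that the advertised $\beta$-powers materialize. No new ideas beyond those in Lemmata~\ref{v200-estimate}--\ref{v022-estimate} and \ref{Outer-v30} should be required; the care lies in ensuring that every uncontrolled fourth-order derivative is either absorbable into the $Y$-quadratic or comes equipped with at least a $\beta^2$-coefficient, and that every second-order quadratic inherits a factor of at least $\beta^{3/5}$ from the $F_\ell$-derivatives --- an exponent that is just small enough to be closed off downstream by the sharp estimate of Proposition~\ref{SharpLyaponuv}.
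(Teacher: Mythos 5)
Your proposal is correct and follows the paper's approach essentially step by step: invoke Corollary~\ref{GeneralEquation} for the linear reaction and $B_{m,n,k}$ terms, then bound the commutators $E_{m,n,n,\ell}$ pointwise using the just-established outer-region second-order decay together with derivative bounds on the $F_\ell$, and collect via Cauchy--Schwarz and the smallness $|v_{m,n,n}|\ls\beta^{3/2}$ from Condition~[C3]. One point worth making explicit: since $E_{m,n,n,\ell}$ with $m+n=3$ contains third derivatives of $F_\ell$ (which carry the fourth-order derivatives $\V 40,\V 31,\dots$ of $v$), you will need the pointwise bounds of Lemma~\ref{Fell3} in addition to the first- and second-derivative formulas you cite, exactly as the paper uses in Lemmata~\ref{Outer-v30} and~\ref{Outer-vmn}.
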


\begin{proof}
As in the proof of Lemma~\ref{Outer-v30}, we may apply Condition~[Ca]
and estimates~\eqref{v-below}, \eqref{v_y-est}, and \eqref{v_z-est} to
equations~\eqref{Evolve-vmnk2}--\eqref{BisGood} in order to bound the
linear terms above.

To derive a pointwise bound for $|(v_{m,n,n})\sum_{\ell=0}^5 E_{m,n,n,\ell}|$,
we use some but not all of the estimates derived in Lemma~\ref{E21estimate}.
To replace the estimates that were obtained there using integration by parts,
we again combine the pointwise bounds derived in Lemma~\ref{Fell3} with
estimates~\eqref{v200-estimate}--\eqref{v022-estimate}, obtaining, for example,
\[|(v_{m,n,n})(v^{-n}\dy^m\dz^n F_1)\V 20|\ls\beta^{\frac{7}{10}}|(v_{m,n,n})|
    \left(\beta^2+|(v_{m+1,n,n})|+|(v_{m,n+1,n+1})|\right),\]
with stronger estimates holding for the contributions from $F_\ell$, $\ell=2,3,4$.
Here, the term $| (v_{m,n,n})E_{m,n,n,5}|$ is easy to estimate. Thus the result
again follows from Cauchy--Schwarz and our assumption that
$|(v_{m,n,n})|\ls\beta^{\frac{3}{2}}$.
\end{proof}

Now we are ready to prove our third-order decay estimates in the outer region.

\begin{proof}[Proof of estimate~\eqref{OuterThirdOrder}]
Define
\[\Upsilon:=\beta{\V30}^2+{\V 21}^2+{\V 12}^2+{\V 03}^2.\]
Then, just as in the proof of estimate~\eqref{OuterSecondOrder}, it
follows from Lemmas~\ref{Outer-v30}--\ref{Outer-vmn} and Cauchy--Schwarz that
\begin{align*}
\dt\Upsilon&\leq A_v(\Upsilon)-\Upsilon
    +C_0\left(\beta^{\frac{17}{10}}\Upsilon^{\frac{1}{2}}+\beta^4\right)\\
    &\leq A_v(\Upsilon)-\frac{7}{8}\Upsilon+C\beta^{\frac{34}{10}}.
\end{align*}
By hypothesis, one has $\Upsilon\ls\beta^{\frac{33}{10}}$ on the parabolic
boundary of the outer region. Thus applying the parabolic maximum
principle in the form of Proposition~\ref{PMP}, exactly as in the proof
of estimate~\eqref{OuterSecondOrder}, shows that
$\Upsilon\ls\beta^{\frac{33}{10}}$ throughout the outer region for
$\tau\in[0,\tau_1]$.
\end{proof}

This completes our proof of Theorem~\ref{OuterEstimates}.

\subsection{Third-order smallness estimates}\label{Smallness}
Now we prove Theorem~\ref{SmallnessEstimates}, whose
purpose is to bound $|\dy\dz^2 v|$ and $v^{-1}|\dz^3 v|$ for use
in Section~\ref{SecondBootstrap} below.\footnote{Note that Lemma~\ref{InterpEst}
lets us control $|\dy\dz v|$ and $v^{-1}|\dz^k v|$ for $k=1,2$ by
bounding $|\dy\dz^2 v|$ and $v^{-1}|\dz^3 v|$, respectively.}

Define
\begin{equation}\label{eq:newUps}
\Upsilon:= (v^{-1}\dz^3 v)^2+(\dy\dz^2 v)^2
    +\beta^{-\frac{11}{10}}\left[(\dy^2 \dz v)^2+(\dy^3 v)^2\right].
\end{equation}
The factor $\beta^{-\frac{11}{10}}$ will be used in estimate~\eqref{eq:D1} below.
In this section, we prove:
\begin{proposition}\label{prop:small}
There exists a constant $C$ such that in the outer region $\beta y^2\geq 20$,
\[\dt\Upsilon\leq A_{v}\Upsilon+C\beta^{\frac{11}{10}}.\]
\end{proposition}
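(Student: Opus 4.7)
The plan is, for each of the four summands of $\Upsilon$, to derive an evolution equation via Corollary~\ref{GeneralEquation}, handle the extra contribution generated by $\dt\beta^{-\frac{11}{10}}$ on the weighted summands, and then pointwise estimate every resulting reaction and commutator contribution by $C\beta^{\frac{11}{10}}$.

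Concretely, I apply Corollary~\ref{GeneralEquation} with $(m,n,k)\in\{(0,3,1),(1,2,0),(2,1,0),(3,0,0)\}$ to get
\[\dt v_{m,n,k}^2 = A_v(v_{m,n,k}^2) + 2\bigl[(k+1)v^{-2}-(m+k-1)a\bigr]v_{m,n,k}^2 - B_{m,n,k} + 2E_{m,n,k}v_{m,n,k}.\]
Using \eqref{v-below} and Condition~[Ca], the linear reaction coefficients are strictly negative for the triples $(3,0,0)$ and $(2,1,0)$ and at most $1/4$ for the remaining two. For the $\beta^{-\frac{11}{10}}$-weighted summands, the identity $\dt\beta^{-\frac{11}{10}}=\frac{11}{10}\beta^{-\frac{1}{10}}$ (since $\dt\beta=-\beta^2$) produces extra positive forcing $\frac{11}{10}\beta^{-\frac{1}{10}}[(\dy^3 v)^2+(\dy^2\dz v)^2]$; by the sharp decay rates $|\dy^3 v|\ls\beta^{\frac{23}{20}}$ and $v^{-1}|\dy^2\dz v|\ls\beta^{\frac{33}{20}}$ from Theorem~\ref{OuterEstimates}, this is dominated by $C\beta^{\frac{11}{10}}$.

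The nonlinear commutators $E_{m,n,k,\ell}v_{m,n,k}$ are treated following Lemmas~\ref{v200-estimate}--\ref{v022-estimate} and \ref{Outer-v30}--\ref{Outer-vmn}: expand each $E_{m,n,k,\ell}$ via \eqref{DecomposeCommutators}--\eqref{Define-Xmnkj} together with the formulas for $\dy F_\ell$ and $\dz F_\ell$ (cf.\ \eqref{dyF_ell-estimate}, \eqref{dzF_ell-estimate}), apply the pointwise bounds through order three supplied by Theorems~\ref{FirstOrderEstimates}, \ref{InnerEstimates}, and \ref{OuterEstimates}, and absorb the unavoidable fourth-order derivative contributions by weighted Cauchy--Schwarz against $-B_{m,n,k}$ in the form $\beta^{p}|D^{4}v|-\ve|D^{4}v|^{2}\leq\frac{1}{4\ve}\beta^{2p}$. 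The two nonnegative linear reactions $2v^{-2}(\dy\dz^2 v)^2$ and $4v^{-4}(\dz^3 v)^2$ are bounded by rewriting them as $2v^2(v^{-2}\dy\dz^2 v)^2$ and $4v^2(v^{-3}\dz^3 v)^2$ and invoking the scaled decay $v^{-k}|\dy^m\dz^n v|\ls\beta^{\frac{33}{20}}$ from Theorem~\ref{OuterEstimates}.

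The main obstacle is to maintain this pointwise control uniformly in $y$ despite $v\ls\lan y\ran$ being unbounded in the outer region; the key is to re-express every product in terms of the scaled derivatives $v^{-k}\dy^m\dz^n v$ (for which Theorem~\ref{OuterEstimates} provides uniform $\beta^{\frac{33}{20}}$-decay), after which the $\beta^{-\frac{11}{10}}$ weights in $\Upsilon$ match up so that each contribution is at most $C\beta^{\frac{11}{10}}$.
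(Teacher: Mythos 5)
Your overall skeleton matches the paper's proof (apply Corollary~\ref{GeneralEquation} to each summand, handle the $\beta^{-\frac{11}{10}}$ weight, absorb fourth-order derivatives by completing the square against the $-B$ terms), but there are two genuine gaps in the details.

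The central gap is that you cannot absorb all the fourth-order derivatives appearing in $E_{m,n,k}$ against the $-B_{m,n,k}$ from the \emph{same} evolution equation. For example, $B_{1,2,0}\geq\frac{2}{1+p^2+q^2}\bigl[(\dy^2\dz^2 v)^2+v^{-2}(\dy\dz^3 v)^2\bigr]$ controls only the $(2,2)$- and $(1,3)$-type fourth derivatives, yet $E_{1,2,0}$ also contains a term with $\dy^3\dz v$ and another with $v^{-2}\dz^4 v$ (the paper's $D_1$ and $D_2$). These cannot be absorbed by $-B_{1,2,0}$; they have to be absorbed against $\beta^{-\frac{11}{10}}B_{2,1,0}$ (from $\Upsilon_3$) and $B_{0,3,1}$ (from $\Upsilon_1$) respectively, after the four differential inequalities are added. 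The same cross-coupling occurs in the other three summands: $W_1,W_2$ in the $\Upsilon_3$ equation lean on $B_{3,0,0}$ and $B_{1,2,0}$, and so on. This transfer across equations — with the $\beta^{-\frac{11}{10}}$ weights chosen so the bookkeeping closes — is precisely why $\Upsilon$ is defined as it is, and it is not something ``weighted Cauchy--Schwarz against $-B_{m,n,k}$'' for each term separately can achieve.

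The second gap is your treatment of the nonnegative linear reactions and the $\dt\beta^{-\frac{11}{10}}$-forcing. Rewriting $2v^{-2}(\dy\dz^2 v)^2$ as $2v^2\bigl(v^{-2}\dy\dz^2 v\bigr)^2$ and invoking $v^{-2}|\dy\dz^2 v|\ls\beta^{\frac{33}{20}}$ leaves an uncontrolled factor $v^2\ls\lan y\ran^2$, which is unbounded in the outer region $\beta y^2\geq 20$; similarly for $4v^{-4}(\dz^3 v)^2$ and for the claimed bound $\beta^{-\frac{1}{10}}(\dy^2\dz v)^2\ls\beta^{\frac{11}{10}}$. The paper instead interpolates, $v^{-1}|\dy\dz^2 v|\leq\bigl(v^{-2}|\dy\dz^2 v|\bigr)^{\frac12}|\dy\dz^2 v|^{\frac12}$, pairing the $v$-weighted decay from Theorem~\ref{OuterEstimates} with the \emph{unweighted} smallness bound from Condition~[C3], which avoids the unbounded $v$-factor; and for $\Upsilon_3$ the $\frac{11}{10}\beta$ contribution from $\dt\beta^{-\frac{11}{10}}$ is simply folded into the reaction coefficient $2\bigl(v^{-2}-a\bigr)+\frac{11}{5}\beta$, which remains $\leq -\frac12$, so no separate pointwise bound on $(\dy^2\dz v)^2$ is ever needed.
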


We claim that Theorem~\ref{SmallnessEstimates} is an easy corollary of this result.
Indeed, by our Main Assumptions, one has
\[\Upsilon(\cdot,\cdot,0)\leq\ve_0\]
for some $\ve_0\ll 1$. In the inner region $\beta y^2\leq 20$, one combines the
consequence $v\leq C_0$ of Condition~[C0i] with the results of Theorem~\ref{InnerEstimates}
to see that
\[\Upsilon|_{\beta y^2\leq 20}\ls \beta^{\frac{1}{2}}.\]
Together, these estimates control $\Upsilon$ on the parabolic boundary of the outer region.
Thus by Proposition~\ref{prop:small} and the parabolic maximum principle, one has
\begin{align*}
\Upsilon(\cdot,\cdot,\tau)
    &\leq\ve_0+\beta^{\frac{1}{2}}(\tau)
    +C\int_{0}^{\tau}\beta^{\frac{11}{10}}(\tau)\,\mathrm{d}\tau\\
    &\ls(\ve_0+\beta_0)^{\frac{1}{10}},
\end{align*}
which proves Theorem~\ref{SmallnessEstimates}.

\begin{proof}[Proof of Proposition~\ref{prop:small}]
We start by studying $\Upsilon_2:=(\dy\dz^2 v)^2$, since it forces us to
define $\Upsilon$ as in ~\eqref{eq:newUps}. Apply Corollary~\ref{GeneralEquation}
to obtain
\[\dt\Upsilon_2=A_{v}\Upsilon_2+2v^{-2}\Upsilon_2-B_{1,2,0}+2 E_{1,2,0}(\dy \dz^2 v).\]
For the term $2v^{-2}\Upsilon_2$, we write
$v^{-1}|\dy\dz^2 v|\leq (v^{-2}|\dy\dz^2 v|)^{\frac{1}{2}}|\dy\dz^2 v|^{\frac{1}{2}}$
and employ Theorem~\ref{OuterEstimates} and the smallness estimate in Condition~[C3] to estimate
\[2v^{-2}|\dy\dz^2 v|^2\leq \beta^{\frac{3}{2}}.\]
In this way, we derive the differential inequality
\begin{equation}\label{eq:Upsilon2Sec}
\dt\Upsilon_2\leq A_{v}\Upsilon_2
-B_{1,2,0}+2 E_{1,2,0}(\dy \dz^2 v)+\beta^{\frac{3}{2}}.
\end{equation}
Next we consider the term $E_{1,2,0}(\dy \dz^2 v)$. In the present situation, the difficulties
encountered are similar to those surmounted in the proof of Theorem~\ref{OuterEstimates}; we
handle these in a similar manner.

We treat the highest-order terms first.
Among the many terms that make up $E_{1,2,0}$, some contain a factor (at most one) of
$\dy^m\dz^n v$ with $m+n=4$. If $(m,n)=(2,2),\ (1,3)$, this factor
is controlled by the favorable term $-B_{1,2,0}$ in inequality~\eqref{eq:Upsilon2Sec}.
The difficult cases are $(m,n)=(0,4),\ (3,1)$. There are only two such terms in
$E_{1,2,0}(\dy \dz^2 v)$, namely
\begin{align*}
D_1 &:=(\dy\dz^2 v)(\dz  F_1)(\dy^3\dz v)\\
    &=(\dy\dz^2 v)[(\partial_p F_1)(\dy\dz v)+(\partial_q F_1)(\dz q)](\dy^3\dz v)
\end{align*}
and
\begin{align*}
D_2 &:=(\dy\dz^2 v)(\dy F_2)(v^{-2}\dz^4 v)\\
    &= (\dy\dz^2 v)[(\partial_p F_2)(\dy^2 v)+(\partial_q F_2)(\dy q)](v^{-2}\dz^4 v).
\end{align*}

For $D_1$, combining Lemma~\ref{InterpEst} with the smallness estimate in Condition~[C3]
shows that the terms in brackets admit the estimate
\[ |(\partial_p F_1)(\dy\dz v)+(\partial_q F_1)(\dz q)|
    \leq\frac{(\beta_0+\ve_0)^{\frac{1}{20}}}{1+p^2+q^2}.\]
Using Cauchy--Schwarz and the smallness assumption again, one thus obtains
\begin{align}\label{eq:D1}
|D_1|   &\leq (\beta_0+\ve_0)^{\frac{1}{20}}
        \left[\beta^{\frac{11}{10}}|\dy\dz^2 v|^2+\frac{\beta^{-\frac{11}{10}}(\dy^3\dz v)^2}{1+p^2+q^2}\right]
        \nonumber\\
    &\leq (\beta_0+\ve_0)^{\frac{1}{10}}\beta^{\frac{11}{10}}
    +(\beta_0+\ve_0)^{\frac{1}{20}}\beta^{-\frac{11}{10}}B_{2,1,0},
\end{align}
where $\beta^{-\frac{11}{10}}B_{2,1,0}$ appears in the evolution equation for
$\beta^{-\frac{11}{10}}(\dy^2 \dz v)^2$ below.

For $D_2$, we proceed differently. Here, the decay estimate comes from the terms in brackets.
By the estimates in Theorem~\ref{OuterEstimates} and the interpolation Lemma~\ref{InterpEst},
one has
\[
|(\partial_p F_2)(\dy^2 v)+(\partial_q F_2)(\dy q)|
    \ls\frac{|\dy^2 v|+|\dy q|}{1+p^2+q^2}\\
    \ls \frac{\beta^{\frac{13}{20}}}{1+p^2+q^2}.
\]
Hence
\begin{equation}\label{eq:D2}
|D_2|\leq (\beta_0+\ve_0)^{\frac{1}{10}}\beta^{\frac{11}{10}}
    +\frac{(\beta_0+\ve_0)^{\frac{1}{20}}}{1+p^2+q^2} B_{0,3,1},
\end{equation}
where $B_{0,3,1}$ appears in the evolution equation for $v^{-2}(\dz^3 v)^2$ below.

The remaining terms in inequality~\eqref{eq:Upsilon2Sec} are estimated by techniques
very similar to those employed in the proof of Theorem~\ref{OuterEstimates}. Those methods
may be used because we have available the smallness estimates from Condition~[C3].
Together with estimate~\eqref{eq:D1} and estimate~\eqref{eq:D2}, we apply these techniques
here (omitting further details) to obtain
\begin{align}\label{eq:Upsilon2Fin}
\dt\Upsilon_2\leq A_{v}\Upsilon_2-\frac{1}{2}B_{1,2,0}
    +\frac{B_{0,3,0}+\beta^{-\frac{11}{10}}B_{2,1,0}}{1+p^2+q^2}+\beta^{\frac{11}{10}}.
\end{align}
\smallskip

Now we turn to $\Upsilon_3:=\beta^{-\frac{11}{10}}(\dy^2 \dz v)^2$.
We use Corollary ~\ref{GeneralEquation} to write
\[\dt\Upsilon_3=A_{v}\Upsilon_3
    +2\left[v^{-2}-a+\beta^{\frac{11}{10}}(\dt\beta^{-\frac{11}{10}})\right]\Upsilon_3
    -\beta^{-\frac{11}{10}}B_{2,1,0}
    +2\beta^{-\frac{11}{10}} E_{2,1,0}(\dy^2 \dz v).\]
From this, we use the implication of Theorem ~\ref{OuterEstimates} that
$v^{-2}(\dy^2 \dz v)^2\ls\beta^{\frac{33}{10}}$ and the fact that
$\beta^{\frac{11}{10}}(\dt\beta^{-\frac{11}{10}})=\frac{11}{10}\beta\ll\frac{1}{10}$
to derive the differential inequality
\begin{equation}\label{eq:Upsilon3sec}
\dt\Upsilon_3   \leq A_{v}\Upsilon_3-\frac{1}{2}\Upsilon_3-\beta^{-\frac{11}{10}}B_{2,1,0}
    +2\beta^{-\frac{11}{10}} E_{2,1,0}(\dy^2 \dz v+\beta^2).
\end{equation}
This differs from the corresponding inequality~\eqref{eq:Upsilon2Sec}
for $\dt\Upsilon_2$ in by the term $-\frac{1}{2}\Upsilon_3$.

As in the derivation of inequality~\eqref{eq:Upsilon2Fin}, there are two critical terms,
\[W_1:=\beta^{-\frac{11}{10}}(\dy^2\dz v)(\dz F_1)(\dy^4 v)\]
and
\[W_2:=\beta^{-\frac{11}{10}}(\dy^2\dz v)(v^{-1}\dy F_2)(v^{-1}\dy\dz^3 v),\]
which appear in $\beta^{-\frac{11}{10}} E_{2,1,0}(\dy^2 \dz v).$

For $W_1$, a suitable estimate for $\dz F_1$, namely
\[
|\dz F_1|   \ls |\partial_p F_1||\dy\dz v|+|\partial_q F_1||\dz q|
            \ls\frac{(\beta_0+\ve_0)^{\frac{1}{20}}}{1+p^2+q^2},\]
follows from the smallness component of Condition~[C3]. Applying
Cauchy--Schwarz then yields
\begin{align}\label{eq:w1}
|W_1|&\leq (\beta_0+\ve_0)^{\frac{1}{20}}\beta^{-\frac{11}{10}}
        \frac{(\dy^2\dz v)^2+(\dy^4 v)^2}{1+p^2+q^2}\nonumber\\
     &\leq (\beta_0+\ve_0)^{\frac{1}{20}}
        \left[\Upsilon_3+\beta^{-\frac{11}{10}}B_{3,0,0}\right],
\end{align}
where $\beta^{-\frac{11}{10}}B_{3,0,0}$ appears in the evolution equation
for $\beta^{-\frac{11}{10}}(\dy^3 v)^2$.

For $W_2$, we get good decay from the term $v^{-1}\dy F_2$.
By Theorem~\ref{OuterEstimates}, one has
\[
|v^{-1}\dy F_2|\leq\frac{\beta^{\frac{3}{5}}}{1+p^2+q^2},\]
and hence
\begin{equation}\label{eq:w2}
|W_2|\leq \beta^{-\frac{1}{2}}\frac{|\dy^2\dz v|\,|v^{-1}\dy\dz^3 v|}{1+p^2+q^2}
    \leq \beta^{\frac{1}{20}}\left[\Upsilon_3+B_{1,2,0}\right],
\end{equation}
where $B_{1,2,0}$ appears in inequality~\eqref{eq:Upsilon2Fin}.

The remaining terms in inequality~\eqref{eq:Upsilon3sec} are estimated by the same
methods we employed many times in previous sections. Combining these estimates with
inequalities~\eqref{eq:w1}--\eqref{eq:w2} yields
\begin{equation}\label{eq:Upsilon3Fin}
\dt\Upsilon_3\leq A_{v}\Upsilon_3-\frac{1}{2}\beta^{-\frac{11}{10}}B_{2,1,0}
        +\beta^{\frac{1}{20}}B_{1,2,0}
        +(\beta_0+\ve_0)^{\frac{1}{20}}\beta^{-\frac{11}{10}}B_{3,0,0}+\beta^{\frac{11}{10}},
\end{equation}
where $-\frac{1}{2}\Upsilon_3$ was used to control various terms in $W_1$ and $W_2$.
\smallskip

By similar techniques, we estimate that $\Upsilon_1:=(v^{-1}\dz^3 v)^2$ and
$\Upsilon_4:=\beta^{-\frac{11}{10}}(\dy^3 v)^2$ satisfy the differential inequalities
\begin{equation}\label{eq:Upsilon1Fin}
\dt\Upsilon_1\leq A_{v}\Upsilon_1-\frac{1}{2}B_{0,3,1}
    +(\beta_0+\ve_0)^{\frac{1}{20}}B_{1,2,0}+\beta^{\frac{11}{10}}
\end{equation}
and
\begin{equation}\label{eq:Upsilon4Fin}
\dt\Upsilon_4\leq A_{v}\Upsilon_4-\frac{1}{2}\beta^{-\frac{11}{10}}B_{3,0,0}
    +(\beta_0+\ve_0)^{\frac{1}{20}}\beta^{-\frac{11}{10}}B_{2,1,0}+\beta^{\frac{11}{10}},
\end{equation}
respectively.
Adding equations~\eqref{eq:Upsilon1Fin}, \eqref{eq:Upsilon2Fin}, \eqref{eq:Upsilon3Fin},
and \eqref{eq:Upsilon4Fin} completes the proof.
\end{proof}


\section{The second bootstrap machine}\label{sec:asymp}
\label{SecondBootstrap}

In this section and those that follow, we describe the asymptotic behavior of solutions.
Specifically, we show that a solution $v$ to equation~\eqref{MCF-v}, which is a rescaling
of a solution $u$ to equation~\eqref{MCF-u}, may be decomposed into a slowly-changing main component
and a rapidly-decaying small component. We accomplish this by building a second bootstrap machine,
following \cite{GS09}.

\subsection{Input}\label{SBMI}
Our first input to this bootstrap machine is  that $u(x,t)$ is a solution of
equation~\eqref{MCF-u} satisfying the following conditions:
\begin{itemize}
\item[{[Cd]}]
There exists $t_{\#}>0$ such that for $0\le t\le t_{\#}$,
there exist $C^1$ functions $a(t)$ and $b(t)$ such that
$u(x,\theta, t)$ admits the decomposition
\begin{equation}\label{eqn:split2}
u(x,\theta,t) = \lambda(t) v(y,\theta,\tau) = \lambda (t)
\left[
\left(\frac{2+b(t)y^{2}}{a(t) +\frac12}\right)^{\frac12}
+ \phi (y,\theta,\tau)
\right]
\end{equation}
with the $L^2$ orthogonality properties
$$\phi (\cdot,\cdot, \tau) \perp\,e^{-\frac{a(t)}2  y^2},\
\left(1-a(t) y^2 \right) e^{-\frac{a(t)}2  y^2},$$
where $a(t):= -\lambda (t)\partial_t \lambda(t)$,
$y:=\lambda^{-1}(t)x$, and
$\tau(t) := \int_0^t \lambda^{-2} (s)\,\mathrm{d}s$.
\end{itemize}
Proposition~\ref{IFT} will show that Condition~[Cd] follows from our Main Assumptions.
\medskip

To state the second set of inputs, we define estimating functions to control the quantities
$\phi(y,\theta,\tau)$, $a(t(\tau))$, and $b(t(\tau))$ appearing in equation~\eqref{eqn:split2}, namely:
\begin{align}
M_{m,n}(T) := &\displaystyle\max_{\tau \le T}
\beta^{-\frac{m+n}2 - \frac1{10}}  (\tau)
\|\phi(\cdot,\cdot,\tau)\|_{m,n}, \label{eq:majorMmn}\\
\noalign{\vskip6pt}
A(T) := &\displaystyle \max_{\tau\le T}  \beta^{-2} (\tau)
\left| a(t(\tau)) - \frac12 + b(t(\tau))\right|, \label{eq:majorA}\\
\noalign{\vskip6pt}
B(T) :=& \displaystyle\max_{\tau\le T} \beta^{-\frac32}  (\tau) |b(t(\tau)) - \beta(\tau)|.\label{eq:majorB}
\end{align}
where $(m,n)\in\left\{(3,0),\ (11/10,0),\ (2,1), \ (1,1)\right\}$.
Here we used the definitions
$$\|\phi\|_{m,n} := \left\| \langle y\rangle^{-m} \dy^n \phi\right\|_{L^\infty}
\quad\text{and}\quad\beta(\tau) := \frac1{\frac1{b(0)} + \tau}.$$
\smallskip

By standard regularity theory for quasilinear parabolic equations, if the
initial data satisfy the Main Assumptions in Section~\ref{Basic} for $b_0$
and $c_0$ sufficiently small, then (making $t_{\#}>0$ smaller if necessary)
the solution will satisfy the second set of inputs for this bootstrap argument, namely:
\begin{itemize}
\item[{[Cb]}] For any $\tau \le \tau(t_{\#})$, one has
\begin{align}\label{eq:assuMAB}
A(\tau)+B(\tau)+|M(\tau)|\ls \beta^{-\frac1{20}} (\tau),
\end{align}
\end{itemize}
where $M$ denotes the vector
\begin{equation}\label{eq:vector}
M := (M_{i,j}), \ (i,j)\in\left\{(3,0),\ (11/10,0),\ (2,1),\ (1,1)\right\}.
\end{equation}
\begin{remark}\label{NoCircleHere}
Condition~[Cb] implies estimates on $v$ and $v^{-\frac12} \dy v$ in the
inner region $\beta y^2\le  20$, which in turn imply Condition~[C0i], the estimate
$|\dy v|\ls\beta^{\frac12} v^{\frac12}$ of Condition~[C1i], and the estimate
$\langle y\rangle^{-1} |\dy v |\ls \beta$ of Condition~[Cg].
\end{remark}
\smallskip

The final inputs to this bootstrap machine are the following estimates. They
follow from the outputs of the first bootstrap machine, as summarized in
Section~\ref{FirstOutput}. For any $\tau \in [0, \tau(t_{\#})]$, the estimates
proved in Sections~\ref{FirstBootstrap}--\ref{ProveLast} show that $v$ satisfies
\begin{align}\label{eq:lower}
v(y,\theta,\tau) \ge 1;
\end{align}
and that there exist constants $\epsilon_0\ll 1$ and $C$, independent of $\tau(t_{\#})$, such that
\begin{align}
|\dy  v| \le  & C; \label{eq:upperBounddV}\\
|v^{-1} \dz^2 v|,\ |\dy \dz^2 v| \le & \epsilon_0 \ll 1;
\label{eq:smallness}\\
v^{-1} |\dy v| \le C\beta^{\frac12},\ |\dy^2 v| \le  C\beta^{\frac{13}{20}},
&\ |\dy^3  v|  \le  C\beta^{\frac{23}{20}}; \label{eq:yDe}\\
v^{-2} |\dy \dz^2 v|, \ v^{-1} |\dy \dz v|,\ & v^{-1}
|\dy^2 \dz v|, \ v^{-2}|\dz^2 v|\le C \beta^{\frac{33}{20}}.\label{eq:thetaDe}
\end{align}

\subsection{Output}
The main result of this section is:

\begin{theorem}\label{THM:aprior}
Suppose that Conditions~[Cd] and [Cb] and estimates~\eqref{eq:lower}--\eqref{eq:thetaDe}
hold in an interval $\tau\in [0, \tau_1]$. Then there exists $C$ independent of $\tau_1$
such that for the same time interval, the parameters $a$ and $b$ and the function $\phi$
are such that
\begin{equation}\label{EstABM}
A(\tau)+B(\tau)+|M(\tau)|\le  C.
\end{equation}
\end{theorem}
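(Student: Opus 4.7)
The plan is to follow the strategy of \cite{GS09}, adapted to the nonsymmetric setting, namely: derive an evolution equation for $\phi$, use the orthogonality conditions in [Cd] to produce modulation \textsc{ode}s for $a$ and $b$, and then close a bootstrap argument by combining these \textsc{ode}s with decay estimates for $\phi$ in the weighted $\|\cdot\|_{m,n}$ norms.

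First I would substitute the decomposition \eqref{eqn:split2} into equation~\eqref{MCF-v} to obtain an equation of the schematic form
\[
\dt \phi = \mathcal{L}_{a,b} \phi + N(\phi,a,b) + \Lambda(a',b',y),
\]
where $\mathcal{L}_{a,b}$ is the linearization of \eqref{MCF-v} about the adiabatic profile $V_{a,b}$, the term $N$ collects all nonlinear corrections (at least quadratic in $\phi$ and its derivatives, plus curvature terms from the $\theta$-direction), and $\Lambda$ carries the contributions of $\partial_t a$ and $\partial_t b$ through differentiation of $V_{a,b}$. Because $\mathcal{L}_{a,b}$ resembles the harmonic-oscillator operator $\dy^2 - \tfrac{a}{2}y\dy + \mathrm{const}$ after conjugation, its zero and $\tfrac{1}{2}$-eigenmodes are (up to normalization) exactly $e^{-\frac{a}{2}y^2}$ and $(1-ay^2)e^{-\frac{a}{2}y^2}$; the orthogonality in [Cd] is precisely what is needed to kill these modes.

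Second, I would derive the modulation equations by differentiating the two orthogonality conditions of [Cd] in $\tau$ and substituting the $\phi$-equation. After integrating by parts against the zero modes, the top-order contributions from $\mathcal{L}_{a,b}\phi$ vanish, leaving a $2\times 2$ linear system in $(\partial_\tau a, \partial_\tau b)$ whose right-hand side consists of weighted integrals of $N$ and of profile terms. Using the estimates \eqref{eq:lower}--\eqref{eq:thetaDe} and the weak input [Cb], the nonlinearities can be shown to be $\mathcal{O}(\beta^{5/2})$ or better after projecting against the exponentially-decaying Gaussian weight, which localizes everything to the inner region. This yields the expected
\[
\partial_\tau b + \beta b = \mathcal{O}(\beta^{5/2}\,\beta^{-1/10}), \qquad
\partial_\tau\!\left(a - \tfrac12 + b\right) = \mathcal{O}(\beta^{3}\,\beta^{-1/10}),
\]
and integrating these from $0$ to $\tau$ (and using $\beta(\tau)\le \beta_0$) produces $B(\tau),A(\tau)\le C$ with $C$ independent of $\tau_1$, improving \eqref{eq:assuMAB} by the factor $\beta^{1/20}$.

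Third, to control $M$, I would derive, for each $(m,n)\in\{(3,0),(11/10,0),(2,1),(1,1)\}$, a differential inequality of the form
\[
\dt \bigl(\beta^{-(m+n) - \tfrac15}\,\lan y\ran^{-2m}(\dy^n\phi)^2\bigr)
\le A_v(\cdots) - \gamma_{m,n}\bigl(\cdots\bigr) + C\beta^{1/10}(\mathrm{bounded}),
\]
combining the equation for $\phi$ with the maximum principle in the outer region and the spectral gap of $\mathcal{L}_{a,b}$ (on the orthogonal complement of its zero modes) in the inner region. The modulation bounds on $A,B$ above, together with \eqref{eq:thetaDe}, ensure that the nonlinear and $\Lambda$-contributions are controlled by $\beta^{(m+n)/2+1/10+\epsilon}$, which after applying Proposition~\ref{PMP} and using the Main Assumptions on $\phi(\cdot,\cdot,0)$ yields $M_{m,n}(\tau)\le C$.

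The principal obstacle I anticipate is the treatment of the nonlinear term $N$: because our initial data are not rotationally symmetric, $N$ contains all the mixed derivatives $\dy^i\dz^j\phi$, and for the weighted $L^2$ integrals defining the modulation system one must control these by the pointwise smallness estimates \eqref{eq:smallness}--\eqref{eq:thetaDe} (output of the first machine) in a way that does not feed back $\beta^{-1/20}$ losses. The balance is delicate: one must use the $\theta$-derivative estimates of order $\beta^{33/20}$ from Theorem~\ref{OuterEstimates}, which are at most barely sharp enough, together with the orthogonality in the inner region to eliminate the slow modes. As in \cite{GS09}, the three improvements on $A$, $B$, and $M$ must be closed simultaneously; the outputs of the first machine guarantee that no circularity arises, so that the constant $C$ in \eqref{EstABM} is genuinely uniform in $\tau_1$.
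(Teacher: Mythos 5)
Your first two steps — substituting the decomposition, deriving a $\phi$-equation, using orthogonality to produce a $2\times2$ modulation system for $(\partial_\tau a,\partial_\tau b)$, and integrating to bound $A,B$ — match the paper's route (Sections~\ref{SlowlyDecompose}--\ref{Sec:Splitting}), modulo the observation that the paper first gauges to the self-adjoint variable $\xi=e^{-\frac{a}{4}y^2}\phi$ and projects the $\xi$-equation against the Hermite modes $\phi_{0,a},\phi_{2,a}$ rather than working with $\phi$ directly. That gauge makes the linearization symmetric and the projections clean; without it the slow modes of $L(a,b)$ are not literally the functions you project against, and the cancellation is not automatic.

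The genuine gap is in your third step. You propose to bound $M_{m,n}$ via a pointwise differential inequality of the form $\dt(\beta^{-(m+n)-1/5}\langle y\rangle^{-2m}(\dy^n\phi)^2)\le A_v(\cdots)-\gamma_{m,n}(\cdots)+C\beta^{1/10}$, closing with the parabolic maximum principle in the outer region and a ``spectral gap of $\mathcal{L}_{a,b}$'' in the inner region. This will not work as stated. The operator $L(a,b)$ has strictly negative eigenvalues (corresponding to $\phi_{0,a}$ and $\phi_{2,a}$), so the relevant coercivity constant $\gamma_{m,n}$ has the wrong sign unless one restricts to the orthogonal complement of the unstable modes. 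But the parabolic maximum principle is insensitive to orthogonality: it acts pointwise and cannot exploit that $\phi\perp\phi_{0,a},\phi_{2,a}$. The paper therefore does something structurally different: it fixes a terminal time $T$, freezes the modulation parameter at $\alpha=a(T)$ by re-parameterizing along a comparison trajectory $\lambda_1$ with constant dilation rate, transforms to the variable $\eta$ (equation~\eqref{NewFun}), and writes a Duhamel integral formula \eqref{eq:eta3}, \eqref{eq:durha2}, \eqref{eta21}. The control of $M$ then comes from weighted-$L^\infty$ decay estimates for the propagator $e^{-L_\alpha\sigma}P_n^\alpha$ and $U_n(\tau,\sigma)$ (Proposition~\ref{PRO:propagator}), which \emph{do} see the spectral projections. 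The reparameterization is essential: the Mehler-type kernel bounds used in the propagator estimates require a time-independent operator, which $L(a(\tau),b(\tau))$ is not. Additionally, the extension of these propagator estimates to the $\theta$-dependent setting requires separating the $\theta$-Fourier modes and using the spectral gap of $\dz^2$ on $\pi$-periodic functions (cf.~the proof of Proposition~\ref{PRO:propagator} and Remark~\ref{FixAxis}), a point your outline does not address. In short: your proposal has the right modulation skeleton, but the $M$-estimate requires a semigroup/Duhamel argument in weighted $L^\infty$ after a gauge and reparameterization, not a differential inequality closed by the maximum principle.
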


This theorem will be reformulated into Propositions~\ref{Prop:Gam12} and \ref{Prop:Main3},
and then proved in Section~\ref{eqn:split3}.

\subsection{Structure}
Before giving the details of the second bootstrap argument, we discuss the general
strategy of its proof.
The first observation is that our Main Assumptions imply that there exist $\lambda(t)$
and $b(t)$ such that the solution $v(\cdot,\cdot,\tau)$ remains close to the
adiabatic approximation $V_{a(t(\tau)),b(t(\tau))}$ defined in equation~\eqref{adiabatic}
for at least a short time. Moreover, we can choose those parameters so that the solution
admits the decomposition in equation~\eqref{eqn:split2}, subject to the orthogonality
stipulations of Condition~[Cd].

It is shown in \cite{GS09} that $\lambda(t)$
and $b(t)$ can be chosen so that Condition~[Cd] is satisfied. To state this precisely,
we need some definitions. Given any time $t_0$ and $\delta>0$, we define
$I_{t_0,\delta}:=[t_0,t_0+\delta]$. We say that $\lambda(t)$ is \emph{admissible} on
$I_{t_0,\delta}$ if $\lambda\in C^2(I_{t_0,\delta},\,\mathbb{R}_+)$
and $a(t):=-\lambda\partial_t\lambda\in[1/4,1]$.
Proposition~5.3 and Lemma~5.4 of \cite{GS09} imply the following result.

\begin{proposition}\label{IFT}
Given $t_*>0$, fix $t_0\in[0,t_*)$ and $\lambda_0>0$. Then there exist $\delta,\ve>0$
and a function $\lambda(t)$, admissible on $I_{t_0,\delta}$, such that if

\textsc{(i)}
$\langle x \rangle^{-3} u\in C^1\left([0,t_*),\,L^\infty(\mathbb{S}^1\times\mathbb{R})\right)$,

\textsc{(ii)} $\inf u>0$, and

\textsc{(iii)} $\|v(\cdot,\cdot,t_0)-V_{a_0,b_0}\|_{3,0}\ll b_0$ for some
$a_0\in[1/4,1]$ and $b_0\in(0,\ve]$,

\noindent where $v(y,\theta,\tau)=\lambda(t)^{-1}\,u(\lambda(t)x,\theta,t)$,
then there exist
\[  a(\tau(t))\in C^1\left(I_{t_0,\delta},\,[1/4,1]\right)\quad\text{and}\quad
    b(\tau(t))\in C^1\left(I_{t_0,\delta},\,(0,\ve]\right)\]
such that the rescaled solution $v(y,\theta,\tau)$ admits the decomposition and orthogonality relations
of Condition~[Cd], with $\lambda(0)=\lambda_0$ and $a(\tau(t))=-\lambda(t)\partial_t\lambda(t)$.
\end{proposition}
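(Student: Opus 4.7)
The proposition is a standard implicit-function-theorem statement, so my plan is to realize $a(\tau)$ and $b(\tau)$ as implicit functions of $\lambda(t)$ and then close the loop by solving a short-time ODE for $\lambda(t)$. The natural setup is to work at a fixed time and consider the map
\[
\Phi(a,b;\lambda):=\Bigl(\lp v_\lambda-V_{a,b},\,e^{-ay^2/2}\rp_0,\
\lp v_\lambda-V_{a,b},\,(1-ay^2)e^{-ay^2/2}\rp_0\Bigr),
\]
where $v_\lambda(y,\theta):=\lambda^{-1}u(\lambda y,\theta,t_0)$ and $\lp\cdot,\cdot\rp_0$ is the $L^2$-inner product on $\mathbb{S}^1\times\mathbb{R}$ with flat measure. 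Hypothesis~(iii) ensures $\Phi(a_0,b_0;\lambda_0)=O(b_0\cdot b_0)$ in an appropriate sense (in particular, small). I would compute $\partial_aV_{a,b}=-V_{a,b}/[2(\tfrac12+a)]$ and $\partial_b V_{a,b}=y^2/[2(\tfrac12+a)V_{a,b}]$ and show that the $2\times2$ Jacobian $\partial_{(a,b)}\Phi$ at $(a_0,b_0;\lambda_0)$ is nondegenerate: the diagonal entries are essentially Gaussian integrals of the forms $\int e^{-a_0y^2}\,dy$ and $\int y^2(1-a_0y^2)e^{-a_0y^2}\,dy$ (suitably weighted), while off-diagonal entries are smaller by a power of $b_0$. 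For $b_0$ small enough the matrix is invertible with a uniform lower bound on its smallest singular value, and the standard implicit function theorem then produces $C^1$ functions $a^*(\lambda)$, $b^*(\lambda)$, defined on a neighborhood of $\lambda_0$, such that $\Phi(a^*(\lambda),b^*(\lambda);\lambda)=0$ and $(a^*(\lambda_0),b^*(\lambda_0))$ is close to $(a_0,b_0)$.

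Next I would time-evolve. The definition $a(t):=-\lambda(t)\partial_t\lambda(t)$ gives $\tfrac{d}{dt}(\lambda^2/2)=-a(t)$, so once $a(t)$ is known, $\lambda(t)$ is obtained by quadrature from $\lambda(0)=\lambda_0$. Conversely, at each time $t$ the rescaled solution $v(\cdot,\cdot,\tau(t))$ is determined once $\lambda(t)$ is known, and we want $a(t),b(t)$ to be the values produced by the implicit function theorem applied to $v(\cdot,\cdot,\tau(t))$. Differentiating the two orthogonality conditions $\lp v-V_{a,b},e^{-ay^2/2}\rp_0=0$ and $\lp v-V_{a,b},(1-ay^2)e^{-ay^2/2}\rp_0=0$ in $t$, and substituting $\dt v$ from equation~\eqref{MCF-v}, gives a linear system
\[
J(a,b;\lambda)\begin{pmatrix}\dot a\\ \dot b\end{pmatrix}=R(a,b,\lambda,v),
\]
where $J$ is the same Jacobian as above (still invertible by continuity) and $R$ is a $C^0$ functional of the current solution. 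Combined with $\lambda\dot\lambda=-a$, this is a closed ODE system for $(a,b,\lambda)$ with $C^1$ right-hand side.

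Having reduced the problem to a standard ODE, I would invoke Picard--Lindel\"of to get a unique $C^1$ solution on some interval $I_{t_0,\delta}$, with $\delta>0$ depending only on the ambient bounds on $\lan x\ran^{-3}u$, $\inf u$, and the modulus of continuity supplied by (i)--(iii). Admissibility $\lambda\in C^2$ follows because $a$ is $C^1$ and $\lambda=\sqrt{\lambda_0^2-2\int a}$, and by shrinking $\delta$ if necessary I can keep $a(t)\in[1/4,1]$, since $a(t_0)$ is close to $a_0\in[1/4,1]$. The resulting $(a(t),b(t),\lambda(t))$ realize the decomposition and orthogonality asserted in Condition~[Cd].

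The main obstacle is the quantitative invertibility of the Jacobian $J(a,b;\lambda)$: one needs to verify that the two orthogonality conditions really do decouple $a$ from $b$ in a way that survives multiplication against the concrete adiabatic profile $V_{a,b}$, uniformly in $(a,b)$ lying in a compact neighborhood of $(a_0,b_0)$ with $b_0$ small. Once that is pinned down, everything else is the standard parametrization-by-orthogonality/modulation argument, which is why the authors are content to cite \cite{GS09} rather than reprove it here.
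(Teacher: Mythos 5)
Your plan is a faithful reconstruction of the modulation/implicit-function-theorem argument that the paper invokes by citing Proposition~5.3 and Lemma~5.4 of \cite{GS09}; the paper itself gives no further proof, so at the level of strategy you are doing exactly what the cited result does. One quantitative claim in your sketch is off, though not fatally: the off-diagonal Jacobian entries are \emph{not} both $O(b_0)$. Writing $\Phi=(\Phi_1,\Phi_2)$ with $\Phi_1=\lp v-V_{a,b},e^{-ay^2/2}\rp_0$ and $\Phi_2=\lp v-V_{a,b},(1-ay^2)e^{-ay^2/2}\rp_0$, and using $\partial_a V_{a,b}=-V_{a,b}/[2(\tfrac12+a)]\approx \text{const}$ and $\partial_b V_{a,b}=y^2/[2(\tfrac12+a)V_{a,b}]\approx \text{const}\cdot y^2$ for small $b$, one finds
\[
\partial_b\Phi_1\approx -c\int y^2 e^{-ay^2/2}\,\mathrm{d}y=\mathcal{O}(1),
\qquad
\partial_a\Phi_2\approx c'\int(1-ay^2)e^{-ay^2/2}\,\mathrm{d}y+\mathcal{O}(b_0)=\mathcal{O}(b_0),
\]
since the leading Gaussian moment $\int(1-ay^2)e^{-ay^2/2}\,\mathrm{d}y$ vanishes identically. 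So the Jacobian is approximately \emph{lower triangular} rather than approximately diagonal; invertibility (with a uniform lower bound on the determinant for $b_0$ small) still follows because both diagonal entries are $\mathcal{O}(1)$ and only $\partial_a\Phi_2$ is small. You also omit, but can absorb as an $\mathcal{O}(b_0)$ perturbation using hypothesis~(iii), the contribution to $\partial_a\Phi_i$ from the $a$-dependence of the test functions $e^{-ay^2/2}$ and $(1-ay^2)e^{-ay^2/2}$. With these corrections made, the rest of your plan --- solving the coupled ODE for $(a,b,\lambda)$ by Picard--Lindel\"of and verifying admissibility by quadrature --- is the intended argument.
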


The second bootstrap machine will establish that
$V_{a(t(\tau)),b(t(\tau))}$ is the large, slowly-changing part of the solution, while
$\phi$ is the small, rapidly-decaying part. The utility of the stipulated orthogonality
will become clear in Section~\ref{SEC:Rescale}, when we compute the linearization of an
equation closely related to \eqref{MCF-v}.
\smallskip

In what follows, we will usually convert from the time scale $t$ to $\tau$. To study the asymptotics
of the solution, we derive equations for $a_\tau$, $b_\tau$, and $\phi$ in  equations~\eqref{eq:xi},
\eqref{eq:F0ab} and \eqref{eq:F2ab}, respectively. Then we analyze those equations to show that
\[
a(\tau) \to  \frac12,\quad
b(\tau) \approx \frac1{\frac1{b_0} + \tau} =: \beta (\tau),\quad\text{and}\quad
\|\langle y\rangle^{-m} \dy^n \phi \|_\infty
\ls \beta^{\frac{m+n}2 + \frac1{10}}
\]
for $(m,n)=(3,0),\ (11/10,0),\ (2,1)$ and $(1,2)$.

The first two estimates above are proved in Section~\ref{Sec:Splitting}. Their
proofs are straightforward generalizations of those in \cite{GS09}. The third
estimate, which establishes the fast decay of $\phi$, is the most critical. It
is proved in Section~\ref{SEC:Rescale}. For all three results, the crucial new
ingredients from \cite{GS09} are the $\theta$-derivative bounds appearing in
estimates~\eqref{eq:upperBounddV}--\eqref{eq:thetaDe}, which follow from the
first bootstrap machine constructed in Sections~\ref{FirstBootstrap}--\ref{ProveLast}.

\section{Evolution equations for the decomposition}\label{SlowlyDecompose}

Equation~\eqref{MCF-v} is not self-adjoint with respect to its linearization,
but it can be made so by a suitable gauge transformation.
By Condition~[Cd] in Section~\ref{sec:asymp}, there exists a ($t$-scale) time
$0<t_{\#} \le \infty$ such that the gauge-fixed quantity
$$w(y,\theta,\tau) := v(y,\theta,\tau) e^{-\frac{a}4 y^2}$$ can be decomposed as
\begin{equation}
w = w_{ab}(y) + \xi(y,\theta,\tau),\quad\text{with}\quad \xi \perp  \phi_{0,a},\ \phi_{2 a}. \label{eqn:split3}
\end{equation}
Here
\begin{equation}\label{BadEvals}
\phi_{0,a} :=  \left( \frac{a}{2\pi}\right)^{\frac14} e^{-\frac{ay^2}4}\quad\text{and}\quad
\phi_{2,a} := \left( \frac{a}{8\pi}\right)^{\frac14}  (1-ay^2) e^{-\frac{ay^2}4},
\end{equation}
and the orthogonality is with respect to the $L^2(\mathbb{S}^1\times\mathbb{R})$ inner product.
The parameters $a$ and $b$ are $C^1$ functions of $t$; the (almost stationary) part is
$$w_{ab} : = \sqrt{2+by^2/a+\frac12}\ e^{-\frac{a}4 y^2};$$
and the (rapidly-decaying) fluctuation is
$$\xi := e^{-\frac{ay^2}4} \phi.$$
To simplify notation, we will write $a(\tau)$ and $b(\tau)$ for $a(t(\tau))$ and
$b(t(\tau))$, respectively. (This will not cause confusion, as the original functions
$a(t)$ and $b(t)$ are not needed until Section~\ref{RepeatMainProof}.) In this
section, we derive evolution equations for the parameters $a(\tau)$ and $b(\tau)$,
and the fluctuation $\xi(y,\theta,\tau)$.
\medskip

We substitute equation~\eqref{eqn:split3} into equation~\eqref{MCF-v} to obtain the
following equation for $\xi$,
\begin{equation}\label{eq:xi}
\dt \xi = - L(a,b) \xi + F(a,b) + N_1 (a,b,\xi) +N_2 (a,b,\xi) + N_3 (a,b,\xi),
\end{equation}
where $L(a,b)$ is the linear operator given by
$$L(a,b) := - \dy^2 + \frac{a^2 + \partial_\tau a }4 y^2 -\frac{3a}2
- \frac{\frac12 +a}{2+b y^2} - \frac12 \dz^2,$$
and the functions $F(a,b)$ and $N_i (a,b,\xi)$, $(i=1,2,3)$, are given below.
We define
\begin{equation}\label{eq:source}
F(a,b) := \frac12  e^{-\frac{a y^2}4}
\left(\frac{2+by^2}{a+\frac12}\right)^{\frac12}
\left[\Gamma_1 +\Gamma_2 \frac{y^2}{2+by^2} - \frac{b^3 y^4}{(2+by^2)^2}\right],
\end{equation}
with
\begin{align}
\Gamma_1 := & \frac{ \partial_\tau a }{a+\frac12} + a - \frac12 + b , \nonumber\\
\Gamma_2 := & - \partial_\tau b  - b \left(a-\frac12 + b\right) - b^2 , \nonumber\\
N_1 (a,b,\xi) := & - \frac1v   \frac{a+\frac12}{2+by^2} e^{\frac{ay^2}4} \xi^2 , \label{eq:defN1}\\
N_2 (a,b,\xi) := & - e^{-\frac{ay^2}4} \frac{p^2}{1+p^2+q^2} \dy^2 v . \label{eq:defN2}
\end{align}
The final term, $N_3$, did not appear in \cite{GS09}; it is
\begin{equation}\label{eq:difN3}
\begin{array}{rl}
N_3 (a,b,\xi) :=  & \displaystyle \left[v^{-2} \frac{1+p^2}{1+q^2+p^2} - \frac12 \right]
\dz^2 v e^{-\frac{ay^2}4}\\
\noalign{\vskip6pt}
&\quad \displaystyle + e^{-\frac{ay^2}4}  v^{-1} \frac{2pq}{1+p^2+q^2}
\dz  \dy v\\
\noalign{\vskip6pt}
&\quad \displaystyle + e^{-\frac{ay^2}4}  v^{-2}  \frac{q}{1+p^2+q^2} \dz v .
\end{array}
\end{equation}
\medskip

Now we derive differential equations for the parameters $a$ and $b$.
Taking inner products on equation~\eqref{eq:xi} with the functions $\phi_{k,a}$, $(k=0,2)$, and
using the orthogonality conditions $\xi \perp \phi_{0,a},\ \phi_{2,a}$ in \eqref{eqn:split3},
one obtains two equations:
\begin{equation}\label{eq:F0ab}
\begin{array}{rcl}
\left\langle F(a,b),\phi_{0,a}\right\rangle
&=&\displaystyle -\left\langle \xi,\partial_\tau \phi_{0,a}\right\rangle
-\left\langle \left(\frac{\frac12 + a}{2+by^2} - \frac{\frac12 + a}2\right)
\xi, \phi_{0,a}\right\rangle\\
\noalign{\vskip6pt}
&& \displaystyle - \sum_{k=1}^3  \left\langle N_k, \phi_{0,a}\right\rangle
+\frac{a_\tau}4 \left\langle \xi,y^2 \phi_{0,a}\right\rangle,
\end{array}
\end{equation}
\begin{equation}\label{eq:F2ab}
\begin{array}{rcl}
\left\langle F(a,b),\phi_{2,a}\right\rangle
&=& \displaystyle - \left\langle \xi,\partial_\tau  \phi_{2,a}\right\rangle
- \left\langle \left(\frac{\frac12 + a}{2+by^2} - \frac{\frac12 + a}2\right )
\xi, \phi_{2,a}\right\rangle\\
\noalign{\vskip6pt}
&& \displaystyle - \sum_{k=1}^3 \left\langle N_k, \phi_{2,a}\right\rangle
+\frac{a_\tau}4 \left\langle \xi,y^2 \phi_{2,a}\right\rangle.
\end{array}
\end{equation}
Note that the terms on the right-hand sides of equations~\eqref{eq:F0ab}--\eqref{eq:F2ab}
depend on $\xi$, while $F(a,b)$ depends on $a_\tau,\ b_\tau,\ a,\ b$ and $y$.
\medskip

We now show that Theorem~\ref{THM:aprior} is a consequence of the following two results.

\begin{proposition}\label{Prop:Gam12}
Suppose Conditions~[Cd] and [Cb] and
estimates~\eqref{eq:upperBounddV}--\eqref{eq:thetaDe} hold.
Then there exists a nondecreasing polynomial $P(M,A)$ of $A$ and the components of $M$ such that
\begin{equation}\label{eq:B}
B(\tau) \ls 1 + P \left(M(\tau),A(\tau)\right)
\end{equation}
and
\begin{equation}\label{eq:A}
A(\tau) \ls A(0) + \beta^{\frac7{10}} (0) P \left(M(\tau),A(\tau)\right).
\end{equation}
\end{proposition}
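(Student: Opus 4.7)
The plan is to invert the modulation equations \eqref{eq:F0ab}--\eqref{eq:F2ab} to extract ODEs for the parameters $a(\tau)$ and $b(\tau)$, and then integrate those ODEs against the known behavior $\beta_\tau = -\beta^2$. The argument closely parallels the rotationally symmetric case treated in \cite{GS09}; the only genuinely new ingredient is the additional nonlinear term $N_3$ arising from $\theta$-dependence, which is controlled by the $\theta$-derivative estimates \eqref{eq:smallness}--\eqref{eq:thetaDe} output by the first bootstrap machine.

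\emph{Inverting the modulation equations.} First I would compute the inner products $\langle F(a,b),\phi_{k,a}\rangle$ for $k=0,2$ using the explicit form \eqref{eq:source}. Their leading Gaussian contributions assemble into an invertible $2\times 2$ linear system in $(\Gamma_1,\Gamma_2)$, with coefficient matrix depending smoothly on $(a,b)$ and uniformly non-degenerate on the parameter range of Condition~[Cb]. On the right-hand sides of \eqref{eq:F0ab}--\eqref{eq:F2ab}, the $\xi$-terms (including $\partial_\tau\phi_{k,a}$ and the $(\tfrac12+a)/(2+by^2)$ term) are controlled against the $\|\phi\|_{m,n}$-bounds encoded by $M$; the term $N_1\sim v^{-1}\xi^2$ is bounded by $M^2$; the term $N_2$ is handled exactly as in \cite{GS09}; and the new term $N_3$ is controlled using \eqref{eq:smallness}--\eqref{eq:thetaDe}, whose $\beta^{33/20}$ decay on $v^{-2}\dz^2 v$ and its relatives provides more than enough smallness. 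Inverting the system produces
\[
|\Gamma_1(\tau)|+\beta^{-1}(\tau)|\Gamma_2(\tau)|\ls\beta^{3}(\tau)\bigl[1+\beta^{\frac{1}{10}}(\tau)P(M(\tau),A(\tau),B(\tau))\bigr].
\]

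\emph{Estimating $B$.} The definitions of $\Gamma_1,\Gamma_2$ in \eqref{eq:source} give the ODE system
\[
a_\tau=-(a+\tfrac12)(a-\tfrac12+b)+(a+\tfrac12)\Gamma_1,\qquad
b_\tau=-b(a-\tfrac12+b)-b^2-\Gamma_2.
\]
Writing $h:=b-\beta$, the second ODE yields $h_\tau=-(b+\beta)h-b(a-\tfrac12+b)-\Gamma_2$. By Condition~[Cb], $b+\beta=2\beta(1+o(1))$, so the integrating factor is
\[
\mu(\tau):=\exp\!\Bigl(\int_0^\tau (b+\beta)\,ds\Bigr)\asymp\beta_0^2/\beta^2(\tau).
\]
Using $|b(a-\tfrac12+b)|\le A(\tau)\beta^3(\tau)$ and the Step~1 bound on $\Gamma_2$, the identity $\int_0^\tau\mu(s)\beta^3(s)\,ds\ls\beta_0^2\log\bigl(\beta_0/\beta(\tau)\bigr)$ yields $|h(\tau)|\ls\beta^2(\tau)[1+P(M,A,B)]$, which is sharper than the $\beta^{3/2}$ required. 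Dividing by $\beta^{3/2}$ produces \eqref{eq:B}.

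\emph{Estimating $A$.} Setting $\alpha:=a-\tfrac12+b$ and summing the two ODEs,
\[
\alpha_\tau=-(a+\tfrac12+b)\alpha-b^2+(a+\tfrac12)\Gamma_1-\Gamma_2.
\]
The homogeneous coefficient is $\approx 1$, so the integrating factor is essentially $e^\tau$. Integration by parts on the leading forcing gives $\int_0^\tau e^s\beta^2(s)\,ds=e^\tau\beta^2(\tau)\bigl(1+\mathcal{O}(\beta(\tau))\bigr)-\beta_0^2$, so the $-b^2$-forcing drives $\alpha$ to size $-\beta^2(\tau)$. The initial contribution $\alpha(0)e^{-\tau}$, when divided by $\beta^2(\tau)$, is bounded by $A(0)$; the remaining contributions from $\Gamma_1$, $\Gamma_2$, and $b^2-\beta^2$ integrate to $\beta^2(\tau)\beta_0^{7/10}P(M,A)$ after careful bookkeeping. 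These combine to give \eqref{eq:A}. The main technical obstacle throughout is precisely this bookkeeping: every error term, especially the new $N_3$-contributions, must decay faster than the principal $-b^2$ forcing by at least a factor of $\beta^{1/10}(0)$, which is what the outputs of the first bootstrap machine are designed to furnish.
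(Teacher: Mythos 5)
Your overall strategy is correct and matches the paper's: extract ODEs for $a$ and $b$ by projecting the modulation equations~\eqref{eq:F0ab}--\eqref{eq:F2ab} onto $\phi_{0,a},\phi_{2,a}$, bound $\Gamma_1,\Gamma_2$ using the first bootstrap machine's outputs, and integrate. Your integrating-factor treatment of $h=b-\beta$ is an honest variant of the paper's slicker manipulation (dividing the $b$-ODE by $b^2$ to observe that $\partial_\tau(b^{-1}-\beta^{-1})$ is small), and your treatment of $\alpha=a-\tfrac12+b$ is essentially the paper's Lemma~\ref{LM:Gam12}--based argument.

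However, the intermediate estimate you propose to produce, namely $|\Gamma_1|+\beta^{-1}|\Gamma_2|\ls\beta^3[1+\beta^{1/10}P]$, is too strong to be obtainable from the given inputs and does not match the paper's Lemma~\ref{LM:Gam12}, which gives only $|\Gamma_1|,\ |\Gamma_2|\ls\beta^{27/10}P(M,A)$. The bottleneck is the $N_2$-projection: the first bootstrap machine outputs $|\dy^2 v|\ls\beta^{13/20}$ (estimate~\eqref{eq:yDe}), not $\beta$, and the paper's computation gives
$|\langle N_2,\phi_{k,a}\rangle|\ls(\beta+M_{2,1}\beta^{8/5})^2\beta^{13/20}\sim\beta^{53/20}$,
which is strictly larger than your claimed $\beta^3$. (You would need a $\beta$-decay rate for $\dy^2 v$, not $\beta^{13/20}$, to reach $\beta^3$.) Your statement is therefore a gap: it asserts a decay that the hypotheses do not deliver. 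Fortunately, the weaker and correct bound $\ls\beta^{27/10}P(M,A)$ still suffices for both \eqref{eq:B} and \eqref{eq:A}, because $27/10-2=7/10$ produces exactly the $\beta^{7/10}(0)$ prefactor in \eqref{eq:A}; so the conclusion is not in danger, only your intermediate claim is.

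Two further points. First, you write $P(M,A,B)$, but the Proposition's $P$ must be a polynomial in $A$ and the components of $M$ only; otherwise \eqref{eq:B}, which bounds $B\ls 1+P$, is circular. The paper avoids this by never letting $B$ appear on the right-hand side of the $\Gamma_i$ estimate. Second, the genuinely new labor in this Proposition — and the place where the $\theta$-dependence actually enters — is in bounding $\langle N_3,\phi_{k,a}\rangle$ and, for $\langle v^{-2}F_2\dz^2 v,\phi_{k,a}\rangle$, integrating by parts in $\theta$ to trade one $\dz$ against the controlled quantities $v^{-2}\dz v$, $v^{-2}\dz^2 v$, $v^{-1}\dy\dz v$ from estimate~\eqref{eq:thetaDe}. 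You wave at this as "controlled using \eqref{eq:smallness}--\eqref{eq:thetaDe}, whose $\beta^{33/20}$ decay ... provides more than enough smallness," which is qualitatively right but skips the step that constitutes the actual novelty versus \cite{GS09}.
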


This will be proved in Section~\ref{Sec:Splitting}.

\begin{proposition}\label{Prop:Main3}
Suppose Conditions~[Cd] and [Cb] and
estimates~\eqref{eq:upperBounddV}--\eqref{eq:thetaDe} hold.
Then the function $\phi$ satisfies the estimates
\begin{equation}\label{eq:M30}
M_{3,0}(\tau) \ls M_{3,0}(0) + \beta^{ \frac1{20}} (0) P \left(M(\tau),A(\tau)\right),
\end{equation}
\begin{equation}\label{eq:M20}
M_{\frac{11}{10},0}(\tau) \ls
M_{\frac{11}{10},0}(0) + 1 + \epsilon_0 M_{\frac{11}{10},0}+M_{3,0}(\tau)
+ \beta^{ \frac{1}{20}} (0) P \left( M(\tau),A(\tau)\right),
\end{equation}
\begin{equation}\label{eq:M21}
M_{2,1}(\tau) \ls M_{2,1}(0) + M_{3,0}(\tau)
+ \beta^{ \frac1{20}} (0)P  \left(M(\tau),A(\tau)\right),
\end{equation}and
\begin{equation}\label{eq:M12}
M_{1,1}(\tau) \ls M_{1,2}(0) + M_{3,0} (\tau) + M_{2,1}(\tau)
+ \beta^{\frac{1}{20} }(0)P \left(M(\tau),A(\tau)\right),
\end{equation}
for any $\tau\in [0,\tau(t_{\#})]$, where $P(M,A)$ is a nondecreasing
polynomial of $A$ and the components of $M$.
\end{proposition}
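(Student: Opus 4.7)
The plan is to derive estimates~\eqref{eq:M30}--\eqref{eq:M12} via Duhamel's formula applied to the gauge-transformed equation~\eqref{eq:xi} for $\xi=e^{-ay^2/4}\phi$. The linear generator $-L(a,b)$ decomposes as a rescaled harmonic oscillator in $y$ coupled to $\tfrac12\dz^2$ on $\mathbb{S}^1$. The orthogonality $\xi\perp\phi_{0,a},\phi_{2,a}$ imposed by Condition~[Cd] removes the two non-decaying eigenmodes on the $\theta$-independent subspace; meanwhile, $\pi$-periodicity (Lemma~\ref{Symmetries}) together with the Poincar\'e-type inequality of Remark~\ref{FixAxis} provides an $\mathcal{O}(1)$ spectral gap on the $\theta$-dependent subspace. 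Taken together, these yield a semigroup bound of the schematic form $\|U(\tau,s)f\|_{m,n}\ls e^{-\mu(\tau-s)}\|f\|_{m,n}$ on the admissible subspace, and this is what ultimately drives the target pointwise decay rate $\beta^{(m+n)/2+1/10}$.

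Next I would expand the right-hand side of~\eqref{eq:xi} into the source $F(a,b)$ and the three nonlinearities $N_1, N_2, N_3$, and estimate each contribution in the weighted $L^\infty$ norm $\|\cdot\|_{m,n}$ after the semigroup has been applied. Using definitions~\eqref{eq:majorA}--\eqref{eq:majorB}, the quantities $\Gamma_1,\Gamma_2$ appearing in $F$ are bounded in terms of $A$ and $B$; multiplying by the Gaussian $e^{-ay^2/4}$ and integrating against $U(\tau,s)$ produces precisely the polynomial $P(M,A)$ factors on the right-hand side of \eqref{eq:M30}--\eqref{eq:M12}. The quadratic term $N_1\ls v^{-1}\xi^2$ likewise contributes quadratically in $M$, which is why $P$ is allowed to be polynomial. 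The term $N_2$ carries a factor $p^2=(\dy v)^2\ls\beta v$ by \eqref{eq:yDe}, and is therefore small. The genuinely new term $N_3$ is controlled using all of the first-bootstrap outputs~\eqref{eq:smallness}--\eqref{eq:thetaDe}, together with the interpolation Lemma~\ref{InterpEst} for mixed derivatives.

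The four estimates would then be obtained in the stated order. For $M_{3,0}$, the strong weight $\langle y\rangle^{-3}$ allows the Duhamel estimate to be run directly. For $M_{11/10,0}$, the weaker weight forces the use of the already-established bound on $M_{3,0}$ to control $|\phi|$ at large $y$, and to absorb an $\epsilon_0 M_{11/10,0}$ contribution from $N_1$ back to the left-hand side (possible because $\epsilon_0\ll 1$ by~\eqref{eq:smallness}). For $M_{2,1}$ and $M_{1,1}$, I would differentiate~\eqref{eq:xi} in $y$, compute the commutator $[\dy, L(a,b)]$, and rerun the semigroup argument; the $M_{3,0}$ and $M_{2,1}$ terms appearing on the right-hand sides of \eqref{eq:M21}--\eqref{eq:M12} come from this commutator and from differentiation of the weight.

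The main obstacle is the rigorous control of $N_3$ in equation~\eqref{eq:difN3}, which has no analogue in the rotationally symmetric setting of \cite{GS09}. Its coefficient $v^{-2}\tfrac{1+p^2}{1+p^2+q^2}-\tfrac12$ must be expanded carefully so as to extract genuine decay from $v^{-2}-\tfrac12$ in the inner region and from the full factor in the outer region, while the $\theta$-derivative factors $\dz^2 v$, $\dy\dz v$, $\dz v$ are bounded by~\eqref{eq:smallness}--\eqref{eq:thetaDe}. A secondary bookkeeping issue is obtaining exactly the $\beta^{1/20}(0)$ small factor in the conclusions: time integrals of the form $\int_0^\tau e^{-\mu(\tau-s)}\beta^k(s)\,\mathrm{d}s$ gain $\beta^{1/20}(0)$ only when $k$ exceeds the target exponent by this precise amount, so every source and nonlinear contribution must be shown to produce a power of $\beta$ at least $\beta^{(m+n)/2+1/10+1/20}$ before the time integral is applied.
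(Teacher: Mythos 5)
Your overall architecture — Duhamel against the gauge-transformed equation, spectral gap from orthogonality on the $\theta$-independent part and from $\pi$-periodicity on the $\theta$-dependent part, then careful bookkeeping of $F$, $N_1$, $N_2$, and the genuinely new $N_3$ — matches the spirit of the paper's argument, and you correctly identified $N_3$ as the main novelty over~\cite{GS09}. But there is a real gap in the first and most structural step: you propose to run the semigroup estimate $\|U(\tau,s)f\|_{m,n}\ls e^{-\mu(\tau-s)}\|f\|_{m,n}$ directly for the generator $-L(a,b)$ in equation~\eqref{eq:xi}. That operator is \emph{not} a fixed harmonic oscillator: its quadratic potential coefficient is $\tfrac14(a^2+\partial_\tau a)y^2$, time-dependent through $a(\tau)$, and the orthogonality constraints $\xi\perp\phi_{0,a},\phi_{2,a}$ are likewise time-dependent and not preserved by the flow. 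There is no off-the-shelf propagator estimate with the required decay and weighted-$L^\infty$ mapping properties for such a time-dependent operator, and nothing in your outline supplies one.

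The paper avoids this by an essential reparameterization that you omit. For a fixed target time $T$ one introduces a comparison trajectory $\lambda_1(t)$ with $\lambda_1(t(T))=\lambda(t(T))$ and \emph{constant} $\alpha:=-\lambda_1\partial_t\lambda_1=a(T)$, new variables $z=\lambda_1^{-1}x$, $\sigma=\int_0^t\lambda_1^{-2}$, and $\eta$ defined by~\eqref{NewFun}. This converts~\eqref{eq:xi} into equation~\eqref{eq:eta}, whose linear part is the genuinely time-independent operator $\mathcal{L}_\alpha=L_\alpha+V$, for which explicit eigenfunctions, projections $P^\alpha_n$, and the propagator bounds of Proposition~\ref{PRO:propagator} are available. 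Making this rigorous requires two further ingredients you do not mention: Proposition~\ref{NewTrajectory}, which shows $\lambda/\lambda_1-1=\mathcal{O}(\beta)$ and converts bounds on $\eta$ to bounds on $\phi$ (estimate~\eqref{eq:compareXiEta}); and Lemma~\ref{Comparison}, which shows that \emph{at the end time} $\sigma=S$ the projected quantity $\langle z\rangle^{-\ell}e^{\alpha z^2/4}P^\alpha_m(\partial_z+\tfrac{\alpha}{2}z)^n\eta(S)$ equals $\langle y\rangle^{-\ell}\partial_y^n\phi(T)$ exactly, so the Duhamel estimate for $\eta$ really controls $M_{m,n}(T)$. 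Likewise, for $M_{2,1}$ and $M_{1,1}$ the paper does not naively differentiate in $y$ and compute $[\partial_y,L(a,b)]$; it uses the exact identity $e^{-\alpha z^2/4}\partial_z(e^{\alpha z^2/4}g)=(\partial_z+\tfrac{\alpha}{2}z)g$ so that differentiation shifts the spectrum of the \emph{frozen} operator by $+\alpha$, improving the linear decay. Without the $\lambda_1$ construction, your commutator computation would be carried out against the time-dependent $L(a,b)$ and would not yield the required spectral improvement. In short: the nonlinear and source estimates you sketch are the right ingredients, but the backbone — freezing the trajectory to make the linear propagator analysis tractable and then transferring back to $\phi$ at the end time — is missing and must be added.
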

This will be proved in Section~\ref{SEC:Rescale}.
\medskip

These two propositions imply Theorem~\ref{THM:aprior}, as we now show.

\begin{proof}[Proof of Theorem~\ref{THM:aprior}]
Observe that $M_{3,0}(\tau)$ and $M_{2,1}(\tau)$ are present on the right-hand sides
of equations~\eqref{eq:M20}--\eqref{eq:M12}.
To remove these, we use estimates~\eqref{eq:M30} and \eqref{eq:M21}
to recast estimates~\eqref{eq:B}, \eqref{eq:A}, and \eqref{eq:M30}--\eqref{eq:M12} as
\begin{align*}
A(\tau) + |M(\tau)| \ls & A(0) + |M(0)| + \beta^{\frac1{10}}
(0)P \left(|M(\tau)|,A(\tau)\right),\\ \\
B(\tau) \ls &1 + P  \left(|M(\tau)|,A(\tau)\right),
\end{align*}
with $P$ being some polynomial.
By the boundedness of $|M(0)|,\ A(0)$ and the smallness of $\beta_0$,
we obtain the desired estimate \eqref{EstABM}.
\end{proof}

\section{Estimates to control the parameters $a$ and $b$}\label{Sec:Splitting}

We start by stating a preliminary estimate, which is proved later in this section.

\begin{lemma}\label{LM:Gam12}
The functions $\Gamma_1$ and $\Gamma_2$ appearing in definition~\eqref{eq:source} satisfy
\begin{align}
|\Gamma_1|,\ |\Gamma_2|  \ls
\beta^{\frac{27}{10}}  P(M,A).\label{eq:estI1I2}
\end{align}
\end{lemma}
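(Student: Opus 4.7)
The plan is to regard \eqref{eq:F0ab}--\eqref{eq:F2ab} as a $2\times 2$ linear system for $(\Gamma_1,\Gamma_2)$. Expanding $F(a,b)$ via definition~\eqref{eq:source}, the left-hand sides take the form
\[
\lan F(a,b),\phi_{k,a}\ran = c_{k,1}(a,b)\,\Gamma_1 + c_{k,2}(a,b)\,\Gamma_2 + r_k,\qquad k=0,2,
\]
where $|r_k|\ls b^3\ls\beta^3$ arises from the bracket $-b^3 y^4/(2+by^2)^2$ in \eqref{eq:source} and the coefficients $c_{k,j}(a,b)$ are Gaussian-weighted integrals of elementary functions. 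At the reference values $(a,b)=(\tfrac12,0)$ the off-diagonal entry $c_{21}$ vanishes thanks to the $L^2$-orthogonality of the Hermite functions $\phi_{0,1/2}$ and $\phi_{2,1/2}$ in \eqref{BadEvals}, while the diagonal entries are nonzero constants; by continuity in $(a,b)$ together with $|a-\tfrac12|+b\ls\beta$ (from Condition~[Cb] and the decomposition \eqref{eqn:split2}), the matrix $(c_{k,j})$ remains invertible with uniformly bounded inverse. Accordingly, the lemma reduces to bounding the right-hand sides $R_k$ of \eqref{eq:F0ab}--\eqref{eq:F2ab} by $\beta^{27/10}P(M,A)$.

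The contributions to $R_k$ fall into three groups. The $a_\tau$-dependent terms $\lan\xi,\dt\phi_{k,a}\ran=a_\tau\lan\xi,\partial_a\phi_{k,a}\ran$ and $\tfrac{a_\tau}{4}\lan\xi,y^2\phi_{k,a}\ran$ are each bounded by $|a_\tau|\,\|\phi\|_{3,0}$; substituting $a_\tau=(a+\tfrac12)[\Gamma_1-(a-\tfrac12+b)]$ closes the $a_\tau$-coupling, producing a $|\Gamma_1|$-contribution of size $\|\phi\|_{3,0}=\mathcal{O}(\beta^{8/5}M_{3,0})$ that is absorbed upon matrix inversion. The potential-type term $\lan[\tfrac{1/2+a}{2+by^2}-\tfrac{1/2+a}{2}]\xi,\phi_{k,a}\ran$ equals $-\tfrac{(1/2+a)b}{2}\lan\tfrac{y^2}{2+by^2}\xi,\phi_{k,a}\ran$; expanding $\tfrac{y^2}{2+by^2}\phi_{k,a}$ into its Hermite components and using the orthogonality $\xi\perp\phi_{0,a},\phi_{2,a}$ eliminates the leading terms, leaving only projections onto higher Hermite modes at the required rate. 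The quadratic nonlinearity $N_1$ yields $|\lan N_1,\phi_{k,a}\ran|\ls\|\phi\|_{3,0}^2$. For $N_2$ a direct bound is insufficient, but integrating by parts against $\dy^2 v=\dy p$ using $p^2\,\dy p=\tfrac13\dy(p^3)$ reduces the projection to $\tfrac{a}{3}\lan y p^3,e^{-ay^2/4}\phi_{k,a}\ran$; since $|p|\ls b|y|$ on the Gaussian support, this is $\ls b^3$.

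The main obstacle, and the only substantively new input relative to \cite{GS09}, is $\lan N_3,\phi_{k,a}\ran$, where $N_3$ is the collection of $\theta$-derivative terms in \eqref{eq:difN3}. For the leading first piece, integration by parts in $\theta$ together with periodicity ($\int_0^{2\pi}\dz^2 v\,d\theta=0$) gives
\[
\int_0^{2\pi}\Bigl[v^{-2}\tfrac{1+p^2}{1+p^2+q^2}-\tfrac12\Bigr]\dz^2 v\,d\theta
=-\int_0^{2\pi}\dz\Bigl[v^{-2}\tfrac{1+p^2}{1+p^2+q^2}-\tfrac12\Bigr]\dz v\,d\theta,
\]
trading the apparently slow factor $\dz^2 v$ for a product of two $\theta$-derivatives, each of which carries the fast decay $\beta^{33/20}$ from the first bootstrap output \eqref{eq:thetaDe}, while the remaining factors $v^{-1}$, $(1+p^2+q^2)^{-1}$ are uniformly bounded on the Gaussian-localized support of $\phi_{k,a}$. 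The second and third pieces of $N_3$ already contain products of two $\theta$-derivatives and admit direct estimation via the same bounds. Collecting all contributions yields $|R_k|\ls\beta^{27/10}P(M,A)$; inverting the matrix $(c_{k,j})$ then gives the stated estimate on $\Gamma_1,\Gamma_2$, any residual $\beta^{-k/20}$ factor being absorbed into the polynomial $P(M,A)$ via $|M|+A\ls\beta^{-1/20}$ from Condition~[Cb].
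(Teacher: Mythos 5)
Your proof plan matches the paper's strategy essentially step for step: treating \eqref{eq:F0ab}--\eqref{eq:F2ab} as an approximately triangular $2\times 2$ system for $(\Gamma_1,\Gamma_2)$, bounding the right-hand sides term by term, using the $a_\tau$-relation to close the loop, and handling the genuinely new $N_3$ contribution via integration by parts in $\theta$ so that the apparently slow $\dz^2 v$ factor is traded for a product of two $\theta$-derivatives each carrying $\beta^{33/20}$ decay from the first bootstrap output. Your ``matrix invertibility near $(a,b)=(1/2,0)$'' framing is the same observation the paper records as the lower bounds $|\lan F,\phi_{2,a}\ran| \gtrsim |\Gamma_2| - |b|(|\Gamma_1|+|\Gamma_2|) - b^3$ and $|\lan F,\phi_{0,a}\ran| \gtrsim |\Gamma_1 + \tfrac{1}{2a}\Gamma_2| - \cdots$, which is exactly the statement that the system is upper-triangular with uniformly bounded inverse.

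One small correction concerning $N_2$. You assert that a direct bound is insufficient and propose integrating by parts in $y$ using $p^2\,\dy p = \tfrac13\dy(p^3)$. The paper does use a direct bound here --- $|\lan N_2,\phi_{k,a}\ran| \ls \|\lan y\ran^{-3}\dy v\|_\infty^2\,\|\dy^2 v\|_\infty$ combined with $|\dy^2 v|\ls\beta^{13/20}$ from \eqref{eq:yDe} --- and it closes. Moreover, your integration by parts as written is incomplete: $N_2$ carries the factor $(1+p^2+q^2)^{-1}$, so $p^2\,\dy p$ is not the exact integrand, and differentiating $(1+p^2+q^2)^{-1}$ reintroduces $\dy^2 v$ (and $\dy q$) terms; these need to be accounted for, at which point the IBP buys you little over the paper's direct estimate. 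Everything else in your outline, including the careful use of the orthogonality $\xi\perp\phi_{0,a},\phi_{2,a}$ to kill the leading part of the potential term, is consistent with the paper.
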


Now we use estimate~\eqref{eq:estI1I2} to prove Proposition~\ref{Prop:Gam12}.

\begin{proof}[Proof of Proposition~\ref{Prop:Gam12}]

We start by proving estimate~\eqref{eq:B}.

Recall that $\Gamma_1$ and $\Gamma_2$ are defined in terms of $a_\tau,\ b_\tau,$ $a$ and $b$.
Thus we begin by rewriting $\Gamma_2$ in estimate~\eqref{eq:estI1I2} as
\begin{equation*}
\left| \partial_\tau  b  + b^2\right|
\ls b  \left|\frac12 - a + b \right| + \beta^{\frac{27}{10}}   P(M,A).
\end{equation*}
The first term on the \textsc{rhs} is bounded by
$b \beta^2  A  \ls \beta^3  A$ by definition of $A$.
Hence
\begin{align}\label{eqn:Simplifiedb}
\left| \partial_\tau b  + b^2\right| & \ls \beta^{\frac{27}{10}}P(M,A).
\end{align}
Divide estimate~\eqref{eqn:Simplifiedb} by $b^2$, and use the inequality
$\beta\ls b$ implied by the condition $B\le \beta^{-\frac1{20}}$, to obtain
\begin{equation} \label{est:InversebDE}
\left| \partial_\tau  \frac1{b}-1 \right|  \ls \beta^{\frac7{10}}  P(M,A).
\end{equation}
Then use the fact that $\beta$ satisfies $-\partial_\tau \beta^{-1} + 1 = 0$ to get
\begin{equation*}
\left|\partial_\tau \left( \frac1b  -\frac1{\beta}\right) \right|
\ls  \beta^{\frac7{10}} P(M,A).
\end{equation*}
Integrating this estimate over $[0,\tau]$ and using the facts that $b \ls \beta$ and
$\beta(0) = b(0)$, we obtain
\begin{equation*}
\beta^{-\frac32}  |\beta - b|
\ls \beta^{\frac12}  \int_0^\tau \beta^{\frac7{10}}  (s) P\left  ( M(s),A(s)\right)\,\mathrm{d}s
\ls \beta^{\frac12} P\left(M(\tau),A(\tau)\right),
\end{equation*}
which together with the definitions of $\beta$ and $B$ implies estimate~\eqref{eq:B}.

Now we turn to estimate~\eqref{eq:A}.
To facilitate later discussions, we define
\begin{align}\label{eq:difGam}
\Gamma := a - \frac12 + b.
\end{align}
Differentiating $\Gamma$ with respect to $\tau$,
writing $ \partial_\tau b  $ and $\partial_\tau a$ in terms of $\Gamma_1$ and $\Gamma_2$,
and using estimate~\eqref{eq:estI1I2}, we obtain
\begin{equation*}
\partial_\tau \Gamma + \left( a+\frac12 + b\right)\Gamma = - b^2+\mathcal{R}_b,
\end{equation*}
where $\mathcal{R}_b$ obeys the bound
$$|\mathcal{R}_b|  \le \beta^{\frac{27}{10}} P(M,A).$$
Let $\mu=e^{\int_0^\tau a(s) + \frac12 + b(s)\,\mathrm{d}s}$.
Then the equation above implies that
\begin{equation*}
\mu\Gamma = \Gamma_0  -\int_0^\tau \mu b^2\,\mathrm{d}s+\int_0^\tau \mu \mathcal{R}_b\,\mathrm{d}s.
\end{equation*}
We now use the inequality $b\ls \beta$ and the bound for
$\mathcal{R}_b$ to estimate over $[0,\tau]\le [0,T]$ that
\begin{equation*}
|\Gamma|  \ls \mu^{-1} \Gamma_0 + \mu^{-1} \int_0^\tau \mu \beta^2\,\mathrm{d}s
    + \mu^{-1}\left[\int_0^\tau \mu \beta^{\frac{27}{10}}\,\mathrm{d}s\right] P(M,A).
\end{equation*}
For our purposes, it is sufficient to use the weaker inequality
\begin{equation*}
|\Gamma| \ls \mu^{-1} \Gamma(0) + \mu^{-1} \int_0^\tau \mu \beta^2\,\mathrm{d}s
    \left[1+ \beta^{\frac7{10}}(0) P(M,A)\right].
\end{equation*}
The conditions $A(\tau),\ B(\tau)\le \beta^{-\frac1{20}} (\tau)$ imply that
$a + \frac12 + b\ge \frac12.$
Thus, it is not difficult to show that $\beta^{-2} \mu^{-1} \Gamma(0) \le A(0)$ and
$\beta^{-2} \mu^{-1} \int_0^\tau \mu \beta^2\,\mathrm{d}s$ are bounded. Hence we have
$$A \ls A(0) + 1 + \beta^{\frac7{10}}(0)P(M,A),$$
which is estimate~\eqref{eq:A}.

This completes the proof of Proposition~\ref{Prop:Gam12}.
\end{proof}
\medskip

Now we present:

\begin{proof}[Proof of Lemma \ref{LM:Gam12}]
We begin by analyzing equations~\eqref{eq:F0ab}--\eqref{eq:F2ab}.
For notational purpose, we denote the terms on their right-hand sides by
$G_1$ and $G_2$, namely
\begin{align}
\left\langle F(a,b),\ \phi_{0,a}\right\rangle = &\ G_1,\label{eq:Projection1}\\
\left\langle F(a,b),\ \phi_{2,a}\right\rangle = &\ G_2.\label{eq:Projection2}
\end{align}

We start by bounding the terms in $G_1$ and $G_2$.
Using estimates~\eqref{eq:thetaDe} and \eqref{v-above}, both of which follow
from the first bootstrap machine, one finds for $k=0,2$ that
\begin{align*}
\left| \left\langle v^{-1} F_3 \dy \dz v,\ \phi_{k,a}\right\rangle\right|
  +&\left| \left\langle v^{-2}  F_4 \dz  v,\ \phi_{k,a}\right\rangle\right| \\
\noalign{\vskip6pt}
&\ls |v^{-2} \dz  v|
 \left(|v^{-2} \dy  \dz  v| + |v^{-2} \dz v| \right)
 \ls \beta^{\frac{33}{10}}.
\end{align*}
For the term $\langle v^{-2}F_2 \dz^2 v,\ \phi_{k,a}\rangle$, we
integrate by parts in $\theta$ to generate sufficient decay estimates, obtaining
\begin{align}
 & \left| \left\langle v^{-2}\frac{1+p^2}{1+p^2+q^2} \dz^2 v,
 e^{-\frac{a}4 y^2}\right\rangle\right| \nonumber\\
 \noalign{\vskip6pt}
&= \left| \left\langle \dz v, \dz
[v^{-2} \frac{1+p^2}{1+p^2+q^2}  e^{-\frac{a}4 y^2}]\right\rangle\right| \nonumber\\
 \noalign{\vskip6pt}
&= \left|\left\langle 2v^{-3} \frac{1+p^2}{1+p^2+q^2} (\dz v)^2
+ v^{-2} \frac{2pq \dy \dz v - 2(1 + p^2)(v^{-1}\dz^2 v-2 q^2)}{(1+p^2+q^2)^2} \dz v,
    e^{-\frac{a}4 y^2}\right\rangle\right|\nonumber\\
 \noalign{\vskip6pt}
&\ls  \beta^{\frac{33}{10}}\nonumber.
\end{align}
In the last step, we applied estimate~\eqref{eq:thetaDe}.
By similar methods, i.e.~integration by parts, we get
$$\left| \left\langle v^{-2} \frac{1+p^2}{1+p^2+q^2} \dz^2 v,
\left( ay^2-1\right)e^{-\frac{a}4 y^2}\right\rangle \right| \ls \beta^{\frac{33}{10}}.$$

For the terms $N_1$ and $N_2$, we have
$$|\langle N_1,\ \phi_{k,a}\rangle |
\ls \|\langle y\rangle^{-6} N_1\|_\infty
\le  M_{3,0}^2 \beta^{\frac{16}5}, \qquad (k=0,2).$$
Then we use the estimate $|\dy^2 v| \ls \beta^{\frac{13}{20}}$ of \eqref{eq:yDe} to obtain
$$\begin{array}{rcl}
\displaystyle \left| \left\langle N_2,\ \phi_{k,a}\right\rangle\right|\,
\left\| \langle y\rangle^{-2} p^2 \dy^2 v\right\|_\infty
&\le  & \left\| \langle y\rangle^{-3} \dy v \right\|_\infty^2
\left\| \dy^2 v\right\|_\infty\\
\noalign{\vskip6pt}
&\ls &
\left(\beta+M_{2,1}\beta^{\frac{16}{10}}\right)^2 \beta^{\frac{13}{20}},\qquad (k=0,2).
\end{array}
$$
In estimating $\langle y\rangle^{-3} \dy v$, we used
$v=\sqrt{{2+by^2}/{a+\frac12}} + e^{\frac{a}4  y^2}$
and the facts $a\approx \frac12$, $b\approx \beta$ implied by the consequences
$A,\ B\ls \beta^{-\frac1{20}}$ of  estimate~\eqref{eq:assuMAB} and the definition of
$M_{3,0}$ in \eqref{eq:majorMmn}.

We collect the estimates above, using our conditions on $A(\tau)$ and $B(\tau)$, to get
\begin{align}\label{eq:G1G2}
G_1,G_2 \ls | \partial_\tau a |\beta^2  M_{3,0}
+ \beta^{\frac{53}{20}} \left(1 + M_{2,1}^2 + M_{3,0}^2\right).
\end{align}

Now we turn to the terms on the left-hand sides of equations~\eqref{eq:Projection1}--\eqref{eq:Projection2}.
We decompose the function $F(a,b)$ defined in equation~\eqref{eq:source} into components
approximately parallel or orthogonal to $\phi_{0,a}$ and $\phi_{2,a}$,
plus a term of order $b^3$, as follows:
$$F(a,b) = \left( \frac{2+by^2}{a+\frac12} \right)^{\frac12}
\frac12  e^{-\frac{ay^2}4}
\left[ \Gamma_1 + \Gamma_2  \frac1{a[2+by^2]}
+ \Gamma_2  \frac{ay^2 -1}{a[2+by^2]} - \frac{b^3 y^4}{(2+by^2)^2} \right].$$
Consequently, one has
$$|\langle F(a,b),\phi_{0,a}\rangle|
\gtrsim \left| \Gamma_1 + \frac1{2a} \Gamma_2\right|
- |b| \left(|\Gamma_1| + |\Gamma_2|\right)-b^3$$
and
$$|\langle F(a,b), \phi_{2,a}\rangle|
\gtrsim |\Gamma_2| - |b| (|\Gamma_1| + |\Gamma_2|) - b^3,$$
which together with estimate~\eqref{eq:G1G2} imply that
\begin{align*}
|\Gamma_1|,\ |\Gamma_2|
\ls \beta^{\frac{53}{20}} \left(1+M_{2,1}^2 + M_{3,0}^{2}\right)
+  |\partial_\tau  a |\beta^2  M_{3,0}.
\end{align*}
To control the term $|\partial_\tau a|$ on the \textsc{rhs}, we use
the definitions of $\Gamma_1$ and $ \partial_\tau a $ to obtain
$$| \partial_\tau a |  \ls |\Gamma_1| + \beta^2 A.$$
Hence we have
$$|\Gamma_1|,\ |\Gamma_2|
\ls  \beta^{\frac{53}{20}} \left(1+M_{2,1}^2 + M_{3,0}^2\right)
+ \beta^4 AM_{3,0}.$$
This together with the condition
$|M(\tau)| \le \beta^{- \frac1{10} }  (\tau)$ implies estimate~\eqref{eq:estI1I2}.
\end{proof}

To facilitate later discussions, we state some other useful estimates for $F$.

\begin{corollary}
If $(m,n)\in\left\{(3,0), (11/10,0),\ (2,1), \ (1,1)\right\}$
and $A(\tau),\ B(\tau) \ls \beta^{- \frac1{20} }(\tau)$, then
\begin{equation}\label{eq:estSource}
\Big\| e^{\frac{ay^2}4}  F(a,b)\Big\|_{m,n}
\ls  \beta^{\frac{m+n+1}2} (\tau) P(M,A).
\end{equation}
\end{corollary}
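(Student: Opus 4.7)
The plan starts from the explicit formula~\eqref{eq:source}. Multiplying through by $e^{ay^2/4}$ to cancel the Gaussian, one obtains
\[
e^{ay^2/4}F(a,b) = \frac{1}{2\sqrt{a+1/2}}\left[\Gamma_1\sqrt{2+by^2} + \Gamma_2\frac{y^2}{\sqrt{2+by^2}} - \frac{b^3 y^4}{(2+by^2)^{3/2}}\right] =: T_1+T_2+T_3.
\]
The first two summands inherit the saving $|\Gamma_j|\ls\beta^{27/10}P(M,A)$ supplied by Lemma~\ref{LM:Gam12}, while $T_3$ carries the built-in saving $b^3\ls\beta^3$, where $b\ls\beta$ is a consequence of the bootstrap hypothesis $B(\tau)\ls\beta^{-1/20}$. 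All three saving factors are stronger than the worst target rate, $\beta^2$, so the remaining task is to control the algebraic pieces and their $y$-derivatives against the weight $\langle y\rangle^{-m}$.

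For this, I expand $\dy^n T_j$ by Leibniz and the chain rule, producing a finite sum of terms of the form $c\,b^K y^L(2+by^2)^{\alpha}$ with $K,L\ge 0$ and $\alpha\in\{\tfrac12,-\tfrac12,-\tfrac32,-\tfrac52\}$. I then split $\mathbb{R}$ into an \emph{inner region} $\{|y|\le b^{-1/2}\}$, where $2+by^2\asymp 1$, and an \emph{outer region} $\{|y|\ge b^{-1/2}\}$, where $2+by^2\asymp b|y|^2$ and hence $\langle y\rangle^{-1}\ls b^{1/2}$. In each region, each generic term is bounded pointwise by a product of a power of $b$ and a power of $|y|$. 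Excess positive powers of $|y|$ are then absorbed using the weight $\langle y\rangle^{-m}$: in the outer region via $\langle y\rangle^{-1}\ls b^{1/2}$, and in the inner region via $|y|\le b^{-1/2}$. A short direct verification for each of the four pairs $(m,n)$ then yields~\eqref{eq:estSource}.

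I expect the main technical obstacle to be tracking the $T_3$ contribution, which carries only the weakest saving $b^3$. For instance, at $(m,n)=(2,1)$ a representative term of $\dy T_3$ is $b^3 y^3/(2+by^2)^{3/2}$; in the inner region this is bounded by $b^3|y|^3$, whose $\langle y\rangle^{-2}$-weighted supremum over $|y|\le b^{-1/2}$ is at most $b^3\cdot b^{-1/2}=b^{5/2}$; in the outer region the ratio reduces to $b^{3/2}$, which weighted by $\langle y\rangle^{-2}\ls b$ again yields $b^{5/2}\ls\beta^{5/2}\ls\beta^2$, matching the target. The analogous direct calculations dispatch the other three pairs for all three summands, and assembling the contributions $T_1+T_2+T_3$ proves the corollary.
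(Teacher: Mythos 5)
Your proposal is correct and follows essentially the same route as the paper: decompose $e^{ay^2/4}F$ into the $\Gamma_1$, $\Gamma_2$, and $b^3$ pieces, feed in the bound $|\Gamma_j|\ls\beta^{27/10}P(M,A)$ from Lemma~\ref{LM:Gam12}, and control the algebraic weight factors by comparing $2+by^2$ with $\langle y\rangle$ in the regimes $|y|\lessgtr b^{-1/2}$. The paper carries out only the $(m,n)=(11/10,0)$ case and defers the rest to \cite{GS09}; your inner/outer split and Leibniz bookkeeping are a reasonable way to fill in those omitted cases.
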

\begin{proof}
In what follows, we only prove the case $(m,n) = (11/10,0)$.
The proofs of the remaining cases are similar (see \cite{GS09}), hence omitted.

Recalling definition~\eqref{eq:source} for $F$, one observes that
$$\Big\|\langle  y\rangle^{-\frac{11}{10}}
\frac{y^2}{(2+by^2)^{ \frac12}} \Big\|_\infty
\le b^{-\frac9{20}}
\ls \beta^{-\frac9{20}}.$$
Thus the estimates for $\Gamma_1$ and $\Gamma_2$ in \eqref{eq:estI1I2} imply that
$$\left\|\langle y\rangle^{-\frac{11}{10}}
\left( \frac{2+by^2}{a+\frac12}\right)^{\frac12}
\left[ \Gamma_1 + \Gamma_2 \frac{y^2}{2+by^2}\right] \right\|_\infty
\ls  \left(|\Gamma_1| + |\Gamma_2|\right) \beta^{-\frac9{20}}
\ls \beta^{ \frac{11}5 }  P(M,A).
$$
By similar reasoning, one has
$$\left\| \langle y\rangle^{-\frac{11}{10}} \frac{b^3 y^4}{a(2+by^2)^2}
\left( \frac{2+by^2}{a+\frac12}\right)^{\frac12}\right \|_\infty
\ls   \beta^{ \frac{21}{20} }.$$
Combining the estimates above, we complete the estimate for $(m,n) = (11/10,0)$.
\end{proof}

\section{Estimates to control the fluctuation $\phi$}\label{SEC:Rescale}\label{FullStop}

In this section, we prove the estimates that make up Proposition~\ref{Prop:Main3}.
\smallskip

Recall the evolution equation~\eqref{eq:xi} satisfied by $\xi (y,\theta,\tau)$.
We begin our analysis by reparameterizing $\xi$ so that critical terms in the resulting
(new) evolution equation have time-independent coefficients.

Recall that $\tau(t) := \int_0^t  \lambda^{-2} (s)\,\mathrm{d}s$ for any $\tau\ge  0$,
and $a(\tau) := - \lambda (t(\tau)) \partial_t  \lambda(t(\tau))$.
Let $t(\tau)$ be the inverse function to $\tau(t)$, and pick $T>0$. We approximate
$\lambda (t(\tau))$ on the interval  $0\le \tau \le T$ by a new trajectory
$\lambda_1 (t(\tau))$, chosen so that $\lambda_1 (t(T)) = \lambda (t(T))$ and
$\alpha := - \lambda_1 (t(\tau)) \partial_t \lambda_1  (t(\tau)) = a(T)$ is constant.

Then we introduce new independent variables $z(x,t) := \lambda^{-1}_1  (t)x$ and
$\sigma(t) := \int_0^t  \lambda^{-2}_1(s)\,\mathrm{d}s$, together with a new function
$\eta(z,\theta,\sigma)$ defined by
\begin{equation}\label{NewFun}
\lambda_1 (t) e^{\frac{\alpha}4 z^2} \eta (z,\theta,\sigma)
    :=  \lambda (t) e^{\frac{a(\tau)}4 y^2} \xi (y,\theta,\tau)
    \equiv\lambda (t)\phi(y,\theta,\tau).
\end{equation}
One should keep in mind that the variables $z$ and $y$,
$\sigma$, $\tau$ and $t$ are related by
$z = \frac{\lambda(t)}{\lambda_1(t)}y$,
$\sigma(t) := \int_0^t  \lambda_1^{-2} (s)\,\mathrm{d}s$, and
$\tau = \int_0^t \lambda^{-2} (s)\,\mathrm{d}s$.

For any $\tau = \int_0^{t(\tau)} \lambda^{-2} (s)\,\mathrm{d}s$ with $t(\tau) \le t(T)$,
or equivalently $\tau\le  T$, we define a new function
$\sigma(\tau) := \int_0^{t(\tau)} \lambda_1^{-2} (s)\,\mathrm{d}s$.
Observing that the function $\sigma$ is invertible, we denote its inverse by $\tau(\sigma)$.

The new function $\eta$ satisfies the equation
\begin{equation}\label{eq:eta}
\begin{array}{rl}
\partial_\sigma \eta
\ = &- \mathcal{L}_\alpha \eta (\sigma)
+ \mathcal{W} (a,b) \eta (\sigma)
+ \mathcal{F} (a,b) (\sigma)\\
\noalign{\vskip6pt}
& + \mathcal{N}_{1} (a,b,\eta)
 + \mathcal{N}_{2} (a,b,\eta)
 + \mathcal{N}_{3} (a,b,\eta),
\end{array}
\end{equation}
where the operator $\mathcal{L}_\alpha$ is linear,
$$\mathcal{L}_\alpha  := L_\alpha + V,$$
with $L_\alpha$ and $V$ defined as
$$ L_\alpha :=
- \partial_z^2  + \frac{\alpha^2}4 z^2 - \frac{3\alpha}2 - \frac12 \dz^2,$$
$$V := - \frac{2 \alpha}{2+\beta(\tau(\sigma)) z^2},$$
respectively. The quantity $\mathcal{W}(a,b)$ is a small linear factor of order $\mathcal{O}(\beta)$,
namely
\begin{align}\label{eq:defineTW}
\mathcal{W}(a,b):
= & - \frac{\lambda^2}{\lambda_1^2}\
\frac{(a+\frac12)}{2+b(\tau(\sigma)) y^2}
+ \frac{2\alpha}{2+\beta(\tau(\sigma)) z^2}\\
\noalign{\vskip6pt}
= & \left[ 1 - \frac{\lambda^2}{\lambda_1^2} \right]
\frac{(a+\frac12)}{2+b(\tau(\sigma))y^2} + \frac{2\alpha-a-\frac12}
{2+\beta(\tau(\sigma)) z^2} \nonumber\\
\noalign{\vskip6pt}
& + \left( a+\frac12\right)
\frac{z^2[b(\tau(\sigma)) - \beta(\tau(\sigma))]}
	{(2+b(\tau(\sigma))y^2)(2+\beta(\tau(\sigma)) z^2)}\nonumber\\
\noalign{\vskip6pt}
& + \left( a+\frac12\right)
\frac{[y^2-z^2]b(\tau(\sigma))}{(2+b(\tau(\sigma))y^2)(2+\beta(\tau(\sigma)) z^2)}.\nonumber
\end{align}
The function $\mathcal{F}(a,b)$ is a variant of $F(a,b)$ from definition~\eqref{eq:source}, namely
\begin{equation}\label{ScriptF}
\mathcal{F}(a,b) :=  e^{-\frac{\alpha}4  z^2}  e^{\frac{a}4 y^2}
\frac{\lambda_1}{\lambda}F(a,b).
\end{equation}
The nonlinear terms $\mathcal{N}_k$, $(k=1,2,3)$,
are defined using \eqref{eq:defN1}--\eqref{eq:difN3} by
\begin{equation}\label{eq:DefNonlin}
\begin{array}{lll}
\mathcal{N}_1 (a,b,\eta)
&:=& \displaystyle \frac{\lambda_1}{\lambda}  e^{-\frac{\alpha}4 z^2}
e^{\frac{a}4 y^2} N_1 (a,b,\xi),\\
& &\\
\mathcal{N}_2 (a,b,\eta)
&:=& \displaystyle \frac{\lambda_1}{\lambda}  e^{-\frac{\alpha}4 z^2}
e^{\frac{a}4 y^2} N_2 (a,b,\xi),\\
& &\\
\mathcal{N}_3 (a,b,\eta)
&:=& \displaystyle \frac{\lambda_1}{\lambda}  e^{-\frac{\alpha}4 z^2}
e^{\frac{a}4 y^2} N_3 (a,b,\xi),
\end{array}
\end{equation}
with $\tau$ and $y$ expressed in terms of $\sigma$ and $z$.
Note that $\mathcal{N}_3$ was not needed in \cite{GS09}.

Analysis of equation~\eqref{eq:eta} will provide estimates for $\eta$, but what
we actually need are estimates for $\xi$. To provide this link, we prove in the
next proposition that the new trajectory is a good approximation of the old one.

\begin{proposition}\label{NewTrajectory}
For any $\tau\le T$, if $A(\tau)\ls \beta^{- \frac1{10} }(\tau)$, then
\begin{equation}\label{eq:compare}
\left| \frac{\lambda}{\lambda_1} (t(\tau)) - 1\right|
\leq c \beta(\tau)
\end{equation}
for some constant $c$ independent of $\tau$.
For $(m,n) = (3,0),\ (11/10,0),\ (2,1),\ (1,1)$,
one has
\begin{equation}\label{eq:compareXiEta}
\Big\| e^{\frac{\alpha}4 z^2} \eta(\cdot,\cdot,\sigma)\Big\|_{m,n}
\ls \beta^{\frac{m+n}2 + \frac1{10}} (\tau(\sigma)) M_{m,n} (\tau(\sigma)).
\end{equation}
\end{proposition}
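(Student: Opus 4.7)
The plan is to prove the two estimates sequentially: \eqref{eq:compareXiEta} will follow cheaply from \eqref{eq:compare} by unwinding the change of variables.

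For \eqref{eq:compare}, I introduce the ratio $\mu(\tau) := \lambda^2(\tau)/\lambda_1^2(\tau) - 1$, noting that $|\lambda/\lambda_1 - 1| \le |\mu|$ whenever $\mu$ stays bounded. Since $d/d\tau = \lambda^2\, d/dt$, one has $\partial_\tau \lambda^2 = -2a\lambda^2$ and $\partial_\tau \lambda_1^2 = -2\alpha\lambda^2$, so direct computation gives
\begin{equation*}
\partial_\tau \mu = 2(1+\mu)\bigl[\alpha(1+\mu) - a\bigr],\qquad \mu(T) = 0.
\end{equation*}
The hypothesis $A(\tau)\ls \beta^{-1/10}$ gives $a(\tau) = \tfrac12 - b(\tau) + \mathcal{O}(\beta^{19/10})$, while the companion bound $B\ls \beta^{-1/20}$ from Condition~[Cb] gives $b(\tau) = \beta(\tau) + \mathcal{O}(\beta^{29/20})$. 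Subtracting these at $\tau$ and $T$ and using monotonicity of $\beta$ yields $|a(\tau) - \alpha| = |a(\tau) - a(T)|\ls\beta(\tau)$ for all $\tau\le T$. Rewriting the ODE as $\partial_\tau\mu - c(\tau)\mu = g(\tau)$ with $c = 2\alpha + 2(\alpha - a) + 2\alpha\mu \approx 2\alpha$ and $g = 2(\alpha - a)$, the integrating factor $\exp(-\int_0^\tau c)$ and backward integration from $T$ give
\begin{equation*}
|\mu(\tau)|\ls \int_\tau^T e^{-2\alpha(s-\tau)}\beta(s)\,\mathrm{d}s \ls \beta(\tau),
\end{equation*}
after splitting the integral at $s=\tau+1$ (monotonicity of $\beta$ on $[\tau,\tau+1]$) and using the exponential decay for $s>\tau+1$. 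A continuity bootstrap starting from $\mu(T) = 0$ and the a~priori bound $|\mu|\le 1/4$ closes the self-consistency requirement on $c$, yielding \eqref{eq:compare}.

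For \eqref{eq:compareXiEta}, I unwind definition~\eqref{NewFun}: $e^{\alpha z^2/4}\eta(z,\theta,\sigma) = (\lambda/\lambda_1)\,\phi(y,\theta,\tau)$ with $y = (\lambda_1/\lambda)z$ at fixed $t$, so $\partial_z = (\lambda_1/\lambda)\partial_y$ and
\begin{equation*}
\partial_z^n\bigl(e^{\alpha z^2/4}\eta\bigr) = (\lambda_1/\lambda)^{n-1}(\partial_y^n \phi)(y,\theta,\tau).
\end{equation*}
Estimate~\eqref{eq:compare} gives $\lambda/\lambda_1 = 1 + \mathcal{O}(\beta)$, so $\langle z\rangle \sim \langle y\rangle$ uniformly and $(\lambda_1/\lambda)^{n-1} = \mathcal{O}(1)$. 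Therefore $\|e^{\alpha z^2/4}\eta(\cdot,\cdot,\sigma)\|_{m,n} \ls \|\phi(\cdot,\cdot,\tau(\sigma))\|_{m,n}$, and the defining bound $\|\phi(\cdot,\cdot,\tau)\|_{m,n}\le \beta^{(m+n)/2+1/10}(\tau)\,M_{m,n}(\tau)$ from \eqref{eq:majorMmn} closes the estimate.

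The main obstacle is the backward integration in part~1: one must verify that the weak hypothesis $A\ls \beta^{-1/10}$, combined with Condition~[Cb]'s control on $B$, is enough to push $|a - \alpha|$ down to the $\beta$-scale, and that the positivity $c\approx 2\alpha > 0$ (rather than some sign-indefinite coefficient) is what makes the backward Duhamel integral against $\beta(s)$ converge to $\mathcal{O}(\beta(\tau))$. This is the only place where the specific exponents in the bootstrap conditions really bite; once $|a-\alpha|\ls\beta$ is established, the remainder of the proof is routine bookkeeping.
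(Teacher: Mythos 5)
Your proof of estimate~\eqref{eq:compare} is correct and follows essentially the same route as the paper: derive a first-order ODE for a deviation quantity, exploit the terminal condition $\lambda_1(t(T))=\lambda(t(T))$ to integrate backward from $T$, use the bounds on $A$ and $B$ from Condition~[Cb] to get $|a-\alpha|\ls\beta$, and close with a continuity bootstrap that keeps the coefficient $c\approx 2\alpha>0$ (so the backward Duhamel integral converges to $\mathcal{O}(\beta(\tau))$). The only cosmetic difference is that you work with $\mu:=\lambda^2/\lambda_1^2-1$ while the paper works with $\lambda/\lambda_1-1$ directly and packages the bootstrap through the estimating function $\Lambda(\tau):=\sup_{\tau\le s\le T}\beta^{-1}(s)\left|\frac{\lambda}{\lambda_1}(t(s))-1\right|$; the content is identical. (Your splitting of the Duhamel integral at $s=\tau+1$ is unnecessary, since $\beta(s)\le\beta(\tau)$ for $s\ge\tau$ already gives $\int_\tau^T e^{-2\alpha(s-\tau)}\beta(s)\,\mathrm{d}s\le\beta(\tau)/(2\alpha)$.) For \eqref{eq:compareXiEta}, the paper omits the argument entirely; your unwinding of \eqref{NewFun} via $\partial_z=(\lambda_1/\lambda)\partial_y$, $\langle z\rangle\sim\langle y\rangle$, and the definition of $M_{m,n}$ is correct and fills that gap explicitly.
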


\begin{proof}
We start by deriving a convenient expression for $\frac{\lambda}{\lambda_1} (t(\tau))-1$.
By properties of $\lambda$ and $\lambda_1$, we have
\begin{equation}\label{EstLambda}
\partial_\tau \left( \frac{\lambda}{\lambda_1} (t(\tau)) - 1\right)
= 2a (\tau) \left( \frac{\lambda}{\lambda_1} (t(\tau)) - 1\right) + G (\tau),
\end{equation}
where
$$G := \alpha-a + (\alpha-a) \left( \frac{\lambda}{\lambda_1} - 1\right)
\left[ \left( \frac{\lambda}{\lambda_1}\right)^2
+  \frac{\lambda}{\lambda_1} + 1\right]
+ a \left( \frac{\lambda}{\lambda_1} - 1\right)^2
\left[ \frac{\lambda}{\lambda_1}  + 2\right].$$
We use the fact that $\frac{\lambda}{\lambda_1} (t(\tau)) - 1 = 0$ when
$\tau = T$ to rewrite equation~\eqref{EstLambda} as
\begin{equation}\label{LambdaRe}
\frac{\lambda_1}{\lambda}(t(\tau)) - 1 =
- \int_\tau^T  e^{-\int^s_\tau  2a(t)\,\mathrm{d}t} G(s)\,\mathrm{d}s.
\end{equation}
In what follows, we rely on equation~\eqref{LambdaRe} to prove estimate~\eqref{eq:compare}.

We start by analyzing the integrand on the \textsc{rhs}.
The definition of $A(\tau)$ and the condition
$A(\tau)\ls \beta^{- \frac1{20}}(\tau)$ together imply that
$$ \left| a(\tau) - \frac12 \right| \ls \beta(\tau),$$
and hence that
\begin{equation}\label{CauchA}
|a(\tau) - \alpha|  \le  \left| a(\tau) - \frac12 \right| + \left|\alpha - \frac12\right| \le  \beta(\tau)
\end{equation}
in the time interval $\tau\in [0,T]$.
Thus
\begin{equation}\label{Rem}
|G| \ls
\beta + \left(\frac{\lambda}{\lambda_1} - 1\right)^2
+ \left| \frac{\lambda}{\lambda_1} - 1\right|^3
+ \beta \left| \frac{\lambda}{\lambda_1} -1\right|.
\end{equation}

We claim that inequalities~\eqref{CauchA}--\eqref{Rem} are sufficient to prove estimate~\eqref{eq:compare}.
Indeed, define an estimating function $\Lambda (\tau)$ by
$$\Lambda(\tau) := \sup_{\tau\le  s\le  T} \beta^{-1} (s)
\left| \frac{\lambda}{\lambda_1} (t(s)) - 1 \right |.$$
Then \eqref{LambdaRe} and the conditions
$A(\tau),\ B(\tau)\ls \beta^{- \frac1{20} }(\tau)$ imply that $2a\ge  \frac12$ and hence that
$$
\begin{array}{lll}
\displaystyle \left| \frac{\lambda}{\lambda_1} (t(\tau)) - 1\right|
&\ls &
\displaystyle \int_\tau^T  e^{-\frac12 (T-\tau)}
\left[ \beta(s) + \beta^2 (s) \Lambda^2 (\tau) + \beta^2 (s) \Lambda(\tau)\right ]\,\mathrm{d}s\\
\noalign{\vskip6pt}
&\ls
&\beta(\tau) + \beta^2 (\tau) \Lambda^2 (\tau)
+ \beta^3 (\tau) \Lambda^3 (\tau) + \beta^2 (\tau) \Lambda(\tau).
\end{array}
$$
Therefore,
$$\beta^{-1}(\tau)  \left| \frac{\lambda}{\lambda_1} (t(\tau))-1 \right|
\ls
1 + \beta(\tau) \Lambda^2 (\tau)
+ \beta^2 (\tau) \Lambda^3 (\tau) + \beta (\tau) \Lambda(\tau).$$
Now we use the facts that $\beta(\tau)$ and $\Lambda(\tau)$ are
decreasing functions to obtain
$$\Lambda(\tau) \ls
1 + \beta(\tau) \Lambda^2 (\tau) + \beta^2 (\tau) \Lambda^3 (\tau)
+ \beta(\tau) \Lambda(\tau),$$
which together with $\Lambda(T) = 0$ implies that $\Lambda(\tau) \ls 1$
for any time $\tau \in [0,T]$. Combining this with the definition of $\Lambda(\tau)$
gives estimate~\eqref{eq:compare}.
\end{proof}

In the rest of this section, we study the linear operator $\mathcal{L}_\alpha$.
By Lemma~\ref{Symmetries}, it suffices to consider its spectrum acting on
$\{w: \ \mathbb{R}\times\mathbb{S}^1: w(z,\theta) = w(z,\theta+\pi)\}$.
Due to the presence of the quadratic term $\frac14  \alpha z^2$, the operator
$\mathcal{L}_\alpha$ has a discrete spectrum.
For $\beta z^2  \ll 1$, it is close to the harmonic oscillator Hamiltonian
\begin{equation}\label{eq:spectrum}
L_\alpha - \alpha := - \partial_z^2 + \frac14 \alpha^2
z^2 - \frac{5\alpha}2 - \frac12  \dz^2.
\end{equation}
The spectrum of the operator $L_\alpha  - \alpha$ is
\begin{equation}
\mathrm{spec}(L_\alpha - \alpha) = \left\{ n \alpha + 2k^2 :
n = -2,-1,0,1,\ldots; \ k=0,1,2,\ldots\right\}.
\end{equation}
Thus it is essential that we can solve the evolution equation~\eqref{eq:eta}
on the subspace orthogonal to the first three eigenvectors of $L_{\alpha}$.
These eigenvectors, normalized, are
\begin{align}\label{eq:eigenvectors}
& \phi_{0,\alpha}(z) := \left(\frac{\alpha}{2\pi}\right)^\frac{1}{4}
e^{-\frac{\alpha}{4}z^2},\nonumber\\
\noalign{\vskip6pt}
& \phi_{1,\alpha}(z) := \left(\frac{\alpha}{2\pi}\right)^{\frac{1}{4}}\sqrt{\alpha}\ z
e^{-\frac{\alpha}{4}z^2},\\
\noalign{\vskip6pt}
& \phi_{2,\alpha}(z) := \left(\frac{\alpha}{8\pi}\right)^{\frac{1}{4}}(1-\alpha
z^2)e^{-\frac{\alpha}{4}z^2}.\nonumber
\end{align}
We define the orthogonal projection $\overline{P}^\alpha_n$ onto
the space spanned by the first $n$ eigenvectors of $L_\alpha$ by
\begin{equation}\label{eq:projection}
\overline{P}^\alpha_n w  :=  \sum_{m=0}^{n-1} \langle w, \phi_{m,\alpha} \rangle,
\end{equation}
and the orthogonal projection
\begin{align}\label{eq:finiProj}
P^\alpha_n w := 1 - \overline{P}^\alpha_n w\ , \qquad (n=1,2,3).
\end{align}
The following proposition provides useful decay estimates for the
propagators generated by $- L_\alpha$ and $-\mathcal{L}_\alpha$.

\begin{proposition}\label{PRO:propagator}
If $g:\ \mathbb{R}\times \mathbb{S}^1\to \mathbb{R}$ is a function satisfying
$g(y,\theta) = g(y,\theta + \pi)$ for all $y \in \mathbb{R}$ and $\theta\in \mathbb{S}^1$,
then for any times $\tau\ge \sigma\ge  0$, one has
\begin{equation}\label{eq:estproject2}
\big\| \langle z\rangle^{-n} e^{\frac{\alpha}4 z^2}  e^{-L_\alpha \sigma}
P^\alpha_2 g \big\|_{\infty}
\ls  e^{(1-n)\alpha\sigma}
\big\| \langle  z\rangle^{-n} e^{\frac{\alpha}4 z^2} g\big\|_\infty
\end{equation}
with $2\ge   n\ge  1$. Moreover,
\begin{align}\label{eq:firstProj}
\big\| \langle z\rangle^{-1} e^{\frac{\alpha}4 z^2} e^{-L_\alpha\sigma}
P^\alpha_1 g \big\|_{\infty}
\ls \big\|\langle z\rangle^{-1} e^{\frac{\alpha}4  z^2}g\big\|_\infty;
\end{align}
and there exist constants $\gamma,\delta>0$ such that if $\beta(0)\le  \delta$, then
\begin{equation}\label{eq:OperatorWithV}
\big\| \langle z\rangle^{-n}  e^{\frac{\alpha}4 z^2} P^\alpha_n
U_n (\tau,\sigma) e^{(\tau-\sigma)\frac12  \dz^2} P^\alpha_n g \big\|_\infty
\ls
e^{-(\gamma + (n - 3) \alpha)(\tau-\sigma)}
\big\| \langle z\rangle^{-n} e^{\frac{\alpha}4 z^2} g\big\|_\infty,
\end{equation}
where $U_n (\tau,\sigma)$ denotes the propagator generated by the operator
$- P^\alpha_n [\mathcal{L}_\alpha +\frac12 \dz^2] ^\alpha_n$, $(n=1,2,3)$.
\end{proposition}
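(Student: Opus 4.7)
The plan is to exploit the product structure $L_\alpha = H_\alpha - \tfrac{1}{2}\dz^2$, where $H_\alpha := -\partial_z^2 + \frac{\alpha^2}{4}z^2 - \frac{3\alpha}{2}$ is a one-dimensional harmonic-oscillator Hamiltonian in $z$ only. Since $H_\alpha$ and $-\tfrac{1}{2}\dz^2$ act on disjoint variables and commute, the propagator factors as $e^{-\sigma L_\alpha} = e^{-\sigma H_\alpha}\,e^{\sigma\dz^2/2}$. For any $\pi$-periodic $g$, I would split $g = g_1 + g_2$ with $g_1 := \frac{1}{2\pi}\int_0^{2\pi}g\,\Dz$, noting that $e^{\sigma\dz^2/2}$ preserves $g_1$ while contracting $g_2$ in $L^\infty$ at rate $e^{-2\sigma}$ (the lowest nonzero eigenvalue of $-\tfrac{1}{2}\dz^2$ on $\pi$-periodic mean-zero functions is $2$, which is the content of Remark~\ref{FixAxis}). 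Thus the $g_2$ piece contributes an exponentially small remainder, and \eqref{eq:estproject2}--\eqref{eq:firstProj} reduce to the corresponding one-dimensional bounds for $e^{-\sigma H_\alpha}$ acting on $g_1$.

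For the one-dimensional problem I would use Mehler's explicit kernel for $e^{-\sigma H_\alpha}$ together with the fact that $\{\phi_{n,\alpha}\}$ is an orthonormal basis of eigenfunctions with $H_\alpha\phi_{n,\alpha} = (n-1)\alpha\phi_{n,\alpha}$. On the range of $P^\alpha_n$ the lowest eigenvalue of $H_\alpha$ is therefore $(n-1)\alpha$, matching the exponent $(1-n)\alpha\sigma$ in \eqref{eq:estproject2}. Expanding $g_1$ in this eigenbasis and estimating the resulting Gaussian integrals pointwise yields the weighted $L^\infty$ bound: the Gaussian tails of Mehler's kernel provide enough room to absorb the factor $e^{\alpha z^2/4}$ appearing on both sides of the inequality. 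Estimate \eqref{eq:firstProj} is proved by the same kernel analysis, using the smoothing property of $e^{-\sigma H_\alpha}$ to trade the polynomial input weight $\langle z\rangle^{-n}$ for the weaker output weight $\langle z\rangle^{-1}$ at the cost of constants only.

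For \eqref{eq:OperatorWithV}, the crucial observation is that $\mathcal{L}_\alpha + \tfrac{1}{2}\dz^2 = H_\alpha + V$, so $U_n$ acts purely on $z$ and the factor $e^{(\tau-\sigma)\dz^2/2}$ separates off and is handled as in the first paragraph. I would treat $V = -2\alpha/(2+\beta z^2)$ as a uniformly bounded perturbation of $H_\alpha$ via Duhamel's formula,
\[
U_n(\tau,\sigma) = e^{-(\tau-\sigma)P^\alpha_n H_\alpha P^\alpha_n} - \int_\sigma^\tau e^{-(\tau-s)P^\alpha_n H_\alpha P^\alpha_n}\,P^\alpha_n V P^\alpha_n\,U_n(s,\sigma)\,\mathrm{d}s,
\]
and iterate using the bounds just established. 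Because $|V|\le\alpha$ globally with $V \to -\alpha$ only at $z = 0$, a Gronwall-type argument in the weighted $L^\infty$ norm converges provided $\beta(0)\le\delta$ is small, and produces the stated decay rate $\gamma + (n-3)\alpha$, where $\gamma > 0$ is the residual spectral gap of $P^\alpha_3(H_\alpha + V)P^\alpha_3$; the rate for $n = 1,2$ may be negative (growth), but is still sharp relative to the actual spectrum.

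The main obstacle will be verifying the weighted pointwise estimates \eqref{eq:estproject2}--\eqref{eq:firstProj} uniformly in $n$: Mehler's kernel has Gaussian tails that must strictly dominate the Gaussian weight $e^{\alpha z^2/4}$ on the output, and the projection $P^\alpha_n$ must be realized through the eigenbasis expansion in a way that respects $L^\infty$-polynomial-weighted norms rather than the natural $L^2$ ones. Tracking prefactors in the Duhamel iteration for \eqref{eq:OperatorWithV} is also delicate since $V$ couples modes at comparable eigenvalue levels, and one must exploit the smallness of $\beta(0)$ to close the iteration without collapsing $\gamma$.
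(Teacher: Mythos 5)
Your decomposition of $g$ into its $\theta$-mean and oscillatory part is exactly the paper's decomposition $g = g_0 + \sum_{k\neq 0}e^{2ik\theta}g_k$, and the key observation that $\mathcal{L}_\alpha + \tfrac12\dz^2 = H_\alpha + V$ acts purely in $z$ while $e^{\sigma\dz^2/2}$ contracts the oscillatory modes is also the paper's observation. The main difference is that where you propose to \emph{prove} the one-dimensional (pure-$z$) bounds --- \eqref{eq:estproject2}, \eqref{eq:firstProj}, and the $g_0$ piece of \eqref{eq:OperatorWithV} --- via Mehler's kernel and a Duhamel iteration around $H_\alpha$, the paper simply cites them from \cite{GS09,DGSW}. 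So you are reconstructing what the paper leaves to a reference. That reconstruction is plausible, but two points deserve care. First, on the oscillatory modes the $z$-propagator $e^{-\sigma(H_\alpha+V)}$ is \emph{not} projected (since $P_n^\alpha$ acts trivially on $k\neq 0$ modes) and actually grows like $e^{2\alpha\sigma}$ in the relevant weighted norm; the paper makes this explicit via a Trotter comparison to the shifted harmonic oscillator with minimum $-\tfrac{5\alpha}{2}$, and the net decay in \eqref{eq:OperatorWithV} then comes from $2-2\alpha > 0$. Your statement that $g_2$ ``contributes an exponentially small remainder'' is correct in conclusion but silently assumes the $z$-factor does not eat up the $e^{-2\sigma}$ gain; this needs an explicit bound of the Trotter type. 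Second, the role you assign to the smallness of $\beta(0)$ --- making the Gronwall iteration converge --- is not quite right: $\|V\|_\infty\le\alpha$ uniformly in $\beta$, so a naive Gronwall never benefits from small $\beta$. In the cited work the smallness of $\beta$ enters elsewhere (roughly, it keeps the effective potential close to the $V\equiv 0$ model so the spectral gap $\gamma$ does not collapse). Your Duhamel scheme with the crude bound $\|V\|_\infty\le\alpha$ would yield the rate $\gamma + (n-3)\alpha$ with $\gamma = \alpha$, which is consistent with the proposition, so this point is not fatal --- but you should either carry out that computation honestly without invoking small $\beta$, or explain what feature of the iteration actually needs $\beta(0)\le\delta$.
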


\begin{proof}
What makes estimates~\eqref{eq:estproject2}--\eqref{eq:OperatorWithV} different
from the corresponding results in the earlier paper \cite{GS09} is the presence
of the operator $\dz^2$, which has discrete spectrum $\{0, 4, 8,\cdots \}$ in the
space $\{h:\mathbb{S}^1\rightarrow\mathbb{R}:h(\theta) = h(\theta+\pi)\}.$
(Compare Remark~\ref{FixAxis}.)

The general strategy in the present situation is to decompose the function
$g:\ \mathbb{R}\times \mathbb{S}^1\to \mathbb{R}$ according to the spectrum of $\dz^2$.
One then obtains the desired estimate by studying the terms of this decomposition.
If a real-valued function $g$ satisfies $g(y,\theta) = g(y,\theta + \pi)$, then it
admits the decomposition
$$g(y,\theta) =  \sum_{k=-\infty}^\infty  e^{2i k\theta} g_k(y),$$
with $g_k(y) := \frac1{2\pi} \int_0^{2\pi} e^{-2i k\theta} g(y,\theta)\,\mathrm{d}\theta$.
By the definition of $P_n$ in equation~\eqref{eq:finiProj}, one has
$(1-P_n)e^{2i k\theta}g_k(y) = 0$ if $k\ne 0$, and hence
$$P_n g = P_n g_0 + \sum_{k\ne 0} e^{2i k\theta}g_k.$$
Moreover,
$$e^{\frac12 \sigma \dz^2} P_n g = P_n g_0
 + e^{\frac12 \sigma \dz^2} \sum_{k\ne 0} e^{2i k\theta} g_k.$$

Now apply these observations to the propagator acting on $g$ to obtain
\begin{align}\label{eq:twoparts}
\langle z\rangle^{-n} e^{\frac{\alpha}4  z^2} P^\alpha_n
&U_n (\tau,\sigma) e^{(\tau-\sigma)\frac12 \dz^2} P^\alpha_n g
\nonumber\\
\noalign{\vskip6pt}
&\qquad
 =\langle z\rangle^{-n}  e^{\frac{\alpha}4  z^2} P^\alpha_n
U_n (\tau,\sigma) P^\alpha_n g_0\\
\noalign{\vskip6pt}
&\qquad\qquad
 +\langle z\rangle^{-n} e^{\frac{\alpha}4 z^2}
\tilde U (\tau,\sigma) e^{(\tau-\sigma)\frac12 \dz^2}
\sum_{k\ne 0}  e^{2ik\theta} g_k,\nonumber
\end{align}
where $\tilde U$ is generated by the operator
$$- \left[ \mathcal{L}_\alpha + \frac12 \dz^2\right]
= - \left[ -\partial_z^2 + \frac{\alpha^2}4  z^2 - \frac{3\alpha}2
- \frac{2 \alpha}{2+\beta(\tau(\sigma)) z^2} \right].$$
For the first term on the \textsc{rhs}, we apply results proved in \cite{GS09} to obtain
\begin{align}\label{eq:estG0}
\left\| \langle z\rangle^{-n}  e^{\frac{\alpha}4  z^2} P^\alpha_n
U_n (\tau,\sigma) P^\alpha_n g_0\right\|_{\infty}
\ls e^{-(\gamma + (n-3)\alpha)(\tau-\sigma)}
\left\| \langle  z\rangle^{-n} e^{\frac{\alpha}4 z^2} g_0\right\|_\infty.
\end{align}
For the second term on the \textsc{rhs}, we apply the Trotter formula to see that
$$ \tilde U (\tau,\sigma)
\le   e^{-(\tau-\sigma)(-\partial_z^2 + \frac14 \alpha^2 z^2 - \frac52 \alpha)}.$$
The latter harmonic oscillator be bounded by the non-decaying estimate
$e^{2\alpha(\tau-\sigma)}$ in suitable normed spaces
(for details see \cite{DGSW08, GS09}). Thus one has
\begin{align}\label{eq:nondecay}
& \bigg\| \langle z\rangle^{-n}  e^{\frac{\alpha}4  z^2}
\tilde U (\tau,\sigma) e^{(\tau-\sigma) \frac12  \dz^2}
\sum_{k\ne 0} e^{2ik\theta} g_k\bigg\|_\infty\\
\noalign{\vskip6pt}
&\qquad
\le  e^{2\alpha(\tau-\sigma)}
\bigg\|  \langle z\rangle^{-n} e^{\frac{\alpha}4 z^2} e^{(\tau-\sigma)\frac12
\dz^2}
\sum_{k\ne 0} e^{2ik\theta} g_k \bigg\|_\infty.\nonumber
\end{align}
For the last term above, one may  apply the maximum principle to see that
\begin{align}\label{eq:shortTime}
\bigg| e^{\frac12 \sigma \dz^2} \sum_{k\ne 0} e^{2i k\theta}g_k \bigg|
\le  \max_{\theta \in [0,2\pi)}
\bigg| \sum_{k\ne 0} e^{2i k\theta} g_k \bigg|  \le 2\max_{\theta \in [0,2\pi)} |g|
\end{align}
for any $\sigma\ge 0$; and for $\sigma\ge  1$
\begin{align}\label{eq:higherDecay}
\bigg| e^{\frac12 \sigma \dz^2} \sum_{k\ne 0} e^{2i k\theta} g_k \bigg|
= & \bigg| \sum_{k\ne 0} e^{-2 \sigma k^2} e^{2i k\theta} g_k \bigg|\\
\ls & e^{-2\sigma} \max_k |g_k| \nonumber\\
= & e^{-2\sigma} \frac1{2\pi}
\max_k \left |\langle g,\ e^{2k i\theta}\rangle_\theta \right| \nonumber\\
\le & e^{-2\sigma} \max_\theta |g| \nonumber.
\end{align}
Collecting the estimates above, we conclude that
\begin{align}
\bigg\| \langle z\rangle^{-n} e^{\frac{\alpha}4 z^2}
\tilde U (\tau,\sigma) e^{(\tau-\sigma)\frac12 \dz^2}
\sum_{k\ne 0} e^{2ik\theta} g_k \bigg\|_\infty
\ls e^{-(\tau-\sigma)} \left\| \langle z\rangle^{-n} e^{\frac{\alpha}4 z^2} g\right\|_\infty.
\end{align}
This together with equation~\eqref{eq:twoparts} and estimate~\eqref{eq:estG0} implies
inequality~\eqref{eq:OperatorWithV}.

The proofs of estimates~\eqref{eq:estproject2}--\eqref{eq:firstProj} are similar
modifications of those detailed in \cite{DGSW08, GS09}, hence are omitted here.
\end{proof}

\subsection{Proof of estimate~\eqref{eq:M30}}\label{SEC:EstM1}
In this section, we prove estimate~\eqref{eq:M30} for the function $M_{3,0}$.
Fix a ($\tau$-scale) time $T$. Then the estimates of Proposition~\ref{NewTrajectory} hold
for $\tau\le T$. We start by estimating $\eta$ defined in equation~\eqref{NewFun}.
We observe that $\eta$ is not orthogonal to the first three eigenvectors of the operator $L_\alpha$.
Therefore, we derive an equation for $P^\alpha_3 \eta$, namely
\begin{equation}\label{EQ:eta2}
\partial_\sigma (P^\alpha_3 \eta) = - P^\alpha_3 \mathcal{L}_\alpha P^\alpha_3 \eta
+ \sum_{k=1}^6 D^{(k)}_{3,0}(\sigma)
\end{equation}
where $D^{(k)}_{m,n} \equiv D^{(k)}_{m,n}(\sigma)$, with
$k=1,2,3,4, 5, 6$, $(m,n)=(3,0),\ (2,0),\ (2,1),\ (1,1)$, are defined
as\,\footnote{Recall definitions~\eqref{eq:defineTW}--\eqref{eq:DefNonlin} and \eqref{eq:finiProj}.}
\begin{align*}
D^{(1)}_{m,n} & := - P^\alpha_m V e^{-\frac{\alpha}4 z^2} \partial_z^n
[e^{\frac{\alpha}4 z^2} \eta]
+ P^\alpha_m V P^\alpha_m e^{-\frac{\alpha}4 z^2} \partial_z^n [e^{\frac{\alpha}4 z^2} \eta]\\
& =  -P^\alpha_m V e^{-\frac{\alpha}4 z^2}
[1 - P^\alpha_m] \partial_z^2  [e^{\frac{\alpha}4 z^2} \eta]\\
&=P^\alpha_m \frac{\alpha \beta(\tau(\sigma)) z^2}{2+\beta(\tau(\sigma))z^2}
[1-P^\alpha_m ] e^{-\frac{\alpha}4 z^2} \partial_z^n [e^{\frac{\alpha}4 z^2} \eta],
\end{align*}
and
\begin{align*}
D^{(2)}_{m,n} &:=  P^\alpha_m e^{-\frac{\alpha}4 z^2} \partial_z^n
[e^{\frac{\alpha}4 z^2} \mathcal{W} \eta],\\
D^{(3)}_{m,n} &:=  P^\alpha_m e^{-\frac{\alpha}4 z^2} \partial_z^n
[e^{\frac{\alpha}4 z^2} \mathcal{F}(a,b)],\\
D^{(4)}_{m,n} &:=  P^\alpha_m e^{-\frac{\alpha}4 z^2} \partial_z^n
[e^{\frac{\alpha}4 z^2} \mathcal{N}_1 (a,b,\alpha,\eta)],\\
D^{(5)}_{m,n} &:=  P^\alpha_m e^{-\frac{\alpha}4 z^2} \partial_z^n
[e^{\frac{\alpha}4 z^2} \mathcal{N}_2],\\
D^{(6)}_{m,n} &:=  P^\alpha_m e^{-\frac{\alpha}4 z^2} \partial_z^n
[e^{\frac{\alpha}4 z^2} \mathcal{N}_3].
\end{align*}

Define
\begin{equation}\label{T2}
S := \int_0^{t(T)}  \lambda_1^{-2} (s)\,\mathrm{d}s.
\end{equation}
Then by Duhamel's principle, we may rewrite equation~\eqref{EQ:eta2} as
\begin{equation}\label{eq:eta3}
P^\alpha_3 \eta(S) =  P^\alpha_3 U_3 (S,0) P^\alpha_3 \eta(0)
+ \sum_{n=1}^6 \int_0^S P^\alpha_3 U_3 (S,\sigma) P^\alpha_3 D_{3,0}^{(n)}
(\sigma)\,\mathrm{d}\sigma,
\end{equation}
where $U_3(\tau,\sigma)$ is defined in Proposition~\ref{PRO:propagator} and
satisfies estimate~\eqref{eq:OperatorWithV}. It follows that
\begin{equation}\label{eq:M1Ge}
\begin{split}
& \beta^{-\frac85} (T)
\|e^{\frac{\alpha}4 z^2} P^\alpha_3 \eta(S)\|_{3,0}\\
\noalign{\vskip6pt}
&\qquad
\ls   e^{-\gamma S} \beta^{-\frac85}(T) \| e^{\frac{\alpha}4 z^2} \eta(0)\|_{3,0}\\
\noalign{\vskip6pt}
&\qquad\quad   + \beta^{-\frac85} (T)
\sum_{k=1}^6  \int_0^S e^{-\gamma(S-\sigma)}
\| e^{\frac{\alpha}4 z^2} D^{(k)}_{3,0}(\sigma)\|_{3,0}\,\mathrm{d}\sigma.
\end{split}
\end{equation}
For the terms $D^{(k)}_{3,0}$ with $k=1,2,3,4,5$, on the \textsc{rhs}
of equation~\eqref{EQ:eta2}, one has the following estimates.

\begin{lemma}\label{LM:EstDs}
If $ A(\tau),\ B(\tau)\ls \beta^{- \frac1{20} }(\tau)$ and if $\sigma\le S$,
equivalently $\tau\le  T$, then
\begin{equation}\label{eq:MplusN3}
\sum_{k=1}^3 \| e^{\frac{\alpha}4 z^2} P_3^\alpha D_{3,0}^{(k)}(\sigma)\|_{3,0}
\ls\beta^{\frac{33}{20}} (\tau(\sigma)) P(M(T),A(T)),
\end{equation}
\begin{equation}\label{eq:estN1}
\|e^{\frac{\alpha}4 z^2} P_3^\alpha D_{3,0}^{(4)} (\sigma)\|_{3,0}
\ls \| e^{\frac{a y^2}4} N_1 (a,b,\xi)\|_{3,0}
\ls \beta^{\frac{33}{20}} (\tau) P(M(\tau)),
\end{equation}
\begin{equation}\label{eq:estN2}
\| e^{\frac{\alpha}4 z^2} P_3^\alpha D_{3,0}^{(5)} (\sigma)\|_{3,0}
\ls \| e^{\frac{a y^2}4} N_2 (a,b,\xi)\|_{3,0}
\ls \beta^{\frac{33}{20}} (\tau) P(M(\tau)),
\end{equation}
and
\begin{equation}\label{eq:estN31}
\| e^{\frac{\alpha}4 z^2} P_3^\alpha D_{3,0}^{(6)} (\sigma)\|_{3,0}
\ls \| e^{\frac{a y^2}4} N_3 (a,b,\xi)\|_{3,0}
\ls \beta^{\frac{33}{20}} (\tau).
\end{equation}
\end{lemma}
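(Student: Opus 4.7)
The plan is to reduce each $D^{(k)}_{3,0}$ to quantities already controlled, using three ingredients. First, $P^\alpha_3$ is bounded on the weighted $L^\infty$ space defined by $\|e^{\alpha z^2/4}\cdot\|_{3,0}$: each coefficient $\langle w,\phi_{m,\alpha}\rangle$ is controlled by $\|e^{\alpha z^2/4}w\|_{3,0}$ via the Gaussian decay of $\phi_{m,\alpha}$, and $\overline{P}^\alpha_3 w$ is a finite linear combination of Gaussians of bounded weighted norm. Second, Proposition~\ref{NewTrajectory} gives $\langle z\rangle\sim\langle y\rangle$ and $\lambda_1/\lambda = 1 + O(\beta)$, letting one switch freely between the $(z,\sigma)$ and $(y,\tau)$ pictures. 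Third, the identity $e^{\alpha z^2/4}\mathcal{N}_j = (\lambda_1/\lambda)e^{ay^2/4}N_j$ combined with the first two ingredients reduces $D^{(j+3)}_{3,0}$ to an $L^\infty$ estimate on $N_j$, which is the first inequality in \eqref{eq:estN1}--\eqref{eq:estN31}.

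\textbf{Linear terms $D^{(1)}, D^{(2)}, D^{(3)}$.} For $D^{(3)}_{3,0}$, estimate~\eqref{eq:estSource} with $(m,n)=(3,0)$ yields $\|e^{\alpha z^2/4}\mathcal{F}\|_{3,0}\ls\beta^2 P(M,A)$, which dominates $\beta^{33/20}P$. For $D^{(2)}_{3,0} = P^\alpha_3(\mathcal{W}\eta)$, I will show each of the four summands of $\mathcal{W}$ in \eqref{eq:defineTW} is $O(\beta)$ pointwise: the first from \eqref{eq:compare}; the second from $|2\alpha - a - \tfrac12|\le|\alpha - a| + |a - \tfrac12 + b| + b \ls \beta$ (using \eqref{eq:estI1I2} and Condition~[Cb]); the third from $|b-\beta|\ls\beta^{29/20}$; and the fourth from $|y^2-z^2|\ls\beta\langle z\rangle^2$. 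Multiplying against $\|e^{\alpha z^2/4}\eta\|_{3,0}\ls\beta^{8/5}M_{3,0}$ from \eqref{eq:compareXiEta} then gives $\ls\beta^{13/5}M_{3,0}$. For $D^{(1)}_{3,0} = -P^\alpha_3\frac{\alpha\beta z^2}{2+\beta z^2}\overline{P}^\alpha_3\eta$, I will bound the coefficients $\langle\eta,\phi_{m,\alpha}\rangle$ by writing $\phi_{m,\alpha} = \phi_{m,a} + O(\beta^{19/20})$, the correction coming from $|\alpha - a|\ls\beta^{19/20}$ (which itself follows by integrating $|\partial_\tau a|\ls\beta^{39/20}$). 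The dominant part vanishes by the orthogonalities $\xi\perp\phi_{0,a},\phi_{2,a}$ of Condition~[Cd], and for $m=1$ by odd parity of $\phi_{1,a}$ together with $\xi(y)=\xi(-y)$ from Lemma~\ref{Symmetries}; the correction is absorbed since $\frac{\beta z^2}{2+\beta z^2}\le 1$.

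\textbf{Nonlinear terms and main obstacle.} For $N_1$, $e^{ay^2/4}N_1 = -(a+\tfrac12)\phi^2/(v(2+by^2))$; using $v\ge 1$, the estimate reduces to bounding $\|\langle y\rangle^{-3}\phi^2/(2+by^2)\|_\infty$. I will split at $by^2 = 2$: in the inner region use $|\phi|\ls\langle y\rangle^3\beta^{8/5}M_{3,0}$ with $\langle y\rangle^3\ls\beta^{-3/2}$ to obtain $\ls\beta^{17/10}M_{3,0}^2$; in the outer region use $|\phi|\ls\langle y\rangle^{11/10}\beta^{13/20}M_{11/10,0}$ with $(2+by^2)^{-1}\ls\beta^{-1}\langle y\rangle^{-2}$ and $\langle y\rangle\gtrsim\beta^{-1/2}$ to obtain $\ls\beta^{17/10}M_{11/10,0}^2$. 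For $N_2$, the identity $e^{ay^2/4}N_2 = -p^2(1+p^2+q^2)^{-1}\partial_y^2 v$ combined with $|p|\ls\beta^{1/2}v^{1/2}$ and $|\partial_y^2 v|\ls\beta^{13/20}$ from \eqref{eq:yDe} gives $|e^{ay^2/4}N_2|\ls\beta^{33/20}v$; then $\|\langle y\rangle^{-3}v\|_\infty\ls 1$ by \eqref{v-above}. For $N_3$, each of the three summands in \eqref{eq:difN3} is pointwise $\ls\beta^{33/20}$ by the $\theta$-derivative bounds in \eqref{eq:thetaDe}, combined with $|pq|\ls 1$ from \eqref{eq:upperBounddV}--\eqref{eq:smallness} and $v\ls\langle y\rangle$. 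The hardest step is the $N_1$ estimate, where neither $M_{3,0}$ nor $M_{11/10,0}$ alone suffices — the former permits $\phi^2$ to grow like $\langle y\rangle^6$ in the outer region, while the latter lacks sharp $\beta$-decay; the interpolation between them at $\langle y\rangle\sim\beta^{-1/2}$ is precisely what the ladder of weights in \eqref{eq:vector} is designed to permit.
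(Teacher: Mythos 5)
The core structure of your proof — reduce $D^{(j+3)}_{3,0}$ to $L^\infty$ bounds on $N_j$ via the change of trajectory, bound $D^{(3)}$ by $\|F\|_{3,0}$, bound $D^{(2)}$ by $\|\mathcal{W}\|_\infty\cdot\|e^{\alpha z^2/4}\eta\|_{3,0}$, and for $N_1$ interpolate between the $M_{3,0}$ and $M_{11/10,0}$ weights at $by^2\sim 1$ — matches the paper, and your $N_1$, $N_2$ and $D^{(2)}$, $D^{(3)}$ estimates are essentially the same as the paper's modulo presentation. But two pieces of your argument need attention.

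\textbf{The $D^{(1)}$ term.} Your approach here is genuinely different from the paper's, and as written has a gap. The paper does not invoke orthogonality cancellation at all: it uses the elementary bound $\sup_z\langle z\rangle^{-1}\tfrac{bz^2}{1+bz^2}\ls b^{1/2}$ together with the projection estimate $\|e^{\alpha z^2/4}(1-P^\alpha_3)\eta\|_{2,0}\ls\|e^{\alpha z^2/4}\eta\|_{3,0}$, which already yields $\ls b^{1/2}\beta^{8/5}M_{3,0}\approx\beta^{21/10}M_{3,0}$ with no cancellation needed. Your route — showing the coefficients $\langle\eta,\phi_{m,\alpha}\rangle$ are small using $\xi\perp\phi_{0,a},\phi_{2,a}$ — discards the $\langle z\rangle^{-1}$ gain from the coefficient of $V$ and instead bets on the cancellation to supply all the decay. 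The problem is that the orthogonality of Condition~[Cd] is an $L^2(\mathrm{d}y\,\mathrm{d}\theta)$ statement about $\xi$ in the $y$-variable, whereas $\langle\eta,\phi_{m,\alpha}\rangle$ is an inner product in $z$. Since $z=(\lambda/\lambda_1)y$ with $\lambda/\lambda_1=1+O(\beta)$ and the Gaussian weights involve both $a$ and $\alpha(\lambda/\lambda_1)^2$, the ``dominant part'' does \emph{not} vanish exactly — it vanishes only up to an $O(\beta)$ error that the change of variables introduces. You would need to carry out that change of variables explicitly to justify the claim; as stated, ``the dominant part vanishes by the orthogonalities'' is false. (The numerology still closes with the $O(\beta)$ error, so the repaired argument works, but the proof as written has a hole; the paper's non-cancellative route avoids the issue entirely. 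As a minor point, your $|\alpha-a|\ls\beta^{19/20}$ is weaker than the $|\alpha-a|\ls\beta$ that (CauchA) actually gives, and the exponent $39/20$ for $|\partial_\tau a|$ isn't justified from what's available.)

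\textbf{The $N_3$ term.} Your claim that ``each of the three summands in \eqref{eq:difN3} is pointwise $\ls\beta^{33/20}$'' is false for the first one, $[v^{-2}\tfrac{1+p^2}{1+p^2+q^2}-\tfrac12]\dz^2 v$. When $v$ is large this behaves like $-\tfrac12\dz^2 v$, and the available bound $v^{-2}|\dz^2 v|\ls\beta^{33/20}$ only gives $|\dz^2 v|\ls\beta^{33/20}v^2$, which grows without bound. You clearly recognize that $v\ls\langle y\rangle$ has to absorb this, but then the correct statement is that $\langle y\rangle^{-3}\cdot(\text{first summand})\ls\beta^{33/20}$, not that the summand itself is pointwise small — the paper's decomposition $N_3=N_{3,1}+N_{3,2}$ makes precisely this point. (A small further nit on $N_2$: the input estimate \eqref{eq:yDe} gives $|\dy v|\ls\beta^{1/2}v$, not $\beta^{1/2}v^{1/2}$; the stronger bound is a first-machine \emph{output}, not what you cited. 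Either suffices here since $v\ls\langle y\rangle$.)
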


\begin{proof}
We first prove the most involved estimate, namely \eqref{eq:estN31}.
Its first inequality is a consequence of the fact that
$\frac{\lambda_1}{\lambda} - 1 = \mathcal{O}(\beta)$ proved in estimate~\eqref{eq:compare}.
Because estimates of this type will appear many times below, we provide a detailed
proof (only) here. Unwrapping the definitions of the functions involved, one finds that
\begin{align}\label{eq:sample}
\| e^{\frac{\alpha}4 z^2} P_3^\alpha D_{3,0}^{(6)} (\sigma)\|_{3,0}
&\le \| e^{\frac{\alpha}4 z^2}D_{3,0}^{(6)} (\sigma)\|_{3,0}\\
&\le \| \langle z\rangle^{-3} e^{\frac{a}4 y^2} N_3 (a,b,\xi) (\tau(\sigma))\|_\infty\nonumber\\
& \ls \| \langle y\rangle^{-3} e^{\frac{a}4 y^2} N_3 (a,b,\xi) (\tau(\sigma))\|_\infty\nonumber\\
& = \| e^{\frac{a}4 y^2} N_3 (a,b,\xi)\|_{3,0}\nonumber.
\end{align}
To prove the second inequality in  estimate~\eqref{eq:estN31}, we write the new term $N_3$ as
\begin{equation}\label{eq:N312}
N_3 := N_{3,1} + N_{3,2},
\end{equation}
with
$$N_{3,1} := \Big[v^{-2}-\frac12\Big] (\dz^2 v) e^{-\frac{ay^2}4}$$
and
\begin{multline*}
N_{3,2} := - v^{-2} \frac{q^2}{1+q^2+p^2} (\dz^2 v) e^{-\frac{ay^2}4}\\
    + e^{-\frac{ay^2}4} v^{-1} \frac{2pq}{1+p^2+q^2}(\dz \dy v)
    + e^{-\frac{ay^2}4} v^{-2} \frac{q}{1+p^2+q^2} (\dz v).
\end{multline*}
We shall only prove estimate~\eqref{eq:estN31} for $N_{3,1}$;
the result for $N_{3,2}$ is easier, when one recalls from estimate~\eqref{eq:thetaDe} that
$v^{-2} \dz^2 v,\ v^{-1} \dz \dy v = \mathcal{O} (\beta^{\frac{33}{20}})$.
For the term $v^{-2} \dz v$ above, we apply the interpolation result in
Lemma~\ref{InterpEst} to inequality~\eqref{eq:thetaDe} to get
\begin{align}\label{eq:embedding}
|v^{-2} \dz v|
\ls \max_{\theta\in [0,2\pi]} |v^{-2} \dz^2 v|
\ls \beta^{\frac{33}{20}}.
\end{align}
Then by estimate~\eqref{v-above} that $\langle y\rangle^{-1}v\ls 1$
and estimate~\eqref{eq:thetaDe}, we obtain
$$\langle y\rangle^{-3} \left[ v^{-2} - \frac12\right] |\dz^2v|
\ls v^{-2}| \dz^2 v|  \ls \beta^{\frac{33}{20}}, $$
which gives the desired result.
\smallskip

Now consider inequality~\eqref{eq:MplusN3}.
For $k=1$, we use the identity $P_3^\alpha (1-P_3^\alpha) = 0$
to rewrite $P^\alpha_3 D^{(1)}_{3,0}$ as
$$P^\alpha_3 D_{3,0}^{(1)} (\sigma)
=  P^\alpha_3 \frac{\alpha+\frac12}{2+b(\tau(\sigma))z^2} b (\tau (\sigma))
z^2(1- P^\alpha_3 )\eta(\sigma).$$
Then direct computation gives the desired estimate,
\begin{align*}
\| e^{\frac{\alpha}4 z^2} P^\alpha_3
D_{3,0}^{(1)} (\sigma) \|_{3,0}
&\ls \max_{z\in \mathbb{R}}
\left| \langle z\rangle^{-1} \frac{b(\tau(\sigma)) z^2}{1+bz^2}\right| \
\Big| \langle z\rangle^{-2}  e^{\frac{\alpha}4  z^2} (1- P^\alpha_3 ) \eta (\sigma)\Big| \\
&\ls  b^{ \frac12 } (\tau(\sigma)) \| e^{\frac{\alpha}4 z^2} \eta (\sigma) \|_{3,0}\\
&\ls  \beta^2 (\tau(\sigma)) M_{3,0}(T).
\end{align*}
Here we used the fact that $|b(\tau)| \le 2\beta (\tau)$, which is implied by
$B(\tau)\le  \beta^{- \frac1{10} } (\tau)$, and the fact for any $m\ge  1$, one has
\begin{equation}\label{eq:estPro}
\| e^{\frac{\alpha}4 z^2} (1- P^\alpha_m )g \|_{m-1,0}
\ls  \| e^{\frac{\alpha}4 z^2} g \|_{m,0}
\end{equation}
by definition~\eqref{eq:projection} of the orthogonal projections
$\overline{P}^\alpha_n \equiv1 - P_m^\alpha$.
\smallskip

Now we estimate $D^{(k)}_{3,0}$ for $k=2,3$.
By estimate~\eqref{eq:estPro} and direct computation, we have
$$\sum_{k=2}^5 \| e^{\frac{\alpha}4 z^2} P_3^\alpha  D_{3,0}^{(k)} (\sigma) \|_{3,0}
\le  \| e^{\frac{\alpha}4 z^{2}} \mathcal{F} (a,b) (\sigma)\|_{3,0}
+ \| e^{\frac{\alpha}4 z^2} \mathcal{W} \eta (\sigma) \|_{3,0}.$$
By estimate~\eqref{eq:compare} that $\frac{\lambda_1}{\lambda} - 1 = \mathcal{O}(\beta)$, one
then obtains
$$\| e^{\frac{\alpha}4 z^2} \mathcal{F} (\sigma)\|_{3,0}
\ls \| F(a,b)(\tau(\sigma)) \|_{3,0}
\le  \beta^2 P (M(T),A(T)),$$
where $F(a,b)$ is defined in  equation~\eqref{eq:source} and estimated in \eqref{eq:estSource}.
Recalling definition~\eqref{eq:defineTW} for $\mathcal{W} = \mathcal{O}(\beta)$, one has
$$\| e^{\frac{\alpha}4 z^2} \mathcal{W} \eta\|_{3,0}
\ls \|\mathcal{W} \|_\infty  \| e^{\frac{\alpha}4 z^2} \eta\|_{3,0}
\ls \beta^{\frac{13}5} M_{3,0}.
$$
The first inequalities in estimates~\eqref{eq:estN1}--\eqref{eq:estN2} are derived in almost
exactly the same way as estimate~\eqref{eq:sample}; so we omit further details here.

The proof of the second inequality in estimate~\eqref{eq:estN1} makes it necessary to
prove a bound for $\|\langle y\rangle^{\frac{11}{10}} e^{\frac{a}4 y^2}\xi\|_{\infty}$.
Specifically,
\begin{align}
\|\langle z\rangle^{-3} D_{3,0}^{(4)} (\sigma)\|_\infty
&\ls   \| \langle y\rangle^{-3} e^{\frac{a}4 y^2} N_1 (\tau(\sigma))\|_\infty \nonumber\\
\noalign{\vskip6pt}
&\ls   \Big\| \langle y\rangle^{-3} e^{\frac{a}2 y^2} \frac1{2+by^2} \xi^2\Big\|_\infty \nonumber\\
\noalign{\vskip6pt}
&\ls   \| \langle y\rangle^{-3} e^{\frac{a}4 y^2} \xi \|_\infty
\Big\| \frac1{2+by^2}  e^{\frac{a}4 y^2} \xi \Big\|_\infty \nonumber.
\end{align}
By the fact that $\frac1{1+by^2}\ls  \beta^{-\frac{11}{20}} \langle y\rangle^{-\frac{11}{10}}$
and the definitions of $M_{3,0}$ and $M_{11/10,0}$, we obtain the desired estimate ~\eqref{eq:estN1}.

Next we turn to the second inequality in \eqref{eq:estN2}.
Compute directly to obtain
\begin{align}\label{eq:N230}
|\langle y\rangle^{-3} e^{\frac{a}4 y^2} N_2 (a,b,\xi)|
\ls | \dy^2 v|\,|\langle y\rangle^{-1} \dy v|^2.
\end{align}
By estimate~\eqref{eq:yDe}, the first term on the \textsc{rhs}
is bounded by $| \dy^2 v| \ls \beta^{\frac{13}{20}}$.
For the second term, we recall that
$v = \sqrt{ 2+by^2 / a+\frac12} + e^{\frac{ay^2}4}\xi$.
So by direct computation, one has
\begin{align*}
|\langle y\rangle^{-1} \dy v |
&\ls  b+\langle y\rangle^{-1} | \dy e^{\frac{ay^2}4} \xi |\\
&\le    b+b^{\frac{11}{10}} M_{1,1}\\
&\ls   \beta(1+M_{1,1}).
\end{align*}
Feed these estimates into inequality~\eqref{eq:N230} to obtain \eqref{eq:estN2}.

Collecting the estimates above completes the proof of the lemma.
\end{proof}

Now we resume our study of equation~\eqref{eq:eta3}. For its first term,
we use the slow decay of $\beta(\tau)$ and estimate~\eqref{eq:compareXiEta} to get
\begin{equation}\label{eq:estIni}
e^{-\gamma S} \beta^{-\frac85} (T) \| e^{\frac{\alpha}4 z^2} \eta(0)\|_{3,0}
\ls \beta^{-\frac85} (0) \| e^{\frac{\alpha}4 z^2} \eta(0)\|_{3,0}
\ls  M_{3,0}(0).
\end{equation}
For its second term, we use the bounds for $D_{3,0}^{(k)}$ from Lemma~\ref{LM:EstDs} and
the integral estimate~\eqref{INT} proved in Lemma~\ref{SimpleIntegralEstimate} below to obtain
\begin{equation}\label{eq:estD30}
\begin{split}
&\sum_{k=1}^6 \int_0^S e^{-\gamma(S-\sigma)} \| e^{\frac{\alpha}4 z^2}
D^{(k)}_{3,0}(\sigma) \|_{3,0}\,\mathrm{d}\sigma\\
\noalign{\vskip6pt}
&\qquad \ls
\left[\int_0^S e^{-\gamma (S-\sigma)} \beta^{\frac{17}{10}}(\sigma)(\sigma)\,\mathrm{d}\sigma\right]
     P(M(T),A(T))\\
\noalign{\vskip6pt}
&\qquad \ls
\beta^{\frac{17}{10}} (T) P(M(T),A(T)).
\end{split}
\end{equation}
By estimate~\eqref{eq:AgreeEnd}, proved in Lemma~\ref{Comparison} below, we have
$\| e^{\frac{\alpha}4 z^2} P^\alpha_3 \eta(S) \|_{3,0} = \| \phi(\cdot,T)\|_{3,0},$
which together with  \eqref{eq:estIni} and \eqref{eq:estD30} implies that
$$\beta^{-\frac85} (T)\| \phi(\cdot,T)\|_{3,0}
\ls M_{3,0}(0) + \beta^{\frac1{10} } (T) P (M(T),A(T)),$$
where $P$ is a nondecreasing polynomial.
By definition~\eqref{eq:majorMmn} of $M_{3,0}$, we obtain
$$M_{3,0} (T) \ls  M_{3,0}(0) + \beta^{ \frac1{10}} (0) P (M(T),A(T)).$$
Since $T$ was arbitrary, estimate~\eqref{eq:M30} will follow, once we provide
a proof of inequalities~\eqref{INT} and \eqref{eq:AgreeEnd} used above.
\medskip

Here is the first of the two promised lemmas.
\begin{lemma}\label{SimpleIntegralEstimate}
For any $c_1,c_2 > 0$, there exists a constant $c(c_1,c_2)$ such that
\begin{equation}\label{INT}
\int_0^S e^{-c_1 (S-s)} \beta^{c_2} (\tau(s))\,\mathrm{d}s \le  c(c_1,c_2) \beta^{c_2} (T).
\end{equation}
\end{lemma}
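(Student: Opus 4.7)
The plan is to change variables so as to reduce to an estimate on the $\tau$-time scale, and then to split the resulting integral at the midpoint of the time interval.

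First I would pass from the $\sigma$-variable $s$ to the $\tau$-variable $u=\tau(s)$. Since $d\tau/d\sigma=(\lambda_1/\lambda)^2$ and estimate~\eqref{eq:compare} gives $\lambda/\lambda_1-1=\mathcal{O}(\beta)$, this Jacobian is bounded above and below by positive constants depending only on $\beta(0)$. In particular $T-u$ is comparable to $S-s$, and $\beta(\tau(s))$ evaluated in the new variable is simply $\beta(u)=(\kappa+u)^{-1}$. Thus, at the cost of replacing $c_1$ by a slightly smaller constant $c_1'>0$, matters reduce to proving
\[\int_0^T e^{-c_1'(T-u)}\beta^{c_2}(u)\,\mathrm{d}u\;\ls\;\beta^{c_2}(T).\]

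Next I would split this integral at $u=T/2$. On the upper subinterval $[T/2,T]$, monotonicity of $\beta$ together with $\beta(T/2)/\beta(T)=(\kappa+T)/(\kappa+T/2)\leq 2$ gives $\beta^{c_2}(u)\leq 2^{c_2}\beta^{c_2}(T)$, and pulling this constant outside and computing $\int_0^\infty e^{-c_1'r}\,\mathrm{d}r=1/c_1'$ bounds this piece by $(2^{c_2}/c_1')\beta^{c_2}(T)$. On the lower subinterval $[0,T/2]$, one uses instead the crude bounds $e^{-c_1'(T-u)}\leq e^{-c_1'T/2}$ and $\beta(u)\leq\beta(0)=\kappa^{-1}$ to bound the contribution by $(T/2)\,\beta(0)^{c_2}\,e^{-c_1'T/2}$.

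The step that closes the argument is the observation that exponential decay dominates polynomial growth: the ratio
\[\frac{(T/2)\,\beta(0)^{c_2}\,e^{-c_1'T/2}}{\beta^{c_2}(T)}=\frac{T}{2}\left(\frac{\kappa+T}{\kappa}\right)^{c_2}e^{-c_1'T/2}\]
is uniformly bounded for $T\geq 0$, with a bound depending only on $c_1'$, $c_2$, and $\kappa$. Adding the two contributions yields the claimed inequality. There is no substantive obstacle here; the argument is elementary, and the only subtlety worth flagging is that one must invoke estimate~\eqref{eq:compare} to justify the comparability of the $\sigma$ and $\tau$ time scales before splitting.
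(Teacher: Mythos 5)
Your proof is correct and takes essentially the same approach as the paper: both rely on the $\sigma$-versus-$\tau$ comparability supplied by estimate~\eqref{eq:compare} (which the paper records as $\sigma/4 \le \tau(\sigma) \le 4\sigma$) together with a split-at-the-midpoint argument that pits exponential decay against polynomial growth. The only cosmetic difference is that you change variables to the $\tau$-scale before splitting, whereas the paper keeps the $\sigma$-variable, bounds $\beta(\tau(\sigma)) \le 4\beta(\sigma)$ pointwise, applies the midpoint split to $\int_0^S e^{-c_1(S-s)}\beta^{c_2}(s)\,\mathrm{d}s$, and then converts $\beta(S)$ back to $\beta(T)$ at the end.
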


\begin{proof}
The estimate is based on a simpler observation:
there exists $C(c_1,c_2)$ such that for any $S\ge  0$, one has
\begin{equation}
\int_0^S \frac{e^{-c_1  (S-s)}}{(1+s)^{c_2}}\,\mathrm{d}s
\le   \frac{C(c_1,c_2)}{(1+S)^{c_2}}.
\end{equation}
This is proved by separately considering the regions $s\in [0,\frac{S}2]$ and
$s\in [\frac{S}2,S]$, and comparing the sizes of $e^{-c_1 (S-s)}$ and $\frac1{(1+s)^{c_2}}$
there.

The proof of inequality~\eqref{INT} is complicated by the presence of the two time scales
$\sigma$ and $\tau$, which are related by $\sigma(t) := \int_0^t  \lambda_1^{-2} (s)\,\mathrm{d}s$
and $\tau = \int_0^t \lambda^{-2} (s)\,\mathrm{d}s$. In what follows, we prove the desired estimate
by comparing these two time scales.

By estimate~\eqref{eq:compare}, one has $\frac{\lambda(\tau)}{\lambda_1(\tau(\sigma))}\approx 1$
and thus
\begin{equation}\label{TauS1}
4\sigma\ge  \tau(\sigma)\ge  \frac14 \sigma.
\end{equation}
This implies that $\frac1{\frac1{b_0} + \tau(\sigma)}\le 4\frac1{\frac1{b_0} +\sigma}$,
which in turn gives
\begin{equation}\label{InI2}
\int_0^S e^{-c_1 (S-s)}\beta^{c_2} (\tau(s))\,\mathrm{d}s
\le   \frac{c(c_1,c_2)}{(\frac1{b_0} +S)^{c_2}}.
\end{equation}
Recall that $\tau(S) = T$ and use estimate~\eqref{TauS1} again to obtain
\begin{align*}
\frac1{(\frac1{b_0} +S)^{c_2}}
\ls \frac1{(\frac1{b_0} +T)^{c_2}}.
\end{align*}
This together with inequality~\eqref{InI2} implies the desired estimate.
\end{proof}

The next lemma relates $\phi := e^{\frac{ay^2}4} \xi$ and
$\eta$ at times $\sigma = S$ and $\tau = T$.

\begin{lemma}\label{Comparison}
If $m+n\le  3$ and $\ell\ge  0$, then
\begin{equation}\label{eq:AgreeEnd}
\langle z\rangle^{-\ell}
e^{\frac{\alpha z^2}4} P^\alpha_m \left(\partial_z + \frac{\alpha}2  z\right)^n \eta(z,S)
= \langle y\rangle^{-\ell} \partial^n_y \phi (y,T).
\end{equation}
\end{lemma}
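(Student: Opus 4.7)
The plan is to exploit the fact that the two rescalings agree at the matching time: by construction $\lambda_1(t(T))=\lambda(t(T))$ and $\alpha=a(T)$, so $z=\lambda_1^{-1}x=\lambda^{-1}x=y$ and $\sigma=S$ corresponds to $\tau=T$. Substituting into the defining relation \eqref{NewFun}, the scaling prefactors and exponential factors cancel at this instant, giving
\[e^{\frac{\alpha}{4}z^{2}}\eta(z,\theta,S)=\phi(y,\theta,T),\qquad z=y.\]

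Next I would use the ladder-operator identity $(\partial_z+\tfrac{\alpha}{2}z)(e^{-\alpha z^{2}/4}f)=e^{-\alpha z^{2}/4}\partial_zf$, applied $n$ times, to conjugate the Gaussian through the derivatives:
\[e^{\frac{\alpha}{4}z^{2}}\bigl(\partial_z+\tfrac{\alpha}{2}z\bigr)^{n}\eta(z,\theta,S)=\partial_z^{n}\bigl(e^{\frac{\alpha}{4}z^{2}}\eta\bigr)(z,\theta,S)=\partial_y^{n}\phi(y,\theta,T),\]
where in the last equality we used $z=y$. Multiplying by the common weight $\langle z\rangle^{-\ell}=\langle y\rangle^{-\ell}$ would then give the claimed identity, provided $P^\alpha_m$ acts as the identity on the left-hand side.

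The main obstacle is therefore to establish $\overline{P}^\alpha_m(\partial_z+\tfrac{\alpha}{2}z)^{n}\eta(\cdot,\cdot,S)=0$ whenever $m+n\leq3$. I would handle this in two substeps. First, I would show $\eta(\cdot,\cdot,S)\perp\phi_{j,\alpha}$ for $j=0,1,2$: from step~1, $\eta(y,\theta,S)=\xi(y,\theta,T)$; Condition~[Cd] evaluated at $\tau=T$ (where $a(T)=\alpha$) gives $\xi(\cdot,\cdot,T)\perp\phi_{0,\alpha},\phi_{2,\alpha}$; and Lemma~\ref{Symmetries} forces $v(\cdot,\cdot,T)$ (and hence $w_{ab}$ and $\xi$) to be even in $y$, which yields $\xi(\cdot,\cdot,T)\perp\phi_{1,\alpha}$ because $\phi_{1,\alpha}$ is odd in $z$. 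Second, I would use that $\partial_z+\tfrac{\alpha}{2}z$ is the annihilation operator of the harmonic-oscillator part of $L_\alpha$, whose $L^{2}$-adjoint $-\partial_z+\tfrac{\alpha}{2}z$ is the creation operator and satisfies $(-\partial_z+\tfrac{\alpha}{2}z)\phi_{j,\alpha}\propto\phi_{j+1,\alpha}$ by direct computation on the three explicit eigenvectors \eqref{eq:eigenvectors}. Iterating gives
\[\bigl\langle(\partial_z+\tfrac{\alpha}{2}z)^{n}\eta(\cdot,\cdot,S),\phi_{j,\alpha}\bigr\rangle=c_{j,n}\bigl\langle\eta(\cdot,\cdot,S),\phi_{j+n,\alpha}\bigr\rangle,\]
which vanishes whenever $j+n\leq2$. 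The hypothesis $m+n\leq3$ ensures that every $j\in\{0,\dots,m-1\}$ obeys $j+n\leq2$, so the entire $\overline{P}^\alpha_m$ projection is zero and the lemma follows.
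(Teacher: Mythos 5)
Your proposal is correct and takes essentially the same route as the paper: identify $z=y$, $\eta(S)=\xi(T)$ at the matching time, conjugate through the Gaussian to turn $(\partial_z+\tfrac{\alpha}{2}z)^n$ into $\partial_y^n$, and then kill the low-mode projection via the Hermite ladder structure together with the orthogonality conditions. The one place you are more explicit than the paper is in justifying $\xi(\cdot,\cdot,T)\perp\phi_{1,\alpha}$: Condition~[Cd] only lists $\phi_{0,a}$ and $\phi_{2,a}$, and the paper silently assumes the $k=1$ case when it writes ``$\xi\perp\phi_{k,a}$ for $k=0,1,2$''; your observation that this follows from the evenness of $v$ (hence of $\xi$) in $y$ via Lemma~\ref{Symmetries}, combined with the oddness of $\phi_{1,\alpha}$, is exactly the right way to close that gap. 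Your phrasing of the ladder step via the $L^2$-adjoint $(-\partial_z+\tfrac{\alpha}{2}z)^n$ acting on $\phi_{j,\alpha}$ is trivially equivalent to the paper's integration-by-parts on $e^{-ay^2/4}\partial_y^n\phi$ against $\phi_{k,a}$, and the counting $j+n\le2$ for $j\le m-1$ matches the paper's requirement $0\le k\le 2-n$ with $m-1\le 2-n$.
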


\begin{proof}
By definition of the various quantities in equation~\eqref{NewFun}, one has
$\lambda_1(t(T)) = \lambda (t(T))$, $a(T) = \alpha$, and hence $z=y$
and $e^{\frac{\alpha}4 z^2} \eta (z,S) = \phi (y,T)$.
Then because
$e^{\frac{\alpha}4 z^2} (\partial_z +\frac{\alpha}2 z) e^{-\frac{\alpha}4 z^2} = \partial_z$,
we have
\begin{equation}\label{eq:derivPro}
\langle z\rangle^{-\ell}
e^{\frac{\alpha z^2}4} P^\alpha_m \left(\partial_z + \frac{\alpha}2 z\right)^n \eta (z,\theta,S)
= \langle y\rangle^{-\ell} e^{\frac{a(T) y^2}4}  P^{a(T)}_m
e^{-\frac{a(T) y^2}4}  \dy^n \phi (y,\theta,T).
\end{equation}
Integrating by parts and using the orthogonality conditions
$\xi(\cdot,\tau) \perp \phi_{k,a(\tau)}$ for $k=0,1,2$, with
$\phi_{k,a}$ defined in equation~\eqref{BadEvals}, one gets
$$e^{-\frac{a(\tau) y^2}4} \partial^n_y \phi (\cdot,\tau)
\perp  \phi_{k,a(\tau)}, \qquad (0\le  k\le  2-n),$$
which shows that
$$P^{a(T)}_m e^{-\frac{a(T) y^2}4} \dy^n \phi (y,\theta,T)
= e^{-\frac{a(T) y^2}4} \dy^n \phi(y,\theta,T)$$ if $m+n\le  3$.
This together with equation~\eqref{eq:derivPro} implies  identity~\eqref{eq:AgreeEnd}.
\end{proof}
\smallskip
Our proof of estimate~\eqref{eq:M30} is complete.

\subsection{Proof of estimate~\eqref{eq:M20}}\label{SEC:EstM2}
In this section, we prove estimate~\eqref{eq:M20} for the function $M_{11/10,0}$.
To begin, we use equation~\eqref{eq:eta} to compute that $P^\alpha_2 \eta (\sigma)$
evolves by
\begin{equation}\label{eq:eta4}
\partial_\sigma(P^\alpha_2 \eta) = - L_\alpha P^\alpha_2 \eta - P^\alpha_2
V \eta + P_2^\alpha  \sum_{k=2}^6  D_{2,0}^{(k)}
\end{equation}
where the functions $D^{(k)}_{2,0}$ and the operator $L_\alpha$ (which differs
from $\mathcal{L}_\alpha$ by the absence of the potential $V$) are defined
immediately after equations~\eqref{EQ:eta2} and  \eqref{eq:eta}, respectively.
Our first step is to obtain the following bounds.

\begin{lemma}
If $ A(\tau),\ B(\tau) \ls  \beta^{- \frac1{10} }(\tau)$, then
\begin{equation}\label{eq:estF3}
\|e^{\frac{\alpha}4 z^2} V \eta (\sigma)\|_{\frac{11}{10},0}
\ls \beta^{\frac{13}{20}} (\tau(\sigma)) M_{3,0}(T),
\end{equation}
\begin{equation}\label{eq:1.1}
\sum_{k=2}^{5} \| e^{\frac{\alpha}4 z^2} P_2^\alpha D_{2,0}^{(k)} (\sigma)\|_{\frac{11}{10},0}
\ls  \beta^{\frac7{10}} (\tau) P (M(T),A(T)),
\end{equation}
\begin{equation}\label{eq:estN32}
\| e^{\frac{\alpha}4  z^2} P_2^\alpha  D_{2,0}^{(6)} (\sigma)\|_{\frac{11}{10},0}
\ls \| e^{\frac{a y^2}4}  N_3 (a,b,\xi)\|_{\frac{11}{10},0}
\ls \beta^{\frac{13}{20}}  (\tau) (1 + \epsilon_0 M_{\frac{11}{10},0}),
\end{equation}
where $\epsilon_0$ is the small constant that appears in estimate~\eqref{eq:smallness}.
\end{lemma}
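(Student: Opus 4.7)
The plan is to prove the three displayed estimates in order, closely following the template of Lemma~\ref{LM:EstDs} but adapting it to the weaker weight $\langle y\rangle^{-11/10}$ in place of $\langle y\rangle^{-3}$. Throughout I would exploit the comparison $\lambda/\lambda_1\approx 1$ of \eqref{eq:compare} to convert norms in the $(z,\sigma)$ variables into norms in $(y,\tau)$, and the projection inequality $\|P_2^\alpha h\|_{11/10,0}\ls\|h\|_{11/10,0}$ coming from \eqref{eq:estPro}.

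For \eqref{eq:estF3} I would factor $\langle z\rangle^{-11/10}|V\eta|\le (|V|\langle z\rangle^{19/10})\cdot(\langle z\rangle^{-3}|\eta|)$. A direct maximization of $\langle z\rangle^{19/10}/(2+\beta z^2)$, whose supremum is attained where $\beta z^2\sim 1$, gives $\|\,|V|\langle z\rangle^{19/10}\|_\infty\ls\beta^{-19/20}$. Combined with $\|\langle z\rangle^{-3}e^{\alpha z^2/4}\eta\|_\infty\ls\beta^{8/5}M_{3,0}$ supplied by \eqref{eq:compareXiEta}, this produces the claimed $\beta^{13/20}M_{3,0}$ rate.

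For \eqref{eq:1.1} I would treat the four terms $D^{(k)}_{2,0}$, $k=2,3,4,5$, one at a time. The $\mathcal{W}\eta$-contribution follows from $\|\mathcal{W}\|_\infty\ls\beta$ (by \eqref{eq:defineTW}) together with the definition of $M_{11/10,0}$. The $\mathcal{F}$-contribution comes directly from \eqref{eq:estSource} at $(m,n)=(11/10,0)$, which actually delivers the stronger rate $\beta^{21/20}$. The $\mathcal{N}_1$-contribution is bounded via $|e^{ay^2/4}N_1|\ls\phi^2/(v(2+by^2))$ by distributing the $\langle y\rangle^{-11/10}$ weight between the two factors of $\phi$ and using the elementary bound $\|\langle y\rangle^{11/10}/(2+by^2)\|_\infty\ls\beta^{-11/20}$ together with the two available bounds on $\phi$ in terms of $M_{11/10,0}$ and $M_{3,0}$. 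Finally, the $\mathcal{N}_2$-contribution uses $|\partial_y^2 v|\ls\beta^{13/20}$ from \eqref{eq:yDe} combined with $|\langle y\rangle^{-1}\partial_y v|\ls\beta(1+M_{1,1})$ obtained by decomposing $v = v_{ab}+\phi$.

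The third estimate \eqref{eq:estN32} is where the main obstacle lies. The first inequality is a consequence of \eqref{eq:compare}, just as in Lemma~\ref{LM:EstDs}. For the second inequality I would decompose $N_3 = N_{3,1}+N_{3,2}$ as there. The $N_{3,2}$-piece is routine, because \eqref{eq:thetaDe} controls each of its factors and yields an $O(\beta^{33/20})$ contribution that is strictly stronger than required. The hard part is $N_{3,1}=(v^{-2}-\tfrac12)(\partial_\theta^2 v)\,e^{-ay^2/4}$: unlike the $(3,0)$-case in Lemma~\ref{LM:EstDs}, where the bound $\langle y\rangle^{-3}\ls v^{-2}$ absorbed the factor $v^{-2}-\tfrac12$ into the sharp estimate $v^{-2}|\partial_\theta^2 v|\ls\beta^{33/20}$, the weaker weight $\langle y\rangle^{-11/10}$ does not. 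I would therefore split $(v^{-2}-\tfrac12)\partial_\theta^2 v = v^{-2}\partial_\theta^2 v - \tfrac12\partial_\theta^2 v$; the first summand is bounded immediately by $\beta^{33/20}$ using \eqref{eq:thetaDe}, and for the second summand I would invoke $\partial_\theta^2 v = \partial_\theta^2\phi$ together with the two pointwise bounds $|\partial_\theta^2 v|\le\epsilon_0 v$ from \eqref{eq:smallness} and $v^{-2}|\partial_\theta^2 v|\ls\beta^{33/20}$ from \eqref{eq:thetaDe}, interpolating against the decomposition $v = v_{ab}+\phi$. The $\phi$-piece produces the $\epsilon_0 M_{11/10,0}$-factor via $|\phi|\ls\langle y\rangle^{11/10}\beta^{21/20}M_{11/10,0}$, while the $v_{ab}$-piece, with $v_{ab}\ls\sqrt{1+\beta y^2}$, must be analyzed separately in the inner and outer regions to extract the clean $\beta^{13/20}$ rate. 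This regional split and interpolation for the $v_{ab}$-piece is the main technical obstacle I anticipate.
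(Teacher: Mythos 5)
Your handling of the first two estimates is correct and follows the paper's method closely: the $\beta^{-19/20}$ bound on $\|V\langle z\rangle^{19/10}\|_\infty$ together with \eqref{eq:compareXiEta} and the definition of $M_{3,0}$ is exactly the computation the paper performs for \eqref{eq:estF3}, and the term-by-term treatment of $D^{(2)}_{2,0},\dots,D^{(5)}_{2,0}$ mirrors the paper's deferral to the proof of \eqref{eq:MplusN3} in Lemma~\ref{LM:EstDs}. The first inequality of \eqref{eq:estN32} and the dispatch of $N_{3,2}$ are also fine.

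There is, however, a genuine gap in your treatment of $N_{3,1}$. You propose splitting $(v^{-2}-\tfrac12)\dz^2 v = v^{-2}\dz^2 v - \tfrac12 \dz^2 v$, handling the first summand by \eqref{eq:thetaDe} and the second by interpolating between $v^{-1}|\dz^2 v|\le\epsilon_0$ and $v^{-2}|\dz^2 v|\ls\beta^{33/20}$. This split destroys the cancellation that is the whole point of $N_{3,1}$: the factor $v^{-2}-\tfrac12 = \frac{(\sqrt2-v)(\sqrt2+v)}{2v^2}$ vanishes to first order where $v\approx\sqrt2$, i.e.\ in the inner region, and it is precisely the small factor $\sqrt2 - v$ that supplies the needed $\beta$-decay there. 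Once you separate off $\tfrac12\dz^2 v$, the best you can do is $\langle y\rangle^{-11/10}|\dz^2 v|\le\epsilon_0 \langle y\rangle^{-11/10} v$, and since $v\ls\langle y\rangle$ and $v = \mathcal{O}(\langle y\rangle)$ saturates in the outer region, this is $\ls \epsilon_0\langle y\rangle^{-1/10}$. At the boundary $\beta y^2 \approx 20$ this is only $\approx \epsilon_0\beta^{1/20}$, which is far larger than the required $\beta^{13/20}(1+\epsilon_0 M_{11/10,0})$, and no interpolation against the two available pointwise bounds can improve the exponent on $\beta$ past $\approx\beta^{33/200}$ because the weight $\langle y\rangle^{-11/10}$ only lets you convert at most one tenth of a power of $v$ into a power of $\langle y\rangle$ before the supremum over $y$ diverges.

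The paper's argument keeps the cancellation intact: it writes $\langle y\rangle^{-11/10}e^{ay^2/4}N_{3,1} = \tfrac12\frac{\sqrt2+v}{v}\langle y\rangle^{-11/10}(\sqrt2 - v)\,v^{-1}\dz^2 v$, then decomposes $\sqrt2 - v$ itself into three pieces — a term $\approx -by^2/(\sqrt2+\sqrt{2+by^2})$, a term $\approx(a-\tfrac12)$, and $-e^{ay^2/4}\xi$ — giving $C_1+C_2+C_3$. The first piece contributes the essential $b^{1/2}\langle y\rangle^{-1/10}$ factor; the $\langle y\rangle^{-1/10}$ is then used via $v\ls\langle y\rangle$ in a H\"older interpolation $\langle y\rangle^{-1/10}v^{-1}|\dz^2 v|\ls(v^{-2}|\dz^2 v|)^{1/10}(v^{-1}|\dz^2 v|)^{9/10}$, and the $b^{1/2}\approx\beta^{1/2}$ combines with $\beta^{33/200}$ and $\epsilon_0^{9/10}$ to give $\epsilon_0^{9/10}\beta^{133/200}\le\beta^{13/20}$. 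The $C_2$ and $C_3$ pieces use $|a-\tfrac12|=\mathcal{O}(\beta)$ and the $M_{11/10,0}$-bound on $\phi$, respectively. So the decomposition should be done inside $\sqrt2-v$ after factoring it out, not by splitting $v^{-2}-\tfrac12$ into $v^{-2}$ and $-\tfrac12$.
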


\begin{proof}
We start by proving inequality~\eqref{eq:estF3}.
By Condition~[Cb], we have $\frac1b \ls \frac1{\beta}$ and hence
$$ \frac{\langle z\rangle^{-\frac{11}{10}}}{1+b(\tau(\sigma)) z^2}
\le  \langle z\rangle^{-\frac{11}{10}}
\left(1 + b(\tau(\sigma))z^2\right)^{-\frac{19}{20}}
\ls \beta^{-\frac{19}{20}} (\tau(\sigma))  \langle z\rangle^{-3},$$
which, together with estimate~\eqref{eq:compareXiEta} and
the definition of $V$ following equation~\eqref{eq:eta}, yields
\begin{align*}
\|e^{\frac{\alpha}4 z^2} V \eta (\sigma)\|_{\frac{11}{10},0}
&\ls
\Big\| \frac1{1+b(\tau(\sigma))z^2}  e^{\frac{\alpha}4 z^2} \eta (\sigma)\Big\|_{\frac{11}{10},0}\\
\noalign{\vskip6pt}
&\ls   \beta^{-\frac{19}{20}} (\tau(\sigma)) \| e^{\frac{\alpha}4 z^2} \eta(\sigma)\|_{3,0}\\
\noalign{\vskip6pt}
&\ls  \beta^{\frac{13}{20}} (\tau(\sigma)) M_{3,0}(T).
\end{align*}
This gives  inequality~\eqref{eq:estF3}.

The proof of inequality~\eqref{eq:1.1} is almost identical to that of \eqref{eq:MplusN3},
thus is omitted here. (It may be compared with the corresponding part of \cite{GS09}.)

Now we turn to inequality~\eqref{eq:estN32}.
As in equation~\eqref{eq:N312}, we decompose the new term $N_3$ into two parts,
$N_3 = N_{3,1} + N_{3,2}$.
We only prove estimate~\eqref{eq:estN31} for $N_{3,1}$;
the estimate for $N_{3,2}$ are easier and use the sufficiently fast decay
estimates for $v^{-2} \dz^2 v,\ v^{-1} \dz  \dy v,\ v^{-2}\dz v = \mathcal{O} (\beta^{\frac{33}{20}})$
found in  \eqref{eq:thetaDe} and  \eqref{eq:embedding}.

By direct computation, we find that
$$
\langle y\rangle^{-\frac{11}{10}} e^{\frac{a y^2}4} N_{3,1}
= \frac12\ \frac{\sqrt2  + v}v
\langle y\rangle^{-\frac{11}{10}}
\Big[\sqrt2 - v\Big] v^{-1} \dz^2 v.$$
The lower bound $v\ge  1$ from \eqref{eq:lower} and condition~[Cb] imply that
\begin{equation}
\Big| \langle y\rangle^{-\frac{11}{10}} e^{\frac{a y^2}4} N_{3,1} \Big|
\ls  \langle y\rangle^{-\frac{11}{10}}
\Big|\sqrt2 - v\Big|\ v^{-1}| \dz^2 v|.  \label{eq:N31parts}
\end{equation}
For the term $\sqrt2 - v$, we use the decomposition $v=e^{\frac{a}4 y^2} w$
in  equation~\eqref{eqn:split3} to write
\begin{align*}
\sqrt2 - v
& =  \sqrt2 - \sqrt{\frac{2 + by^2}{a+\frac12}} - e^{\frac{a}4 y^2}\xi\\
\noalign{\vskip6pt}
&=  - \left(a + \frac12\right)^{-\frac12}  \frac{by^2}{\sqrt2 + \sqrt{2+b y^2}}
+ \frac{\sqrt2}{\sqrt{a + \frac12}} \frac{a - \frac12}{1 + \sqrt{a+\frac12}}
- e^{\frac{a}4 y^2} \xi.
\end{align*}
Substitute this into estimate~\eqref{eq:N31parts} to obtain
\begin{align*}
\Big| \langle y\rangle^{-\frac{11}{10}}  e^{\frac{a y^2}4}  N_{3,1} \Big|
&\ls  \langle y\rangle^{-\frac{11}{10}}
\left[ \frac{by^2}{\sqrt{2+by^2}} + |1-2a| + | e^{\frac{ay^2}4}\xi |\right]
v^{-1} |\dz^2 v|\\
\noalign{\vskip6pt}
&=:   C_1 + C_2 + C_3.
\end{align*}
To conclude, we shall estimate the three components on the \textsc{rhs}.

The estimate for $C_1$ is the most involved.
By direct computation, we obtain
$$C_1 \le  b^{\frac12} \langle y\rangle^{-\frac1{10}} v^{-1}  | \dz^2 v|.$$
Using the estimates $v^{-1} |\dz^2 v| \le  \epsilon_0$ from  \eqref{eq:smallness},
$v^{-2} |\dz^2 v| \ls \beta^{\frac{17}{10}}$ from \eqref{eq:thetaDe}, and
$\frac{v}{1+|y|} \ls 1$ from \eqref{v-above}, the consequence $\beta\approx b$
of Condition~[Cb] for $B$, and H\"older's inequality, we get
\begin{equation}
C_1  \ls b^{\frac12} \left[v^{-2} | \dz^2 v|\right]^{\frac1{10}}
\left[v^{-1} | \dz^2 v|\right]^{\frac9{10}}
\ls \epsilon_0^{\frac9{10}}  b^{\frac{133}{200}}.\label{eq:sharpEst}
\end{equation}
For $C_2$, we use the fact $a-\frac12 = \mathcal{O}(b) + \mathcal{O}(\beta^2 A)$
from estimate~\eqref{eq:majorA} and Condition~[Cb] to obtain
\begin{equation}
C_2 \ls \epsilon_0 \beta.\label{eq:estC2}
\end{equation}
For $C_3$, we use definition~\eqref{eq:majorMmn} of $M_{1,1}$ to get
\begin{equation}
C_3\le  \epsilon_0 M_{1,1} \beta^{\frac{13}{20}}.\label{eq:estC3}
\end{equation}

Collecting inequalities~\eqref{eq:sharpEst}--\eqref{eq:estC3} completes
the estimate for
$$\Big\| \langle y\rangle^{-\frac{11}{10}}  e^{\frac{a y^2}4} N_{3,1}\Big\|_\infty.$$
\end{proof}

Now fix a ($\tau$-scale) time $T$.
To conclude our proof of ~estimate~\eqref{eq:M20}, we continue to study
equation~\eqref{eq:eta4}, which may by Duhamel's principle be rewritten in the form
$$P^\alpha_2 \eta(S) = e^{- L_\alpha S} P^\alpha_2  \eta(0)
+ \int_0^S  e^{- L_\alpha (S-\sigma)} P^\alpha_2
\bigg[- V\eta +  \sum_{k=2}^6  D_{2,0}^{(k)}\bigg]\,\mathrm{d}\sigma, $$
where $S=S(t(T))$ is defined in equation~\eqref{T2}.
By estimate~\eqref{eq:estproject2} for $e^{- L_\alpha \sigma} P^\alpha_2 $, one has
\begin{equation}\label{K123s}
\Big\| e^{\frac{\alpha}4 z^2} P^\alpha_2 \eta(S)\Big\|_{\frac{11}{10},0}
\ls K_0 +K_1 + K_2,
\end{equation}
where
\begin{align*}
K_0 & := e^{-\alpha  S} \Big\| e^{\frac{\alpha}4 z^2} \eta(0)\Big\|_{\frac{11}{10},0},\\
\noalign{\vskip6pt}
K_1 & := \int_0^S  e^{-\alpha (S-\sigma)}
\Big\| e^{\frac{\alpha}4 z^2} V \eta(\sigma)\Big\|_{\frac{11}{10},0}\,\mathrm{d}\sigma,\\
\noalign{\vskip6pt}
K_2 & :=   \sum_{k=2}^6 \int_0^S  e^{-\alpha (S-\sigma)}
\Big\| e^{\frac{\alpha}4 z^2} D_{2,0}^{(k)}\Big\|_{\frac{11}{10},0}\,\mathrm{d}\sigma.
\end{align*}
Next, we estimate the $K_n$ for $n=0,1,2$.
\begin{itemize}
\item[(K0)]
Estimate~\eqref{eq:compareXiEta} and the slow decay of $\beta$ yield
\begin{equation}\label{eq:estK0}
K_0
\ls \beta^{\frac{13}{20}} (T) \beta^{-\frac{13}{20}} (0)
\Big\| e^{\frac{\alpha}4 z^2} \eta(0)\Big\|_{\frac{11}{10},0}
\ls  \beta^{\frac{13}{20}} (T) M_{\frac{11}{10},0}(0).
\end{equation}
\item[(K1)]  Estimate~\eqref{eq:estF3} and the integral inequality~\eqref{INT} imply that
\begin{equation}\label{EstK1}
K_1
\ls \left[\int_0^S  e^{-\alpha (S-\sigma)} \beta^{\frac{13}{20}}(\tau(\sigma))\,
    \mathrm{d}\sigma\right] M_{3,0} (T)
\ls \beta^{\frac{13}{20}}(T) M_{3,0}(T).
\end{equation}
\item[(K2)]
The estimates of $D_{2,0}^{(k)}$, $(k=2\ldots6)$, implied by \eqref{eq:1.1} yield the bounds
\begin{equation}\label{eq:estK2}
\begin{split}
K_2 &\ls   \int_0^S  e^{-\alpha (S-\sigma)}
 \Big[ \beta^{\frac34} (\tau(\sigma)) P(M(T),A(T))
 + \beta^{\frac{13}{20}} \epsilon_0 M_{\frac{11}{10},0} (T) ]\,\mathrm{d}\sigma\\
 \noalign{\vskip6pt}
 &\ls  \beta^{\frac34} (T) P(M(T),A(T))
 + \beta^{\frac{13}{20}} \epsilon_0 M_{\frac{11}{10},0}(T).
\end{split}
\end{equation}
\end{itemize}
\smallskip
Collecting estimates~\eqref{K123s}--\eqref{eq:estK2}, one obtains
\begin{equation}\label{FinalStep}
\begin{split}
&\beta^{-\frac{13}{20}} (T)
\Big\| e^{\frac{\alpha}4 z^2} P^\alpha_2 \eta(\tau(S)) \Big\|_{\frac{11}{10},0} \\
\noalign{\vskip6pt}
&\qquad
\ls  M_{\frac{11}{10},0}(0) + M_{3,0} (T) + \epsilon_0 M_{\frac{11}{10},0}
+ 1 + \beta^{ \frac1{10}} (0) P (M(T),A(T)).
\end{split}
\end{equation}
Equation~\eqref{eq:AgreeEnd} shows that
$$\beta^{-\frac{13}{20}} (T)\| \phi(T)\|_{\frac{11}{10},0}
= \beta^{-\frac{13}{20}} (T)
\Big\| e^{\frac{\alpha z^2}4} P^\alpha_2 \eta (S)\Big\|_{\frac{11}{10},0},$$
which together with estimate~\eqref{FinalStep} and the definition of
$M_{11/10,0}$ implies that
$$M_{\frac{11}{10},0}(T)
\ls   M_{\frac{11}{10},0} (0) + M_{3,0} (T)
+ \epsilon_0 M_{\frac{11}{10},0} (\tau) + \beta^{ \frac1{10}} (0) P (M(T),A(T)).$$
Since the time $T$ was arbitrary, the proof of estimate~\eqref{eq:M20} is complete.

\subsection{Proof of estimate~\eqref{eq:M12}}\label{SEC:estM12}
In this section, we prove estimate~\eqref{eq:M12} for the function $M_{1,1}$.
Observe that for any smooth function $g$, one has
\begin{equation}\label{eq:obser}
e^{-\frac{\alpha z^2}4} \partial_z   \Big( e^{\frac{\alpha}4 z^2 }g\Big)
= \Big( \partial_z + \frac{\alpha}2  z \Big) g.
\end{equation}
Thus it follows from equation~\eqref{eq:eta} that the function
$P^\alpha_1 (\partial_z + \frac{\alpha}2 z)\eta$ evolves by
\begin{equation}\label{eq:oneD}
\partial_\sigma\left[P^\alpha_1 \Big(\partial_z + \frac{\alpha}2 z\Big) \eta\right]
= - P^\alpha_1 (L_\alpha + \alpha) P^\alpha_1 \Big(\partial_z + \frac{\alpha}2 z\Big)\eta
+ \sum_{k=2}^6  D^{(k)}_{1,1}+D,
\end{equation}
where $D^{(k)}_{1,1}$ is defined immediately after equation~\eqref{EQ:eta2}, and
\begin{align*}
D & :=   - P^\alpha_1 \Big(\partial_z + \frac{\alpha}2 z\Big) V\eta\\
\noalign{\vskip6pt}
& \ =  - P^\alpha_1 [ \partial_z V] \eta - P_1^\alpha V \Big[\partial_z + \frac{\alpha}2 z\Big]\eta.
\end{align*}
The key observation here is that applying the operator $\partial_z + \frac{\alpha}2 z$
gives an equation with an improved linear part.

Fix a ($\tau$-scale) time $T$. Then Duhamel's principle lets us rewrite
equation~\eqref{eq:oneD} as
\begin{equation}\label{eq:durha2}
\begin{split}
P^\alpha_1 \Big(\partial_z + \frac{\alpha}2 z\Big)\eta (S)
&= P^\alpha_1  e^{-(L_\alpha +\alpha)S} P^\alpha_1
\Big(\partial_z +\frac{\alpha}2 z\Big)\eta(0)\\
\noalign{\vskip6pt}
&\qquad
+ \int_0^S P^\alpha_1 e^{-(L_\alpha +\alpha)(S-\sigma)} P^\alpha_1
\bigg[ \sum_{n=2}^6 D_{1,1}^{(n)}+D \bigg] d\sigma,
\end{split}
\end{equation}
where $P^\alpha_1 e^{-(L_\alpha +\alpha)S} P^\alpha_1$ is defined
in equation~\eqref{eq:projection} and estimated in \eqref{eq:firstProj}.
For the remaining terms on the \textsc{rhs}, we derive the following estimates.

\begin{lemma}\label{LM:remaind11}
If $A(\tau),\ B(\tau)\ls \beta^{- \frac1{20} }(\tau)$, then:
\begin{equation}\label{eq:estDD}
\big\| e^{\frac{\alpha}4 z^2} D(\sigma)\big\|_{1,0}
\ls \beta^{\frac{11}{10}} (\tau (\sigma)) \big[M_{3,0}(T) + M_{2,1}(T)\big],
\end{equation}
\begin{equation}\label{eq:estD11}
\bigg\| e^{\frac{\alpha z^2}4}  \sum_{k=2,3} D_{1,1}^{(k)} (\sigma)\bigg\|_{1,0}
\ls  \beta^{\frac65}  (\tau (\sigma)) P\big(M(T),A(T)\big),
\end{equation}
\begin{align}\label{eq:mn11}
\big\| e^{\frac{\alpha}4 z^2}  D_{1,1}^{(4)} \big\|_{1,0}
& \ls \big\| e^{\frac{a y^2}4} N_1  (a,b,\xi) \big\|_{1,1}\\
\noalign{\vskip6pt}
& \ls \beta^{\frac{11}{10}}  [M_{2,1} + M_{3,0}]
+ \beta^{\frac65} \Big(M_{\frac{11}{10},0}^2 + M_{1,1} M_{\frac{11}{10},0}\Big),\notag
\end{align}
\begin{equation}\label{eq:N211}
\big\| e^{\frac{\alpha}4 z^2} D_{1,1}^{(5)}\big \|_{1,0}
\ls \big\| e^{\frac{a y^2}4} N_{2}(a,b,\xi)\big\|_{1,1}
\ls \beta^{\frac65}(\tau),
\end{equation}
\begin{equation}\label{eq:N31II}
\big\| e^{\frac{\alpha}4 z^2} D_{1,1}^{(6)} \big\|_{1,0}
\ls \big\| e^{\frac{a y^2}4} N_3  (a,b,\xi)\big\|_{1,1}
\ls \beta^{\frac{23}{10}} (\tau).
\end{equation}
\end{lemma}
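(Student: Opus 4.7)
The plan is to establish the five pointwise bounds in turn. The ingredients are uniform across all cases: the explicit form of the symbol-like factors $V$, $\partial_z V$, $\mathcal{W}$, $\mathcal{F}$, and $F_\ell$; the pointwise consequences of estimate~\eqref{eq:compareXiEta} and definitions~\eqref{eq:majorMmn}--\eqref{eq:vector}, which convert the $M_{m,n}$-norms into $\langle y\rangle^m\beta^{\frac{m+n}{2}+\frac{1}{10}}M_{m,n}$-bounds on $\dy^n\phi$; the first-bootstrap outputs~\eqref{eq:upperBounddV}--\eqref{eq:thetaDe} for $v$ and its derivatives; and the intertwining identity $e^{-\frac{\alpha z^2}{4}}\partial_z(e^{\frac{\alpha z^2}{4}}g) = (\partial_z+\frac{\alpha}{2}z)g$ from~\eqref{eq:obser}, which converts the extra factor $(\partial_z+\frac{\alpha}{2}z)$ appearing in $D$ and the $D^{(k)}_{1,1}$ into an honest $\dy$-derivative of $\phi$, modulo the near-identity factor $\lambda/\lambda_1$ controlled by~\eqref{eq:compare}. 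The projections $P^\alpha_n$ can be discarded at boundedness cost via~\eqref{eq:estPro}.

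For~\eqref{eq:estDD}, I split $D = -P^\alpha_1[\partial_z V]\eta - P^\alpha_1 V(\partial_z+\frac{\alpha}{2}z)\eta$. The explicit form $V = -2\alpha(2+\beta z^2)^{-1}$ gives $|V|\ls \beta(1+\beta z^2)^{-1}$, and using $\beta|z|(1+\beta z^2)^{-1}\ls\beta^{1/2}$ gives $|\partial_z V|\ls\beta^{1/2}(1+\beta z^2)^{-1}$. Pairing the first factor with $|e^{\frac{\alpha z^2}{4}}\eta|\ls\langle z\rangle^3\beta^{8/5}M_{3,0}$ and using $\sup_z\langle z\rangle^2(1+\beta z^2)^{-1}\ls\beta^{-1}$ produces $\beta^{11/10}M_{3,0}$. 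For the second piece, the intertwining identity converts $(\partial_z+\frac{\alpha}{2}z)\eta$ into (essentially) $\dy\phi$, so the $M_{2,1}$-bound $|\dy\phi|\ls\langle y\rangle^2\beta^{8/5}M_{2,1}$ combined with $|V|\langle z\rangle\ls\beta^{1/2}$ yields a term of size $\beta^{21/10}M_{2,1}\ls\beta^{11/10}M_{2,1}$.

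For~\eqref{eq:estD11}, I follow Lemma~\ref{LM:EstDs} at the index $(m,n) = (1,1)$: the $\mathcal{F}$-term is bounded by $\beta^{3/2}P(M,A)$ via~\eqref{eq:estSource}, and the $\mathcal{W}\eta$-term by combining $\mathcal{W}=\mathcal{O}(\beta)$ from~\eqref{eq:defineTW} with~\eqref{eq:compareXiEta}. For the nonlinear estimates~\eqref{eq:mn11}--\eqref{eq:N31II}, the first inequality in each case is the $\lambda/\lambda_1\approx 1$ reduction of~\eqref{eq:sample}. The second inequality of~\eqref{eq:mn11} follows by differentiating $e^{\frac{ay^2}{4}}N_1 = -v^{-1}(a+\frac{1}{2})(2+by^2)^{-1}\phi^2$ in $y$, splitting the quadratic factor $\phi\cdot\phi$ into one piece controlled by $M_{\frac{11}{10},0}$ and another by $M_{3,0}$ or $M_{1,1}$, and using the decay $(2+by^2)^{-1}\ls\beta^{-11/20}\langle y\rangle^{-11/10}$. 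For~\eqref{eq:N211}, differentiating $e^{\frac{ay^2}{4}}N_2 = -p^2(1+p^2+q^2)^{-1}\dy^2 v$ produces terms involving at worst $\dy^3 v$, $\dy^2 v$, or $\dy q$, all controlled by~\eqref{eq:yDe}--\eqref{eq:thetaDe}; the prefactor $|p|^2\ls\beta v$ together with $v\ls\langle y\rangle$ from~\eqref{v-above} then deliver $\beta^{6/5}$. For~\eqref{eq:N31II}, I decompose $N_3 = N_{3,1}+N_{3,2}$ as in~\eqref{eq:N312}: the piece $N_{3,2}$ carries factors of $q$ or $pq$ and is handled directly by~\eqref{eq:thetaDe}, while the critical $N_{3,1}$ requires expanding $\sqrt 2 - v$ as in the proof of~\eqref{eq:estN32} and pairing each resulting summand with the derivatives of $v^{-1}\dz^2 v$ controlled by~\eqref{eq:thetaDe} and the interpolation~\eqref{eq:embedding}.

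The main obstacle is~\eqref{eq:estDD}. The naive estimate $|\partial_z V|\ls\beta(1+\beta z^2)^{-2}$ loses a full power of $\beta$, because the $\langle z\rangle^3$-growth permitted by the $M_{3,0}$-norm of $\eta$ cannot then be absorbed. Recovering the sharp exponent $\frac{11}{10}$ requires first extracting the extra $\beta^{1/2}$ hidden in $\beta|z|(1+\beta z^2)^{-1}\ls\beta^{1/2}$, and second correctly identifying $(\partial_z+\frac{\alpha}{2}z)\eta$ with $\dy\phi$ rather than with the Hermite-weighted object $z\eta$; any other handling of these two ingredients yields a bound too weak to close the subsequent bootstrap for $M_{1,1}$.
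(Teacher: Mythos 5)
Your approach mirrors the paper's proof quite closely: for~\eqref{eq:estDD} you split $D$ into the $[\partial_z V]\eta$ piece and the $V(\partial_z+\tfrac{\alpha}{2}z)\eta$ piece and use~\eqref{eq:compareXiEta} with $(m,n)=(3,0)$ and $(2,1)$ respectively via the intertwining identity; for~\eqref{eq:estD11} you defer to the argument of Lemma~\ref{LM:EstDs}; and for~\eqref{eq:mn11}--\eqref{eq:N31II} you establish the first inequality in each chain by the $\lambda/\lambda_1\approx 1$ change of variable and then differentiate $e^{\frac{ay^2}4}N_k$ in $y$, splitting $N_3$ as $N_{3,1}+N_{3,2}$. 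This is precisely the paper's route.

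However, there is an arithmetic error in your treatment of the second piece of $D$. You assert $|V|\ls\beta(1+\beta z^2)^{-1}$ and consequently $|V|\langle z\rangle\ls\beta^{1/2}$. Both are false: since $V=-2\alpha(2+\beta z^2)^{-1}$, one has $|V(0)|=\alpha\approx\tfrac12=\mathcal{O}(1)$, not $\mathcal{O}(\beta)$. The correct bounds are $|V|\ls(1+\beta z^2)^{-1}$ and (by the AM--GM inequality $\langle z\rangle(1+\beta z^2)^{-1}\ls\beta^{-1/2}$) $\langle z\rangle|V|\ls\beta^{-1/2}$. The consequence of this correction is that the $V(\partial_z+\tfrac{\alpha}{2}z)\eta$ term contributes $\beta^{-1/2}\cdot\beta^{8/5}M_{2,1}=\beta^{11/10}M_{2,1}$, matching the lemma exactly --- not the $\beta^{21/10}M_{2,1}$ you claim. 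Your final inequality $\beta^{21/10}M_{2,1}\ls\beta^{11/10}M_{2,1}$ is numerically true, but it rests on a false premise; the bound you claim to prove is not achievable, and the one that is achievable is exactly what is stated. For the $[\partial_z V]\eta$ piece your estimate $|\partial_z V|\ls\beta^{1/2}(1+\beta z^2)^{-1}$ is correct and gives $\beta^{11/10}M_{3,0}$ as you say. Your closing discussion of the ``main obstacle'' inherits this confusion: the factor $\beta^{1/2}$ extracted from $\beta|z|(1+\beta z^2)^{-1}$ is relevant only for $\partial_z V$; for the $V$-piece, the relevant mechanism is the \emph{gain} of $\beta^{-1/2}$ in $\langle z\rangle|V|$ being compensated by the $\beta^{8/5}$ decay carried by the $M_{2,1}$-norm. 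A referee would flag the line $|V|\ls\beta(1+\beta z^2)^{-1}$ as an error, even though a minimal correction recovers the statement.
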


\begin{proof}
We first prove estimate~\eqref{eq:estDD}.
Direct computation leads to
\begin{align*}
|\langle z\rangle e^{\frac{\alpha}4 z^2} D(\sigma)|
& \ls  \frac{b}{(1 + b (\tau(\sigma))z^2)^2}
| e^{\frac{\alpha}4 z^2} \eta | + \frac1{\langle z\rangle (1+b(\tau(\sigma))z^2)}
| \partial_z e^{\frac{\alpha}4 z^2} \eta |\\
\noalign{\vskip6pt}
&\le   b^{-\frac12} \langle z\rangle^{-3}
| e^{\frac{\alpha}4 z^2} \eta | + b^{-\frac12} \langle z\rangle^{-2}
| \partial_z e^{\frac{\alpha}4 z^2} \eta |.
\end{align*}
To control the \textsc{rhs}, we apply inequality~\eqref{eq:compareXiEta} to obtain the desired estimate.

The proof of  inequality~\eqref{eq:estD11} is very similar to the corresponding part of
Lemma~\ref{LM:EstDs}, hence is omitted.

The first inequalities in  estimates~\eqref{eq:mn11}--\eqref{eq:N31II} can be proved by the same
method as estimate~\eqref{eq:sample}. So in what follows, we only prove the second ones.

Consider the second inequality in estimate~\eqref{eq:mn11} --- an estimate which was not necessary
in \cite{GS09}. By direct computation, we obtain
\begin{align*}
\dy N_1(a,b,\xi)
& = \frac{\dy v}{v^2}  \frac{a+\frac12}{2+by^2}
\big(e^{\frac{a}4 y^2} \xi\big)^2 + \frac1v  \frac{2(a + \frac12 )by}{(2 + by^2)^2}
\big(e^{\frac{a}4 y^2} \xi\big)^2 \\
&\qquad -  \frac2v  \frac{a+\frac12}{2+by^2}
e^{\frac{a}4 y^2} \xi  \dy \big(e^{\frac{a}4 y^2} \xi\big)
\\
\noalign{\vskip6pt}
& =: A_1 + A_2 + A_3.
\end{align*}
To bound $A_1$ and $A_2$, we note that $|v^{-1} \dy v| \ls \beta^{\frac12}$ by \eqref{eq:yDe},
and use the consequences $|\frac{by}{\sqrt{2+by^2}}| \ls  \beta^{\frac12}$ and $|a|\le  1$
of \eqref{eq:assuMAB} in Condition~[Cb] to obtain
\begin{equation*}
\langle y\rangle^{-1} (|A_1| + |A_2|)
\ls  b^{\frac12} \langle y\rangle^{-1} \frac1{1+by^2} \big(e^{\frac{a}4 y^2}\xi\big)^2
\ls   b^{\frac12} b^{-\frac35} \big(\langle y\rangle^{-\frac{11}{10}} e^{\frac{a}4 y^2}\xi \big)^2.
\end{equation*}
It follows that
\begin{equation}\label{eq:estA1A2}
\big\| \langle y\rangle^{-1} (A_1 + A_2) \big\|_{L^\infty}
\ls b^{-\frac1{10}}
\big\| \langle y\rangle^{-\frac{11}{10}} e^{\frac{a}4 y^2} \xi \big\|_\infty^2
\le  \beta^{\frac65} M_{\frac{11}{10},0}^2.
\end{equation}
It is not hard to bound $A_3$ via the observation
\begin{align}
\Big\| \langle y\rangle^{-1}\frac2v \frac{a + \frac12}{2+by^2}
e^{\frac{a}4 y^2} \xi \dy
(e^{\frac{a}4 y^2} \xi ) \Big\|_\infty
&\ls   b^{-\frac{11}{20}}
\big\| \langle y\rangle^{-\frac{11}{10}} e^{\frac{a}4 y^2} \xi \big\|_\infty
\big\|\langle y\rangle^{-1} \dy (e^{\frac{a}4 y^2}\xi )\big\|_\infty   \nonumber\\
&\le   \beta^{\frac65} M_{1,1} M_{\frac{11}{10},0}.\nonumber
\end{align}
When combined with estimate~\eqref{eq:estA1A2}, this implies \eqref{eq:mn11}.

To derive the second inequality in estimate~\eqref{eq:N211}, we recall that
$p = \dy v$ and $q=v^{-1} \dz v$ and then compute that
$$
\dy \left[ e^{\frac{a}4 y^2} N_2(a,b,\xi) \right]
= \frac{2p^2(p \dy^2 v + q v^{-1} \dy \dz v - q^2 v^{-1} \dy v)}{(1+p^2+q^2)}
\dy^2 v - \frac{p^2 \dy^3 v + 2p (\dy^2 v)^2}{1+p^2+q^2}.
$$
Recall that we need to bound
$\langle y\rangle^{-1} \dy \left[ e^{\frac{a}4 y^2} N_2(a,b,\xi) \right]$.
To control $\langle y\rangle^{-1} |\dy v |$, we use Condition~[Cb] for $M_{1,1}$,
which implies that
\begin{equation}\label{eq:DeV}
(1+|y|)^{-1} | \dy v| \ls  2\beta.
\end{equation}
To see this, we write $v = \sqrt{2 + by^2/a+\frac12} + \phi$
and use the fact that
$$\langle y\rangle^{-1} | \dy \phi | \le  \beta^{\frac{21}{20}} M_{1,1} \ll \beta.$$
This observation, together with estimates~\eqref{eq:yDe}--\eqref{eq:thetaDe} for
$|\dy^n v|$, $(n=2,3)$, and $v^{-1} |\dy \dz v|$ yields the second inequality in
estimate~\eqref{eq:N211}.

Now we turn to \eqref{eq:N31II}. As in equation~\eqref{eq:N312}, we decompose
the new term $N_3$ into two parts, writing $N_3 = N_{3,1} + N_{3,2}$.
We will prove estimate~\eqref{eq:estN31} for $N_{3,1}$; the proof for
$N_{3,2}$ follows readily from the decay estimates for
$v^{-2} \dz^2 v$, $v^{-1} \dz \dy v$, $v^{-2} \dz v$,
and their $y$-derivatives provided by estimates~\eqref{eq:thetaDe}
and  \eqref{eq:embedding}. By direct computation, we have
\begin{equation}\label{eq:N31Two}
\dy \big[ e^{\frac{ay^2}4} N_{3,1} \big]
= - 2v^{-3} \dy v \dz^2 v + \frac12 \Big[ \frac2{v^2}-1 \Big]
\dy \dz^2 v.
\end{equation}
For the first term on the \textsc{rhs}, we use estimates~\eqref{eq:yDe}--\eqref{eq:thetaDe}
to obtain
\begin{equation}\label{eq:N31Two1}
v^{-3} | \dy v \dz^2 v |
= v^{-1} | \dy v|\ v^{-2}  | \dy v \dz^2 v |
\ls \beta^{\frac{21}{20}}.
\end{equation}
Estimating the second term on the \textsc{rhs} requires more work.
We start by writing in the the more convenient form,
\begin{align*}
\langle y\rangle^{-1} \left| \Big[ \frac2{v^2}-1\Big] \dy \dz^2 v\right|
& =  \langle y\rangle^{-1} \Big| \frac1v + 1\Big|\
\Big| \frac1v - 1\Big|\ | \dy \dz^2 v|\\
\noalign{\vskip6pt}
& \ls  \langle y\rangle^{-1} \Big| \frac1v - 1\Big|^{\frac{10}{11}}
| \dy \dz^2 v|\\
\noalign{\vskip6pt}
=& \big(v^{-2} | \dy \dz^2 v|\big)^{\frac5{11}}
|\dy \dz^2 v|^{\frac6{11}}
\Big[ \langle y\rangle^{-\frac{11}{10}} |v-1| \Big]^{\frac{10}{11}}.
\end{align*}
It follows from estimates~\eqref{eq:thetaDe} and \eqref{eq:smallness} that
the first two factors on the \textsc{rhs} here can be bounded by
$\beta^{\frac34} \epsilon_0^{\frac6{11}}$. To bound the final factor,
we recall that $v = \sqrt{ 2+by^2/a+\frac12} + e^{\frac{a}4 y^2} \xi.$
By definitions~\eqref{eq:majorMmn} and \eqref{eq:majorA} for
$M_{11/10,0}$ and $A$, respectively, we have
\begin{equation}
\langle y\rangle^{-\frac{11}{10}} |v-1|
\ls \beta^{\frac12} + \beta^{\frac1{10}} [A+M_{11/10,0}].
\end{equation}
Collecting the estimates above and using the consequences
$A, M_{11/10,0}\ls  \beta^{-\frac1{20}}$ of Condition~[Cb], we obtain
\begin{equation}
\langle y\rangle^{-1} \left| \Big[ \frac2{v^2} - 1\Big]  \dy \dz^2 v \right|
\ls \beta^{\frac{53}{44}} \epsilon^{\frac6{11}}
\le  \beta^{\frac65}.
\end{equation}
When combined with inequalities~\eqref{eq:N31Two}--\eqref{eq:N31Two1}, this
implies the desired estimate.

Finally, \eqref{eq:N31II} follows easily from the estimates obtained earlier
in the lemma; so we omit further details.
\end{proof}

Now we return to equation~\eqref{eq:durha2}.
Applying the norm $\|\cdot\|_{1,0} = \|\langle y\rangle^{-1} \cdot \|_\infty$
to both sides yields
\begin{equation}\label{eq:estEta}
\left\| e^{\frac{\alpha}4 z^2} P^\alpha_1 \Big(\partial_z + \frac{\alpha}2 z\Big) \eta(S)
\right\|_{1,0}
\ls  Y_1 +Y_2 + Y_3,
\end{equation}
where
\begin{align*}
Y_1 & := e^{-\gamma S} \|
e^{\frac{\alpha}4 z^2} \eta(0) \|_{1,1},\\
Y_2 & := \int_0^S e^{-\gamma (S - \sigma)}  \sum_{k=2}^6
\| e^{\frac{\alpha z^2}4}  D_{1,1}^{(k)} (\sigma) \|_{1,0}\,\mathrm{d}\sigma,\\
Y_3 & := \int_0^S e^{-\gamma (S-\sigma)}
\| e^{\frac{\alpha z^2}4}  D(\sigma) \|_{1,0}\,\mathrm{d}\sigma.
\end{align*}
By  inequalities~\eqref{eq:estDD}--\eqref{eq:N31II} and the integral estimate~\eqref{INT},
one has
\begin{equation}\label{eq:Y2}
Y_2 \ls \left[\int_0^S e^{-\gamma (S-\sigma)} \beta^{\frac{23}{20}}\,\mathrm{d}\sigma\right]
P(M(T),A(T))
\ls \beta^{\frac{23}{20}} (T) P (M(T),A(T)).
\end{equation}
Similar reasoning yields
\begin{equation}
Y_3 \ls \beta^{\frac{11}{10}} (T) \Big[M_{3,0}(T)+M_{2,1}(T)\Big].
\end{equation}
Estimate~\eqref{eq:compareXiEta} and the slow decay of $\beta$ imply that
\begin{equation}\label{eq:Y1}
Y_1 \ls e^{-\gamma S} \| \phi(\cdot,0)\|_{1,1}
\ls \beta^{\frac{11}{10}} (T) (T) M_{1,1}(0).
\end{equation}

Now collecting estimates~\eqref{eq:estEta}--\eqref{eq:Y1}, we obtain
\begin{equation*}
\begin{split}
& \beta^{-\frac{11}{10}} (T)
\left\| e^{\frac{\alpha z^2}4} P^\alpha_1 \Big( \partial_z + \frac{\alpha}2 z\Big) \eta(S)
\right\|_{1,0}\\
\noalign{\vskip6pt}
&\qquad
\ls  M_{1,1} (0) + M_{3,0} (T) + M_{2,1} (T) + \beta^{\frac1{20} }
(0)P(M(T),A(T)).
\end{split}
\end{equation*}
By equation ~\eqref{eq:AgreeEnd}, one has
$\| e^{\frac{\alpha z^2}4} P^\alpha_1 ( \partial_z + \frac{\alpha}2 z)\eta(S)]\|_{1,0}
= \| \phi(T)\|_{1,1}$.
Thus by definition of $M_{1,1}$, one obtains
$$M_{1,1}(T)
\ls M_{1,1} (0) + M_{3,0} (T) + M_{2,1} (T) + \beta^{ \frac1{20} } (0)P(M(T),A(T)).$$
Because $T$ was arbitrary, this proves estimate~\eqref{eq:M12}.

\subsection{Proof of estimate~\eqref{eq:M21}}\label{SEC:EstM21}
In this section, we prove estimate~\eqref{eq:M21} for $M_{2,1}$ and thereby
complete our proof of Proposition~\ref{Prop:Main3}.

The same method used to derive equation~\eqref{eq:oneD} shows that
$P^\alpha_2 (\partial_z + \frac{\alpha}2 z) \eta$ evolves by
\begin{equation}\label{eq:oneD2}
\partial_\sigma\left[P^\alpha_2 \Big(\partial_z + \frac{\alpha}2 z\Big) \eta\right]
= - P^\alpha_2 (\mathcal{L}_\alpha +\alpha) P^\alpha_2
\Big(\partial_z + \frac{\alpha}2 z\Big) \eta
+ \sum_{k=1}^6 D^{(k)}_{2,1} + D_7,
\end{equation}
where $D^{(k)}_{2,1}$, $(k=1,\ldots,6)$, are defined immediately after
equation~\eqref{EQ:eta2}, and
$$D_7 := - P^\alpha_2 \eta \partial_z V.$$

We fix a ($\tau$-scale) time $T$ and apply Duhamel's principle to
rewrite  equation~\eqref{eq:oneD2} in the form
\begin{align}\label{eta21}
P^\alpha_2 \Big( \partial_z + \frac{\alpha}2 z \Big) \eta (S)
& = P^\alpha_2 U_2 (S,0)
e^{-\alpha S} P^\alpha_2 \Big( \partial_z + \frac{\alpha}2 z\Big) \eta (0)\\
\noalign{\vskip6pt}
&\quad
+ \int_0^S P^\alpha_2 U_2 (S,\sigma)
e^{-\alpha (S-\sigma)} P^\alpha_2
\bigg[ \sum_{n=1}^6 D_{2,1}^{(n)}+D_7 \bigg]\,\mathrm{d}\sigma,\nonumber
\end{align}
where $U_2$ is defined in Proposition~\ref{PRO:propagator}
and satisfies estimate~\eqref{eq:OperatorWithV}.
Terms on the \textsc{rhs} may be estimated as follows.

\begin{lemma}
If $A(\tau),\ B(\tau)\ls  \beta^{- \frac1{20} } (\tau)$, then
\begin{equation}\label{eq:estD6}
\Big\| e^{\frac{\alpha}4 z^2} D_7 (\sigma) \Big\|_{2,0}
\ls \beta^{\frac85} (\tau (\sigma)) M_{3,0}(T),
\end{equation}
and
\begin{equation}\label{eq:estD21}
\bigg\| e^{\frac{\alpha z^2}4} \sum_{k=1}^6 D_{2,1}^{(k)} (\sigma)\bigg\|_{2,0}
\ls \beta^{\frac{33}{20}} (\tau(\sigma)) P(M(T),A(T)).
\end{equation}
\end{lemma}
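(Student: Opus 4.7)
The plan is to treat each term by the same methods used in Lemmata~\ref{LM:EstDs} and \ref{LM:remaind11}, now applied with the norm $\|\cdot\|_{2,0}$ after having effectively acted on $\eta$ by $\partial_z+\tfrac{\alpha}2 z$. The three universal ingredients are: \textsc{(a)} the first-bootstrap outputs \eqref{eq:lower}--\eqref{eq:thetaDe}, which bound the first three $y$-derivatives of $v$ and the mixed $\theta$-derivatives by the correct powers of $\beta$; \textsc{(b)} the pointwise-equivalence estimate \eqref{eq:compareXiEta} that trades $\|e^{\frac{\alpha}{4}z^2}\eta(\sigma)\|_{m,n}$ for $\beta^{\frac{m+n}2+\frac{1}{10}}M_{m,n}$; and \textsc{(c)} the fact that each projection $P^\alpha_m$ is bounded on the relevant weighted $L^\infty$ space, so that $\|P^\alpha_m f\|_{m,0}\lesssim \|f\|_{m,0}$.

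For the estimate~\eqref{eq:estD6} of $D_7=-P^\alpha_2 \eta\,\partial_z V$, one starts from $\partial_z V=\frac{4\alpha\beta z}{(2+\beta z^2)^2}$. Splitting the cases $\beta z^2\le 1$ and $\beta z^2\ge 1$ shows that $\langle z\rangle |\partial_z V|\lesssim 1$ pointwise. Since $|e^{\frac{\alpha}{4}z^2}\eta|\lesssim \langle z\rangle^{3}\beta^{\frac{33}{20}}M_{3,0}$ by \eqref{eq:compareXiEta}, the factor $\langle z\rangle^{-2}(\partial_z V)(e^{\frac{\alpha}{4}z^2}\eta)$ is bounded by $\beta^{\frac{33}{20}}M_{3,0}\leq \beta^{\frac{8}{5}}M_{3,0}$, which (after absorbing the projection via \textsc{(c)}) yields the claim.

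For \eqref{eq:estD21} one handles the six pieces in turn. $D^{(1)}_{2,1}$ is dealt with as in Lemma~\ref{LM:EstDs}: using the identity $P_2^\alpha(1-P_2^\alpha)=0$ one reduces it to $P_2^\alpha\frac{\alpha\beta z^2}{2+\beta z^2}(1-P_2^\alpha)(\partial_z+\tfrac{\alpha}2 z)\eta$, then \eqref{eq:estPro} and the $(2,1)$-case of \eqref{eq:compareXiEta} give the bound $\beta^{\frac{1}{2}}\cdot\beta^{\frac{8}{5}}M_{2,1}\lesssim\beta^{\frac{21}{10}}M_{2,1}$. Terms $D^{(2)}_{2,1}$ and $D^{(3)}_{2,1}$ are estimated exactly as the corresponding terms in Lemma~\ref{LM:EstDs}, using $\mathcal W=\mathcal O(\beta)$ and the $(2,1)$-analogue of \eqref{eq:estSource} (the latter is proved by a routine computation paralleling the $(11/10,0)$ case shown). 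For the nonlinear pieces $D^{(k)}_{2,1}$, $k=4,5,6$, we need to bound $\|e^{\frac{ay^2}4}N_k\|_{2,1}$, i.e.\ one $\dy$ derivative in $\langle y\rangle^{-2}L^\infty$; for $N_1$ the computation of Lemma~\ref{LM:remaind11} applies verbatim with the improved weight, for $N_2$ one uses the formula for $\dy[e^{\frac{ay^2}4}N_2]$ derived there together with \eqref{eq:yDe}--\eqref{eq:thetaDe}, and all contributions come out $\lesssim\beta^{\frac{33}{20}}$ times polynomially bounded functions of $M$.

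The single nontrivial obstacle is the treatment of $D^{(6)}_{2,1}$, the $N_3$ contribution that did not appear in \cite{GS09}. Splitting $N_3=N_{3,1}+N_{3,2}$ as in \eqref{eq:N312}, the easier part $N_{3,2}$ is controlled by combining \eqref{eq:thetaDe} with Lemma~\ref{InterpEst}. For $N_{3,1}$ the relevant identity is
\[\dy\bigl[e^{\frac{ay^2}{4}}N_{3,1}\bigr]=-2v^{-3}(\dy v)(\dz^2 v)+\tfrac12\bigl[\tfrac{2}{v^2}-1\bigr](\dy\dz^2 v).\]
The first summand is of size $\beta^{\frac{21}{20}}$ by $v^{-1}|\dy v|\lesssim\beta^{\frac12}$ and $v^{-2}|\dz^2 v|\lesssim\beta^{\frac{33}{20}}$. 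The second summand is the delicate one: combining the smallness bound $|\dy\dz^2 v|\le\epsilon_0$ from \eqref{eq:smallness} with the decay $v^{-2}|\dy\dz^2 v|\lesssim\beta^{\frac{33}{20}}$ from \eqref{eq:thetaDe} by H\"older interpolation (in the spirit of \eqref{eq:sharpEst}), and using the expansion $v=\sqrt{(2+by^2)/(a+\tfrac12)}+e^{\frac{a}4 y^2}\xi$ to control $\langle y\rangle^{-2}|v^{-1}-\tfrac{1}{\sqrt{2}}|$, produces the required $\beta^{\frac{33}{20}}$ decay. The main technical burden is ensuring that the $\epsilon_0$-smallness of $v^{-1}\dz^2 v$, $\dy\dz^2 v$ in Condition~[C3] (the first output of the smallness bootstrap) suffices to beat the loss of one derivative caused by the factor $2/v^2-1$. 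Once that interpolation is in place, summing all contributions yields \eqref{eq:estD21}.
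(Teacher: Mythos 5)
The paper omits this proof, indicating only that it is ``similar to but easier than that of Lemma~\ref{LM:remaind11}'', and your proposal supplies exactly the kind of details the authors had in mind, following the template of Lemmata~\ref{LM:EstDs} and \ref{LM:remaind11} term by term. Your treatment of $D_7$ and of $D^{(k)}_{2,1}$ for $k=1,2,3,4,5$ is sound (the passing slip of $\beta^{33/20}$ before you correct to $\beta^{8/5}$ in the $D_7$ bound is harmless, since $\frac{3}{2}+\frac{1}{10}=\frac{8}{5}$). One point worth flagging: for the $N_{3,1}$ contribution you invoke the same H\"older interpolation used in Lemma~\ref{LM:remaind11}, but in the present $\|\cdot\|_{2,0}$ norm that machinery is unnecessary --- this is precisely where the lemma is ``easier''. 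Writing $[\tfrac2{v^2}-1]\dy\dz^2 v = (2-v^2)\,v^{-2}\dy\dz^2 v$ and using $v=\mathcal{O}(\langle y\rangle)$ from~\eqref{v-above} gives $\langle y\rangle^{-2}|2-v^2|\lesssim 1$, so that $\langle y\rangle^{-2}\big|[\tfrac2{v^2}-1]\dy\dz^2 v\big|\lesssim v^{-2}|\dy\dz^2 v|\lesssim\beta^{\frac{33}{20}}$ directly by~\eqref{eq:thetaDe}, with no need for the $\epsilon_0$-smallness or interpolation you describe. Similarly the first summand $-2v^{-3}(\dy v)(\dz^2 v)$ is bounded pointwise by $v^{-1}|\dy v|\cdot v^{-2}|\dz^2 v|\lesssim\beta^{\frac12}\cdot\beta^{\frac{33}{20}}=\beta^{\frac{43}{20}}$, not the weaker $\beta^{\frac{21}{20}}$ you quote (which by itself would not suffice); the stronger bound is the one actually needed.
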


The proof is similar to but easier than that of Lemma~\ref{LM:remaind11},
hence is omitted.
\smallskip

Continuing, we apply the $\|\cdot\|_{2,0} = \|\langle z\rangle^{-2} \cdot\|_\infty$
norm to equation~\eqref{eta21}, obtaining
\begin{equation}\label{eq:estEtaAgain}
\left\| e^{\frac{\alpha}4 z^2} P^\alpha_2 \Big(\partial_z + \frac{\alpha}2 z \Big) \eta (S)
\right\|_{2,0}
\ls Y_1 +Y_2 + Y_3,
\end{equation}
where
\begin{align*}
Y_1 & := e^{-\gamma S} \Big\| e^{\frac{\alpha}4 z^2} \eta(0) \Big\|_{2,1},\\
Y_2 & := \int_0^S e^{-\gamma (S-\sigma)}   \sum_{k=1}^6
\Big\| e^{\frac{\alpha z^2}4} D_{2,1}^{(k)} (\sigma) \Big\|_{2,0}\,\mathrm{d}\sigma,\\
Y_3 & := \int_0^S e^{-\gamma (S-\sigma)}
\Big\| e^{\frac{\alpha z^2}4} D_7(\sigma) \Big\|_{2,0}\,\mathrm{d}\sigma.
\end{align*}
By estimate~\eqref{eq:estD21} and the integral inequality~\eqref{INT}, we have
\begin{equation}\label{eq:Y2Again}
Y_2
\ls \left[\int_0^S e^{-\gamma (S-\sigma)} \beta^{\frac{33}{20}}\,\mathrm{d}\sigma\right]
    P(M(T),A(T))
\ls \beta^{\frac{33}{20}} (T) P (M(T),A(T)).
\end{equation}
By similar reasoning, we get
\begin{equation}
Y_3 \ls \beta^{\frac85} (T)M_{3,0} (T).
\end{equation}
Estimate~\eqref{eq:compareXiEta} and the slow decay of $\beta$ imply that
\begin{equation}\label{eq:Y1Again}
Y_1 \ls e^{-\gamma S} \|\phi(\cdot,0)\|_{2,1}
\ls \beta^{\frac85} (T) (T) M_{2,1}(0).
\end{equation}

Collecting estimates~\eqref{eq:estEtaAgain}--\eqref{eq:Y1Again}, we conclude that
$$\beta^{-\frac85} (T)
\left\| e^{\frac{\alpha z^2}4} P^\alpha_2 \Big(\partial_z + \frac{\alpha}2 z \Big) \eta (S)
\right\|_{2,0}
\ls M_{2,1} (0) + M_{3,0} (T) + \beta^{ \frac1{20} } (0) P (M(T),A(T)).$$
Then we use equation~\eqref{eq:AgreeEnd} to rewrite the \textsc{lhs}, yielding
$$\| e^{\frac{\alpha z^2}4} P^\alpha_2 (\partial_z + \frac{\alpha}2 z ) \eta (S)] \|_{2,0}
= \| \phi(T)\|_{2,1}.$$
Because $T$ was arbitrary, estimate~\eqref{eq:M21} follows by recalling the
definition of $M_{2,1}$.

This completes our proof of Proposition~\ref{Prop:Main3} and hence of Theorem~\ref{THM:aprior}.

\section{Proof of the Main Theorem}\label{RepeatMainProof}
The proof is almost identical to that in \cite{GS09}. We provide a sketch below.

\begin{proof}[Proof of Main Theorem]
For $b_0,c_0$ small enough, our Main Assumptions (see Section~\ref{Basic}) imply that there exists
$\tau_1>0$ such that the the necessary conditions --- including the hypotheses of Proposition~\ref{IFT} ---
for starting both bootstrap machines (see Sections~\ref{FBMI} and \ref{SBMI})  are satisfied for
$\tau(t)\in[0,\tau_1]$. The outputs of both machines then show that the Main Assumptions hold at $\tau_1$,
with smaller constants. So one can iterate this procedure to a larger interval $[0,\tau_2]$.

We now return to the original time scale $t$. The iteration above produces
$t^* \le T$, where $T\leq\infty$ denotes the maximal existence time of the solution,
and a function $\lambda(t)$, admissible on $[0, t^*)$, such that Theorems~\ref{FirstOrderEstimates},
\ref{InnerEstimates}, \ref{OuterEstimates}, \ref{SmallnessEstimates}, and \ref{THM:aprior} hold
on $[0, t^*)$. Observe that if the iteration cannot be continued past $t^*$, then either $t^* = T$
or $\lambda(t^*) = 0$. Hence either
\textsc{(i)} $t^* < T$ but $\lambda(t^*) = 0$, or
\textsc{(ii)} $t^*=T=\infty$, or
\textsc{(iii)} $t^*=T<\infty$.

We first claim that case~\textsc{(i)} cannot occur. Indeed, if $\lambda(t^*) = 0$,
then it follows from Theorem~\ref{THM:aprior} that
\[
0<u(0,\theta,t)\leq \lambda(t)
    \left[\sqrt{\frac{2}{1+C\beta(\tau(t))}}+C \beta^\frac{8}{5}(\tau(t))\right]\rightarrow 0
\]
as $t\nearrow t^*$, which implies that $T\leq t^*$.

We next show that case~\textsc{(ii)} cannot occur. Indeed, by Theorem~\ref{THM:aprior},
one has
\begin{equation}\label{EndTimes}
a(t)-\frac{1}{2}=- b(t)+\mathcal{O}(\beta^{2}(\tau(t)))\quad\text{and}\quad
b(t)=\beta(\tau(t))[1+\mathcal{O}( \beta^{\frac{1}{2}}(\tau(t)))],
\end{equation}
where $\tau(t)=\int_0^t \lambda^{-2}(s)\,\mathrm{d}s$.
So $|a(t)-\frac{1}{2}|=\mathcal{O}(\beta(\tau))$.
To obtain a contradiction, suppose $T=\infty$.
Then because $\partial_t\lambda=-a/\lambda<0$, we have
$\tau\rightarrow\infty$ as $t\rightarrow\infty$, and hence
$|a(t)-\frac{1}{2}|\rightarrow 0$. Because
$\lambda^{2}(t)=\lambda_{0}^{2}-2 \int_{0}^{t}a(s)\,\mathrm{d}s$,
it follows that there exists $\tilde{T}<\infty$ with $\lambda(T)=0$.
Our proof for case~\textsc{(i)} forces $T\leq\tilde{T}<\infty$, proving the claim.

Therefore, case~(iii) is the only possibility. This means that there exists
$\lambda(t)$, admissible on $[0, T)$ with $T<\infty$, such that
Theorems~\ref{FirstOrderEstimates}, \ref{InnerEstimates}, \ref{OuterEstimates},
\ref{SmallnessEstimates}, and \ref{THM:aprior} hold for $t\in[0,T)$, with
$\lambda(t)\searrow 0$ as $t\nearrow T$.

We establish the asymptotic behaviors of $\lambda(t)$, $b(t)$, and
$c(t):=\frac12+a(t)$ as follows. Equation~\eqref{EndTimes} shows that
that $b(t)\rightarrow 0$ and $a(t)\rightarrow\frac{1}{2}$ as $t\nearrow T$.
Hence as $t\nearrow T$, one has
\[\lambda(t)=(1+o(1))\sqrt{T-t}\quad\text{and}\quad \tau(t)=(1+o(1))(-\log(T-t)),\]
as well as
\[\beta(\tau(t))=(1+o(1))(-\log(T-t))^{-1}.\]
This concludes the proof.
\end{proof}

\appendix

\section{Mean curvature flow of normal graphs} \label{Graphs}

Here we derive the basic formulas that describe \textsc{mcf} of a normal graph over
a cylinder. Although these equations are familiar to experts, we include the short
derivation for the convenience of the reader, and to make the paper self-contained.

Write points in $\mathbb{R}^{n+2}$ as $(w,x)=(w^{1},\ldots,w^{n+1},x)$. The
round cylinder $\mathbb{S}^{n}\times\mathbb{R}$ is naturally embedded as
$\mathcal{C}^{n+1}=\{(w,x):\left\vert w\right\vert ^{2}=1\}\subset\mathbb{R}^{n+2}$.
Let $\mathcal{M}_{t}$ denote a smooth family of smooth surfaces
determined by $u:\mathcal{C}^{n+1}\times\lbrack0,T)\rightarrow\mathbb{R}_{+}$,
namely
\[
\mathcal{M}_t =\left\{(u(w,x,t)w,x):(w,x)\in\mathcal{C}^{n+1},\,t\in\lbrack0,T)\right\}.
\]
For most of the derivation, we suppress time dependence. In what follows, we
sum over repeated indices and restrict Roman indices to the range $1\ldots n$.

For $(w,x)\in\mathcal{C}^{n+1}$, let
$(e_{1},\ldots,e_{n},e_{n+1}=\frac{\partial}{\partial x})$ denote an orthonormal
basis of $T_{(w,x)}\mathcal{C}^{n+1}\subset T_{(w,x)}\mathbb{R}^{n+2}$, with
$(e_{1},\ldots,e_{n})$ tangent to $\mathbb{S}^{n}\times\{x\}$. Let $D$ denote covariant
differentiation on $\mathcal{C}^{n+1}$, and let $\nabla$ denote covariant
differentiation on a round $\mathbb{S}^{n}$ of radius one. If
\[
F:(w,x)\mapsto(u(w,x)w,x),
\]
then, recalling that $1\leq i\leq n$, one has
\begin{align*}
D_i F &  =(\nabla_i u)w+ue_i ,\\
D_{n+1}F  &  =(\partial_x u)w+e_{n+1}.
\end{align*}
It follows that the Riemannian metric
$g$ induced on $\mathcal{M}$ has components
\begin{align*}
g_{ij}  &  =\langle D_i F,D_j F\rangle = u^2 \delta_{ij}+\nabla_i u \nabla_j u,\\
g_{i\,n+1}  &  =\langle D_i F,D_{n+1}F\rangle=\partial_x u \nabla_{i}u,\\
g_{n+1\,n+1}  &  =\langle D_{n+1}F,D_{n+1}F\rangle=1+(\partial_x u)^2.
\end{align*}
Note that in this Appendix (but nowhere else in this paper) the symbol
$\langle\cdot,\cdot\rangle$ denotes the pointwise Euclidean inner product
of $\mathbb{R}^{n+2}$.

An outward normal to $\mathcal{M}$ is
\[
N=uw-\nabla_{i}ue_i -u(\partial_x u)e_{n+1}.
\]
Noting that
\begin{equation}
|N|^{2}=[1+(\partial_x u)^2]u^{2}+|\nabla u|^{2}, \label{Norm-N}
\end{equation}
we denote the outward unit normal by $\nu=|N|^{-1}N.$
Straightforward calculations show that
\begin{align*}
|N|D_{j}\nu & =2(\nabla_j u)w+ue_j -\nabla_j \nabla u
    -(\partial_x u \nabla_j u+u\nabla_{j}\partial_x u)e_{n+1}+\{\cdots\}\nu,\\
|N|D_{n+1}\nu &  =(\partial_x u)w-\nabla \partial_x u
    -[(\partial_x u)^2+u\partial_x^2 u]e_{n+1}+\{\cdots\}\nu,
\end{align*}
where the terms in braces are easy to compute but irrelevant for what follows.
One thus finds that the components of the second fundamental form $h$ are
\begin{align*}
h_{ij}  &=\langle D_{i}F,D_{j}\nu\rangle=|N|^{-1}(g_{ij}+\nabla_{i}u\nabla_{j}u-u\nabla_{i}\nabla_{j}u),\\
h_{i\,n+1}  &  =\langle D_{i}F,D_{n+1}\nu\rangle=|N|^{-1}(g_{i\,n+1}-u\nabla_{i}\partial_x u),\\
h_{n+1\,n+1}  &  =\langle D_{n+1}F,D_{n+1}\nu\rangle=|N|^{-1}\left(g_{n+1\,n+1}-[(\partial_x u)^2+u\partial_x^2 u]\right).
\end{align*}

The standard trick to compute the mean curvature $H=\operatorname{tr}_{g}(h)$
of $\mathcal{M}$ is as follows. Fix any point of the graph and rotate the
local orthonormal frame $(e_{1},\ldots,e_{n})$ so that all $\nabla_{i}u$
vanish except perhaps $\nabla_{n}u$. In these coordinates, one may identify
$g$ and $h$ with block matrices
\[
g=
\begin{pmatrix}
u^2 I_{n-1} & 0\\
0 & P_2
\end{pmatrix}
\quad\text{and}\quad
h=\frac{1}{|N|}
\begin{pmatrix}
u^2 I_{n-1}-u\nabla\nabla u & 0\\
0 & Q_2
\end{pmatrix}
\]
respectively, where
\[
P_2=
\begin{pmatrix}
u^{2}+|\nabla u|^{2} & \partial_x u \nabla_{n}u\\
\partial_x u \nabla_{n}u & 1+(\partial_x u)^2
\end{pmatrix}
\]
and
\[
Q_2=
\begin{pmatrix}
u^{2}+2|\nabla u|^{2}-u\nabla_{n}\nabla_{n}u & \partial_x u \nabla_{n}u-u\nabla_{n}\partial_x u\\
\partial_x u \nabla_{n}u-u\nabla_{n}\partial_x u & -u\partial_x^2 u
\end{pmatrix}
.
\]
Computing $\mathrm{tr}(g^{-1}h)$ and recasting the resulting terms in
invariant notation, one obtains the coordinate-independent formula
\begin{equation} \label{NH}
H=\frac{n|N|^2-u
    \left\{Q_{ij}\nabla_{i}\nabla_{j}u+(u^{2}
    +|\nabla u|^{2})\partial_x^2 u
    -2\partial_x u\langle\nabla u,\nabla \partial_x u\rangle
    -u^{-1}|\nabla u|^{2}\right\}}{|N|^3},
\end{equation}
where $Q$ is the quasilinear elliptic operator on $\mathbb{S}^{n}$ defined below
in equation~\eqref{Q-general}. \medskip

Now we reintroduce time. It is a standard fact that \textsc{mcf} of
$\mathcal{M}_t$ is equivalent (modulo time-dependent reparameterization by
tangential diffeomorphisms) to the evolution equation
\[
\frac{u^{2}}{|N|^{2}}\partial_t u=-H\langle\nu,w\rangle=-\frac{H\langle N,w\rangle}{|N|}=-\frac{Hu}{|N|},
\]
namely to $\partial_t u=-|N|Hu^{-1}$. By equations~\eqref{Norm-N} and
\eqref{NH}, this is the quasilinear parabolic \textsc{pde}
\begin{equation} \label{MCF-general}
\partial_t u
    = \frac{Q_{ij}\nabla_{i}\nabla_{j}u
            +(u^{2}+|\nabla u|^2)\partial_x^2 u
            -2\partial_x u\langle\nabla u,\nabla \partial_x u\rangle
            - u^{-1}|\nabla u|^{2}}
    {|\nabla u|^{2}+[1+(\partial_x u)^2]u^2} - \frac{n}{u},
\end{equation}
where $Q$ is the elliptic operator on $\mathbb{S}^{n}$
with coefficients
\begin{equation} \label{Q-general}
Q_{ij}=[1+u^{-2}|\nabla u|^{2}+(\partial_x u)^2]\delta_{ij}-u^{-2}\nabla_i u \nabla_j u.
\end{equation}

\section{Interpolation estimates}\label{InterpolationInequalities}
Here, we state and prove various interpolation inequalities used in this paper.
We begin with an elementary embedding result.

\begin{lemma}\label{LM:embedding}
Let $g: [0,2\pi]\rightarrow \mathbb{R}$ be a smooth periodic function satisfying the condition
$\int_0^{2\pi}g(\theta)\,\mathrm{d}\theta=0$. Then for any $n\in \mathbb{Z}^{+}$, the following
inequalities hold:
\[
\displaystyle\max_{\theta\in [0,2\pi]} |g(\theta)|
    \leq \frac{5}{2}
    \left[\frac{1}{2\pi} \int_{0}^{2\pi}
        |\dz^{n}g|^2\,\mathrm{d}\theta\right]^{\frac{1}{2}}
    \leq \frac{5}{2}\max_{\theta\in [0,2\pi]} |\dz^{n}g|.
\]
\end{lemma}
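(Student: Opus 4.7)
The plan is to use Fourier series. Since $g$ is smooth, $2\pi$-periodic, and has vanishing average, it expands as $g(\theta)=\sum_{k\ne 0}c_k e^{ik\theta}$ with the zero mode absent (that is precisely what the mean-zero hypothesis buys us). The triangle inequality gives $|g(\theta)|\leq\sum_{k\ne 0}|c_k|$ pointwise. First I would split each summand as $|c_k|=|k|^{-n}\cdot|k|^n|c_k|$ and apply Cauchy--Schwarz to obtain
\[
|g(\theta)|^2 \leq \Bigl(\sum_{k\ne 0}|k|^{-2n}\Bigr)\Bigl(\sum_{k\ne 0}k^{2n}|c_k|^2\Bigr).
\]

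Next I would identify the second factor. Parseval's identity applied to $\partial_\theta^n g=\sum_{k\ne 0}(ik)^n c_k e^{ik\theta}$ gives $\sum_{k\ne 0} k^{2n}|c_k|^2 = \frac{1}{2\pi}\int_0^{2\pi}|\partial_\theta^n g|^2\,\mathrm{d}\theta$. For the first factor I would write $\sum_{k\ne 0}|k|^{-2n}=2\zeta(2n)$, and use monotonicity of $\zeta$ on $[2,\infty)$ to bound it uniformly in $n\geq 1$ by $2\zeta(2)=\pi^2/3$. The numerical check $\pi^2/3<25/4$ (equivalently $\pi^2<75/4\approx 18.75$, which holds since $\pi^2\approx 9.87$) yields the constant $5/2$ after taking square roots, establishing the first inequality. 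The second inequality is immediate, since $\frac{1}{2\pi}\int_0^{2\pi}|\partial_\theta^n g|^2\,\mathrm{d}\theta\leq\|\partial_\theta^n g\|_\infty^2$.

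There is no substantial obstacle: the whole argument is a standard Sobolev-type embedding $H^n_0\hookrightarrow L^\infty$ on the circle, tightened by explicit evaluation of $\sum k^{-2n}$. The only arithmetic point worth highlighting is that the bound $5/2$ is uniform in $n$, which works because the \emph{worst} case $n=1$ already yields $\sqrt{\pi^2/3}\approx 1.81<5/2$, and all larger $n$ only improve this.
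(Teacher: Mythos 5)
Your proof is correct and follows essentially the same approach as the paper: Fourier expansion, the key observation that the mean-zero hypothesis kills the $k=0$ mode, and then a Cauchy--Schwarz-type bound on $\sum|c_k|$ combined with Parseval. The only cosmetic difference is that you apply Cauchy--Schwarz directly with the weight $|k|^{-n}\cdot|k|^n$ uniformly in $n$ (giving the slightly sharper constant $\pi/\sqrt{3}$), whereas the paper runs a weighted AM--GM with a specific choice of $\varepsilon$ and details only $n=1$; both comfortably clear $5/2$.
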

\begin{proof} We provide a detailed proof for $n=1$; the cases $n\geq 2$ are similar.

Recall that the Fourier decomposition
$g(\theta)=\sum_{m=-\infty}^{\infty}a_m e^{in\theta}$
and the Plancherel equality
$\frac{1}{2\pi}\int_{0}^{2\pi} |\partial_{\theta}g|^2\,\mathrm{d}\theta=\sum_{m=-\infty}^{\infty}m^2 |a_m|^2$
hold for determined complex coefficients $a_m$. The key fact that $g\perp 1$ forces $a_0=0$.
Using H\"older and weighted Cauchy--Schwarz, this fact allows us to obtain
\[
\displaystyle\max_{\theta\in [0,2\pi]} |g(\theta)|
    \leq \sum_{m=-\infty}^{\infty}|a_m|
    \leq\frac{1}{2}\left(\ve\sum_{m\not=0} |m|^{-2}+\ve^{-1}\sum_{m=-\infty}^{\infty}m^2|a_m|^2\right)
\]
for any $\ve>0$. If $0<\ve^2:=\sum_{m=-\infty}^{\infty}m^2|a_m|^2$, this becomes
\[\displaystyle\max_{\theta\in [0,2\pi]} |g(\theta)|
    \leq\frac{\ve}{2}\left(\sum_{m\not=0} |m|^{-2}+1\right)
    =\frac{\ve}{2}\left(\frac{\pi^2}{3}+1\right)
    <\frac{5}{2}\ve.\]
The result follows.
\end{proof}

The main result of this appendix is the following easy corollary.

\begin{lemma}\label{InterpEst}
Let $v$ satisfy Assumption~[Cr], so that $|v_2|\leq\delta v_1$
with respect to the decomposition $v=v_1+v_2$ given in
definition~\eqref{Decomposition}. Then for any $k\geq 0$, $m\geq 0$,
and $n\geq 1$, one has
\[
v^{-k}|\dy^{m}\dz^{n}v|\ls
    \left[\frac{1}{2\pi}\int_{0}^{2\pi}v^{-2k}|\dy^{m}\dz^{n+1}v|^2\,
        \mathrm{d}\theta\right]^{\frac{1}{2}}
    \ls \max_{\theta\in[0,2\pi]}v^{-k}|\dy^{m}\dz^{n+1}v|.
\]
\end{lemma}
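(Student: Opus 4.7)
The plan is to reduce to Lemma~\ref{LM:embedding} by exploiting the fact that any $\theta$-derivative of $v$ has zero mean over $\mathbb{S}^1$, together with Condition~[Cr] to handle the $v^{-k}$ weight.

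First, fix $(y,\tau)$ and set $g(\theta) := \dy^{m}\dz^{n}v(y,\theta,\tau)$. Because $n\geq 1$ and $v$ is $2\pi$-periodic in $\theta$, one has
\[
\int_{0}^{2\pi} g(\theta)\,\mathrm{d}\theta = \int_{0}^{2\pi}\dz\bigl(\dy^{m}\dz^{n-1}v\bigr)\,\mathrm{d}\theta = 0,
\]
so $g$ satisfies the hypothesis of Lemma~\ref{LM:embedding}. Applying that lemma with one $\theta$-derivative yields the pointwise estimate
\[
|\dy^{m}\dz^{n}v(y,\theta,\tau)| \;\leq\; \tfrac{5}{2}\left[\frac{1}{2\pi}\int_{0}^{2\pi}|\dy^{m}\dz^{n+1}v|^{2}\,\mathrm{d}\theta\right]^{1/2}.
\]

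Second, I convert this into a statement involving the weight $v^{-k}$. The key is that the decomposition $v=v_{1}+v_{2}$ of \eqref{Decomposition} makes $v_{1}$ independent of $\theta$, and Condition~[Cr] gives the sandwich $(1-\delta)v_{1}\leq v\leq (1+\delta)v_{1}$. Therefore $v_{1}^{-k}$ is a constant with respect to the $\theta$-integration, and one may pull it freely inside or outside. Multiplying the inequality above by $v_{1}^{-k}$ and then replacing $v_{1}^{-k}$ by $(1-\delta)^{-k}v^{-k}$ on the left and by $(1-\delta)^{-k}v^{-k}$ inside the integral (using the two sides of the sandwich appropriately), one obtains
\[
v^{-k}|\dy^{m}\dz^{n}v| \;\leq\; \tfrac{5}{2}\Bigl(\tfrac{1+\delta}{1-\delta}\Bigr)^{k}\left[\frac{1}{2\pi}\int_{0}^{2\pi}v^{-2k}|\dy^{m}\dz^{n+1}v|^{2}\,\mathrm{d}\theta\right]^{1/2},
\]
which is the first displayed inequality.

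Third, the second displayed inequality is immediate, since the normalized $L^{2}$ norm over $[0,2\pi]$ of any continuous function is bounded by its $L^{\infty}$ norm; applied to $\theta\mapsto v^{-k}|\dy^{m}\dz^{n+1}v|$, this gives the claim.

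There is no real obstacle here: the only nontrivial point is the use of Condition~[Cr] to commute the $v^{-k}$ weight past the $\theta$-integration, and this costs only the harmless constant $\bigl(\frac{1+\delta}{1-\delta}\bigr)^{k}$ absorbed in the symbol $\ls$.
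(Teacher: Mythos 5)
Your proof is correct and follows essentially the same route as the paper: apply Lemma~\ref{LM:embedding} to $g=\dy^m\dz^n v$ (which has zero $\theta$-mean since $n\geq1$), then use Condition~[Cr] in the form $(1-\delta)v_1\leq v\leq(1+\delta)v_1$ to trade $v^{-k}$ for the $\theta$-independent weight $v_1^{-k}$ and back. The only small slip is purely in the prose, where you say you replace $v_1^{-k}$ inside the integral by $(1-\delta)^{-k}v^{-k}$ — it should be $(1+\delta)^{k}v^{-k}$ — but your displayed inequality and the final constant $\bigl(\tfrac{1+\delta}{1-\delta}\bigr)^{k}$ are correct, so this does not affect the argument.
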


\begin{proof}
By Lemma~\ref{LM:embedding}, one has
\[
v_1^{-k}|\dy^{m}\dz^{n}v|\ls
    v_1^{-k}\left[\frac{1}{2\pi}\int_{0}^{2\pi}|\dy^{m}\dz^{n+1}v|^2\,\mathrm{d}\theta\right]
        ^{\frac{1}{2}}
    \leq v_1^{-k}\max_{\theta\in[0,2\pi]}|\dy^{m}\dz^{n+1}v|.
\]
The result follows from this and the fact that
$(1-\delta)v_1\leq v \leq(1+\delta)v_1$.
\end{proof}

\section{A parabolic maximum principle for noncompact domains}\label{ProvePMP}

Parabolic maximum principles do not in general hold in noncompact domains without a growth restriction,
even for the linear heat equation in Euclidean space. Here we extend Lemma~7.1 in \cite{GS09}, adding
$\theta$-dependence and allowing exponential growth at infinity. This maximum principle is designed
to apply to quantities $(v^{-k}\dy^m\dz^n v)^\ell$ in a time-dependent region $\{\beta(\tau) y^2\geq c\}$.
(See Remark~\ref{WLOG} below.)

\begin{proposition}\label{PMP}
Given $T>0$ and a continuous function $b(\tau)$, define
\[
\Omega:=\{(y,\theta,\tau)\in\mathbb{R}\times\mathbb{S}^1\times(0,T] : |y|\geq b(\tau)\}.
\]
Let
\[D:=\dt-(A_1\dy^2+A_2\dz^2+2A_3\dy\dz+A_4\dz+A_5 y\dy+B)\]
be a differential operator with continuous coefficients in $\bar\Omega$.

\textsc{(o)} Suppose that $A_1$, $A_5$, and $B$ are uniformly bounded from above
in $\Omega$, and the operator $D$ is parabolic in the sense that $A_1 A_2>0$ and
$A_1 A_2\geq A_3^2$.

\textsc{(i)} Suppose that $u\in C^{2,1}(\Omega)\cap C^0(\bar\Omega)$ is a $D$-subsolution,
namely that
\begin{equation}\label{PMP-1}
(\dt-A_1\dy^2-A_2\dz^2-2A_3\dy\dz-A_4\dz-A_5 y\dy-B)u\leq 0\quad\text{in}\quad\Omega.
\end{equation}

\textsc{(ii)} Suppose that $u$ satisfies the parabolic boundary conditions
\begin{equation}\label{PMP-2}
u(y,\theta,0)\leq 0\,\text{ if }\,|y|\geq b(0);\qquad
u(y,\theta,\tau)\leq 0\,\text{ if }\,|y|=b(\tau)\,\text{ and }\,\tau\leq T.
\end{equation}

\textsc{(iii)} Suppose that in $\Omega$,
\begin{equation}\label{PMP-3}
|u| \ls e^{\alpha y^2}\quad\text{ for some }\,\alpha>0.
\end{equation}

Then $u\leq 0$ in $\Omega$.
\end{proposition}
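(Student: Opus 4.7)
The plan is to adapt the barrier-function argument of Lemma~7.1 in \cite{GS09} to accommodate the exponential growth allowance~(\textsc{iii}) and the additional $\theta$-variable. Since the barrier I will use is independent of $\theta$, the terms $A_2\dz^2$, $2A_3\dy\dz$, and $A_4\dz$ in $D$ play no role when applied to it; compactness of $\mathbb{S}^1$ then lets the eventual cut-off region be treated as genuinely compact.

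First I would introduce the barrier
\[
\phi(y,\tau) := \ve \exp\!\bigl(\gamma(\tau) y^2 + k\tau\bigr),
\]
where $\ve>0$ is auxiliary, $k$ is a constant to be chosen, and $\gamma(\tau)$ solves the Riccati-type ODE $\gamma' = 4C_1\gamma^2 + 2C_5\gamma$ with $\gamma(0)=2\alpha$, where $C_1:=\sup_\Omega A_1$ and $C_5:=\max\{\sup_\Omega A_5,\,0\}$. This ODE has a smooth positive solution on a short interval $[0,T_0]$ with $\gamma(\tau)\geq 2\alpha$ throughout, and $T_0$ depends only on $\alpha$, $C_1$, $C_5$. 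A direct computation gives
\[
D\phi = \Bigl[\bigl(\gamma' - 4A_1\gamma^2 - 2A_5\gamma\bigr)y^2 + \bigl(k - 2A_1\gamma - B\bigr)\Bigr]\phi,
\]
so the choice of $\gamma$ kills the $y^2$ coefficient's sign, and choosing $k > 2C_1\gamma(T_0) + \sup_\Omega B$ makes the remaining term nonnegative. Hence $D\phi \geq 0$ on $\Omega \cap \{\tau \leq T_0\}$.

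Next I would set $v := u - \phi$, so that $Dv\leq 0$ on this slab, while $v\leq 0$ on the parabolic boundary by~(\textsc{ii}) and positivity of $\phi$. Since $\gamma(\tau) > \alpha$, the growth bound~(\textsc{iii}) forces $v(y,\theta,\tau) \to -\infty$ as $|y|\to\infty$ uniformly in $(\theta,\tau) \in \mathbb{S}^1\times[0,T_0]$. Thus there exists $R_0>0$ with $v\leq 0$ on $\Omega \cap \{|y|\geq R_0\} \cap\{\tau\leq T_0\}$, and one applies the classical parabolic maximum principle on the compact region $\{b(\tau)\leq|y|\leq R_0\}\times\mathbb{S}^1\times[0,T_0]$ --- absorbing the zeroth-order term by the standard substitution $\tilde v = e^{-K\tau}v$ with $K>\sup_\Omega B$ --- to conclude $v\leq 0$ there too. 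Sending $\ve\searrow 0$ yields $u\leq 0$ on $[0,T_0]$. Because $T_0$ depends only on $\alpha$ and on the uniform upper bounds of $A_1,A_5,B$, and because conditions~(\textsc{ii}) and~(\textsc{iii}) persist with the new initial datum $u(\cdot,\cdot,T_0)\leq 0$, iterating finitely many times covers all of $[0,T]$.

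The main obstacle is the unbounded coefficient $A_5 y$ in the drift term, which is exactly what forces the Riccati (rather than linear) ODE for $\gamma$; this in turn means the barrier only survives on a short time interval, mandating the iteration. Growing $\gamma$ faster than $\alpha$ is essential to ensure $v\to-\infty$ at spatial infinity, but this same growth is what triggers finite-time blowup of $\gamma$, and balancing these two requirements is the technical content of step one.
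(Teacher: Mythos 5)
Your argument is correct, and it follows the same overall strategy as the paper's proof: construct an explicit supersolution barrier that dominates $e^{\alpha y^2}$ on a short time interval whose length depends only on $\alpha$ and the coefficient bounds, subtract a small multiple of the barrier from $u$, use spatial decay to reduce to a compact region, apply the maximum principle there, let $\ve\searrow 0$, and iterate in time.

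The chief difference is the barrier itself. The paper uses the Tychonoff-type barrier $h(y,\tau)=(T_0-\tau)^{-1/2}e^{\gamma y^2/(T_0-\tau)}$ with a \emph{constant} small $\gamma<(4\sup A_1)^{-1}$ and a singularity at $\tau=T_0$; the constraint $T_0<\gamma/\alpha$ is what makes the exponent $\gamma/(T_0-\tau)$ exceed $\alpha$ from the start. You instead take a regular-in-time barrier $\ve\,e^{\gamma(\tau)y^2+k\tau}$ with $\gamma$ solving the Riccati equation $\gamma'=4C_1\gamma^2+2C_5\gamma$ from $\gamma(0)=2\alpha$, so the short-interval restriction arises from the blow-up time of the ODE rather than from the pole of $h$. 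Both implementations are standard and yield the same conclusion; the ODE formulation arguably makes the dependence of $T_0$ on $\alpha$, $\sup A_1$, $\sup A_5$ more transparent, while the paper's choice gives slightly more explicit constants. A second, minor procedural difference: the paper verifies the inequality by hand at a hypothetical first interior zero of $w=u-\ve(h+\tau)$ rather than invoking the classical maximum principle on a compact cutoff; your version of that step (cut off at $|y|=R_0$, subtract $K\tau$ in the exponent to kill the zeroth-order term, apply the classical theorem) is equally valid and also handles the time-dependent boundary $|y|=b(\tau)$ correctly. One small point worth making explicit: in your coefficient bound $\gamma'-4A_1\gamma^2-2A_5\gamma\ge 0$ and $k-2A_1\gamma-B\ge 0$ you implicitly use $A_1>0$ (so that $A_1\le C_1$ pushes in the favourable direction); this is fine since parabolicity gives $A_1>0$, but it deserves a sentence.
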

\begin{proof}
Because $B$ is bounded above, we may w.l.o.g.~assume that $B\leq 0$. Otherwise, for $C>0$
sufficiently large, it suffices to prove the result for $\tilde u:= e^{-C\tau}u$, which
satisfies $(\dt-A_1\dy^2-A_2\dz^2-2A_3\dy\dz-A_4\dz-A_5 y\dy-(B-C))\tilde u\leq 0$.

For $0<\gamma<1/4$ to be chosen, define
\[h(y,\tau):=(T_0-\tau)^{-1/2}e^{\gamma y^2(T_0-\tau)^{-1}},\]
where $T_0\leq T$ is also to be chosen below, such that $0<T_0<\gamma/\alpha$.
For $\ve>0$, define \[w(y,\theta,\tau):=u(y,\theta,\tau)-\ve[h(y,\tau)+\tau].\]

Let $T_1:=\frac{9}{10}T_0$. We claim that $w\leq0$ for $\tau\in[0,T_1]$.
To prove the claim, note that $w(\cdot,\cdot,0)\leq-\ve T_0^{-1/2}<0$
and $w(y,\theta,\tau)\rightarrow-\infty$ as $|y|\rightarrow\infty$,
uniformly for $\tau\in[0,T_1]$. So if the claim is false, there is a first
time $\tau\in(0,T_1]$ and a point $(y,\theta)$ with $|y|>b(\tau)$,
where $0=w(y,\theta,\tau)=\max_{0\leq\tau'\leq\tau}w(\cdot,\cdot,\tau')$.
At $(y,\theta,\tau)$, one has
\begin{equation}\label{MaxPoint}
0   \leq \dt u -\ve(\dt h+1)
    \leq-\ve\left\{(\dt-A_1\dy^2-A_5 y\dy)h-B(h+\tau)+1\right\}.
\end{equation}
But
\[
(\dt-A_1\dy^2-A_5 y\dy)h =
\frac{h}{2(T_0-\tau)}
\left\{1-4\gamma A_1+ 2\gamma y^2\left[\frac{1-4\gamma A_1}{T_0-\tau}-A_5\right]\right\}.
\]
By condition~(\textsc{o}), one can choose $0<\gamma<(4A_1)^{-1}$. Then choose
$T_0$ small enough so that $A_5\leq(1-4\gamma A_1)T_0^{-1}$. These choices ensure
that $(\dt-A_1\dy^2-A_5 y\dy)h\geq 0$, whence inequality~\eqref{MaxPoint} implies that
$0\leq-\ve$, because we are assuming $B\leq0$. This contradiction proves the claim.

Now letting $\ve\searrow 0$ shows that $u\leq0$ for $\tau\in[0,T_1]$.
The theorem follows by repeating this argument on successive time intervals,
$[T_1,\min\{2T_1,T\}]$, etc.
\end{proof}

\begin{remark}\label{WLOG}
In our applications, we only need to verify conditions (\textsc{i})--(\textsc{iii}),
i.e.~\eqref{PMP-1}--\eqref{PMP-3}, in Proposition~\ref{PMP}, along with an upper
bound for $B$. Indeed, parabolicity and an upper bound for $A_1$ follow easily from
definition~\eqref{Define-Fi}. Condition~[Ca] bounds $a$ from above. Condition~[C0] implies
upper bounds for $v^{-2}$. Hence condition~(\textsc{o}) will always be satisfied by
the operators in \eqref{MCF-v} and \eqref{Evolve-vmnk2}.
\end{remark}

\section{Estimates of higher-order derivatives}\label{GetHigh}

Recall that we defined $v_{m,n,k}$ in equation~\eqref{Define-vmnk} and
$E_{m,n,n,\ell}$ in definition~\eqref{Define-Xmnkj}.
In this section, we estimate the nonlinear commutators (``error terms'')
\[E_{m,n}:=
\int_{\mathbb{R}}\int_{\mathbb{S}^1}
        \left(\sum_{\ell=0}^5 E_{m,n,n,\ell}\right)v_{m,n,n}\,\Dz\,\sigma\,\Dy\]
that appear in the evolution equations satisfied by the Lyapunov functionals
$\Omega_{m,n}$ considered in Section~\ref{InnerBootstrap} and defined
in equation~\eqref{Define-Omega-mn}. We treat the cases that $3\leq m+n\leq5$ here,
because the corresponding quantities were estimated for $m+n=2$ in the proofs of
Lemmas~\ref{v200-estimate}--\ref{v022-estimate} in Section~\ref{PointwiseSecond}.
In the proofs in this Appendix, we use the fact that Assumptions~[C0i]--[C1i]
allow us to assume that estimates~\eqref{v_y-est}--\eqref{v_z-est} of
Theorem~\ref{FirstOrderEstimates} hold globally.
\medskip

\textsc{Notation}:
For brevity, we suppress irrelevant coefficients in this section, writing $\es$
to denote an equality that holds up to computable and uniformly bounded
constant coefficients. For example, \[(a+b)^2\es a^2+ab+b^2\ls a^2+b^2.\]

\begin{lemma}\label{E21estimate}
If the first-order inequalities~\eqref{v_y-est}--\eqref{v_z-est}
and Conditions~[C2]--[C3] hold, then there
exist constants $0<C<\infty$ and $\rho=\rho(\Sigma)>0$,
with $\rho(\Sigma)\searrow0$ as $\Sigma\rightarrow\infty$, such that for
all $m+n=3$ with $n\geq1$, one has
\[E_{m,n}\leq
\rho\beta^2\left[\Omega_{m,n}^{\frac{1}{2}}
    +\beta\Omega_{4,0}^{\frac{1}{2}}
    +\Omega_{3,1}^{\frac{1}{2}}
    +\Omega_{2,2}^{\frac{1}{2}}
    +\Omega_{1,3}^{\frac{1}{2}}
    +\Omega_{0,4}^{\frac{1}{2}}\right]
    +C\beta^{\frac{3}{5}}
    \left(\Omega_{2,1}+\Omega_{1,2}+\Omega_{0,3}\right).\]
\end{lemma}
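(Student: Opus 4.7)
\smallskip

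\noindent\textbf{Proof proposal.} The plan is to split $E_{m,n}$ into the six pieces coming from $E_{m,n,n,\ell}$ for $\ell=0,\dots,5$, handle each in turn, and sum the results using Cauchy--Schwarz. The routine but critical principle is that each $E_{m,n,n,\ell}$ is a \emph{commutator}: the ``leading'' term in $v^{-n}\dy^m\dz^n(v^{-j}F_k\dy^a\dz^b v)$ is cancelled against $v^{-j}F_k\dy^a\dz^b v_{m,n,n}$, leaving only terms in which at least one derivative falls on a coefficient. Since $m+n=3$, even after the commutator cancellation a naive Leibniz expansion still produces terms containing $\dy^{i}\dz^{j}v$ with $i+j=4$. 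Those fourth-order derivatives are \emph{not} in our pointwise toolkit (we only have pointwise control up to order three via [C2]--[C3]), so the central technique will be to integrate by parts in $y$ or $\theta$ against the weighted measure $\sigma\,\Dy\,\Dz$ to trade one fourth-order derivative of $v$ for one additional derivative of $F_\ell$ or of $\sigma$; the resulting third-order derivatives of $v$ are then bounded pointwise using [C3], while any remaining fourth-order factor is paired by Cauchy--Schwarz with $v_{m,n,n}\sigma^{1/2}$ to produce an $\Omega_{i,j}^{1/2}$ with $i+j=4$.

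First I would dispatch the easy pieces. The term $E_{m,n,n,0}\equiv-nv^{-1}v_{m,n,n}(A_v v+ay\dy v)$ contains at most second-order derivatives of $v$; combining [C1]--[C2] with Theorem~\ref{FirstOrderEstimates} gives a pointwise bound $|E_{m,n,n,0}|\ls\beta^{2}|v_{m,n,n}|$, hence $\int E_{m,n,n,0}v_{m,n,n}\sigma\ls\beta^2\Omega_{m,n}$, which feeds into the $\Omega_{m,n}^{1/2}$ slot after one further Cauchy--Schwarz. The term $E_{m,n,n,5}=-v^{-k}\dy^m\dz^n(v^{-1})+v^{-2}v_{m,n,n}$ involves only $v$ and its derivatives up to order three (by Leibniz on $v^{-1}$), so [C1]--[C3] and Theorem~\ref{FirstOrderEstimates} again yield a pointwise estimate absorbed into the $\Omega_{m,n}^{1/2}$ term.

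The heart of the proof is $\ell=1,2,3,4$. For each, I would expand
\[v^{-n}\dy^m\dz^n(v^{-j}F_\ell \dy^a\dz^b v)-v^{-j}F_\ell\dy^a\dz^b v_{m,n,n}\]
by Leibniz, separating the summands by the number of derivatives that land on $\dy^a\dz^b v$. Terms where at most $a+b+2$ derivatives land on $v$ are estimated pointwise by [C2]--[C3], yielding a contribution bounded by $\rho\beta^2|v_{m,n,n}|$ after using the weight decay $|\sigma^{-1}\dy^j\sigma|\ls\Sigma^{-j/2}$ from \eqref{gamma-decay} (this is where the small factor $\rho(\Sigma)$ enters). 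The dangerous Leibniz summands, of the schematic form
\[(\dy^i\dz^j F_\ell)(v^{-j}\dy^{a+m-i}\dz^{b+n-j}v)\quad\text{with}\quad a+m-i+b+n-j=4,\]
are handled by integrating the $\dy$ (or $\dz$) derivative on $v$ by parts against $v_{m,n,n}\sigma$. By the parity in [A1] and the evenness of $\sigma$, no boundary terms arise. The integration by parts produces: (i) a term with an extra derivative on $F_\ell$ (controlled in $L^\infty$ via Lemma~\ref{UniformBounds}, [C2]--[C3], and estimates~\eqref{dyF_ell-estimate}--\eqref{dz2F_ell-estimate}); (ii) a term with an extra derivative on $\sigma$ (controlled by \eqref{gamma-decay}, giving the $\rho(\Sigma)$); and (iii) a term where the derivative lands on $v_{m,n,n}$, producing $v_{m+1,n,n}$ or $v_{m,n+1,n+1}$, which is paired by Cauchy--Schwarz with a third-order pointwise factor to yield a $\beta^{3/5}\Omega_{m',n'}$ contribution for $m'+n'=3$ (this is where the $C\beta^{3/5}(\Omega_{2,1}+\Omega_{1,2}+\Omega_{0,3})$ term originates). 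Any genuinely fourth-order $L^2_\sigma$ factor that survives is paired with $v_{m,n,n}\sigma^{1/2}$ by Cauchy--Schwarz, producing the $\rho\beta^2\bigl[\beta\Omega_{4,0}^{1/2}+\Omega_{3,1}^{1/2}+\Omega_{2,2}^{1/2}+\Omega_{1,3}^{1/2}+\Omega_{0,4}^{1/2}\bigr]$ terms, where the extra factor of $\beta$ in front of $\Omega_{4,0}^{1/2}$ reflects the fact that a pure $\dy^4 v$ term only ever arises multiplied by $v_{m,n,n}$ with $n\geq 1$, which carries an extra factor controlled by $|\dz v|\ls\beta^{3/2}v^2$ from~\eqref{v_z-est}.

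The main obstacle will be the bookkeeping for $\ell=2$ and $\ell=3$: the presence of the factors $v^{-2}$ and $v^{-1}$ multiplying $F_2,F_3$ means Leibniz generates many cross terms involving $\dy^i\dz^j v$ with various $(i,j)$, each of which must be tracked to confirm the correct power of $\beta$ and correct $\Omega_{m,n}$ assignment. In principle none of these terms is harder than those already estimated in Lemmata~\ref{v200-estimate}--\ref{v022-estimate}; the genuinely new work is only the systematic integration-by-parts bookkeeping, combined with one invocation of \eqref{gamma-decay} per term that uses the smallness of $\Sigma^{-1/2}$ to produce $\rho$. Collecting all contributions yields exactly the asserted bound.
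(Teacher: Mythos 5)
Your overall plan matches the paper's proof closely: decompose $E_{m,n}$ into the six $E_{m,n,n,\ell}$ pieces, expand by Leibniz, integrate by parts to trade each fourth-order derivative of $v$ for an extra derivative on $F_\ell$ or on $\sigma$ (invoking~\eqref{gamma-decay} to produce the small $\rho(\Sigma)$ factor), bound what remains pointwise by [C2]--[C3] and estimates~\eqref{dyF_ell-estimate}--\eqref{dz2F_ell-estimate}, and close with Cauchy--Schwarz. The parity observation that kills boundary terms in the $y$-integration is also correct, and your explanation for the extra $\beta$ multiplying $\Omega_{4,0}^{1/2}$ (namely that a genuine $\dy^4 v$ must be paired with a $\theta$-derivative of a coefficient when $n\geq1$) is in the right spirit.

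However, your treatment of $E_{m,n,n,0}$ is wrong in a specific way. You claim $|E_{m,n,n,0}|\ls\beta^2|v_{m,n,n}|$, but $E_{m,n,n,0}=-nv^{-1}v_{m,n,n}(A_v v+ay\dy v)$ contains the summand $F_1\dy^2 v$, and Condition~[C2] only gives $|\dy^2 v|\ls\beta^{3/5}$, not $\beta^2$. So the honest pointwise bound is $|E_{m,n,n,0}|\ls\beta^{3/5}|v_{m,n,n}|$, yielding $|\lp E_{m,n,n,0},v_{m,n,n}\rp|\ls\beta^{3/5}\Omega_{m,n}$. This cannot be routed into the $\rho\beta^2\Omega_{m,n}^{1/2}$ slot as you assert; it belongs in the $C\beta^{3/5}(\Omega_{2,1}+\Omega_{1,2}+\Omega_{0,3})$ slot, which is available precisely because $m+n=3$ with $n\geq1$. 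The conclusion of the lemma still follows, but the slot assignment and the $\beta$-exponent you wrote are both incorrect. (There is also a sign typo in your display of $E_{m,n,n,5}$, which should read $-v^{-2}v_{m,n,n}$, but this is cosmetic.) With the $\ell=0$ bookkeeping corrected, your argument is essentially the paper's.
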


\begin{proof}
We provide a detailed proof for $E_{2,1}$.

It is easy to see that $|\lp E_{2,1,1,0},v_{2,1,1}\rp|\ls\beta^{\frac{3}{5}}\Omega_{2,1}$.
In the rest of the proof, we will frequently use
estimates~\eqref{dyF_ell-estimate}--\eqref{dz2F_ell-estimate}, namely
\[|\dy F_\ell|\ls\beta^{\frac{3}{5}},\quad\quad
v^{-1}|\dz F_\ell|\ls\beta^{\frac{3}{2}}\]
and
\begin{align*}
|\dy^2 F_\ell|&\ls\beta^{\frac{6}{5}}+|\V 30| + |\V 21|\ls\beta,\\
v^{-1}|\dy\dz F_\ell|&\ls\beta^2+|\V 21|+|\V 12|\ls\beta^{\frac{3}{2}},\\
v^{-2}|\dz^2F_\ell|&\ls\beta^3+|\V 12|+|\V 03|\ls\beta^{\frac{3}{2}},
\end{align*}
which hold under the hypotheses in place here.
Direct computation establishes that
\begin{align*}
\lp E_{2,1,1,1},v_{2,1,1}\rp
    \es &\,\lp v^{-1}\dy^2\dz F_1,\, \V 21\V 20 \rp\\
    &+\lp \dy^2 F_1,\, {\V 21}^2\rp
    +\lp v^{-1}\dy\dz F_1,\, \V 30 \V 21 \rp\\
    &+\lp \dy F_1,\, \V 31 \V 21 \rp
    +\lp v^{-1}\dz F_1,\, \V 40 \V 21 \rp\\
    &+\lp F_1,\,\V 31 \V 21 \V 10 +{\V 21}^2[\V 20 +{\V 10}^2]\rp.
\end{align*}
Because we have stronger estimates for $\dy\dz F_1$ than we do for $\dy^2 F_1$,
we transform the first inner product above via integration by parts in $y$, yielding
\begin{align*}
\lp v^{-1}\dy^2\dz F_1, \V 21 \V 20 \rp
    \es&\,
    \lp v^{-1}\dy\dz F_1,\,\V 31 \V 20 +\V 30 \V 21 \rp\\
    &+\lp v^{-1}\dy\dz F_1,\,\V 21 \V 20 [\V 10 +\Sigma^{-\frac{1}{2}}]\rp,
\end{align*}
where the final term comes from \eqref{gamma-decay}.
Because the measure of $(\mathbb{S}^1\times\mathbb{R};\Dz\,\sigma\,\Dy)$ is finite,
it then follows easily from the hypotheses of the lemma that
\[ |\lp E_{2,1,1,1},v_{2,1,1}\rp|\leq\rho\left[
    \beta^3\Omega_{4,0}^{\frac{1}{2}}
    +\beta^2\Omega_{3,1}^{\frac{1}{2}}
    +\beta^{\frac{5}{2}}\Omega_{2,1}^{\frac{1}{2}}\right]
    +C\beta^{\frac{3}{5}}(\Omega_{2,1}+\Omega_{1,2}).\]
Integrating $\lp v^{-1}\dy^2\dz F_2,\,\V 21 \V 02 \rp$ by parts in $\theta$, one
obtains
\begin{align*}
\lp E_{2,1,1,2},v_{2,1,1}\rp\es&\,
    \lp \dy^2F_2,\,\V 22 \V 02 \rp\\
    &+\lp \dy^2F_2,\,\V 21 [\V 03 + \V 02 \V 01 ]\rp\\
    &+\lp v^{-1}\dy\dz F_2,\, \V 21 [\V 12 + \V 02 \V 10]\rp\\
    &+\lp \dy F_2,\,\V 21
        [\V 13 +\V 12 \V 01 +\V 03 \V 10 ]\rp\\
    &+\lp \dy F_2,\,\V 21 \V 02
        [\V 11 +\V 10 \V 01 ] \rp\\
    &+\lp v^{-1}\dz F_2,\,\V 21
        [\V 22 +\V 12 \V 10 ] \rp\\
    &+\lp v^{-1}\dz F_2,\,\V 21 \V 02
        [\V 20 +{\V 10 }^2 ] \rp\\
    &+\lp F_2,\,
        \V 21 [\V 13 \V 10
        +\V 21 {\V 01}^2]\rp\\
    &+\lp F_2,\,
        \V 21 \V 12 [\V 11 +\V 10 \V 01 ]\rp\\
    &+\lp F_2,\,
        \V 21 \V 03 [\V 20 +{\V 10}^2]\rp\\
    &+\lp F_2,\,
        \V 21 \V 20 \V 02 \V 01 \rp\\
    &+\lp F_2,\,
        \V 21 \V 02 [\V 11 \V 10 +{\V 10}^2\V 01 ]\rp,
\end{align*}
which shows that
\[ |\lp E_{2,1,1,2},v_{2,1,1}\rp|\leq\rho\left[
    \beta^{\frac{5}{2}}\Omega_{2,2}^{\frac{1}{2}}+\beta^2\Omega_{1,3}^{\frac{1}{2}}
    +\beta^{\frac{7}{2}}\Omega_{2,1}^{\frac{1}{2}}\right]
    +C\beta(\Omega_{2,1}+\Omega_{1,2}+\Omega_{0,3}).\]
An similar computation, again integrating by parts in $\theta$, proves that
\[ |\lp E_{2,1,1,3},v_{2,1,1}\rp|\leq\rho\left[
    \beta^3\Omega_{3,1}^{\frac{1}{2}}+\beta^{\frac{21}{10}}\Omega_{2,2}^{\frac{1}{2}}
    +\beta^{\frac{7}{2}}\Omega_{2,1}^{\frac{1}{2}}\right]
    +C\beta(\Omega_{2,1}+\Omega_{1,2}).\]
Integrating by parts in $\theta$ yet again, one calculates that
\begin{align*}
\lp E_{2,1,1,4},v_{2,1,1}\rp\es&\,
    \lp\dy^2F_4,\,\V 22 \V 01 +\V21 [\V 02 +{\V 01}^2 ]\rp\\
    +&\lp v^{-1}\dy\dz F_4,\,\V21 [\V 11 +\V 10 \V 01 ]\rp\\
    +&\lp \dy F_4,\,\V 21 [\V 12 +\V 11 \V 10 +\V 02 \V 10 ]\rp\\
    +&\lp v^{-1}\dz F_4,\,\V 21 [\V 21 +\V 20 \V 01 ]\rp\\
    +&\lp v^{-1}\dz F_4,\,\V 21 [\V 11 \V 10 +{\V 10}^2\V 01 ]\rp\\
    +&\lp F_4,\,  \V 21 \V 20 [\V 02 +{\V 01}^2]\rp\\
    +&\lp F_4 ,\, \V 21 \V 11 [\V 11 +\V 10 \V 01 ]\rp,
\end{align*}
yielding
\[ |\lp E_{2,1,1,4},v_{2,1,1}\rp|\leq\rho\beta^{\frac{5}{2}}
    \left[\Omega_{2,2}^{\frac{1}{2}}
    +\Omega_{2,1}^{\frac{1}{2}}\right]
    +C\beta(\Omega_{2,1}+\Omega_{1,2}).\]
Finally, one computes
\[E_{2,1,1,5}\es \V 20 \V 01 +\V11 \V 10 +{\V 10 }^2 \V 01 ,\]
yielding
\[|\lp E_{2,1,1,5},v_{2,1,1}\rp|\leq\rho\beta^2\Omega_{2,1}^{\frac{1}{2}}.\]
The estimate for $E_{2,1}$ follows when one combines the estimates above.
(Note that in this case, the term $\Omega_{0,4}^{\frac{1}{2}}$ is not needed.)
\medskip

The proof for $E_{1,2}$ is entirely analogous,
except that it suffices here to integrate the leading term
$\lp v^{-2}\dy\dz^2 F_1,\V 12 \V 20 \rp$
by parts in $\theta$. A typical subsequent term, also obtained
by integrating by parts in $\theta$, is
\begin{align*}
\lp E_{1,2,2,2},v_{1,2,2}\rp\es&\,
    \lp v^{-1}\dy\dz F_2,\,\V 13 \V 02 \rp\\
    &+\lp v^{-1}\dy\dz F_2,\,\V 12 [\V 03 + \V 02 \V 01 ]\rp\\
    &+\lp v^{-2}\dz^2 F_2,\, \V 12 [\V 12 + \V 02 \V 10]\rp\\
    &+\lp \dy F_2,\,\V 12
        [\V 04 +\V 03 \V 01  ]\rp\\
    &+\lp \dy F_2,\,\V 12
        [{\V 02}^2 +\V 02 {\V 01}^2 ]\rp\\
    &+\lp v^{-1}\dz F_2,\,\V 12
        [\V 13 +\V 12 \V 01 +\V 03 \V 10] \rp\\
    &+\lp v^{-1}\dz F_2,\,\V 12 \V 02
        [\V 11 +\V 10 \V 01 ] \rp\\
    &+\lp F_2,\,
        \V 04 \V 12 \V 10
        +{\V 12}^2 [\V 02 +{\V 01}^2 ]\rp\\
    &+\lp F_2,\,
        \V 12 \V 03 [\V 11 +\V 10 \V 01 ]\rp\\
    &+\lp F_2,\,
        \V 12 \V 02 [\V 11 \V 01 +\V 02 \V 10 ]\rp.
\end{align*}
The weakest estimate for $E_{1,2}$ again comes from its final term,
\[|\lp E_{1,2,2,5},v_{1,2,2}\rp|\leq\rho\beta^2\Omega_{1,2}^{\frac{1}{2}}.\]

One estimates $E_{0,3}$ in exactly the same fashion.
We omit further details.
\end{proof}

Our final third-order quantity satisfies a slightly different estimate.

\begin{lemma}\label{E30estimate}
If the first-order inequalities~\eqref{v_y-est}--\eqref{v_z-est}
and Conditions~[C2]--[C3] hold, then there
exist constants $0<C<\infty$ and $\rho=\rho(\Sigma)>0$,
with $\rho(\Sigma)\searrow0$ as $\Sigma\rightarrow\infty$, such that
\[E_{3,0}\leq
\rho\beta^{\frac{3}{2}}\left[
    \Omega_{3,0}^{\frac{1}{2}}
    +\beta^{\frac{1}{10}}\Omega_{4,0}^{\frac{1}{2}}
    +\Omega_{3,1}^{\frac{1}{2}}
    +\Omega_{2,2}^{\frac{1}{2}}\right]
    +C\beta\left(\sum_{i+j=3}\Omega_{i,j}\right)
    +C\beta^{\frac{1}{2}}\Omega_{3,0}^{\frac{1}{2}}\Omega_{2,0}^{\frac{1}{2}}.\]
\end{lemma}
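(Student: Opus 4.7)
My plan is to follow the template established for $E_{2,1}$ in the proof of Lemma~\ref{E21estimate}, since the algebraic structure of the commutators $E_{3,0,0,\ell}$ is parallel. However two features of the case $(m,n)=(3,0)$ force genuine modifications: first, the orthogonality to constants in $\theta$ used in Lemma~\ref{E21estimate} to integrate by parts in $\theta$ is unavailable here because $\V 30$ carries no $\theta$-derivative (cf.\ Remark~\ref{FixAxis}); second, the commutator $E_{3,0,0,5}$ is no longer vanishing and produces a qualitatively new cross term coupling $\Omega_{3,0}$ to $\Omega_{2,0}$. Throughout I will freely use the pointwise first-derivative bounds of Theorem~\ref{FirstOrderEstimates} (valid globally under [C0i]--[C1i], per the appendix preamble) and the pointwise second-order bounds \eqref{dyF_ell-estimate}--\eqref{dz2F_ell-estimate}.

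The first step is to observe that $E_{3,0,0,0}=0$ since $k=0$, and then to expand each $E_{3,0,0,\ell}$ for $\ell=1,2,3,4$ via the Leibniz rule; for instance $E_{3,0,0,1}=(\dy^3 F_1)\V 20 + 3(\dy^2 F_1)\V 30 + 3(\dy F_1)\V 40$. The only term that cannot be bounded pointwise in $F_\ell$ alone is the one carrying $\dy^3 F_\ell$, because a pointwise bound on $\dy^3 F_\ell$ would require pointwise control of $\dy^4 v$, which we do not have. I will remove this loss by integrating by parts in $y$: since $v(y,\theta,\tau)$ and hence every $v_{m,n,k}$ is even in $y$ (Lemma~\ref{Symmetries}) and $\sigma$ is also even, the boundary terms at infinity vanish by Condition~[Cs], yielding e.g.
\[
\lp (\dy^3 F_1)\V 20,\V 30\rp \es
-\lp (\dy^2 F_1),\V 30^2+\V 20\V 40\rp
-\lp (\dy^2 F_1),\V 20\V 30\,\sigma^{-1}\dy\sigma\rp.
\]
Applying $|\dy^2 F_\ell|\ls\beta$ (from \eqref{dy2F_ell-estimate} combined with [C3]), estimate~\eqref{gamma-decay} for $\sigma^{-1}\dy\sigma$, the pointwise bound $|\V 30|\ls\beta$ from [C3], and Cauchy--Schwarz then produces contributions bounded by the three families on the right-hand side of the lemma: the term $\beta^{3/2}\beta^{1/10}\Omega_{4,0}^{1/2}$ comes from $\lp \dy F_1\cdot\V 40,\V 30\rp$ where $\|\V 30\|_\infty\ls\beta$ and $|\dy F_\ell|\ls\beta^{3/5}$ combine to give $\beta^{8/5}=\beta^{3/2+1/10}$; the $C\beta\sum_{i+j=3}\Omega_{i,j}$ family absorbs the Cauchy--Schwarz pairings against $\V 30$ itself; and the remaining $\beta^{3/2}\Omega_{m,n}^{1/2}$ terms come from the $\ell=2,3,4$ expansions using the stronger estimates \eqref{dzF_ell-estimate}--\eqref{dz2F_ell-estimate}.

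The main obstacle, and the feature that distinguishes this lemma from Lemma~\ref{E21estimate}, is the treatment of $\ell=5$. A direct calculation gives
\[
E_{3,0,0,5}=-\dy^3(v^{-1})-v^{-2}\V 30
\es v^{-4}\V 10^{\,3}-v^{-3}\V 10\,\V 20,
\]
after the $v^{-2}\V 30$ piece cancels. Pairing with $\V 30$ and applying Cauchy--Schwarz produces exactly
\[
|\lp E_{3,0,0,5},\V 30\rp|\ls
\|v^{-3}\V 10\|_\infty\Omega_{2,0}^{1/2}\Omega_{3,0}^{1/2}
+\|v^{-4}\V 10^{\,3}\|_{L^2_\sigma}\Omega_{3,0}^{1/2}.
\]
Using $|\V 10|\ls\beta^{1/2}v^{1/2}$ from \eqref{v_y-est} and $v\geq\kappa^{-1}$ from [C0] bounds $\|v^{-3}\V 10\|_\infty\ls\beta^{1/2}$, which yields exactly the cross term $C\beta^{1/2}\Omega_{3,0}^{1/2}\Omega_{2,0}^{1/2}$; the second contribution collapses into the $C\beta\Omega_{3,0}$ family. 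This cross term is genuinely new: it couples two distinct Lyaponuv functionals and cannot be absorbed by the dissipation of $\Omega_{3,0}$ alone, which is precisely why Proposition~\ref{SharpLyaponuv} is forced to group $\beta^2\Omega_{2,0}$ together with $\beta\Omega_{3,0}$ (with the $\beta$-weights chosen so that weighted Cauchy--Schwarz handles this term). Collecting all contributions gives the stated bound.
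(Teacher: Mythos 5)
Your proof is correct and follows essentially the same route as the paper's: note $E_{3,0,0,0}=0$, integrate by parts in $y$ to lower $\dy^3 F_\ell$ to $\dy^2 F_\ell$ (paying the $\sigma^{-1}\dy\sigma=\mathcal{O}(\Sigma^{-1/2})$ correction), use the pointwise bounds on $\dy F_\ell,\dy^2 F_\ell$ together with Cauchy--Schwarz, and treat $\ell=5$ separately to extract the cross term $C\beta^{1/2}\Omega_{3,0}^{1/2}\Omega_{2,0}^{1/2}$. You also correctly identify the two structural features distinguishing this case from Lemma~\ref{E21estimate}: loss of the Remark~\ref{FixAxis} $\theta$-orthogonality and nonvanishing $E_{3,0,0,5}$.

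Two small clerical points worth flagging. First, in the paper's notation $\V 10\equiv v_{1,0,1}=v^{-1}\dy v$ (not $\dy v$), so the expression you wrote for $E_{3,0,0,5}$ carries the wrong powers of $v$ in the $v_{m,n,k}$ bookkeeping; since $v\geq\kappa^{-1}$ these extra negative powers are harmless but the formula as written does not match the macro. Second, you say the contribution $\|v^{-4}\V 10^3\|_{L^2_\sigma}\Omega_{3,0}^{1/2}\ls\beta^{3/2}\Omega_{3,0}^{1/2}$ ``collapses into the $C\beta\Omega_{3,0}$ family''; in fact $\beta^{3/2}\Omega_{3,0}^{1/2}$ is not controlled by $\beta\Omega_{3,0}$ and belongs to the $\rho\beta^{3/2}[\cdots]$ bracket on the right-hand side of the lemma, which is where the paper records it. Neither point affects the validity of the argument.
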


\begin{proof}
Note that $E_{3,0,0,0}=0$. Integrating by parts in $y$, one calculates that
\begin{align*}
\lp E_{3,0,0,1},\V 30 \rp\es&\,
    \lp\dy^2F_1,\, \V 40 \V 20
        +\V 30 [\V 30+\Sigma^{-\frac{1}{2}}\V 20\rp\\
    &+\lp \dy F_1,\,\V 40 \V 30 \rp,
\end{align*}
where the $\Sigma^{-\frac{1}{2}}$ factor comes from \eqref{gamma-decay}.
Thus,
\[|\lp E_{3,0,0,1},\V 30 \rp|\leq
    \rho\left[\beta^{\frac{8}{5}}\Omega_{4,0}^{\frac{1}{2}}
    +\beta^{\frac{9}{5}}\Omega_{3,0}^{\frac{1}{2}}\right]
    +C\beta\Omega_{3,0}.\]
Again integrating by parts in $y$, one estimates the quantity
\begin{align*}
\lp E_{3,0,0,3},\V 30 \rp\es&\,
    \lp\dy^2F_3,\,\V 40\V 11\rp\\
    &+\lp\dy^2F_3,\,\V 30
        [\V 21+\V 11\V 10+\Sigma^{-\frac{1}{2}}\V 11]\rp\\
    &+\lp\dy F_3,\,\V 30[\V 31+\V 21\V 10]\rp\\
    &+\lp\dy F_3,\,\V 30[\V 20+{\V 10}^2]\V 11\rp\\
    &+\lp F_3,\,\V 31 \V 10
        +\V 21[\V 20+{\V 10}^2]\rp\\
    &+\lp F_3,\,[\V 30+\V 20\V 10 +{\V 10}^3]\V 11\rp
\end{align*}
by
\[|\lp E_{3,0,0,3},\V 30 \rp|\leq
    \rho\left[\beta^{\frac{5}{2}}\Omega_{4,0}^{\frac{1}{2}}
    +\beta^{\frac{3}{2}}\Omega_{3,1}^{\frac{1}{2}}
    +\beta^{\frac{13}{5}}\Omega_{3,0}^{\frac{1}{2}}\right]
    +C\beta\left(\Omega_{3,0}+\Omega_{2,1}\right).\]
The terms $\lp E_{3,0,0,2},\V 30 \rp$ and $\lp E_{3,0,0,4},\V 30 \rp$
have similar expansions and obey similar bounds; we omit the details.
The critical final term,
\[\lp E_{3,0,0,5},\V 30\rp\es\lp\V 20\V 10+{\V 10}^3,\V 30\rp,\]
is estimated by
\[|\lp E_{3,0,0,5},\V 30\rp|\leq
    C\beta^{\frac{1}{2}}\Omega_{3,0}^{\frac{1}{2}}\Omega_{2,0}^{\frac{1}{2}}
    +\rho\beta^{\frac{3}{2}}\Omega_{3,0}^{\frac{1}{2}}.\]
(Note that we deal with the term
$C\beta^{\frac{1}{2}}\Omega_{3,0}^{\frac{1}{2}}\Omega_{2,0}^{\frac{1}{2}}$
in the proof of Proposition~\ref{SharpLyapunov}.)
\end{proof}
\bigskip

The next three lemmas prepare us to estimate $E_{m,n}$ for $4\leq m+n\leq 5$.

\begin{lemma}\label{pq}
If the first-order inequalities~\eqref{v_y-est}--\eqref{v_z-est} and
Conditions~[C2]--[C3] hold, then one may estimate derivatives of $p:=\dy v$ by
\[\begin{array}{c}
|\dy p|\ls\beta^{\frac{3}{5}},\quad v^{-1}|\dz p|\ls\beta^{\frac{3}{2}},\\ \\
|\dy^2 p|\ls\beta,\quad v^{-1}|\dy\dz p|\ls\beta^{\frac{3}{2}},\quad v^{-2}|\dz^2 p|\ls\beta^{\frac{3}{2}},\\ \\
v^{-n}|\dy^m\dz^n p|\ls|(v_{m+1,n,n})|\quad\text{for}\quad 3\leq m+n\leq 4,
\end{array}\]
and of $q:=v^{-1}\dz v$ by
\begin{align*}
v^{-n}|\dy^m\dz^n q|&\ls\beta^{\frac{3}{2}}\quad\text{for}\quad 1\leq m+n\leq 2,\\ \\
v^{-n}|\dy^m\dz^n q|&\ls |v_{m,n+1,n+1}|+\beta^2\quad\text{for}\quad m+n=3,\\ \\
|\dy^4q|&\ls|\V 41|+\beta^{\frac{3}{2}}|\V 40|+\beta^{\frac{1}{2}}|\V 31|+\beta^{\frac{21}{10}},\\ \\
v^{-1}|\dy^3\dz q|&\ls|\V 32|+\beta^{\frac{3}{2}}|\V 31|+\beta^{\frac{1}{2}}|\V 22|+\beta^{\frac{21}{10}},\\ \\
v^{-2}|\dy^2\dz^2q|&\ls|\V 23|+\beta^{\frac{3}{2}}|\V 22|+\beta^{\frac{1}{2}}|\V 13|+\beta^{\frac{21}{10}},\\ \\
v^{-3}|\dy\dz^3q|&\ls|\V 14|+\beta^{\frac{3}{2}}|\V 13|+\beta^{\frac{1}{2}}|\V 04|+\beta^3,\\ \\
v^{-4}|\dz^4q|&\ls|\V 05|+\beta^{\frac{3}{2}}|\V 04|+\beta^3.
\end{align*}
\end{lemma}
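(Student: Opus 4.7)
The plan is to treat the $p$-estimates and the $q$-estimates separately, exploiting the fact that $\dy^m\dz^n p = \dy^{m+1}\dz^n v$ is already in the form handled by Conditions~[C2]--[C3], whereas $q = v^{-1}\dz v$ requires expansion via the Leibniz rule.

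For $p = \dy v$, every estimate claimed is a direct restatement of a hypothesis: $|\dy p| = |\dy^2 v|$, $v^{-1}|\dz p| = v^{-1}|\dy\dz v|$, and so on. The bounds $|\dy p|\ls\beta^{3/5}$, $v^{-1}|\dz p|\ls\beta^{3/2}$, $|\dy^2 p|\ls\beta$, $v^{-1}|\dy\dz p|\ls\beta^{3/2}$, and $v^{-2}|\dz^2 p|\ls\beta^{3/2}$ are immediate from [C2]--[C3]. For $3\leq m+n\leq 4$, the claim $v^{-n}|\dy^m\dz^n p|\ls |v_{m+1,n,n}|$ is tautological since both sides equal $v^{-n}|\dy^{m+1}\dz^n v|$.

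For $q = v^{-1}\dz v$, I would apply the Leibniz rule to obtain
\[\dy^m\dz^n q = \sum_{i=0}^m\sum_{j=0}^n \binom{m}{i}\binom{n}{j}\bigl[\dy^i\dz^j v^{-1}\bigr]\bigl[\dy^{m-i}\dz^{n-j+1}v\bigr],\]
where $\dy^i\dz^j v^{-1}$ is expanded via Fa\`a di Bruno into a finite sum of terms of the form $v^{-(s+1)}\prod_{r=1}^s (\dy^{a_r}\dz^{b_r}v)$ with $s\leq i+j$ and $\sum(a_r + b_r) = i+j$. After multiplying by $v^{-n}$, the $(i,j)=(0,0)$ summand contributes exactly $v_{m,n+1,n+1}$, which is the leading term on the right-hand side of every estimate claimed for $q$. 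The remaining summands are products of factors each of which is one of $v^{-k}\dy^a\dz^b v \equiv v_{a,b,k}$ for various $k$, and are to be bounded pointwise by a scalar product of hypothesized bounds. Here one repeatedly uses estimates~\eqref{v_y-est}--\eqref{v_z-est} (namely $|\dy v|\ls\min(\beta^{1/2}v^{1/2},1)$ and $|\dz v|\ls\kappa^{-1/2}v$) together with the bounds $|v_{m,n,n}|\ls\beta^{3/5}$ or $\beta^{3/2}$ supplied by [C2]--[C3], plus $v\geq 1$ to absorb stray inverse powers of $v$.

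To illustrate the pattern, for $m+n\leq 2$ only the terms $v^{-1}\dy^m\dz^{n+1}v$, $v^{-2}(\dy^a v)(\dz^b v)$, $v^{-3}(\dy v)(\dz v)(\dz v)$, etc.\ appear; each is controlled by $\beta^{3/2}$ after substituting the hypothesized bounds. For $m+n=3$, $4$, $5$ the analogous expansions isolate $v_{m,n+1,n+1}$ and leave remainder terms which, by the convex combination $|\dy v|^{1-\alpha}|\dy v|^\alpha$ with a suitable $\alpha$ and analogous interpolations for $\dz v$, may be bounded by the $\beta$-powers $\beta^2$, $\beta^{21/10}$, $\beta^3$ shown on the right-hand sides. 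The fact that only the leading $v_{m,n+1,n+1}$ survives explicitly—while all other Leibniz contributions collapse to a pure $\beta$-power remainder—is the uniform feature across the six cases $(m,n)$ with $m+n\in\{3,4\}$.

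The main obstacle is purely combinatorial bookkeeping: the expansions at order $m+n=4$ produce on the order of twenty nontrivial summands, and at each one must verify that the product of the hypothesized bounds assigns exactly the claimed $\beta$-exponent (with the surplus factor in $v$ being nonpositive, so that the estimate survives the uniform lower bound $v\geq 1$ without degradation). The technique is entirely mechanical once one tabulates the bounds $|v_{a,b,b}|$ available from [C2]--[C3] and the first-order inequalities~\eqref{v_y-est}--\eqref{v_z-est}; I would carry out one representative case in detail (e.g.\ $v^{-2}\dy\dz^2 q$, whose six Leibniz contributions each reduce to either $v_{1,3,3}$ or a factor bounded by $\beta^2$) and then indicate that the remaining cases follow by the identical algorithm.
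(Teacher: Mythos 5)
The core approach you take — observing that the $p$-estimates are trivial restatements of the hypotheses (since $\dy^m\dz^n p = \dy^{m+1}\dz^n v$), and that the $q$-estimates follow from a Leibniz/Fa\`a--di--Bruno expansion of $\dy^m\dz^n(v^{-1}\dz v)$ whose $(i,j)=(0,0)$ term yields the leading $v_{m,n+1,n+1}$ — is exactly what the paper does. But there is a genuine gap in how you handle the non-leading Leibniz contributions at order $m+n=4$.

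You assert that ``only the leading $v_{m,n+1,n+1}$ survives explicitly --- while all other Leibniz contributions collapse to a pure $\beta$-power remainder.'' That is true for $m+n\leq 3$, because every non-leading factor is then a derivative of total order at most $3$, which is bounded pointwise by Conditions~[C2]--[C3] and estimates~\eqref{v_y-est}--\eqref{v_z-est}. It is false for $m+n=4$: there the Leibniz term with all derivatives landing on $v^{-1}$ produces a factor $\dy^m\dz^n v$ (a fourth-order derivative), and the term with one $y$-derivative on $v^{-1}$ produces $\dy^{m-1}\dz^{n+1}v$ (also fourth-order). Under the hypotheses of the lemma you have no pointwise bound on any fourth-order derivative; these factors must therefore be retained explicitly on the right-hand side, which is precisely why the claimed estimate for, say, $\dy^4 q$ reads $|\V 41|+\beta^{3/2}|\V 40|+\beta^{1/2}|\V 31|+\beta^{21/10}$ rather than $|\V 41|+\beta^{21/10}$. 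The coefficients $\beta^{3/2}$ and $\beta^{1/2}$ come directly from $v^{-2}|\dz v|\ls\beta^{3/2}$ and $v^{-1}|\dy v|\ls\beta^{1/2}$, not from any interpolation; the ``convex combination $|\dy v|^{1-\alpha}|\dy v|^\alpha$'' step you invoke is unnecessary and suggests the bookkeeping at order four was not actually carried out. Your representative example $v^{-2}\dy\dz^2 q$ is at order $m+n=3$, which is exactly the regime where your simplification does hold, so it does not exercise the part of the argument that fails. To complete the proof you would need to track which Leibniz contributions contain a fourth-order (or, in the $m+n=4$ cases, fifth-order leading) factor, keep them explicit with the appropriate $\beta$-coefficients, and only collapse the remaining lower-order products to pure powers of $\beta$.
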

\begin{proof}
The estimates for $v^{-n}|\dy^m\dz^n p|$ are easy and are left to the reader.

The estimates for $v^{-n}|\dy^m\dz^n q|$ are obtained by computing and verifying
all fourteen cases directly. For example, one has
\begin{align*}
v^{-2}\dy^2\dz^2 q\es&\,
    \V 23+\V 22\V 01+\V 13\V 10\\
    &+\V 21[\V 02+{\V 01}^2]
    +\V 12[\V 11+\V 10\V 01]\\
    &+\V 03[\V 20+{\V 10}^2]
    +\V 20[\V 02\V 01+{\V 01}^3]\\
    &+\V 11[\V 11\V 01+\V 02\V10 +\V 10{\V 01}^2]\\
    &+{\V 10}^2{\V 01}^3.
\end{align*}
The remaining calculations are similar and unenlightening, hence omitted.
\end{proof}

Our next results extend estimates~\eqref{dyF_ell-estimate}--\eqref{dz2F_ell-estimate}
to higher derivatives $v^{-n}|\dy^m\dz^n F_\ell|$.

\begin{lemma}\label{Fell3}
If the first-order inequalities~\eqref{v_y-est}--\eqref{v_z-est} and
Conditions~[C2]--[C3] hold, then third derivatives of the coefficients
$F_\ell$ introduced in definition~\eqref{Define-Fi} may be estimated by
\[
|\dy^3F_\ell|\ls|\V 40|+|\V 31|+\beta^{\frac{8}{5}},\]
and for $m+n=3$ with $n\geq1$, by
\[v^{-n}|\dy^m\dz^n F_\ell|\ls|(v_{m+1,n,n})|+|(v_{m,n+1,n+1})|+\beta^2.\]
\end{lemma}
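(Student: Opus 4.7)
The plan is to expand $\dy^m \dz^n F_\ell(p,q)$ via the multivariate chain rule and partition the resulting Faà di Bruno sum into ``leading'' and ``product'' parts. Since $F_\ell$ depends on $v$ only through $p=\dy v$ and $q=v^{-1}\dz v$, each term takes the form
\[
(\partial_p^a\partial_q^b F_\ell)\,\prod_i(\dy^{m_i}\dz^{n_i}p)^{\alpha_i}\prod_j(\dy^{m_j'}\dz^{n_j'}q)^{\beta_j},
\]
with the individual derivative counts summing to $(m,n)$. Lemma~\ref{UniformBounds} lets me treat $\partial_p^a\partial_q^b F_\ell$ as a uniformly bounded prefactor, and the weight $v^{-n}$ distributes across the $q$-factors so each appears paired with its natural weight $v^{-n_j'}$, so that Lemma~\ref{pq} applies to each factor individually.

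The leading terms are those in which the full derivative order $(m,n)$ falls on a single factor, producing $(\partial_p F_\ell)(v^{-n}\dy^m\dz^n p)$ and $(\partial_q F_\ell)(v^{-n}\dy^m\dz^n q)$. By Lemma~\ref{pq}, the first is bounded by $|(v_{m+1,n,n})|$, and the second by $|(v_{m,n+1,n+1})|+\beta^2$ (or $|\V 31|+\beta^2$ in the pure-$y$ case $n=0$). These reproduce exactly the main terms in the stated estimates.

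For the product terms, every factor has order $m_i+n_i\leq 2$ (or $m_j'+n_j'\leq 2$). I would then invoke the first-order inequalities~\eqref{v_y-est}--\eqref{v_z-est}, Condition~[C2], and the low-order cases of Lemma~\ref{pq} to get $|\dy p|\ls\beta^{3/5}$, $|\dy^2 p|\ls\beta$, together with the unified bound $v^{-n_j'}|\dy^{m_j'}\dz^{n_j'}q|+v^{-n_i}|\dy^{m_i+1}\dz^{n_i}p|\ls\beta^{3/2}$ whenever at least one $\dz$-derivative is present. When $n=0$, the slowest-decaying admissible product is $(\dy p)(\dy^2 p)\ls\beta^{3/5}\cdot\beta=\beta^{8/5}$, matching the first claim. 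When $n\geq 1$, every product term contains a factor carrying at least one $\dz$, contributing an additional $\beta^{3/2}$, and the slowest-decaying product (e.g., $(\dy p)\cdot v^{-1}(\dy\dz p)$) is $\ls\beta^{21/10}\ls\beta^2$, matching the second claim.

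The entire argument is essentially bookkeeping, so the main obstacle is only to enumerate the Faà di Bruno terms carefully and verify that the $v^{-n}$ weight distributes correctly onto the $q$-derivatives so that Lemma~\ref{pq} is sharp on each factor. Once this distribution is checked, the triangle inequality and the estimates above yield Lemma~\ref{Fell3} immediately; the dichotomy between the $n=0$ and $n\geq 1$ cases reflects nothing deeper than the extra $\beta^{1/2}$ of decay that each $\theta$-derivative supplies relative to a $y$-derivative.
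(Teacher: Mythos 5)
Your proposal is correct and follows essentially the same route as the paper's proof: both expand $\dy^m\dz^n F_\ell$ via the chain rule into products of derivatives of $p$ and $q$, invoke Lemma~\ref{UniformBounds} to bound the $\partial_p^a\partial_q^b F_\ell$ prefactors uniformly, distribute the $v^{-n}$ weight across the $\theta$-derivatives, and then apply Lemma~\ref{pq} factor by factor. The paper simply writes out the expansions of $\dy^3 F_\ell$ and $\dy^2\dz F_\ell$ explicitly rather than phrasing it through Fa\`a di Bruno, but the accounting of the slowest-decaying terms ($(\dy p)(\dy^2 p)\ls\beta^{8/5}$ for $n=0$, and $\beta^{21/10}\ls\beta^2$ once a $\theta$-derivative is present) matches yours.
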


\begin{proof}
After a wee bit of calculus, one computes, for example, that
\[|\dy^3F_\ell|\ls (|\dy p|+|\dy q|)^3
    +(|\dy p|+|\dy q|)(|\dy^2 p|+|\dy^2 q|)+|\dy^3 p|+|\dy^3 q|\]
and
\begin{align*}
|\dy^2\dz F_\ell|\ls&\,(|\dy p|+|\dy q|)^2(|\dz p|+|\dz q|)
    +(|\dy p|+|\dy q|)(|\dy\dz p|+|\dy\dz q|)\\
    &+(|\dz p|+|\dz q|)(|\dy^2 p|+|\dy^2 q|)
    +|\dy^2\dz p|+|\dy^2\dz q|,
\end{align*}
and then applies Lemma~\ref{pq}.
The remaining inequalities are derived similarly.
\end{proof}

\begin{lemma}\label{Fell4}
If the first-order inequalities~\eqref{v_y-est}--\eqref{v_z-est} and
Conditions~[C2]--[C3] hold, then fourth derivatives of the coefficients
$F_\ell$ introduced in definition~\eqref{Define-Fi} may be estimated by
\begin{align*}
|\dy^4F_\ell|&\ls|\V 50|+|\V 41|+\beta^{\frac{3}{5}}|\V 40|+\beta^{\frac{1}{2}}|\V 31|+\beta^2,\\ \\
v^{-1}|\dy^3\dz F_\ell|&\ls|\V 41|+|\V 32|+\beta^{\frac{3}{2}}|\V 40|
    +\beta^{\frac{3}{5}}|\V 31|+\beta^{\frac{1}{2}}|\V 22|+\beta^{\frac{21}{10}},\\ \\
v^{-2}|\dy^2\dz^2 F_\ell|&\ls|\V 32|+|\V 23|+\beta^{\frac{3}{2}}|\V 31|
    +\beta^{\frac{3}{5}}|\V 22|+\beta^{\frac{1}{2}}|\V 13|+\beta^{\frac{21}{10}},\\ \\
v^{-3}|\dy\dz^3 F_\ell|&\ls|\V 23|+|\V 14|+\beta^{\frac{3}{2}}|\V 22|
    +\beta^{\frac{3}{5}}|\V 13|+\beta^{\frac{1}{2}}|\V 40|+\beta^{\frac{13}{5}},\\ \\
v^{-4}|\dz^4F_\ell|&\ls|\V 14|+|\V 05|+\beta^{\frac{3}{2}}(|\V 13|+|\V 04|)+\beta^3.
\end{align*}
\end{lemma}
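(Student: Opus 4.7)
My plan is to follow the template of Lemma~\ref{Fell3}, simply pushing one order higher. Because each $F_\ell$ is a smooth function of $(p,q)$ whose $(p,q)$-partials are uniformly bounded by Lemma~\ref{UniformBounds}, the chain rule expresses $\dy^m\dz^n F_\ell$ as a polynomial (with bounded coefficients) in the first four mixed derivatives of $p$ and $q$. Concretely, for any $m+n=4$ the generalized Fa\`a di Bruno formula gives a schematic expansion
\[
\dy^m\dz^n F_\ell
   \es \sum_{\pi} (\partial_p^{i}\partial_q^{j}F_\ell)\,
       \prod_{r}(\dy^{\mu_r}\dz^{\nu_r}p)^{a_r}(\dy^{\mu_r}\dz^{\nu_r}q)^{b_r},
\]
where the sum is over partitions $\pi$ of the multi-index $(m,n)$ into nonempty blocks $(\mu_r,\nu_r)$ of total order at most~$4$, and $i,j,a_r,b_r$ are the usual multiplicities. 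Hence the estimation of $v^{-n}|\dy^m\dz^n F_\ell|$ reduces to estimating $v^{-n}$ times each such product of derivatives of $p$ and $q$.

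First I would estimate each factor by invoking Lemma~\ref{pq}: derivatives of $p$ of order $\le 2$ and derivatives of $q$ of order $\le 2$ are pointwise bounded by $\beta^{3/5}$ and $\beta^{3/2}$ respectively, while the order-three derivatives are controlled by the corresponding $(v_{m+1,n,n})$ or $(v_{m,n+1,n+1})$ plus a $\beta^2$ remainder, and the order-four derivatives by the expressions in the last block of Lemma~\ref{pq}. The key bookkeeping step is to split $v^{-n}=\prod_r v^{-\nu_r b_r}$ compatibly with the partition so that each $\dy^{\mu_r}\dz^{\nu_r}q$ is absorbed with the right power of $v^{-1}$; this is exactly what makes Lemma~\ref{pq} directly applicable.

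Second, I would dispose of all terms with at most one derivative of order $\ge 3$ using the pointwise bounds above: any product in which every factor is a derivative of order $\le 2$ is already $\le\beta^2$ (since one needs at least two factors, each contributing at least $\beta^{3/5}$, and the combinatorics improve this to the advertised exponents); products containing one third-order and one first-order factor are controlled by $\beta^{3/5}|(v_{m,n,n})|$-type terms (or the correspondingly weaker $\beta^{3/2}$-weighted versions when the $q$-branch is used), which are exactly the cross terms displayed in the statement; and products of two second-order factors fall below the leading $\beta^2$ threshold. The ``principal'' terms, which produce the leading $|v_{m+1,n,n}|$ and $|v_{m,n+1,n+1}|$, arise from the single-block partitions: $(\partial_p F_\ell)(\dy^m\dz^n p)$ and $(\partial_q F_\ell)(\dy^m\dz^n q)$. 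Applying Lemma~\ref{pq} to these converts them into precisely the leading pair of terms in each displayed line of the lemma.

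Finally, I would collect terms case by case $(m,n)=(4,0),(3,1),(2,2),(1,3),(0,4)$, verifying that the remainder always combines into a clean $\beta^r$ bound with $r=2,\tfrac{21}{10},\tfrac{21}{10},\tfrac{13}{5},3$ respectively; the shift between these exponents is driven entirely by how many $q$-derivatives are present (each $q$-factor improves the base estimate by a factor of $\beta^{1/2}$ relative to a $p$-factor). The main obstacle I anticipate is not analytic but organizational: the combinatorial expansion of $\dy^m\dz^n F_\ell$ produces a large number of terms, and the cleanest path is probably to introduce the shorthand $P_k:=|\dy^a\dz^b p|$ and $Q_k:=v^{-b}|\dy^a\dz^b q|$ for $a+b=k$ (bounded by Lemma~\ref{pq}) and then verify mechanically that every monomial in the expansion is dominated by one of the terms listed in the lemma. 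Once that bookkeeping is carried out, Lemma~\ref{Fell4} follows with no further new ingredients.
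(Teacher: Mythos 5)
Your proposal is correct and follows essentially the same route as the paper: expand $\dy^m\dz^n F_\ell$ by the chain rule into a polynomial (with $(p,q)$-partials of $F_\ell$ as bounded coefficients, by Lemma~\ref{UniformBounds}) in mixed derivatives of $p$ and $q$, distribute the $v^{-n}$ weight among the $\theta$-differentiated factors, and invoke Lemma~\ref{pq} to estimate each factor before collecting. The paper simply displays the $(m,n)=(3,1)$ expansion explicitly and says ``the other estimates are obtained in like fashion''; your write-up states the same bookkeeping in schematic generality, with only a minor notational slip in the $v^{-n}$ splitting (the $p$-blocks carrying $\theta$-derivatives also need their $v^{-\nu_r}$ share, not just the $q$-blocks).
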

\begin{proof}
One proceeds as in Lemma~\ref{Fell3}, calculating, for example, that
\begin{align*}
|\dy^3\dz F_\ell|\ls&\,|\dy^3\dz p|+|\dy^3\dz q|\\
    &+(|\dy^3p|+|\dy^3q|)(|\dz p|+|\dz q|)\\
    &+(|\dy^2\dz p|+|\dy^2\dz q|)(|\dy p|+|\dy q|)\\
    &+(|\dy\dz p|+|\dy\dz q|)(|\dy^2p|+|\dy^2q|+|\dy p|^2+|\dy q|^2)\\
    &+(|\dy^2p|+|\dy^2q|)(|\dy p||\dz p|+|\dy q||\dz q|)\\
    &+(|\dy p|+|\dy q|)^3(|\dz p|+|\dz q|),
\end{align*}
and then applying Lemma~\ref{pq}. The other estimates are obtained
in like fashion.
\end{proof}
\medskip

We are now prepared to bound the ``error terms'' $E_{m,n}$ of orders four and five.
As indicated in Section~\ref{Lyapunov45}, our work below is considerably simplified
because these estimates do not need to be sharp, and because they can use
Proposition~\ref{SharpLyapunov}.

\begin{lemma}\label{E4estimate}
If the first-order inequalities~\eqref{v_y-est}--\eqref{v_z-est},
Conditions~[C2]--[C3], and $L^2_\sigma$ inequalities~\eqref{SharpLyapunovEstimates}
hold, then there exists $0<C<\infty$ such that for all $m+n=4$, one has
\[E_{m,n}\leq
C\beta^r\left(\sum_{4\leq i+j\leq 5}\Omega_{i,j}^{\frac{1}{2}}\right)
    +C\beta^{\frac{1}{2}}\left(\sum_{4\leq i+j\leq 5}\Omega_{i,j}\right),\]
where $r=\frac{8}{5}$ if $n=0$ and $r=2$ otherwise.
\end{lemma}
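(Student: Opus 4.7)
The plan is to adapt the strategy of Lemma~\ref{E21estimate} to the fourth-order setting, exploiting the fact that sharp estimates are no longer required. I would begin by expanding each $E_{m,n,n,\ell}$ ($\ell=0,\ldots,5$) via Leibniz's rule as in display~\eqref{Define-Xmnkj}, obtaining a sum of terms, each a product of a derivative of some $F_j$ (of total order at most four) with a derivative of $v$ (of total order at most six), modulated by powers of $v$. The piece $E_{m,n,n,0}$ vanishes when $n=0$ and is easily estimated by Condition~[C2] otherwise.

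The key tools are the pointwise bounds of Lemmas~\ref{Fell3}--\ref{Fell4}, which control the $F_j$ factors in terms of the critical quantities $|v_{i,j,j}|$ with $i+j\in\{4,5\}$, plus lower-order tails in powers of $\beta$. Every summand containing a sixth-order derivative of $v$ must be reduced by integration by parts in $y$ or $\theta$ inside the inner product $\lp\,\cdot\,,v_{m,n,n}\rp$; the derivative then lands either on $v_{m,n,n}$ itself (producing $v_{m+1,n,n}$ or $v_{m,n+1,n+1}$, hence $\Omega_{m+1,n}^{1/2}$ or $\Omega_{m,n+1}^{1/2}$ after Cauchy--Schwarz) or on the weight $\sigma$ and its cofactors (where estimate~\eqref{gamma-decay} supplies harmless $\Sigma^{-1/2}$ gains). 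The remaining pointwise factors of low order can be controlled using Theorem~\ref{FirstOrderEstimates} for first-order derivatives and Proposition~\ref{SharpLyaponuv} combined with Sobolev embedding on $\mathbb{S}^1$ (Lemma~\ref{InterpEst}) for second- and third-order derivatives; Condition~[C2] handles the remaining auxiliary pieces. Applying Cauchy--Schwarz term-by-term converts each resulting integral into contributions of the desired form $C\beta^{r'}\Omega_{i,j}^{1/2}$ or $C\beta^{1/2}\Omega_{i,j}$ with $i+j\in\{4,5\}$.

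The case split between $r=\frac{8}{5}$ and $r=2$ will come entirely from the term $E_{m,n,n,5}$, together with those commutator pieces in $E_{m,n,n,1}$ involving only $y$-derivatives. When $n\geq 1$, every contributing summand carries at least one $\theta$-derivative, and the bounds $v^{-1}|\dz v|\ls\kappa^{-1/2}$ together with $v^{-n}|\dy^m\dz^n v|\ls\beta^{\frac{3}{2}}$ furnish the full $\beta^2$ rate. When $n=0$, in contrast, $E_{m,0,0,5}$ decomposes via Leibniz into summands typified by $v^{-3}(\dy v)(\dy^{m-1}v)$, whose best pointwise decay is governed by $|\dy v|\ls\beta^{\frac{1}{2}}v^{\frac{1}{2}}$ from \eqref{v_y-est} and by the third-order $L^\infty$ bound implied by Proposition~\ref{SharpLyaponuv}; pairing these against $v_{m,0,0}$ and applying Cauchy--Schwarz yields at best the $\beta^{\frac{8}{5}}$ rate.

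The main obstacle is not analytical but combinatorial: the fourth-order Leibniz expansion produces a large number of summands, and one must verify case-by-case that each one fits the stated form, with no term requiring a decay stronger than the machinery provides. This is exactly the same sort of bookkeeping as in Lemma~\ref{E21estimate}, only lengthier; the availability of Proposition~\ref{SharpLyaponuv} and the relaxed exponents mean that no individual estimate is delicate, but the catalog of terms to check is long enough that most details should be suppressed, as is done throughout the rest of Appendix~\ref{GetHigh}.
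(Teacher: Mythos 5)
Your plan follows essentially the same route as the paper: expand each $E_{m,n,n,\ell}$ via Leibniz, control the $F_\ell$-derivative factors with Lemmata~\ref{Fell3}--\ref{Fell4}, integrate by parts where helpful, and close with Cauchy--Schwarz together with Proposition~\ref{SharpLyaponuv}. Two points in your reasoning are imprecise enough to be worth correcting, though neither is fatal to the structure. First, for $m+n=4$ the highest $v$-derivative surviving the cancellation built into definitions~\eqref{Define-Xmnkj} is of order $m+n+1=5$, not $6$; there are no sixth-order $v$-derivatives to remove, and the actual purpose of integration by parts (as in Lemma~\ref{E21estimate}) is to move a derivative off $D^4F_\ell$, replacing it by the better-controlled $D^3F_\ell$. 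Second, the source of the $n=0$ exponent $r=\tfrac{8}{5}$ is misdiagnosed: pairing $v^{-3}(\dy v)(\dy^3 v)$ against $v_{4,0,0}$ with $v^{-3}|\dy v|\ls\beta^{1/2}$ from~\eqref{v_y-est} and $\Omega_{3,0}^{1/2}\ls\beta^{3/2}$ from Proposition~\ref{SharpLyaponuv} in fact yields the stronger $\beta^{2}\Omega_{4,0}^{1/2}$. The binding constraint instead comes from the $(\dy^2 v)^2$-type summand in $E_{4,0,0,5}$, which after Cauchy--Schwarz gives $\|\dy^2 v\|_\infty\,\Omega_{2,0}^{1/2}\,\Omega_{4,0}^{1/2}\ls\beta^{3/5}\cdot\beta\cdot\Omega_{4,0}^{1/2}=\beta^{8/5}\Omega_{4,0}^{1/2}$ via [C2] and~\eqref{SharpLyaponuvEstimates}, and from the $\beta^{8/5}$ tail in the pure-$y$ bound of Lemma~\ref{Fell3}; attributing the split only to $E_{m,n,n,5}$ and to the $y$-only pieces of $E_{m,n,n,1}$ is also too narrow, since all of $\ell=1,\dots,4$ contribute $\beta^{8/5}$ tails when $n=0$. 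Since the lemma's conclusion depends on the exact exponents, a careful check does need to locate the genuinely binding summands, not just a typical-looking one.
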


\begin{proof}
It is easy to see that $|\lp E_{m,n,n,0},v_{m,n,n}\rp|\ls\beta^{\frac{3}{5}}\Omega_{m,n}$.

In the remainder of the proof, we write $D^kF_\ell$ to denote any sum of terms
$v^{-j}\dy^i\dz^j F_\ell$ with $i+j=k$; and we write  $D^k v$ to denote any
sum of terms $v_{i,j,j}$ of total weight $i+j=k$. For example, $D^2v\es \V 20 +\V 11 +\V 02$.

For $\ell=1,2,3$, carefully adapting the proof of Lemma~\ref{E21estimate}
to the case that $m+n=4$ shows that after
integration by parts,\footnote{In contrast to the proof of Lemma~\ref{E21estimate}, the choice
of variable with which to integrate does not matter here unless $n=0$, because our estimates
do not need to be sharp.} one obtains an estimate of the following form,
\begin{align*}
|\lp E_{m,n,n,\ell},v_{m,n,n}\rp|\ls&\,
    \lp|D^3F_\ell|,
        |D^5v|\,|D^2v|+|v_{m,n,n}|(|D^3v|+\Sigma^{-\frac{1}{2}})\rp\\
    &+\lp|D^3F_\ell|,|v_{m,n,n}|
        (|D^3v|+|D^2v|\,|D^1v|)\rp\\
    &+\lp|D^2F_\ell|,|v_{m,n,n}|
        (|D^4v|+|D^3v|\,|D^1v|+|D^2v|^2)\rp\\
    &+\lp|D^1F_\ell|,|v_{m,n,n}|
        (|D^5v|+|D^4v|\,|D^1v|+|D^3v|\,|D^2v|)\rp\\
    &+\lp|v_{m,n,n}|,
        |D^5v|\,|D^1v|+|D^4v|\,|D^2v|+|D^3v|^2\rp.
\end{align*}
Examination of the terms above using our hypotheses and Lemma~\ref{Fell3}
shows that any terms which are quadratic in derivatives of orders four or
five are multiplied by factors whose decay is at least of order
$\beta^{\frac{1}{2}}$. Any derivatives of orders four or five that do not
occur in quadratic combinations appear with factors whose decay is at
least of order $\beta^r$, where $r=\frac{8}{5}$ if $n=0$ and $r=2$ otherwise.
A similar but simpler estimate holds for $|\lp E_{m,n,n,4},v_{m,n,n}\rp|$.

For the critical final term, one uses estimate~\eqref{SharpLyapunovEstimates}
to see that
\begin{align*}
|\lp E_{m,n,n,5},v_{m,n,n}\rp|\es&\,
    \lp |v_{m,n,n}|,|D^3v|\,|D^1v|+|D^2v|(|D^2v|+|D^1v|^2)+|D^1v|^4\rp\\
    &\ls C\beta^r \Omega_{m,n}^{\frac{1}{2}}.
\end{align*}
The result follows.
\end{proof}

\begin{lemma}\label{E5estimate}
If the first-order inequalities~\eqref{v_y-est}--\eqref{v_z-est},
Conditions~[C2]--[C3], and $L^2_\sigma$ inequalities~\eqref{SharpLyapunovEstimates}
hold, then there exists $0<C<\infty$ such that for all $m+n=5$, one has
\[E_{m,n}\leq
C\beta^{\frac{21}{10}}\left(\sum_{5\leq i+j\leq 6}\Omega_{i,j}^{\frac{1}{2}}\right)
    +C\beta^{\frac{1}{2}}\left(\sum_{4\leq i+j\leq 6}\Omega_{i,j}\right).\]
\end{lemma}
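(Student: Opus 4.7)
The plan is to mirror the proof of Lemma~\ref{E4estimate}, shifted upward by one derivative order. Begin by decomposing $E_{m,n}=\sum_{\ell=0}^{5}\lp E_{m,n,n,\ell},\,v_{m,n,n}\rp$ using \eqref{DecomposeCommutators}. The term $\ell=0$ is handled trivially: Condition~[C3] together with Theorem~\ref{FirstOrderEstimates} gives $|\lp E_{m,n,n,0},v_{m,n,n}\rp|\ls\beta^{3/5}\Omega_{m,n}$, which is absorbed into the $\beta^{1/2}\sum\Omega_{i,j}$ term of the claimed bound.

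For $\ell=1,2,3,4$, I would expand $v^{-n}\dy^m\dz^n(F_\ell\,\cdots)$ by the Leibniz rule and then integrate by parts once against $v_{m,n,n}\sigma$ (in $y$ when $n=0$, in $\theta$ when $n\ge 1$, as in Lemma~\ref{E21estimate}), so that the highest derivative of $v$ that survives has order at most $6$ and the highest derivative of $F_\ell$ has order at most $4$. Applying Lemma~\ref{Fell4} to the $|D^4 F_\ell|$ factors and Lemma~\ref{Fell3} to any $|D^{\le3}F_\ell|$ factors, and replacing lower-order derivatives of $v$ by their pointwise bounds from Conditions~[C2]--[C3] and Theorem~\ref{FirstOrderEstimates}, one obtains schematic sums of inner products of the form $\lp|D^kF_\ell|,|v_{m,n,n}||D^jv|\rp$ with $k+j\le m+n+2=7$, plus boundary-type contributions from $\sigma^{-1}\dy\sigma=\mathcal{O}(\Sigma^{-1/2})$. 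The accounting, just as in Lemma~\ref{E4estimate}, shows that every product containing two factors of total derivative order $\ge 4$ carries a decay factor of at least $\beta^{1/2}$, while every product linear in a single derivative of order $5$ or $6$ inherits $\beta^{21/10}$ from the product of the remaining lower-order factors (the worst case being $|D^1 F_\ell|\cdot|D^1 v|\cdot|D^6 v|$, with $|D^1F_\ell|\ls\beta^{3/5}$, $|D^1 v|\ls\beta$, hence $\beta^{3/5+1+\text{Condition [C2] slack}}\ge\beta^{21/10}$ after the sharp third-order inputs from Proposition~\ref{SharpLyaponuv} are invoked).

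The critical term is $\ell=5$, which must be handled directly because it has no integration-by-parts structure. Expanding $v^{-n}\dy^m\dz^n(v^{-1})$ produces sums of products of the $v_{i,j,j}$ with total weight $m+n+1=6$, each involving at least two factors with weight $\le 3$. I would bound each such product pointwise using Conditions~[C2]--[C3] and Theorem~\ref{FirstOrderEstimates}, reserving one factor of $|v_{m,n,n}|$ to pair with $v_{m,n,n}$ under the integral. The worst case, in which the remaining factors are of order $2$ and $3$, contributes $\beta^{3/2}\cdot\beta^{3/5}=\beta^{21/10}$ times $\Omega_{m,n}^{1/2}$; all other distributions give strictly better decay.

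The main obstacle is purely combinatorial: keeping track of the many Leibniz expansions of $E_{m,n,n,\ell}$ for each $(m,n)$ with $m+n=5$, verifying in each summand that the accumulated $\beta$-weight meets the $\beta^{21/10}$ (single-high-derivative) or $\beta^{1/2}$ (paired-high-derivative) threshold, and that the indices $(i,j)$ appearing in $\Omega_{i,j}^{1/2}$ or $\Omega_{i,j}$ lie in $\{4\le i+j\le 6\}$ or $\{5\le i+j\le 6\}$ respectively. No genuinely new analytic idea is needed beyond what drove Lemmas~\ref{E21estimate}, \ref{E30estimate}, and \ref{E4estimate}; the sharpened $L^2_\sigma$ bounds of Proposition~\ref{SharpLyaponuv} make the lower-order factors sufficiently small to absorb the one extra derivative. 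I would conclude by collecting the bounds and applying Cauchy--Schwarz to convert each $\lp|\cdot|,|v_{m,n,n}||\cdot|\rp$ into the stated form.
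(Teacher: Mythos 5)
Your proposal mirrors the paper's proof in its overall architecture: decompose $E_{m,n}$ into $\sum_\ell\lp E_{m,n,n,\ell},v_{m,n,n}\rp$, dispatch $\ell=0$ trivially, use Lemmas~\ref{Fell3}--\ref{Fell4} for the $F_\ell$ derivatives, handle $\ell=5$ directly via Proposition~\ref{SharpLyaponuv}, and integrate by parts to knock the top-order $F_\ell$ derivative down. This is the same route the paper takes.

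However, there is a gap in your claim that a \emph{single} integration by parts (reducing $D^5F_\ell$ to $D^4F_\ell$) suffices. After that one integration by parts, the Leibniz expansion still contains the term $\lp|D^3F_\ell|,\,|v_{m,n,n}|\,|D^4v|\rp$. Your plan is to bound $|D^3F_\ell|$ by Lemma~\ref{Fell3} and the remaining $D^j v$ factors of order $\le 3$ by pointwise estimates. But Lemma~\ref{Fell3} gives $|D^3 F_\ell|\ls |D^4v|+\beta^2$, which is \emph{not} a pointwise bound: it reintroduces a fourth-order derivative. Substituting produces the integrand $|D^4v|^2\,|v_{m,n,n}|\,\sigma$, a product of \emph{three} factors of order $\ge 4$, none of which is pointwise controlled (Condition~[Cs] and the as-yet-unproven Lemma~\ref{OmegaLevel4} only give $L^2_\sigma$ bounds for orders $4$ and $5$). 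Cauchy--Schwarz alone cannot close this, and it does not fit either the $\beta^{21/10}\Omega^{1/2}$ or the $\beta^{1/2}\Omega$ bucket. This issue did not arise at level $m+n=4$, which is why your "shift by one" heuristic fails here: at level $4$ the $D^3F_\ell$ factor is paired with $v_{m,n,n}\cdot D^3v$ (one pointwise-bounded factor), while at level $5$ it is paired with $v_{m,n,n}\cdot D^4v$ (none). The paper resolves this by performing a \emph{second} integration by parts on the $\lp|D^3F_\ell|,|v_{m,n,n}|(|D^4v|+\cdots)\rp$ block, lowering it to $D^2F_\ell$ (which \emph{is} pointwise $\ls\beta^{3/5}$ by estimates~\eqref{dy2F_ell-estimate} and their analogues) at the cost of producing $D^6 v$ factors, which are then acceptable in quadratic combinations. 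You should add this second integration by parts to make the argument close; separately, your pointwise bound $|D^1v|\ls\beta$ is not justified by [C1], [Cg], or \eqref{v_y-est}, though that claim appears dispensable once the quadratic-combination terms are handled by AM--GM rather than a pointwise factor.
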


\begin{proof}
It is clear that $|\lp E_{m,n,n,0},v_{m,n,n}\rp|\ls\beta^{\frac{3}{5}}\Omega_{m,n}$.

As in the proof of Lemma~\ref{E4estimate}, we write $D^kF_\ell$ to denote any sum
of terms $v^{-j}\dy^i\dz^j F_\ell$ with $i+j=k$; and we write  $D^k v$ to denote
any sum of terms $v_{i,j,j}$ of total weight $i+j=k$.

For $\ell=1,2,3$, adapting the proof of Lemma~\ref{E21estimate} to the case that
$m+n=5$ shows that before integration by parts, one has
\begin{align*}
|\lp E_{m,n,n,\ell},v_{m,n,n}\rp|\ls&\,
    \lp|D^5F_\ell|,|v_{m,n,n}||D^2v|\rp\\
    &+\lp|D^4F_\ell|,|v_{m,n,n}|
        (|D^3v|+|D^2v|\,|D^1v|)\rp\\
    &+\lp|D^3F_\ell|,|v_{m,n,n}|
        (|D^4v|+|D^3v|\,|D^1v|+|D^2v|^2)\rp\\
    &+\lp|D^2F_\ell|,|v_{m,n,n}|
        (|D^5v|+|D^4v|\,|D^1v|+|D^3v|\,|D^2v|)\rp\\
    &+\lp|D^1F_\ell|,|v_{m,n,n}|
        (|D^6v|+|D^5v|\,|D^1v|+|D^4v|\,|D^2v|+|D^3v|^2)\rp\\
    &+\lp|v_{m,n,n}|,
    (|D^6v|\,|D^1v|+|D^5v|\,|D^2v|+|D^4v|\,|D^3v|)\rp.
\end{align*}
Integrating by parts proves that the first inner product on the \textsc{rhs}
above may be bounded by
\[\lp|D^5F_\ell|,|v_{m,n,n}||D^2v|\rp\ls
    \lp|D^4F_\ell|,|D^6v|\,|D^2v|+|v_{m,n,n}|(|D^3v|+\Sigma^{-\frac{1}{2}})\rp.\]
Integrating by parts is also necessary to show that the third inner product on the \textsc{rhs}
above may be bounded by
\begin{align*}
\lp|D^3F_\ell|,\,&|v_{m,n,n}|(|D^4v|+|D^3v|\,|D^1v|+|D^2v|^2)\rp\\
    \ls&\,\lp|D^2F_\ell|,|D^6v|(|D^4v|+|D^3v|\,|D^1v|+|D^2v|^2)\rp\\
    &+\lp|D^2F_\ell|,|v_{m,n,n}|
        (|D^5v|+|D^4v|\,|D^1v|+|D^3v|\,|D^2v|)\rp\\
    &+\lp|D^2F_\ell|,|v_{m,n,n}|(|D^4v|+|D^3v|\,|D^1v|+|D^2v|^2)\Sigma^{-\frac{1}{2}}\rp.
\end{align*}
Examination of all the terms above using our hypotheses and Lemmas~\ref{Fell3}--\ref{Fell4}
proves that any terms that are quadratic in derivatives of orders four, five, or six
are multiplied by factors whose decay is at least of order $\beta^{\frac{1}{2}}$.
Any derivatives of orders five or six not occuring in quadratic combinations
appear with factors whose decay is at least of order $\beta^{\frac{11}{5}}$.
(Derivatives of order four only occur in quadratic combinations.)
A similar but simpler estimate holds for $|\lp E_{m,n,n,4},v_{m,n,n}\rp|$.

For the critical final term, one uses estimate~\eqref{SharpLyapunovEstimates}
to see that
\begin{align*}
|\lp E_{m,n,n,5},v_{m,n,n}\rp|\es&\,
    \lp |v_{m,n,n}|,|D^4v|\,|D^1v|+|D^3v|(|D^2v|+|D^1v|^2)\rp\\
    &+\lp |v_{m,n,n}|,|D^2v|(|D^2v|\,|D^1v|+|D^1v|^3)+|D^1v|^5\rp\\
    \ls&\,C\beta^{\frac{21}{10}} \Omega_{m,n}^{\frac{1}{2}}
    +C\beta^{\frac{1}{2}}\left(\sum_{4\leq i+j\leq 5}\Omega_{i,j}\right)
\end{align*}
The result follows.
\end{proof}

\end{document}